\documentclass{article}
\usepackage{geometry}
\usepackage{amsmath, amsthm, amsfonts, amssymb, enumitem, esint}
\usepackage[hidelinks]{hyperref}
\usepackage{tikz}
\usepackage[hidelinks]{hyperref}
\usetikzlibrary{backgrounds}
\graphicspath{ {./images/} }

\numberwithin{equation}{section}

\newtheorem{theorem}{Theorem}[section]
\newtheorem{lemma}[theorem]{Lemma}
\newtheorem{proposition}[theorem]{Proposition}
\newtheorem{corollary}[theorem]{Corollary}
\theoremstyle{definition}
\newtheorem{definition}[theorem]{Definition}
\theoremstyle{remark}
\newtheorem{remark}{Remark}[theorem]

\newcommand{\card}{\operatorname{card}}

\newcommand{\cpct}{\operatorname{Cap}}
\newcommand{\csubset}{\subset\joinrel\subset}

\newcommand{\supp}{\operatorname{supp}}
\newcommand{\topint}{\operatorname{int}}

\title{\textbf{Combined effect of homogenization and dimension-reduction in the random Neumann sieve problem}}
\author{\textbf{Mert Baştuğ}}
\date{}

\begin{document}

\maketitle

\begin{abstract}
    We investigate the asymptotic behavior of the solutions to the Neumann sieve problem for the Poisson equation in a thin, randomly perforated domain. The perforations (sieve-holes) are generated by a stationary marked point process. According to the scaling between the domain thickness and the typical hole size, three distinct limiting regimes emerge. We also identify the optimal stochastic integrability condition on the random hole radii that guarantees stochastic homogenization, even in the presence of clustering holes.
\end{abstract}

\section{Introduction}

In this paper, we study Neumann's sieve problem on thin domains with random perforations. For a small parameter $\varepsilon > 0$ and a sequence of positive real numbers $(\delta_\varepsilon)$ with $\lim_{\varepsilon \to 0} \delta_\varepsilon = 0$, we consider the following sequence of boundary value problems in $\mathbb{R}^N$ with $N \ge 3$:
\begin{equation}
    \begin{cases}
        \begin{alignedat}{2} \label{eq:random_bvp}
            -\Delta u_\varepsilon &= f \quad && \text{in } U_\varepsilon, \\
            u_\varepsilon &= 0 \quad && \text{on } \partial U' \times (-\delta_\varepsilon, \delta_\varepsilon), \\
            \nabla u_\varepsilon \cdot \nu &= 0 \quad && \text{on } ((U' \setminus T'_\varepsilon) \times \{0\}) \cup (U' \times \{-\delta_\varepsilon, \delta_\varepsilon\}).
        \end{alignedat}
    \end{cases}
\end{equation}
Here, $U' \subset \mathbb{R}^{N - 1}$ is a bounded and open set with Lipschitz boundary, $f \in L^2(U')$, and $T'_\varepsilon \subset U'$ is an open set. The domain $U_\varepsilon$ is defined by
\[
U_\varepsilon := (U' \times (-\delta_\varepsilon, \delta_\varepsilon)) \setminus ((U' \setminus T'_\varepsilon) \times \{0\}).
\]
Hence, the two parts, $U' \times (0, \delta_\varepsilon)$ and $U' \times (-\delta_\varepsilon, 0)$, are connected through the set $T'_\varepsilon \times \{0\}$ representing the perforations (see Figure \ref{fig:domain}). The goal of the paper is to study the convergence of the rescaled functions $\hat{u}_\varepsilon(x', x_N) := u_\varepsilon(x', \delta_\varepsilon x_N)$ and to obtain a characterization of the limit in terms of an effective boundary value problem. As $\delta_\varepsilon \to 0$, we expect the limit function to depend only on $N - 1$ variables and the effective equation to be set on the domain $U'$.

\begin{figure}[t] \label{fig:domain}
    \centering
    \begin{tikzpicture}
        \node[anchor=south west,inner sep=0] (image) at (0,0) {\includegraphics[scale=0.27]{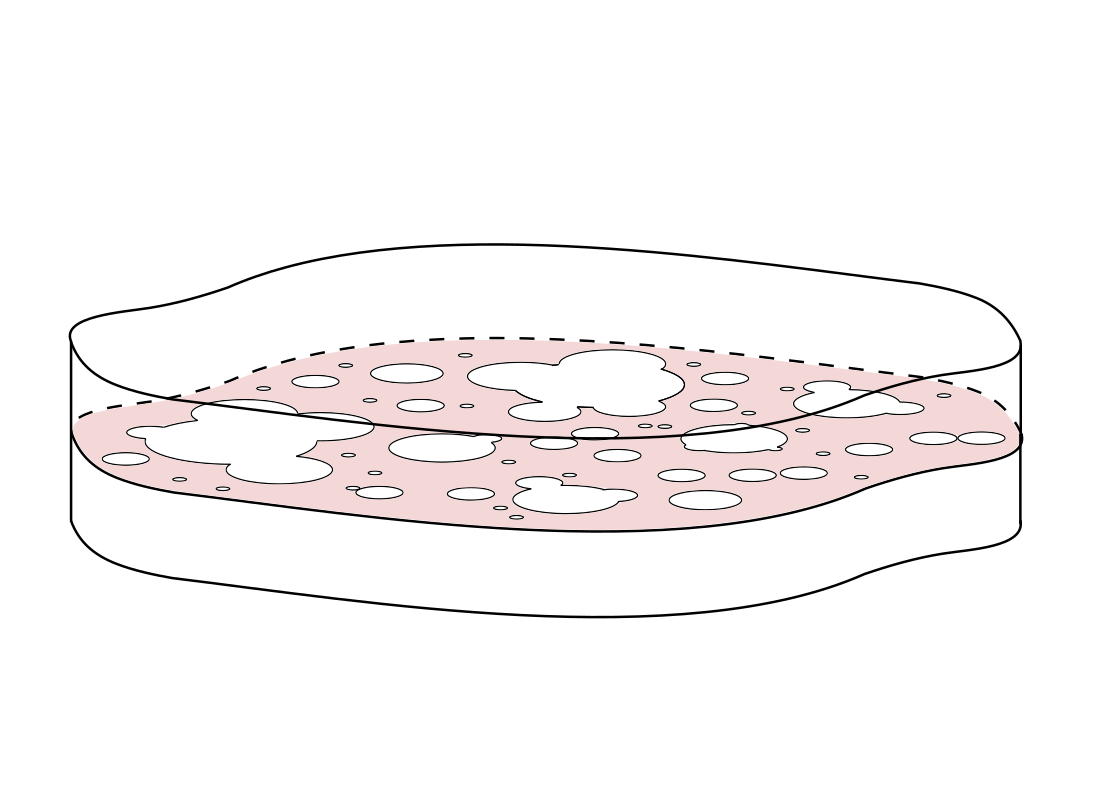}};
            \begin{scope}[x={(image.south east)},y={(image.north west)}]
                \draw (0, 0.5) node {$U_\varepsilon$};
            \end{scope}
    \end{tikzpicture}
    \quad
    \begin{tikzpicture}
        \node[anchor=south west,inner sep=0] (image) at (0,0) {\includegraphics[scale=0.27]{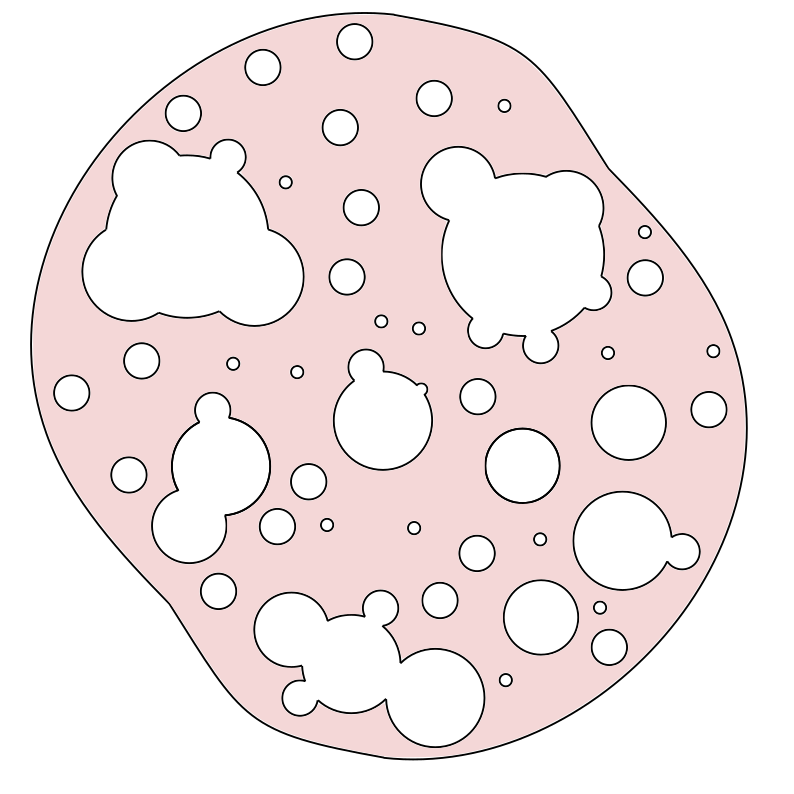}};
            \begin{scope}[x={(image.south east)},y={(image.north west)}]
                \draw (1, 0.8) node {$U' \setminus T'_\varepsilon$};
            \end{scope}
    \end{tikzpicture}
    \caption{Illustrations of the domain $U_\varepsilon$ and the cross-section $U' \setminus T'_\varepsilon$.}
\end{figure}

The asymptotic character of the sequence $(\hat{u}_\varepsilon)$ naturally depends on $T'_\varepsilon$. In this paper, we consider perforations obtained from scaled and translated copies of a model hole. Fix a bounded and open set $T' \subset \mathbb{R}^{N - 1}$. We choose a discrete, countably infinite set of random points $\{(y'_i, \rho_i)\}_{i \in \mathbb{N}} \subset \mathbb{R}^{N - 1} \times (0, \infty)$, and we set
\[
T'_\varepsilon := \bigcup_{\varepsilon y_i' \in U'} (\varepsilon y_i' + a_\varepsilon \rho_i T') \cap U'.
\]
Thus, the position and size of each copy of $T'$ in $T'_\varepsilon$ is random, with $(y'_i)$ determining the centers and $(\rho_i)$ the sizes of the copies. The centers are scaled by $\varepsilon$ and the radii by a positive number $a_\varepsilon$ with $a_\varepsilon \ll \varepsilon$, so that the number of holes in $T'_\varepsilon$ increases while their sizes shrink as $\varepsilon \to 0$. The choice of the random points $\{(y'_i, \rho_i)\}_{i \in \mathbb{N}}$ is made precise with the theory of marked point processes.

The domain $U_\varepsilon$ is a model for two thin films, represented by $U' \times (0, \delta_\varepsilon)$ and $U' \times (-\delta_\varepsilon, 0)$, connected by a randomly generated contact region $T'_\varepsilon \times \{0\}$. In \cite{ABZ}, the authors prove that, for periodically distributed contact regions, the homogenization of the nonlinear elastic energy of a deformation of $U_\varepsilon$ yields an additional interfacial energy term, which results from the penalization of debonding at the interface, in agreement with the phenomenological model introduced in \cite{BFF}. In our paper, we consider a simplification of the problem by replacing the nonlinear elastic energy with the Dirichlet energy, as we are mainly interested in observing the effect of randomizing the contact region. From now on, instead of holes, we shall refer to contact regions or connection regions.

For $\varepsilon$, $\delta_\varepsilon$ and $a_\varepsilon$ satisfying certain asymptotic relationships as $\varepsilon \to 0$ and under mild conditions on the probability law of the random points $\{(y'_i, \rho_i)\}_{i \in \mathbb{N}}$, we show that there exist functions $u^+, u^- \in H_0^1(U')$ and a real-valued random variable $\gamma$, such that, almost surely,
\begin{equation} \label{eq:rescaled_convergence}
    \hat{u}_\varepsilon \rightharpoonup u^+ \text{ in } H^1(U' \times (0, 1)), \quad \hat{u}_\varepsilon \rightharpoonup u^- \text{ in } H^1(U' \times (-1, 0)),
\end{equation}
and the functions $u^+$ and $u^-$ solve the following coupled boundary value problems:
\begin{equation} \label{eq:limit_equations}
    \begin{cases}
        \begin{aligned}
            -\Delta u^+ + \frac{\gamma}{2}(u^+ - u^-) &= f \quad \text{in } U', \\
            u^+ &= 0 \quad \text{on } \partial U',
        \end{aligned}
    \end{cases}
    \quad
    \begin{cases}
        \begin{aligned}
            -\Delta u^- - \frac{\gamma}{2}(u^+ - u^-) &= f \quad \text{in } U', \\
            u^- &= 0 \quad \text{on } \partial U'.
        \end{aligned}
    \end{cases}    
\end{equation}
If $a_\varepsilon$ is either too small or too large relative to the domain height $\delta_\varepsilon$ and the average distance $\varepsilon$ between the centers of neighboring contact regions, then the effective equations are trivial, corresponding to $\gamma = 0$ or $\gamma = \infty$. Hence, in both cases $u^+ = u^-$ and $-\Delta u^+ = -\Delta u^- = f$. Given $(\delta_\varepsilon)$, we show, however, that there exists a critical scaling for $(a_\varepsilon)$ such that the homogenized equations are nontrivial. For the following discussion, we assume $N \ge 4$. We comment on the case $N = 3$ at the end. The critical scaling depends on the growth of a certain capacity function with respect to the relative thickness of the domain, which we now define. For $h > 0$, set
\[
C_h^\infty(\mathbb{R}^{N - 1}) := \left\{v \in C^\infty(\overline{\mathbb{R}^{N - 1} \times (-h, h)}) : \supp(v) \csubset \mathbb{R}^N\right\}.
\]
Hence, if $v \in C_h^\infty(\mathbb{R}^{N - 1})$, then it is smooth up to the boundary of $\mathbb{R}^{N - 1} \times (-h, h)$, and it vanishes at infinity. We note that $v$ does not have to vanish completely on the boundary. For $K' \subset \mathbb{R}^{N - 1}$ bounded, we define
\begin{equation} \label{eq:height_dependent_capacity}
    \cpct_h(K', \mathbb{R}^{N - 1}) := \inf \left\{\int_{\mathbb{R}^{N - 1} \times (-h, h)} |\nabla v|^2 \, dx : v \in C_h^\infty(\mathbb{R}^{N - 1}), \, K' \times \{0\} \subset \topint(\{v = 1\}) \right\},
\end{equation}
where $\topint(\{v = 1\})$ is the interior of the level set $\{v = 1\}$. If $(\delta_\varepsilon)$ and $(a_\varepsilon)$ satisfy
\begin{equation} \label{eq:critical_scaling}
    \lim_{\varepsilon \to 0} \frac{a_\varepsilon^{N - 2}}{\varepsilon^{N - 1} \delta_\varepsilon} \cpct_{\frac{\delta_\varepsilon}{a_\varepsilon}}(T', \mathbb{R}^{N - 1}) \in (0, \infty),
\end{equation}
then the limit equations for \eqref{eq:random_bvp} are given by the system \eqref{eq:limit_equations} with $\gamma \in (0, \infty)$. Assuming the existence of the following limits for the moment, let us set
\begin{equation} \label{eq:a_bunch_of_capacities}
    \cpct(T' \times \{0\}, \mathbb{R}^N) := \lim_{h \to \infty} \cpct_h(T', \mathbb{R}^{N - 1}), \quad \cpct_0(T', \mathbb{R}^{N - 1}) := \lim_{h \to 0} \frac{1}{2h} \cpct_h(T', \mathbb{R}^{N - 1}).
\end{equation}
If all capacities are finite and nonzero, then, depending on the relative thickness $\delta_\varepsilon/a_\varepsilon$, the critical scaling \eqref{eq:critical_scaling} yields three regimes:
\begin{enumerate}[label=(\roman*)]
    \item $\delta_\varepsilon \gg a_\varepsilon$, \quad $\lim_{\varepsilon \to 0} \frac{a_\varepsilon^{N - 2}}{\varepsilon^{N - 1} \delta_\varepsilon} \in (0, \infty)$,
    \item $\delta_\varepsilon \sim a_\varepsilon$, \quad $\lim_{\varepsilon \to 0} \frac{a_\varepsilon^{N - 2}}{\varepsilon^{N - 1} \delta_\varepsilon} \in (0, \infty)$,
    \item  $\delta_\varepsilon \ll a_\varepsilon$, \quad $\lim_{\varepsilon \to 0} \frac{a_\varepsilon^{N - 3}}{\varepsilon^{N - 1}} \in (0, \infty)$.
\end{enumerate}
By an easy computation, we see that a sequence $(a_\varepsilon)$ satisfying (i), (ii) or (iii) exists if and only if $\delta_\varepsilon^{N - 3} \gg \varepsilon^{N - 1}$, $\delta_\varepsilon^{N - 3} \sim \varepsilon^{N - 1}$ or $\delta_\varepsilon^{N - 3} \ll \varepsilon^{N - 1}$, respectively. When $\{y'_i\}_{i \in \mathbb{N}} = \mathbb{Z}^{N - 1}$ and $\rho_i = \rho$ for some $\rho > 0$ and all $i \in \mathbb{N}$, that is, when the connection regions have a uniform size and are distributed periodically, the criticality conditions agree with those studied in \cite{N, ABZ}.

We now give a heuristic derivation of the limit equations \eqref{eq:limit_equations} and show how the criticality condition \eqref{eq:critical_scaling}, equivalently conditions (i), (ii) and (iii), naturally arises. We suppose that \eqref{eq:rescaled_convergence} holds for some $u^+$ and $u^-$. As $u_\varepsilon$ is approximated closely by $u^+$ and $u^-$, we may write the scaled Dirichlet energy of $u_\varepsilon$ as an approximate sum of two terms:
\[
\frac{1}{\delta_\varepsilon} \int_{U_\varepsilon} |\nabla u_\varepsilon|^2 \, dx \approx \left(\int_{U'} |\nabla u^+|^2 + |\nabla u^-|^2 \, dx' \right) + \frac{1}{\delta_\varepsilon}\mathrm{TE}_\varepsilon.
\]
We denote by $\mathrm{TE}_\varepsilon$ the transition energy, that is, the Dirichlet energy necessary to interpolate between $u^+$ and $u^-$ in a neighborhood of the connection regions. In particular, we have
\begin{equation} \label{eq:energy_partition}
    \frac{1}{\delta_\varepsilon} \int_{U_\varepsilon} \frac{1}{2} |\nabla u_\varepsilon|^2 - f u_\varepsilon \, dx \approx \left(\int_{U'} \frac{1}{2} |\nabla u^+|^2 + \frac{1}{2} |\nabla u^-|^2 - f(u^+ + u^-) \, dx' \right) + \frac{1}{2\delta_\varepsilon} \mathrm{TE}_\varepsilon.
\end{equation}
Since the left-hand side is minimized by $u_\varepsilon$ among all $H^1(U_\varepsilon)$ functions that vanish on $\partial U' \times (-\delta_\varepsilon, \delta_\varepsilon)$ and since the integral on the right-hand side does not depend on $\varepsilon$, we observe that $u_\varepsilon$ minimizes the transition energy as well. If $u^+$ and $u^-$ are continuous, then, in the vicinity of the fixed connection region $(\varepsilon y'_i + a_\varepsilon \rho_i T') \times \{0\}$, we can approximate $u_\varepsilon$ by $u^+(\varepsilon y'_i)$ in the upper half-space and by $u^-(\varepsilon y'_i)$ in the lower half-space. The minimal Dirichlet energy required to transition between these two values is approximately equal to
\begin{equation} \label{eq:approximate_transition_energy}
    \begin{aligned}
        \left(\frac{u^+(\varepsilon y'_i) - u^-(\varepsilon y'_i)}{2}\right)^2 \cpct_{\delta_\varepsilon}(\varepsilon y'_i + a_\varepsilon \rho_i T', &\mathbb{R}^{N - 1}) \\
        &= \frac{1}{4}(u^+(\varepsilon y'_i) - u^-(\varepsilon y'_i))^2 a_\varepsilon^{N - 2} \rho_i^{N - 2} \cpct_{\frac{\delta_\varepsilon}{a_\varepsilon \rho_i}}(T', \mathbb{R}^{N - 1}),
    \end{aligned}
\end{equation}
where we used the scaling property of the capacity in the last line. If we partition $U'$ into sets $Q'_1, \dots, Q'_n$ with sufficiently small but fixed diameters, then, summing up the energies over all contact regions in $T'_\varepsilon$ and grouping those falling in the same set $Q'_j$ yields
\begin{equation}
    \frac{1}{2\delta_\varepsilon} \mathrm{TE}_\varepsilon \approx \sum_{j = 1}^n \frac{1}{8} \fint_{Q'_j} |u^+ - u^-|^2 \, dx \sum_{\varepsilon y'_i \in Q'_j} \frac{a_\varepsilon^{N - 2}}{\delta_\varepsilon} \rho_i^{N - 2} \cpct_{\frac{\delta_\varepsilon}{a_\varepsilon \rho_i}}(T', \mathbb{R}^{N - 1})
\end{equation}
If we assume that, for any sufficiently nice set $Q' \subset \mathbb{R}^{N - 1}$, the spatial averages
\begin{equation} \label{eq:spatial_average_intro}
    \frac{1}{|Q'|}\sum_{\varepsilon y'_i \in Q'} \frac{1}{2} \frac{a_\varepsilon^{N - 2}}{\delta_\varepsilon} \rho_i^{N - 2} \cpct_{\frac{\delta_\varepsilon}{a_\varepsilon \rho_i}}(T', \mathbb{R}^{N - 1}) = \frac{1}{\left|\frac{1}{\varepsilon} Q'\right|} \sum_{\varepsilon y'_i \in Q'} \frac{1}{2}\frac{a_\varepsilon^{N - 2}}{\varepsilon^{N - 1} \delta_\varepsilon} \rho_i^{N - 2} \cpct_{\frac{\delta_\varepsilon}{a_\varepsilon \rho_i}}(T', \mathbb{R}^{N - 1})
\end{equation}
converge to a constant $\gamma$, then we obtain
\[
\frac{1}{2\delta_\varepsilon} \mathrm{TE}_\varepsilon \approx \frac{\gamma}{4}\int_{U' } |u^+ - u^-|^2 \, dx'.
\]
Going back to \eqref{eq:energy_partition}, we see as a consequence that the pair $(u^+, u^-)$ minimizes the energy
\[
\int_{U'} \frac{1}{2} |\nabla u^+|^2 + \frac{1}{2} |\nabla u^-|^2 - f(u^+ + u^-) + \frac{\gamma}{4}|u^+ - u^-|^2 \, dx'
\]
among all pairs in $H_0^1(U') \times H_0^1(U')$. The associated Euler-Lagrange equation is given by the coupled system of PDEs \eqref{eq:limit_equations}. Assume that the criticality conditions (i), (ii), or (iii) hold and recall \eqref{eq:a_bunch_of_capacities}. If we set $h_0 := \lim_{\varepsilon \to 0} \delta_\varepsilon/a_\varepsilon$ and define
\begin{equation*}
    J_{h_0}(\rho) :=
    \begin{cases}
        \rho^{N - 2} \cpct(T' \times \{0\}, \mathbb{R}^N) \quad &\text{if } h_0 = \infty, \\
        \rho^{N - 2} \cpct_\frac{h_0}{\rho}(T', \mathbb{R}^{N - 1}) \quad &\text{if } h_0 \in (0, \infty), \\
        2 \rho^{N - 3} \cpct_0(T', \mathbb{R}^{N - 1}) \quad &\text{if } h_0 = 0,
    \end{cases}
\end{equation*}
then we see that the spatial average \eqref{eq:spatial_average_intro} is roughly proportional to
\begin{equation} \label{eq:proportional_spatial_average}
    \frac{1}{\left|\frac{1}{\varepsilon} Q'\right|} \sum_{\varepsilon y'_i \in Q'} J_{h_0}(\rho_i).
\end{equation}
Although our heuristic derivation suggests the right limit problem, there are several technical difficulties associated with it, some of which stem from the randomness of the sieve geometry. A major problem is caused by contact regions that are too large or too close to each other, resulting in clusters. Our estimate \eqref{eq:approximate_transition_energy} of the transition energy in terms of the capacity requires the contact regions to be well-separated. However, unless we impose bounds on the distances $|y'_i - y'_j|$ for $i \neq j$ or on the radii $\rho_i$, the clusters occur with high probability. In our paper, we shall assume, roughly speaking, no more than the convergence of the averages \eqref{eq:proportional_spatial_average} for nice sets $Q' \subset \mathbb{R}^{N - 1}$ (see Theorem \ref{thm:ergodic_theorem_second_version} for details). When formulated in terms of marked point processes, the assumptions translate to the finiteness of the integral $\int_0^\infty J_{h_0}(\rho) \, d\lambda(\rho)$ for a stationary marked point process, where $\lambda$ is, informally, the probability distribution of the radii $(\rho_i)$. Nevertheless, based only on this assumption, we are able to show that the capacity of the clusters is negligible almost surely. Here, we follow an approach similar to the one in our previous paper \cite{B}.

For $N = 3$, the capacity function in \eqref{eq:height_dependent_capacity} is always zero due to the fact that the harmonic capacity vanishes on bounded subsets of $\mathbb{R}^2$. In this case, instead of \eqref{eq:approximate_transition_energy}, the correct transition energy is given by
\begin{equation} \label{eq:transition_energy_in_3_dimensions}
    \left(\frac{u^+(\varepsilon y'_i) - u^-(\varepsilon y'_i)}{2}\right)^2 \cpct((\varepsilon y'_i + a_\varepsilon \rho_i T') \times \{0\}, \mathbb{R}^3),
\end{equation}
where
\[
\cpct(K, \mathbb{R}^{N - 1}) := \inf \left\{\int_{\mathbb{R}^3} |\nabla v|^2 \, dx : v \in C_c^\infty(\mathbb{R}^3), \, K \subset \topint(\{v = 1\}) \right\}
\]
for $K \subset \mathbb{R}^3$ bounded. However, even then, \eqref{eq:transition_energy_in_3_dimensions} holds only when $\ln(\varepsilon/a_\varepsilon) \ll \delta_\varepsilon/a_\varepsilon$. When the relative thickness $\delta_\varepsilon/a_\varepsilon$ is asymptotically comparable to or smaller than $\ln(\varepsilon/a_\varepsilon)$, two-dimensional effects dominate, and the minimum transition energy is no longer given by the harmonic capacity. Hence, for $N = 3$, we consider the critical scaling given by
\[
\ln\left(\frac{\varepsilon}{a_\varepsilon}\right) \ll \frac{\delta_\varepsilon}{a_\varepsilon}, \quad \lim_{\varepsilon \to 0} \frac{a_\varepsilon}{\varepsilon^2 \delta_\varepsilon} \in (0, \infty).
\]
Given $(\delta_\varepsilon)$, a sequence $(a_\varepsilon)$ satisfying the criticality condition exists if and only if $\ln(1/\delta_\varepsilon) \ll 1/\varepsilon^2$.

We now describe the ideas of the proof of our homogenization result. The main step in the proof is the construction of a sequence $(w_\varepsilon)$ with $w_\varepsilon \in H^1(U_\varepsilon)$ such that the rescaled functions $\hat{w}_\varepsilon(x', x_N) := w_\varepsilon(x', \delta_\varepsilon x_N)$ converge weakly to $1$ in $H^1(U' \times (0, 1))$ and to $-1$ in $H^1(U' \times (-1, 0))$ almost surely. Given any $v^+, v^- \in H_0^1(U')$, the functions $(w_\varepsilon)$ allow us to produce a sequence $(v_\varepsilon)$ with $v_\varepsilon \in H^1(U_\varepsilon)$ whose scaled versions converge weakly to $v^+$ and $v^-$ in the upper and lower half-spaces, respectively. In turn, we use $(v_\varepsilon)$ as test functions in the weak formulation of \eqref{eq:random_bvp} and, granted that the Laplacians $(\Delta v_\varepsilon)$ converge, we pass to the limit. As we can choose $v^+$ and $v^-$ arbitrarily, in this way, we obtain a complete characterization of the Laplacians of $u^+$ and $u^-$. This is the essence of Tartar's \textit{method of oscillating test functions}. We construct $(w_\varepsilon)$ that minimize the transition energy near the connection regions, thereby ensuring that $w_\varepsilon$ approximates the solution of a PDE. This is the crucial ingredient for the Laplacians $(\Delta v_\varepsilon)$ to converge.

Neumann's sieve problem on thin domains with periodic perforations was studied previously in \cite{N} and \cite{ABZ} using $\Gamma$-convergence. Our result extends these results to randomly generated perforations in the case of Poisson's equation. For other problems related to Neumann's sieve and homogenization problems in similar geometries, we refer the reader to \cite{AP, CDGO, C1, C2, C, DMFZ, D, DV, M, O, P} and the references therein.

The Neumann sieve problem fits within the broader class of boundary value problems posed on perforated domains, a field whose analysis goes back to the foundational works of Marchenko and Khruslov \cite{MK} and of Cioranescu and Murat \cite{CM1, CM2} (see also \cite{CDG, DMD, DMG}). Homogenization in randomly perforated domains was first investigated in \cite{MK, PV}. Our approach is influenced by more recent developments that apply marked point processes to the homogenization of Poisson’s equation in random media \cite{GHV}. This framework has since been extended to the Stokes equation \cite{GH} and to nonlinear variational problems \cite{SZZ}.

To conclude the introduction, we outline the organization of the paper. In Section \ref{sec:preliminaries}, we recall preliminary results from the theory of marked point processes and properties of the capacity function. In Section \ref{sec:main_result}, we state the main homogenization result in Theorem \ref{thm:homogenization_theorem} as well as the existence of the sequence of oscillating test functions in Theorem \ref{thm:oscillating_test_functions}. A proof of Theorem \ref{thm:homogenization_theorem} is given immediately, using Theorem \ref{thm:oscillating_test_functions}. The rest of the paper is devoted to the proof of Theorem \ref{thm:oscillating_test_functions}. In Section \ref{sec:perforations}, we show that clusters have asymptotically vanishing capacity using methods initiated in \cite{GHV}. However, we use the framework that we set up in our previous work \cite{B}. The oscillating test functions are constructed in Section \ref{sec:otf}, with the important properties being proved in Section \ref{ssec:otf_construction}. Section \ref{ssec:variational_property} contains the proof of the fact that the oscillating test functions have minimal transition energy. Finally, in Section \ref{sec:proof_of_main_theorem}, we complete the proof of Theorem \ref{thm:oscillating_test_functions}. At the end, we include a section with the proofs of some technical results on capacity functions.

\section{Notation and Preliminaries} \label{sec:preliminaries}

In this section, we introduce some of the common notation, and we state some results about marked point processes and capacity functions that we will use throughout the paper.

\subsection{Notation}

We generally use the prime notation to indicate objects that are lower dimensional. Hence, we write $A'$ for a subset of $\mathbb{R}^{N - 1}$ and $A$ for a subset of $\mathbb{R}^N$ that is obtained from $A'$, e.g. $A = A' \times (0, 1)$. Given $x \in \mathbb{R}^N$, we set $x' := (x_1, \dots, x_{N - 1})$ and write $x = (x', x_N)$. The $N$-dimensional ball centered at $x$ with radius $r$ is denoted by $B(x, r)$, and the $(N - 1)$-dimensional ball centered at $x'$ with the same radius is denoted by $B'(x', r)$. We denote the $N$-dimensional cylinder centered at $0$ with radius $r$ and height $2h$ by $C(r, h)$. We write $\nabla$ for the gradient operator in $\mathbb{R}^N$, and in $\mathbb{R}^{N - 1}$ we write $\nabla'$.

The Lebesgue measure of a measurable set $A \subset \mathbb{R}^N$ is written as $|A|$. We denote the $N$-dimensional Hausdorff measure by $\mathcal{H}^N$. The cardinality of a set $A$ is denoted by $\card(A)$.

If $O \subset \mathbb{R}^N$ is open and $f : O \rightarrow \mathbb{R}$ is continuous, then we denote the closure of $\{x \in O : f(x) \neq 0\}$ in the subspace topology of $O$ by $\supp(f)$. For an arbitrary set $A \subset \mathbb{R}^N$, $\topint(A)$ denotes the topological interior of $A$.

If $(a_n)$ and $(b_n)$ are sequences of positive real numbers, we write $a_n \ll b_n$ or $b_n \gg a_n$ if $b_n/a_n \to \infty$. If $b_n/a_n$ is bounded from below, we write $a_n \lesssim b_n$ or $b_n \gtrsim a_n$. If $a_n \gtrsim b_n$ and $a_n \lesssim b_n$, we simply write $a_n \sim b_n$. The notation can be extended to functions of a continuous variable. In inequalities, constants that do not depend on the variables will be denoted by $C$. The value of $C$ may change from line to line without indication.

\subsection{Marked point processes} \label{ssec:mpp}

In this section, we present the necessary definitions and key results from the theory of marked point processes. For additional information and proofs of some of the results, we refer the reader to our previous paper \cite{B}. A more detailed introduction can be found in the books \cite{CSKM, DVJ}.

We formulate all definitions and theorems in the context of Euclidean spaces. Let $d \in \mathbb{N}$. If $O \subset \mathbb{R}^d$ is open, we denote the Borel $\sigma$-algebra on $O$ by $\mathcal{B}(O)$.

A point process on $\mathbb{R}^d$ is a set of randomly distributed points in $\mathbb{R}^d$. If every point has an associated characteristic (a \textit{mark}), then we call it a marked point process (in short m.p.p.). In our work, a mark is simply a positive number. We call $Y \subset \mathbb{R}^d \times (0, \infty)$ \textit{admissible} if it satisfies the following conditions:
\begin{enumerate}[label=(\roman*)]
    \item Each point has a unique mark, that is, if $(y, \rho_1), (y, \rho_2) \in Y$, then $\rho_1 = \rho_2$,
    \item The projection of $Y$ onto $\mathbb{R}^d$ is \textit{locally finite}. Equivalently, the set $Y \cap (B \times (0, \infty))$ has finite cardinality for all bounded Borel sets $B \in \mathcal{B}(\mathbb{R}^d)$.
\end{enumerate}
We define $\mathbb{M}^d$ to be the set of all admissible $Y \subset \mathbb{R}^d \times (0, \infty)$. Let $\mathcal{M}^d$ be the smallest $\sigma$-algebra on $\mathbb{M}^d$ that makes all functions $Y \mapsto \card(Y \cap E)$ measurable, as $E$ runs through the Borel sets in $\mathcal{B}(\mathbb{R}^d \times (0, \infty))$.

\begin{definition}
    A \textit{marked point process} on $\mathbb{R}^d$ with positive marks is a measurable map $M : (\Omega, \mathcal{F}, \mathbb{P}) \rightarrow (\mathbb{M}^d, \mathcal{M}^d)$ on a probability space $(\Omega, \mathcal{F}, \mathbb{P})$. Its \textit{probability distribution} is the measure $\mathcal{L}_M$ defined by $\mathcal{L}_M(\mathcal{A}) = \mathbb{P}(M^{-1}(\mathcal{A}))$ for all $\mathcal{A} \in \mathcal{M}^d$. Equivalently, $\mathcal{L}_M$ is the push-forward measure of $\mathbb{P}$ under $M$.
\end{definition}

By a marked point process we will exclusively mean an m.p.p. on $\mathbb{R}^d$ with positive marks. Let $M : (\Omega, \mathcal{F}, \mathbb{P}) \rightarrow (\mathbb{M}^d, \mathcal{M}^d)$ be an m.p.p. If $E \in \mathcal{B}(\mathbb{R}^d \times (0, \infty))$, then the integer-valued random variable $\omega \mapsto M(\omega)(E)$, where $\omega \in \Omega$, represents the number of points in $M(\omega)$ lying in $E$. We denote it briefly by $M(E)$. Thus, the average number of points in $M$ that belong to $E$ is given by the expectation $\mathbb{E}[M(E)]$. It is easy to check that the expectation is a Borel measure on $\mathbb{R}^d \times (0, \infty)$ as a function of $E$.

\begin{definition}
    Let $M : (\Omega, \mathcal{F}, \mathbb{P}) \rightarrow (\mathbb{M}^d, \mathcal{M}^d)$ be a marked point process. Define $\Lambda(E) := \mathbb{E}[M(E)]$ for all $E \in \mathcal{B}(\mathbb{R}^d \times (0, \infty))$. Then $\Lambda$ is a Borel measure called the \textit{intensity measure} of $M$. If $\Lambda(B \times (0, \infty)) < \infty$ for all bounded Borel sets $B \in \mathcal{B}(\mathbb{R}^d)$, then we say that $M$ has \textit{finite intensity}.
\end{definition}

Now, we introduce an important class of marked point processes. If $Y \in \mathbb{M}^d$ and $\tau \in \mathbb{R}^d$, we define
\[
Y_\tau := \{(y + \tau, \rho) : (y, \rho) \in Y\}.
\]
Analogously, if $\mathcal{A} \in \mathcal{M}^d$, then $\mathcal{A}_\tau := \{Y_\tau : Y \in \mathcal{A}\}$.

\begin{definition}
    A marked point process $M : (\Omega, \mathcal{F}, \mathbb{P}) \rightarrow (\mathbb{M}^d, \mathcal{M}^d)$ is called \textit{stationary} if $\mathcal{L}_M(\mathcal{A}) = \mathcal{L}_M(\mathcal{A}_\tau)$ for all $\tau \in \mathbb{R}^d$ and $\mathcal{A} \in \mathcal{M}^d$.
\end{definition}

Intuitively, stationarity means that the points of the m.p.p. are distributed spatially homogeneously. For a stationary m.p.p., the intensity measure can be factorized as shown in the following proposition. For a proof, see Proposition 2.5 in \cite{B}.

\begin{proposition} \label{pr:intensity_measure_stationary}
    Assume $M : (\Omega, \mathcal{F}, \mathbb{P}) \rightarrow (\mathbb{M}^d, \mathcal{M}^d)$ is a stationary marked point process with finite intensity. Then there exists a finite Borel measure $\lambda$ on $(0, \infty)$ such that
    \begin{equation} \label{eq:intensity_measure_splits}
        \Lambda =  \lambda \otimes \mathcal{L}^d,
    \end{equation}
    where $\mathcal{L}^d$ is the $d$-dimensional Lebesgue measure.
\end{proposition}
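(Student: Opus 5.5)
The plan is to fix a bounded Borel set $E' \in \mathcal{B}((0,\infty))$ and study the measure $\mu_{E'}(B) := \Lambda(B \times E')$ on $\mathcal{B}(\mathbb{R}^d)$. By finite intensity, $\mu_{E'}(B) \le \Lambda(B \times (0,\infty)) < \infty$ for bounded $B$, so $\mu_{E'}$ is a locally finite Borel measure on $\mathbb{R}^d$. The aim is to show that $\mu_{E'}$ is translation invariant. It is then a constant multiple of Lebesgue measure, $\mu_{E'} = c(E')\,\mathcal{L}^d$, and we set $\lambda(E') := c(E') = \Lambda([0,1)^d \times E')$.

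The first step is translation invariance. For $\tau \in \mathbb{R}^d$, the event $\{Y : \card(Y \cap ((B+\tau)\times E')) \ge k\}$ is exactly $\mathcal{A}_\tau$ when $\mathcal{A} = \{Y : \card(Y \cap (B \times E')) \ge k\}$. Stationarity therefore gives
\[
\mathbb{P}(M(B\times E')\ge k) = \mathbb{P}(M((B+\tau)\times E')\ge k).
\]
Summing over $k\ge1$ yields $\mathbb{E}[M(B\times E')] = \mathbb{E}[M((B+\tau)\times E')]$. Uniqueness of translation-invariant locally finite Borel measures on $\mathbb{R}^d$ (Haar measure) then gives $\mu_{E'} = \lambda(E')\,\mathcal{L}^d$.

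The second step is to check that $\lambda$ is a finite Borel measure. Countable additivity in $E'$ follows from that of $\Lambda$. Finiteness follows from $\lambda((0,\infty)) = \Lambda([0,1)^d\times(0,\infty)) < \infty$, again by finite intensity.

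The third step is the identification $\Lambda = \lambda \otimes \mathcal{L}^d$, which we get by uniqueness of measures. Both sides agree on the $\pi$-system of rectangles $B \times E'$ with $B$ bounded Borel and $E'$ Borel; this $\pi$-system generates $\mathcal{B}(\mathbb{R}^d\times(0,\infty))$. Both measures are $\sigma$-finite on this $\pi$-system, using the exhaustion by $B_n \times (0,\infty)$ with $B_n$ balls of radius $n$. Dynkin's $\pi$-$\lambda$ theorem then gives equality. The only mildly delicate point is this $\sigma$-finiteness, together with the measurability of the translated events, which is routine. I expect no real obstacle.
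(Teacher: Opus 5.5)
The paper gives no proof of this proposition and instead cites \cite[Proposition 2.5]{B}. Your argument is correct and is the standard proof of the factorization: translation invariance of $B \mapsto \Lambda(B \times E')$ via stationarity and the tail-sum formula, uniqueness of Haar measure, then a $\pi$--$\lambda$ argument on rectangles using the exhaustion $B_n \times (0,\infty)$.

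One small fix: restricting to bounded $E'$ at the start is unnecessary, and it conflicts with your later use of $E' = (0,\infty)$ and of arbitrary Borel $E'$. Since $\Lambda(B \times E') \le \Lambda(B \times (0,\infty)) < \infty$ for every Borel $E'$ and bounded $B$, work with all Borel $E'$ from the outset.
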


Assume $(C_k) \subset \mathbb{R}^d$ is an increasing sequence of sets such that $\bigcup_{k = 1}^\infty C_k = \mathbb{R}^d$. Often, one is interested in the spatial averages
\begin{equation} \label{eq:spatial_average}
    \frac{1}{|C_k|} \sum \limits_{\substack{(y, \rho) \in M(\omega) \\ y \in C_k}} g(\rho)
\end{equation}
for a nonnegative function $g$ and $\omega \in \Omega$. For instance, if we take $g(\rho) = \rho$, then \eqref{eq:spatial_average} is an approximation to the average mark size in $M(\omega)$ per unit volume.

Below, we present a version of the ergodic theorem with which the limit of \eqref{eq:spatial_average} as $k \to \infty$ can be represented as a conditional expectation with respect to the $\sigma$-algebra of \textit{translation invariant sets}.

\begin{definition}
    Let $M : (\Omega, \mathcal{F}, \mathbb{P}) \rightarrow (\mathbb{M}^d, \mathcal{M}^d)$ be a marked point process. A set $\mathcal{A} \in \mathcal{M}^d$ is called \textit{invariant} if $\mathcal{L}_M(\mathcal{A} \,\triangle\, \mathcal{A}_\tau) = 0$ for all $\tau \in \mathbb{R}^d$. The $\sigma$-algebra of invariant sets is denoted by $\mathcal{I}$.
\end{definition}
We denote the $\sigma$-algebra $\{M^{-1}(\mathcal{A}) \in \mathcal{F}: \mathcal{A} \in \mathcal{I}\}$ by $M^{-1}(\mathcal{I})$. By an $M^{-1}(\mathcal{I})$-measurable random measure we mean a function $\xi : \Omega \times \mathcal{B}(\mathbb{R}^d \times (0, \infty)) \rightarrow [0, \infty]$ such that $\xi(\omega, \cdot)$ is a Borel measure for all $\omega \in \Omega$, and $\xi(\cdot, E)$ is $M^{-1}(\mathcal{I})$-measurable for all $E \in \mathcal{B}(\mathbb{R}^d \times (0, \infty))$.

The following lemma shows that the conditional expectation of a random variable with respect to the $\sigma$-algebra $M^{-1}(\mathcal{I})$ can be expressed as an integral with respect to a random measure.

\begin{lemma}[Lemma 12.2.III \cite{DVJ}] \label{lm:conditional_expectation_random_measure}
    Let $M : (\Omega, \mathcal{F}, \mathbb{P}) \rightarrow (\mathbb{M}^d, \mathcal{M}^d)$ be a stationary marked point process with finite intensity. Then there exists an $M^{-1}(\mathcal{I})$-measurable random measure $\xi$ such that 
    \begin{equation} \label{eq:conditional_expectation_integral_representation}
        \mathbb{E}[G \circ M \,|\, M^{-1} (\mathcal{I})](\omega) = \int_{\mathbb{R}^d} \int_0^\infty g(y, \rho) \, d\xi(\omega, \rho) \, dy \quad \mathbb{P}\text{-a.s.}
    \end{equation}
    for all Borel measurable $g : \mathbb{R}^d \times (0, \infty) \rightarrow [0, \infty)$, where
    \[
    G(Y) := \int_{\mathbb{R}^d \times (0, \infty)} g(y, \rho) \, dY(y, \rho).
    \]
    In particular,
    \begin{equation} \label{eq:conditional_expectation_measure_representation}
        \mathbb{E}[M(B \times L) \,|\, M^{-1}(\mathcal{I})](\omega) = \xi(\omega, L) |B| \quad \mathbb{P}\text{-a.s.}
    \end{equation}
    for all bounded $B \in \mathcal{B}(\mathbb{R}^d)$ and for all $L \in \mathcal{B}(0, \infty)$.
\end{lemma}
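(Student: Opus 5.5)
We need to prove Lemma 12.2.III: there is an $M^{-1}(\mathcal{I})$-measurable random measure $\xi$ such that the conditional expectation of $G\circ M$ given the invariant $\sigma$-algebra equals $\int\int g\,d\xi\,dy$. The plan is to build a random Campbell measure conditional on $M^{-1}(\mathcal{I})$, show it is translation invariant, and then disintegrate it.

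\emph{Step 1: a conditional Campbell measure.} For Borel $E\subset\mathbb{R}^d\times(0,\infty)$, set
\[
\Lambda_\omega(E):=\mathbb{E}[M(E)\,|\,M^{-1}(\mathcal{I})](\omega).
\]
Since $(\mathbb{M}^d,\mathcal{M}^d)$ is a standard Borel space, we can take a regular conditional distribution $Q_\omega$ of $M$ given $M^{-1}(\mathcal{I})$. Then $\Lambda_\omega(E)=\int Y(E)\,dQ_\omega(Y)$, so $\Lambda_\omega$ is a genuine measure for every $\omega$ and is $M^{-1}(\mathcal{I})$-measurable in $\omega$. Finite intensity gives $\mathbb{E}[\Lambda_\omega(B\times(0,\infty))]<\infty$. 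Hence, almost surely, $\Lambda_\omega$ is locally finite in the spatial variable: first for countably many balls, and then for all bounded sets. We modify $\xi$ on a null set where necessary.

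\emph{Step 2: translation invariance.} Fix $\tau$. For $A\in M^{-1}(\mathcal{I})$, write $A=M^{-1}(\mathcal{A})$ with $\mathcal{A}$ invariant. By stationarity,
\[
\mathbb{E}[M(E+\tau)\mathbf 1_A]=\mathbb{E}[M(E)\mathbf 1_{M\in\mathcal{A}_{-\tau}}]=\mathbb{E}[M(E)\mathbf 1_A],
\]
where the last equality uses $\mathcal{L}_M(\mathcal{A}\triangle\mathcal{A}_{-\tau})=0$. So $\Lambda_\omega(E+\tau)=\Lambda_\omega(E)$ almost surely for each fixed $E$ and $\tau$. This is the main obstacle: the null set depends on $(E,\tau)$.

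To deal with it, restrict to rational $\tau$ and a countable $\pi$-system generating the Borel sets, such as rational boxes $B\times(a,b]$. This yields a single null set off which $\Lambda_\omega$ is invariant under all rational translations on these generators. By the $\pi$–$\lambda$ theorem, the invariance extends to all Borel sets. For fixed $L$, the measure $B\mapsto\Lambda_\omega(B\times L)$ is locally finite and invariant under $\mathbb{Q}^d$, so for a box $B$ the map $\tau\mapsto\Lambda_\omega((B+\tau)\times L)$ is translation-invariant on a dense set. Monotonicity under box inclusion then forces it to be a multiple of Lebesgue measure: approximate boxes from inside and outside by rational-corner boxes and use $\mathbb{Q}^d$-invariance to count unit dyadic cubes. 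Since each of the $2^{nd}$ congruent dyadic subcubes of a unit cube receives the same mass, $\Lambda_\omega(B\times L)=c_\omega(L)\,|B|$ on dyadic boxes, and hence on all Borel $B$.

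\emph{Step 3: identifying $\xi$ and the integral formula.} Define $\xi(\omega,L):=\Lambda_\omega([0,1)^d\times L)$. This is a measure in $L$ and $M^{-1}(\mathcal{I})$-measurable in $\omega$, and on the full-measure set we have $\Lambda_\omega=\mathcal{L}^d\otimes\xi(\omega,\cdot)$. On the null set, set $\xi:=0$; this is harmless because $M^{-1}(\mathcal{I})$ contains the null sets after completion, or equivalently because the identity is only claimed almost surely. This proves \eqref{eq:conditional_expectation_measure_representation}.

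For general nonnegative Borel $g$, the identity holds for indicators $g=\mathbf 1_E$ by construction. It extends to simple functions by linearity, and to all nonnegative $g$ by monotone convergence on both sides, using the conditional monotone convergence theorem for $\mathbb{E}[G\circ M\,|\,M^{-1}(\mathcal{I})]$. The integral $\int g\,d\Lambda_\omega$ for the product measure is then written via Tonelli as the iterated integral in \eqref{eq:conditional_expectation_integral_representation}.
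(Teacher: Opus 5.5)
The paper gives no proof of this lemma; it quotes it from \cite{DVJ}, so there is no in-paper argument to match. Your argument is correct. The usual proof organizes the invariance step differently, and the comparison is worth making.

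Your route shows that the random measure $\Lambda_\omega$ is a.s.\ invariant, then controls the $(E,\tau)$-dependent null sets with $\mathbb{Q}^d$ and a dyadic counting argument. The usual route instead fixes $A=M^{-1}(\mathcal{A})\in M^{-1}(\mathcal{I})$ and reads your Step 2 identity as saying that the deterministic measure $E\mapsto\mathbb{E}[M(E)\mathbf{1}_A]$ is invariant under every $\tau\in\mathbb{R}^d$, with no exceptional set at all. Indeed, $M$ remains a stationary m.p.p.\ with finite intensity under $\mathbb{P}(\cdot\,|\,A)$ whenever $\mathbb{P}(A)>0$. Proposition \ref{pr:intensity_measure_stationary} then gives
\[
\mathbb{E}[M(B\times L)\mathbf{1}_A]=|B|\,\mathbb{E}[M([0,1)^d\times L)\mathbf{1}_A]
\]
for every invariant $A$. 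This is exactly \eqref{eq:conditional_expectation_measure_representation}, with $\xi(\cdot,L)$ a version of $\mathbb{E}[M([0,1)^d\times L)\,|\,M^{-1}(\mathcal{I})]$. A Dynkin-class argument on rectangles, followed by monotone convergence, then yields \eqref{eq:conditional_expectation_integral_representation}.

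What each route buys:
\begin{itemize}
\item The usual route needs a regular version only for a finite measure on the mark space $(0,\infty)$. That can be built by hand from rational intervals, like a conditional distribution function, so it never uses that $(\mathbb{M}^d,\mathcal{M}^d)$ is standard Borel.
\item Your route relies on that standard Borel fact. It is true, but you assert it without justification, and you also pay for the dense-subgroup step.
\item In return, your route gives the pathwise identity $\Lambda_\omega=\mathcal{L}^d\otimes\xi(\omega,\cdot)$ off a single null set. The general-$g$ statement is then immediate from $\int g\,d\Lambda_\omega=\int G\,dQ_\omega$.
\end{itemize}

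One small correction. You do not need to redefine $\xi$ on the exceptional set, since $\xi(\omega,\cdot)=\Lambda_\omega([0,1)^d\times\cdot)$ is a $[0,\infty]$-valued measure for every $\omega$ by construction. If you do redefine it, the reason the result stays $M^{-1}(\mathcal{I})$-measurable is not completion. It is that the exceptional set is defined by countably many $M^{-1}(\mathcal{I})$-measurable quantities $\Lambda_\cdot(E)$, so it already lies in $M^{-1}(\mathcal{I})$.
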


A proof of the following version of the ergodic theorem can be found in Section 7.1 of \cite{B}.

\begin{theorem} \label{thm:ergodic_theorem_second_version}
    Let $M : (\Omega, \mathcal{F}, \mathbb{P}) \rightarrow (\mathbb{M}^d, \mathcal{M}^d)$ be a stationary marked point process with finite intensity. Let $\xi$ be the random measure defined in Lemma \ref{lm:conditional_expectation_random_measure}. Assume $B \in \mathcal{B}(\mathbb{R}^d)$ is bounded with nonempty interior such that $|\partial B| = 0$. Then
    \begin{equation} \label{eq:ergodic_theorem_second_version}
        \lim_{\varepsilon \to 0} \frac{1}{\left|\frac{1}{\varepsilon} B\right|} \sum \limits_{\substack{(y, \rho) \in M(\omega) \\ \varepsilon y \in B}} g(\rho) = \int_0^\infty g(\rho) \, d\xi(\omega, \rho) \quad \mathbb{P}\text{-a.s.}
    \end{equation}
    for all $\lambda$-integrable functions $g : (0, \infty) \rightarrow [0, \infty)$, where $\lambda$ is the measure on $(0, \infty)$ given by $\Lambda = \lambda \otimes \mathcal{L}^d$.
\end{theorem}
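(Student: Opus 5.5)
The plan is to deduce the statement from a multiparameter ergodic theorem for stationary random measures. We then upgrade convergence along cubes to convergence along the dilates $\frac{1}{\varepsilon}B$ using $|\partial B| = 0$.

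\textbf{Step 1: an additive process.} Fix a $\lambda$-integrable $g \ge 0$. For bounded Borel $A \subset \mathbb{R}^d$ set
\[
S_A(\omega) := \sum_{(y,\rho)\in M(\omega),\, y\in A} g(\rho) = G_A \circ M(\omega),
\]
where $G_A$ is built from the function $(y,\rho)\mapsto 1_A(y) g(\rho)$. By Proposition \ref{pr:intensity_measure_stationary}, $\mathbb{E}[S_A] = |A| \int_0^\infty g \, d\lambda < \infty$, so $A \mapsto S_A$ is an integrable, stationary random measure on $\mathbb{R}^d$. The version of the ergodic theorem for stationary random measures (Daley--Vere-Jones, Prop.~12.2.II/IV, or Tempelman's theorem applied to the $\mathbb{Z}^d$-action with unit-cube increments $S_{z+[0,1)^d}$) yields, almost surely,
\[
\frac{1}{|C_k|}S_{C_k} \to \mathbb{E}[S_{[0,1)^d} \mid M^{-1}(\mathcal{I})].
\]
This holds for convex averaging sequences $C_k$, in particular for all cubes $\frac{1}{\varepsilon}Q$ with $Q$ a dyadic cube. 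The lemma on conditional expectations, Lemma \ref{lm:conditional_expectation_random_measure}, identifies the limit as $\int_0^\infty g \, d\xi(\omega,\cdot)$.

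\textbf{Step 2: from sequences to continuous $\varepsilon$.} For a cube $Q = x_0 + [0,s)^d$ with $x_0 \neq 0$, the sets $\frac{1}{\varepsilon}Q$ are not nested, so I would use a sandwich argument. Take $\varepsilon_k = \theta^k$ with $\theta<1$ close to $1$, and bound $\frac{1}{\varepsilon}Q$ between unions of cubes of the grid at scale $\varepsilon_k$. Monotonicity of $S$ then gives limsup and liminf bounds differing by a factor $1+O(1-\theta)$, and we let $\theta \to 1$. Countably many cubes and countably many $\theta$ give a single full-measure event.

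\textbf{Step 3: general $B$.} Given $\eta > 0$, use $|\partial B| = 0$ to choose finite unions of dyadic cubes $D_1 \subset B \subset D_2$ with $|D_2 \setminus D_1| < \eta$. This is possible because $\operatorname{int}B$ and $\overline{B}$ differ by a null set and the interior has positive measure. Monotonicity gives
\[
S_{\frac{1}{\varepsilon} D_1} \le S_{\frac{1}{\varepsilon}B} \le S_{\frac{1}{\varepsilon}D_2}.
\]
Applying Step 2 on each cube and dividing by $|\frac{1}{\varepsilon}B| = \varepsilon^{-d}|B|$ shows that the limsup and liminf are within $\eta\, |B|^{-1} \int g \, d\xi$ of $\int g \, d\xi$. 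This is finite a.s. because $\mathbb{E}\int g \, d\xi = \int g \, d\lambda$. Letting $\eta \to 0$ along a countable sequence finishes the proof, with the null set depending on $g$, which is permitted.

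The main obstacle is Step 1, which needs a genuinely multiparameter pointwise ergodic theorem. Blockwise superadditivity is not enough, and the unit-cube increments are only $L^1$. I would first invoke Tempelman's/Wiener's theorem for $\mathbb{Z}^d$-actions on the $L^1$ variable $S_{[0,1)^d}$, applied to cubes $[0,n)^d$. The general cubes in Step 2 would then be handled by the sandwich argument.
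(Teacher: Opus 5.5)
The paper does not prove this statement itself; it refers to Section 7.1 of \cite{B}, so your argument is assessed on its own. Two parts are fine: the identification of the limit in Step 1 through Lemma \ref{lm:conditional_expectation_random_measure}, and the Jordan approximation in Step 3 using $|\partial B|=0$. The genuine gap is the passage to cubes that do not contain the origin. The convex-averaging-sequence theorem of Daley--Vere-Jones requires nested sets, and you apply Wiener's theorem only to $[0,n)^d$. Neither says anything about, for example, $Q=[1,2)^d$, whose dilates $\frac{1}{\varepsilon}Q=[\frac{1}{\varepsilon},\frac{2}{\varepsilon})^d$ drift to infinity and no two of which are nested. So the claim in Step 1 that the theorem holds ``in particular for all cubes $\frac{1}{\varepsilon}Q$ with $Q$ dyadic'' is false, and Step 2 does not repair it. Sandwiching $\frac{1}{\varepsilon}Q$ for $\varepsilon\in[\varepsilon_{k+1},\varepsilon_k]$ between fixed sets dilated by $1/\varepsilon_k$ only transfers convergence from the sequence $(\varepsilon_k)$ to the continuum. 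It presupposes that $\varepsilon_k^d S_{D/\varepsilon_k}$ converges for fixed off-center unions of cubes $D$, which is exactly what is missing. Monotonicity alone cannot supply this. Comparing $Q$ with a larger box $Q^*$ anchored at the origin gives only $\limsup_{\varepsilon\to 0}\varepsilon^d S_{Q/\varepsilon}\le |Q^*|\int_0^\infty g\,d\xi(\omega,\rho)$, with the wrong constant, and gives no lower bound at all.

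The missing ingredient is additivity of $A\mapsto S_A$, not just monotonicity. After splitting along the coordinate hyperplanes, it suffices to treat $R=\prod_{i=1}^d[a_i,b_i)$ with $0\le a_i<b_i$; the other orthants are symmetric. Expanding $1_R(x)=\prod_i\bigl(1_{[0,b_i)}(x_i)-1_{[0,a_i)}(x_i)\bigr)$ gives $S_{R/\varepsilon}=\sum_{J\subset\{1,\dots,d\}}(-1)^{|J|}S_{R_J/\varepsilon}$. Here $R_J$ has $i$-th side $[0,a_i)$ for $i\in J$ and $[0,b_i)$ otherwise. Every nonempty $R_J$ has a vertex at the origin, so $\varepsilon\mapsto\frac{1}{\varepsilon}R_J$ is monotone, and along any decreasing $\varepsilon_k\to 0$ its values form a convex averaging sequence. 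Your Step 1 together with the geometric sandwich then gives $\varepsilon^d S_{R_J/\varepsilon}\to |R_J|\int g\,d\xi$. Summing these finitely many a.s.\ finite terms yields $\varepsilon^d S_{R/\varepsilon}\to \sum_J(-1)^{|J|}|R_J|\int g\,d\xi=|R|\int g\,d\xi$. Alternatively, you could appeal to Lindenstrauss's pointwise ergodic theorem for tempered F{\o}lner sequences, since $(\theta^{-k}Q)_k$ is tempered. If you use the $\mathbb{Z}^d$-action rather than the $\mathbb{R}^d$ theorem, you must also check that the limit is invariant under all real shifts. The $\mathbb{Z}^d$-invariant $\sigma$-algebra can be strictly larger than $M^{-1}(\mathcal{I})$, for instance for a randomly shifted lattice. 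With these repairs, your Step 3 completes the proof.
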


\subsection{Harmonic capacity} \label{ssec:harmonic_capacity}
     
In this section, we introduce several notions of harmonic capacity and establish their relationships, together with auxiliary results needed later. Let us recall that the classical harmonic capacity of a bounded set $K \subset \mathbb{R}^N$ relative to an open set $U \supset \overline{K}$ is defined by
\begin{equation} \label{eq:classical_capacity}
    \cpct(K, U) := \inf \left\{\int_U |\nabla v|^2 \, dx : v \in C_c^\infty(U), \, K \subset \topint (\{v = 1\})\right\}.
\end{equation}
We introduce two other capacity functions that will appear frequently. Let $K' \subset \mathbb{R}^{N - 1}$ be bounded, and let $U' \subset \mathbb{R}^{N - 1}$ be open with $\overline{K'} \subset U'$. For $h > 0$ we define
\begin{equation} \label{eq:capacity_in_a_strip}
    \cpct_h(K', U') := \inf \left\{\int_{U_h} |\nabla v|^2 \, dx : v \in C_h^\infty(U'), \, K' \times \{0\} \subset \topint(\{v = 1\})\right\},
\end{equation}
where
\[
U_h := U' \times (-h, h), \quad C_h^\infty(U') := \left\{v \in C^\infty(\overline{U_h}) : \supp(v) \csubset \mathbb{R}^N\right\}.
\]
If we set
\[
E_h(v) := \frac{1}{2} \int_{U_1} |\nabla' v|^2 + \frac{1}{h^2} |\partial_N v|^2 \, dx, \quad v \in C_1^\infty(U'),
\]
then a simple change of variables gives
\begin{equation} \label{eq:capacity_rescaling}
    \frac{1}{2h} \cpct_h(K', U') = \inf \{E_h(v) : v \in C_1^\infty(U'), \, K' \times \{0\} \subset \topint\{(v = 1\})\}.
\end{equation}
We also introduce
\[
\cpct_0(K', U') := \lim_{h \to 0} \frac{1}{2h} \cpct_h(K', U').
\]
In Proposition \ref{pr:minimization_problem_characterization} we show that $\cpct_0(K', U')$ is characterized by a minimization problem similar to \eqref{eq:classical_capacity} and \eqref{eq:capacity_in_a_strip}. First we prove that it is well-defined.

\begin{proposition} \label{pr:continuity_in_h}
    Let $K' \subset \mathbb{R}^{N - 1}$ be bounded, and let $U' \subset \mathbb{R}^{N - 1}$ be open with $\overline{K'} \subset U'$. If $h_1 \le h_2$, then
    \begin{equation} \label{eq:comparison_in_h}
        \frac{h_1}{h_2} \cpct_{h_2}(K', U') \le \cpct_{h_1}(K', U') \le \frac{h_2}{h_1} \cpct_{h_2}(K', U').
    \end{equation}
    In particular, $h \in (0, \infty) \mapsto \cpct_h(K', U')/(2h)$ is a continuous decreasing function. Furthermore,
    \begin{equation} \label{eq:finite_limit}
        \lim_{h \to 0} \frac{1}{2h} \cpct_h(K', U') < \infty.
    \end{equation}
\end{proposition}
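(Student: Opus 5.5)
The plan is to prove \eqref{eq:comparison_in_h} by a vertical rescaling of competitors, and then read off monotonicity, continuity and finiteness of the limit from it.

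\textbf{The comparison inequality.} Fix $h_1 \le h_2$ and an admissible $v \in C_{h_2}^\infty(U')$, so that $K' \times \{0\} \subset \topint(\{v = 1\})$. Define $w(x', x_N) := v(x', \tfrac{h_2}{h_1} x_N)$ on $\overline{U_{h_1}}$. Then $w \in C_{h_1}^\infty(U')$ after multiplying by a cutoff equal to $1$ on a neighborhood of $\overline{U_{h_1}}$; this gives compact support in $\mathbb{R}^N$ and leaves the integral over $U_{h_1}$ unchanged. Since $x_N \mapsto (h_2/h_1)x_N$ fixes $\{x_N = 0\}$, the interior of $\{w = 1\}$ still contains $K' \times \{0\}$, so $w$ is admissible for $\cpct_{h_1}(K', U')$. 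Substituting $t = (h_2/h_1) x_N$ gives
\[
\int_{U_{h_1}} |\nabla w|^2 \, dx = \frac{h_1}{h_2} \int_{U_{h_2}} \left(|\nabla' v|^2 + \frac{h_2^2}{h_1^2} |\partial_N v|^2\right) dx \le \frac{h_2}{h_1} \int_{U_{h_2}} |\nabla v|^2 \, dx .
\]
Taking the infimum over $v$ yields the right inequality in \eqref{eq:comparison_in_h}. For the left inequality, start from an admissible $w$ on $U_{h_1}$ and set $v(x', x_N) := w(x', \tfrac{h_1}{h_2} x_N)$. The same computation gives a factor $\tfrac{h_2}{h_1}$ from the Jacobian, a factor $\le 1$ on the $\nabla'$ term, and a factor $(h_1/h_2)^2$ on the $\partial_N$ term. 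Hence $\cpct_{h_2} \le \tfrac{h_2}{h_1}\cpct_{h_1}$, which is the claim.

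\textbf{Monotonicity and continuity.} The same computations, done instead in the normalized form \eqref{eq:capacity_rescaling}, show that $h \mapsto E_h(v)$ is nonincreasing in $h$ for each fixed $v \in C_1^\infty(U')$. The admissible class there does not depend on $h$, so the infimum $\cpct_h/(2h)$ is nonincreasing in $h$. Continuity follows from \eqref{eq:comparison_in_h}: letting $h_1 \uparrow h_2$ or $h_2 \downarrow h_1$ squeezes $\cpct_{h_1}$ and $\cpct_{h_2}$ against each other with ratios tending to $1$. This works provided the values are finite, and finiteness follows from the next step, since any fixed admissible competitor has finite energy.

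\textbf{Finiteness of the limit.} By monotonicity the limit as $h \to 0$ exists in $[0, \infty]$, so it suffices to bound $E_h(v)$ uniformly in $h$ for one admissible $v$. I choose $v(x', x_N) = \varphi(x')$ on $\overline{U_1}$, where $\varphi \in C_c^\infty(U')$ equals $1$ on a neighborhood of $\overline{K'}$; such $\varphi$ exists since $\overline{K'} \subset U'$ and $K'$ is bounded. To make $v$ compactly supported in $\mathbb{R}^N$, multiply by a cutoff in $x_N$ equal to $1$ on $[-1, 1]$. Then $\partial_N v = 0$ on $U_1$, so $E_h(v) = \int_{U'} |\nabla'\varphi|^2 \, dx'$ for every $h$, and this bounds the limit.

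No step here is a real obstacle. The only care needed is checking that the rescaled functions stay in the admissible class, namely the interior condition at $\{x_N = 0\}$ and compact support after cutoff.
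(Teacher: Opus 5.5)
Your proof is correct and is essentially the paper's argument. The paper obtains both inequalities in \eqref{eq:comparison_in_h} from the normalized form \eqref{eq:capacity_rescaling}, via $E_{h_2}(v) \le E_{h_1}(v) \le (h_2/h_1)^2 E_{h_2}(v)$; this is your vertical rescaling written on the fixed strip $U_1$. It then uses the same $x_N$-independent competitor for \eqref{eq:finite_limit}. The cutoffs you insert can simply be dropped: a linear rescaling in $x_N$ already preserves smoothness up to the boundary and compactness of the support.
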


\begin{remark} \label{rmk:jointly_equal_to_zero}
    The proposition implies that $\cpct_0(K', U') = 0$ if and only if $\cpct_h(K', U') = 0$ for any $h > 0$.
\end{remark}

\begin{proof}
    Let $v \in C_1^\infty(U')$ with $K' \times \{0\} \subset \topint\{(v = 1\})$. By \eqref{eq:capacity_rescaling} we have
    \[
    \frac{\cpct_{h_1}(K', U')}{2h_1} \le E_{h_1}(v) \le \left(\frac{h_2}{h_1}\right)^2 E_{h_2}(v).
    \]
    Taking the infimum over all $v$ yields $\cpct_{h_1}(K', U') \le (h_2/h_1) \cpct_{h_2}(K', U')$. 
    
    The first inequality in \eqref{eq:comparison_in_h} is equivalent to showing that $\cpct_h(K', U')/(2h)$ is decreasing in $h$. Since $E_{h_2}(v) \le E_{h_1}(v)$, this follows from \eqref{eq:capacity_rescaling} as well. Continuity is a direct corollary of \eqref{eq:comparison_in_h}. Finally, we prove \eqref{eq:finite_limit}. Let $w \in C_c^\infty(U')$ with $K' \subset \topint(\{w = 1\})$. Set $\hat{w}(x', x_N) := w(x')$ for $(x', x_N) \in U' \times (-1, 1)$. Then $\hat{w} \in C_1^\infty(U')$ and $K' \times \{0\} \subset \topint(\{\hat{w} = 1\})$. We have
    \[
    \frac{\cpct_h(K', U')}{2h} \le E_h(\hat{w}) = \int_{U'} |\nabla' w|^2 \, dx'.
    \]
    Hence, $\cpct_h(K', U')/(2h)$ is uniformly bounded in $h$. Since it is also monotone in $h$, the limit as $h \to 0$ exists and is finite.
\end{proof}

The following definition helps us extend the set of admissible functions in \eqref{eq:classical_capacity} and \eqref{eq:capacity_in_a_strip} to weakly differentiable functions.
\begin{definition} \label{def:vanishing_weakly}
    Let $U \subset \mathbb{R}^N$ be open, and let $A \subset \mathbb{R}^N$ be bounded. A function $v \in H_\mathrm{loc}^1(U)$ is said to \textit{vanish weakly} on $A$ if there exist an open neighborhood $W$ of $A$ and a sequence $(v_n) \subset C^\infty(U \cap W)$ such that 
    \begin{equation} \label{eq:vanishing_weakly}
        \lim_{n \to \infty} \|v_n - v\|_{H^1(U \cap W)} = 0, \quad A \cap \overline{\supp (v_n)} = \emptyset \text{ for all } n,
    \end{equation}
    where the closure is taken in $\mathbb{R}^N$. If $v - a$ vanishes weakly on $A$ for some $a \in \mathbb{R}$, we write $v = a$ weakly on $A$.
\end{definition}

\begin{remark} \label{rmk:global_approximation}
    If $v \in H^1(U)$, then $(v_n)$ can be taken in $C^\infty(U)$, and the convergence in \eqref{eq:vanishing_weakly} holds in $H^1(U)$. This follows by a cut-off argument and from the density of smooth functions in $H^1(U)$.
\end{remark}

\begin{remark} \label{rmk:vanishing_weakly_on_boundary}
    Note that $A$ does not have to be a subset of $U$. For example, if $U$ is bounded, then $u \in H^1(U)$ vanishes weakly on $\partial U$ if and only if $u \in H_0^1(U)$.
\end{remark}

The next lemma shows that we can define weak vanishing equivalently in terms of weak convergence in $H^1$.

\begin{lemma} \label{lm:vanishing_weakly}
    Let $U \subset \mathbb{R}^N$ be open, and let $A \subset \mathbb{R}^N$ be bounded. Let $v \in H_\mathrm{loc}^1(U)$. Assume there exist an open neighborhood $W$ of $A$ and a sequence $(v_n) \subset C^\infty(U \cap W)$ such that
    \[
    \lim_{n \to \infty} \|v_n - v\|_{L^2(U \cap W)} = 0, \quad \limsup_{n \to \infty} \|\nabla v_n\|_{L^2(U \cap W; \mathbb{R}^N)} < \infty, \quad A \cap \overline{\supp (v_n)} = \emptyset \text{ for all } n.
    \]
    Then $v = 0$ weakly on $A$.
\end{lemma}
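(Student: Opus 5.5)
We prove Lemma \ref{lm:vanishing_weakly} by upgrading the bounded sequence $(v_n)$ to a strongly convergent sequence of smooth functions, using Mazur's lemma. The support condition survives convex combinations, and strong $H^1$ convergence is exactly what Definition \ref{def:vanishing_weakly} requires.

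First, since $(v_n)$ converges in $L^2(U \cap W)$ and $\|\nabla v_n\|_{L^2(U\cap W)}$ is bounded (after discarding finitely many terms), the sequence $(v_n)$ is bounded in $H^1(U \cap W)$. Because $H^1(U\cap W)$ is a Hilbert space, every subsequence has a further subsequence converging weakly in $H^1(U\cap W)$ to some $\tilde v$. Weak $H^1$ convergence implies weak $L^2$ convergence, so $\tilde v = v$. Hence $v \in H^1(U \cap W)$ and the whole sequence satisfies $v_n \rightharpoonup v$ weakly in $H^1(U \cap W)$.

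Second, we apply Mazur's lemma. For each $n$ there is a finite convex combination
\[
w_n = \sum_{k=n}^{K_n} \theta_k^n v_k, \qquad \theta_k^n \ge 0, \quad \sum_{k} \theta_k^n = 1,
\]
with $\|w_n - v\|_{H^1(U\cap W)} \to 0$. Each $w_n$ is smooth on $U \cap W$, being a finite linear combination of functions in $C^\infty(U\cap W)$.

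Third, we check the support condition, using only that $A \cap \overline{\supp(v_k)} = \emptyset$ for every $k$. Since $\{w_n \ne 0\} \subset \bigcup_{k=n}^{K_n} \{v_k \neq 0\}$, we have $\supp(w_n) \subset \bigcup_{k=n}^{K_n} \supp(v_k)$. The closure of a finite union is the union of the closures, so
\[
\overline{\supp(w_n)} \subset \bigcup_{k=n}^{K_n} \overline{\supp(v_k)},
\]
with closures taken in $\mathbb{R}^N$. Each set on the right is disjoint from $A$, hence $A \cap \overline{\supp(w_n)} = \emptyset$.

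Therefore the same neighborhood $W$ and the sequence $(w_n)$ satisfy \eqref{eq:vanishing_weakly}, so $v = 0$ weakly on $A$. The step needing the most care is identifying the weak $H^1$ limit with the given $v$ and obtaining convergence of the whole sequence rather than a subsequence; this is handled by the uniqueness of the $L^2$ limit. Even without that, passing to a subsequence before applying Mazur's lemma would suffice.
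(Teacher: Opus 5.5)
Your proof is correct and is essentially the paper's argument: you use boundedness in $H^1(U\cap W)$, weak convergence to $v$, Mazur's lemma, and the fact that finite convex combinations of the $v_k$ stay smooth and keep $A\cap\overline{\supp(w_n)}=\emptyset$. The one point the paper handles explicitly (by shrinking $W$ without loss of generality) and you take for granted is that $v$ and $v_n$ themselves lie in $L^2(U\cap W)$, since the hypothesis only controls $\|v_n-v\|_{L^2(U\cap W)}$; you can secure this by replacing $W$ with the neighborhood $W\setminus\overline{\supp(v_{n_0})}$ of $A$ for a fixed large $n_0$, on which $v=v-v_{n_0}\in L^2$.
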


\begin{proof}
    Without loss of generality, we assume $v \in L^2(U \cap W)$. Otherwise, we replace $W$ by a bounded open neighborhood of $A$. By hypothesis $v_n \rightharpoonup v$ in $H^1(U \cap W)$. Hence, by Mazur's lemma there exist convex combinations
    \[
    w_n := \sum_{j = 1}^n \lambda_j^{(n)} v_j, \quad 0 \le \lambda_j^{(n)} \le 1, \quad \sum_{j = 1}^n \lambda_j^{(n)} = 1
    \]
    such that $w_n \to v$ in $H^1(U \cap W)$. Since it is clear that $A \cap \overline{\supp (w_n)} = \emptyset$, the claim is proved.
\end{proof}

If $U$ and $U'$ are bounded, then we have
\begin{gather*}
    \cpct(K, U) = \inf \left\{\int_U |\nabla v|^2 \, dx : v \in H_0^1(U), \, v = 1 \text{ weakly on } K\right\}, \\
    \cpct_h(K', U') = \inf \left\{\int_{U_h} |\nabla v|^2 \, dx : v \in H^1(U_h), \, v = 0 \text{ weakly on } \Gamma_h, \, v = 1 \text{ weakly on } K' \times \{0\}\right\},
\end{gather*}
where we set $\Gamma_h := \partial U' \times (-h ,h)$. The equalities follow from Remarks \eqref{rmk:global_approximation}, \eqref{rmk:vanishing_weakly_on_boundary} and the direct method of calculus of variations. When $U = \mathbb{R}^N$ and $U' = \mathbb{R}^{N - 1}$ the correct function spaces are no longer Sobolev spaces. Set $\mathbb{R}_h^N := \mathbb{R}^{N - 1} \times (-h, h)$. We introduce
\begin{gather*}
    \mathcal{C}(\mathbb{R}^N) := \{v \in L^\frac{2N}{N - 2}(\mathbb{R}^N) : \nabla v \in L^2(\mathbb{R}^N; \mathbb{R}^N)\} \quad \text{for } N > 2, \\
    \mathcal{C}_h(\mathbb{R}^{N - 1}) := \{v \in L^2(-h , h; L^\frac{2(N - 1)}{N - 3}(\mathbb{R}^{N - 1})) : \, \nabla v \in L^2(\mathbb{R}_h^N; \mathbb{R}^N)\} \quad \text{for } N > 3.
\end{gather*}
By the Sobolev inequality, the completions of $C_c^\infty(\mathbb{R}^N)$ and $C_h^\infty(\mathbb{R}^{N - 1})$ under the $H_0^1$-norm yield $\mathcal{C}(\mathbb{R}^N)$ and $\mathcal{C}_h(\mathbb{R}^{N - 1})$, respectively. Consequently,
\begin{gather} \label{eq:extended_capacity_1}
    \cpct (K, \mathbb{R}^N) = \inf \left\{\int_{\mathbb{R}^N} |\nabla v|^2 \, dx : v \in \mathcal{C}(\mathbb{R}^N), \, v = 1 \text{ weakly on $K$}\right\} \quad \text{for } N > 2, \\ \label{eq:extended_capacity_2}
    \cpct_h(K', \mathbb{R}^{N - 1}) = \inf \left\{\int_{\mathbb{R}_h^N} |\nabla v|^2 \, dx : v \in \mathcal{C}_h(\mathbb{R}^{N - 1}), \, v = 1 \text{ weakly on $K' \times \{0\}$}\right\} \quad \text{for } N > 3.
\end{gather}
Using the larger set of admissible functions, we shall show that $\cpct_h(K', U') \to \cpct(K' \times \{0\}, U' \times \mathbb{R})$ as $h \to \infty$ for $N > 3$ and any open set $U' \supset \overline{K'}$. We begin by proving the equivalence of the function spaces involved.

\begin{lemma} \label{lm:equivalence_of_spaces}
    Assume $N > 3$. Let $v \in H_\mathrm{loc}^1(\mathbb{R}^N)$ with $\nabla v \in L^2(\mathbb{R}^N; \mathbb{R}^N)$. Then
    \begin{equation} \label{eq:equivalence_of_spaces}
        v \in L^\frac{2N}{N - 2}(\mathbb{R}^N) \iff v \in L^2(\mathbb{R}; L^\frac{2(N - 1)}{N - 3}(\mathbb{R}^{N - 1})).
    \end{equation}
\end{lemma}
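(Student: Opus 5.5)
The plan is to reduce both implications to the standard fact that a locally $H^1$ function with square-integrable gradient on $\mathbb{R}^d$, $d\ge 3$, differs from a constant by an $L^{2d/(d-2)}$ function. Precisely: if $w \in H^1_\mathrm{loc}(\mathbb{R}^d)$ with $\nabla w \in L^2(\mathbb{R}^d;\mathbb{R}^d)$, then there is a unique $c \in \mathbb{R}$ such that
\[
w - c \in L^{\frac{2d}{d-2}}(\mathbb{R}^d), \quad \|w - c\|_{L^{\frac{2d}{d-2}}(\mathbb{R}^d)} \le C \|\nabla w\|_{L^2(\mathbb{R}^d)}.
\]
I would quote this from the literature (e.g.\ Lieb--Loss, Theorem 8.3). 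I will apply it with $d = N$ to $v$, and with $d = N-1 \ge 3$ to the horizontal slices $v(\cdot, t)$. Write $p := \frac{2(N-1)}{N-3}$ for the exponent in the middle space of \eqref{eq:equivalence_of_spaces}. By the ACL characterization of Sobolev functions and Fubini, for a.e.\ $t \in \mathbb{R}$ the slice $v(\cdot,t)$ lies in $H^1_\mathrm{loc}(\mathbb{R}^{N-1})$ with $\nabla' v(\cdot,t) \in L^2(\mathbb{R}^{N-1})$. Moreover $\int_\mathbb{R} \|\nabla' v(\cdot,t)\|_{L^2}^2 \, dt \le \|\nabla v\|_{L^2}^2$. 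Hence for a.e.\ $t$ there is a constant $c(t)$ with
\[
\|v(\cdot,t) - c(t)\|_{L^p(\mathbb{R}^{N-1})} \le C \|\nabla' v(\cdot,t)\|_{L^2(\mathbb{R}^{N-1})}.
\]

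\emph{Forward direction.} Assume $v \in L^{\frac{2N}{N-2}}(\mathbb{R}^N)$. By Fubini, $v(\cdot,t) \in L^{\frac{2N}{N-2}}(\mathbb{R}^{N-1})$ for a.e.\ $t$. I claim this forces $c(t) = 0$. Suppose $c(t) \neq 0$. Since $v(\cdot,t) - c(t) \in L^p$, the set $\{|v(\cdot,t) - c(t)| \ge |c(t)|/2\}$ has finite measure. Since $v(\cdot,t) \in L^{\frac{2N}{N-2}}$, the set $\{|v(\cdot,t)| \ge |c(t)|/2\}$ also has finite measure. But these two sets cover $\mathbb{R}^{N-1}$, a contradiction. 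With $c(t) = 0$, I square the slice Sobolev inequality and integrate in $t$, which gives $\|v\|_{L^2(\mathbb{R};L^p)} \le C\|\nabla v\|_{L^2(\mathbb{R}^N)}$.

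\emph{Reverse direction.} Assume $v \in L^2(\mathbb{R};L^p(\mathbb{R}^{N-1}))$. Apply the fact in $\mathbb{R}^N$ to obtain $c$ with $v - c \in L^{\frac{2N}{N-2}}(\mathbb{R}^N)$; it remains to show $c = 0$. For a.e.\ $t$ we have both $v(\cdot,t) \in L^p(\mathbb{R}^{N-1})$ and $v(\cdot,t) - c \in L^{\frac{2N}{N-2}}(\mathbb{R}^{N-1})$. If $c \neq 0$, the same covering argument gives a contradiction: $\{|v(\cdot,t)| \ge |c|/2\}$ and $\{|v(\cdot,t) - c| \ge |c|/2\}$ both have finite measure yet cover $\mathbb{R}^{N-1}$. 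Hence $c = 0$ and $v \in L^{\frac{2N}{N-2}}(\mathbb{R}^N)$.

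The only non-elementary ingredient, and the step I regard as the main obstacle, is the ``constant at infinity'' fact. If I cannot simply cite it, I would prove it by Sobolev--Poincaré on balls. Let $c_R$ be the average of $w$ over $B_R$. The inequality $\|w - c_R\|_{L^{2^*}(B_R)} \le C\|\nabla w\|_{L^2(B_R)}$ holds with $C$ independent of $R$ by scaling. Comparing $c_R$ and $c_{2R}$ through this bound, $(c_R)$ is Cauchy, with $|c_R - c_{2R}| \le C R^{-(d-2)/2}\|\nabla w\|_{L^2}$. Fatou's lemma as $R \to \infty$ then yields the global estimate for $w - \lim_R c_R$.

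The measurability of $t \mapsto c(t)$ is not an issue, since we only use $c(t)$ pointwise in $t$ to conclude that it vanishes.
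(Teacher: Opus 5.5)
Your proof is correct, but it takes a different route from the paper.

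The paper approximates $\nabla v$ in $L^2$ by gradients $\nabla\varphi_n$ with $\varphi_n\in C_c^\infty(\mathbb{R}^N)$. It applies both Sobolev inequalities to these smooth compactly supported functions: the full one in $\mathbb{R}^N$, and the one on $(N-1)$-dimensional slices integrated in $x_N$. Hence $(\varphi_n)$ is Cauchy in both spaces simultaneously. The two limits coincide, because both convergences imply $L^2_{\mathrm{loc}}$ convergence. The common limit $\varphi$ satisfies $\nabla\varphi=\nabla v$, so $v=\varphi+a$ for a single constant $a$, and $a$ must vanish because a nonzero constant lies in neither space.

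You replace this density step with the ``constant at infinity'' fact. You apply it once in $\mathbb{R}^N$ for the reverse direction, and slice by slice in $\mathbb{R}^{N-1}$ for the forward direction, where $N-1\ge 3$ and the critical exponent is exactly $\frac{2(N-1)}{N-3}$. Your level-set covering argument then removes the constants.

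What your version buys:
\begin{itemize}
\item It is more self-contained. Your dyadic Sobolev--Poincar\'e argument, finished with Fatou's lemma along $R=2^k$, genuinely proves the key fact. The paper simply asserts the existence of $(\varphi_n)$, which itself requires a cut-off argument.
\item Your forward direction gives the quantitative bound $\|v\|_{L^2(\mathbb{R};L^{p}(\mathbb{R}^{N-1}))}\le C\|\nabla v\|_{L^2(\mathbb{R}^N)}$ directly for non-smooth $v$.
\end{itemize}

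What it costs:
\begin{itemize}
\item You need the slicing property of Sobolev functions: for a.e.\ $t$, the slice $v(\cdot,t)$ lies in $H^1_{\mathrm{loc}}(\mathbb{R}^{N-1})$, and its weak gradient is the restriction of $\nabla' v$.
\item You need the key fact in two different dimensions.
\item The two directions are handled asymmetrically.
\end{itemize}
By contrast, the paper's single approximating sequence handles both spaces at once and only ever invokes the Sobolev inequalities on $C_c^\infty$ functions.
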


\begin{proof}
    Since $\nabla v \in L^2(\mathbb{R}^N; \mathbb{R}^N)$, we can find a sequence $(\varphi_n) \subset C_c^\infty(\mathbb{R}^N)$ such that $\nabla \varphi_n \to \nabla v$ in $L^2(\mathbb{R}^N; \mathbb{R}^N)$. By Sobolev's inequality $(\varphi_n)$ is a Cauchy sequence in $L^\frac{2N}{N - 2}(\mathbb{R}^N)$ and $L^2(\mathbb{R}; L^\frac{2(N - 1)}{N - 3}(\mathbb{R}^{N - 1}))$. As convergence in these spaces implies convergence in $L_\mathrm{loc}^2(\mathbb{R}^N)$, $(\varphi_n)$ has the same limit $\varphi$ in both of them. Moreover, $\nabla \varphi = \nabla v$. Therefore, $\varphi - v = a$ for some $a \in \mathbb{R}$. Clearly, $a = 0$ if and only if
    \[
    v \in L^\frac{2N}{N - 2}(\mathbb{R}^N) \cap L^2(\mathbb{R}; L^\frac{2(N - 1)}{N - 3}(\mathbb{R}^{N - 1})).
    \]
    We now observe that $a = 0$ if any of the conditions in \eqref{eq:equivalence_of_spaces} holds, since a constant function cannot belong to either space if its nonzero.
\end{proof}

\begin{proposition} \label{pr:limit_at_infinity}
    Assume $N > 3$. Let $K' \subset \mathbb{R}^{N - 1}$ be bounded, and let $U' \subset \mathbb{R}^{N - 1}$ be open with $\overline{K'} \subset U'$. Then $\cpct_h(K', U') \le \cpct(K' \times \{0\}, U' \times \mathbb{R})$ for all $h > 0$ and
    \[
    \lim_{h \to \infty} \cpct_h(K', U') = \cpct(K' \times \{0\}, U' \times \mathbb{R}).
    \]
\end{proposition}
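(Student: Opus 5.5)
The upper bound $\cpct_h(K', U') \le \cpct(K' \times \{0\}, U' \times \mathbb{R})$ follows by restriction. Take $v \in C_c^\infty(U' \times \mathbb{R})$ with $K' \times \{0\} \subset \topint(\{v = 1\})$. Its restriction to $\overline{U_h}$ is smooth up to the boundary and has bounded support. It vanishes near $\partial U' \times [-h,h]$, since $\supp v$ is compact in $U' \times \mathbb{R}$. So the restriction is admissible in \eqref{eq:capacity_in_a_strip}, and
\[
\cpct_h(K', U') \le \int_{U_h} |\nabla v|^2 \, dx \le \int_{U' \times \mathbb{R}} |\nabla v|^2 \, dx.
\]
Taking the infimum over $v$ gives the inequality for every $h$. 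For the reverse direction, note that $h \mapsto \cpct_h(K', U')$ is nondecreasing: restricting a competitor for $h_2$ to $U_{h_1}$ gives a competitor for $h_1 \le h_2$. Hence the limit $L := \lim_{h \to \infty} \cpct_h(K', U')$ exists and satisfies $L \le \cpct(K' \times \{0\}, U' \times \mathbb{R})$. If $L = \infty$ there is nothing more to show, so assume $L < \infty$.

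To prove $\cpct(K' \times \{0\}, U' \times \mathbb{R}) \le L$, I would use a compactness argument with the extended admissible classes. For each $h = n$, pick $v_n$ admissible for $\cpct_n(K', U')$ whose energy is within $1/n$ of the infimum. These are smooth on $\overline{U' \times (-n,n)}$, bounded-support, vanish near $\partial U' \times [-n,n]$, and equal $1$ near $K' \times \{0\}$. Extend each $v_n$ by zero outside $U'$ in the $x'$-direction; this keeps it $H^1$ across $\partial U' \times (-n,n)$.

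The key issue is controlling $v_n$ uniformly. I would apply a Sobolev inequality in the $x'$ variables slice by slice, which is where $N > 3$ enters. For a.e. $x_N$, $v_n(\cdot, x_N)$ is compactly supported in $\mathbb{R}^{N-1}$ with $N - 1 > 2$, so
\[
\|v_n(\cdot, x_N)\|_{L^{2(N-1)/(N-3)}} \le C \|\nabla' v_n(\cdot, x_N)\|_{L^2}.
\]
This yields a bound on $v_n$ in $L^2(-n, n; L^{2(N-1)/(N-3)}(\mathbb{R}^{N-1}))$ by $C\sqrt{L+1}$, uniformly in $n$. Extending each $v_n$ by zero in $x_N$ would create jumps, so instead I would argue on fixed windows. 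For each $m$, the restrictions $v_n|_{\mathbb{R}^{N-1} \times (-m,m)}$ with $n \ge m$ are bounded in $H^1_{\mathrm{loc}}$ with uniformly bounded gradient norms. A diagonal subsequence then converges weakly in $H^1_{\mathrm{loc}}(\mathbb{R}^N)$ to some $v$ with:
\begin{itemize}
\item $\int_{\mathbb{R}^N} |\nabla v|^2 \le L$, by lower semicontinuity on each window and letting $m \to \infty$;
\item $v \in L^2(\mathbb{R}; L^{2(N-1)/(N-3)})$, by weak lower semicontinuity of the mixed norm;
\item $v = 0$ outside $U' \times \mathbb{R}$, and $v = 0$ weakly on $\partial U' \times \mathbb{R}$;
\item $v = 1$ weakly on $K' \times \{0\}$, by Lemma \ref{lm:vanishing_weakly} applied to $v_n - 1$ with a fixed neighborhood $W$ of $K' \times \{0\}$.
\end{itemize}
Lemma \ref{lm:equivalence_of_spaces} then places $v$ in $\mathcal{C}(\mathbb{R}^N)$.

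The main obstacle is the last step: showing that such a $v$ is admissible for $\cpct(K' \times \{0\}, U' \times \mathbb{R})$, which is defined via $C_c^\infty(U' \times \mathbb{R})$. First I would truncate in $x_N$ and in $|x'|$ with cutoffs $\eta_R$ supported at scale $R$ in all variables. The error term $\|v \nabla \eta_R\|_{L^2}$ tends to $0$ by Hölder's inequality using $v \in L^{2N/(N-2)}$, since $\|\nabla\eta_R\|_{L^N}$ is bounded and the tail of $v$ vanishes. Next I would approximate the resulting compactly supported $H^1$ function, which vanishes weakly on $\partial U' \times \mathbb{R}$ and equals $1$ weakly on $K' \times \{0\}$, by smooth functions compactly supported in $U' \times \mathbb{R}$. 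These are equal to $1$ near $K' \times \{0\}$, using Definition \ref{def:vanishing_weakly} and Remark \ref{rmk:global_approximation}. Gluing the two approximations with a partition of unity costs $o(1)$ energy. This yields $\cpct(K' \times \{0\}, U' \times \mathbb{R}) \le L$.
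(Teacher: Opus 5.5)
Your proof is correct, and it reaches the conclusion by a somewhat different route from the paper.

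\textbf{How the paper argues.} The paper does not use windows. It extends each near-optimal $v_h$ by zero to all of $\mathbb{R}^N$ and takes global weak limits $v_h \rightharpoonup v$ in $L^2(\mathbb{R}; L^{2(N-1)/(N-3)}(\mathbb{R}^{N-1}))$ and $\nabla v_h \rightharpoonup g$ in $L^2(\mathbb{R}^N;\mathbb{R}^N)$. It identifies $g = \nabla v$ by testing against a fixed $\varphi \in C_c^\infty(\mathbb{R}^N)$, whose support lies inside the strip once $h$ is large. So the jumps at $x_N = \pm h$ that you avoid by windowing never actually matter. The paper then finishes in one line with the extended admissible class \eqref{eq:extended_capacity_1}, with no truncation or smoothing step. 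It gets existence of the limit by squeezing along subsequences rather than from your monotonicity in $h$; both are fine.

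\textbf{What your route buys.} The paper's closing chain is written with $\cpct(K'\times\{0\},\mathbb{R}^N)$. It uses $\liminf_h \cpct_h(K',U') \le \cpct(K'\times\{0\},\mathbb{R}^N)$, which the first part of the proposition supplies only when $U' = \mathbb{R}^{N-1}$. In general one only has $\cpct(K'\times\{0\},U'\times\mathbb{R}) \ge \cpct(K'\times\{0\},\mathbb{R}^N)$. You record that the limit $v$ vanishes outside $U'\times\mathbb{R}$ and weakly on the lateral boundary. You then build genuine $C_c^\infty(U'\times\mathbb{R})$ competitors via the cutoff $\eta_R$, controlled by $v \in L^{2N/(N-2)}$ and $\|\nabla \eta_R\|_{L^N} \lesssim 1$, followed by local approximation and gluing. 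This is exactly what is needed for arbitrary open $U'$, for instance $U' = B'(0,2)$, which is used in Lemma \ref{lm:comparison_old_and_new}. It also matters for irregular $U'$ such as slit domains, where vanishing a.e. outside $U'\times\mathbb{R}$ does not by itself give membership in the right space.

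\textbf{Small points to tidy.}
\begin{itemize}
\item Definition \ref{def:vanishing_weakly} requires a bounded set $A$. State the lateral condition on the bounded pieces $(\partial U'\times\mathbb{R})\cap\overline{B(0,R)}$.
\item Lemma \ref{lm:vanishing_weakly} needs strong $L^2$ convergence. This comes from Rellich--Kondrachov on bounded cylinders, applied to your windowed weak $H^1$ limits.
\item The case $L = \infty$ never occurs. Since $\overline{K'}$ is compact in $U'$, one has $\cpct(K'\times\{0\},U'\times\mathbb{R}) < \infty$.
\end{itemize}
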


\begin{remark} \label{rmk:jointly_equal_to_zero_2}
    Combined with Remark \ref{rmk:jointly_equal_to_zero}, this proposition implies that $\cpct_0(K', U') = 0$ if and only if $\cpct(K' \times \{0\}, U' \times \mathbb{R}) = 0$ for $N > 3$. A similar remark applies to $\cpct_h(K', U')$ as well.
\end{remark}

\begin{proof}
    Let $w \in C_c^\infty(U' \times \mathbb{R})$ with $K' \times \{0\} \subset \topint(\{w = 1\})$. Then $w \in C_h^\infty(U')$ for all $h > 0$. Hence, $\cpct_h(K', U') \le \int_{U' \times \mathbb{R}} |\nabla w|^2 \, dx$. Since $w$ is arbitrary, we conclude that $\cpct_h(K', U') \le \cpct(K' \times \{0\}, U' \times \mathbb{R})$ for all $h > 0$. Now choose $v_h \in C_h^\infty(U')$ with $K' \times \{0\} \subset \topint(\{v_h = 1\})$ such that
    \[
    \int_{U_h} |\nabla v_h|^2 \, dx = \cpct_h(K', U') + \varepsilon_h, \quad \lim_{h \to \infty} \varepsilon_h = 0.
    \]
    Applying Sobolev's inequality along $(N - 1)$-dimensional cross-sections and integrating in $x_N$, we obtain 
    \begin{equation} \label{eq:sobolev's_inequality}
        \int_{-h}^h \left(\int_{\mathbb{R}^{N - 1}} |v_h|^\frac{2(N - 1)}{N - 3} \, dx' \right)^\frac{N - 3}{N - 1} \, dx \le C \int_{\mathbb{R}_h^N} |\nabla v_h|^2 \, dx,
    \end{equation}
    where $C$ is independent of $h$. If we extend $v_h$ by zero to $\mathbb{R}^N$, then it follows that $(v_h)$ is bounded in $L^2(\mathbb{R}; L^\frac{2(N - 1)}{N - 3}(\mathbb{R}^{N - 1}))$. Hence, we can pass to a subsequence (not relabeled) such that $v_h \rightharpoonup v$ in $L^2(\mathbb{R}; L^\frac{2(N - 1)}{N - 3}(\mathbb{R}^{N - 1}))$ and $\nabla v_h \rightharpoonup g$ in $L^2(\mathbb{R}^N; \mathbb{R}^N)$ for some $v$ and $g$. If $\varphi \in C_c^\infty(\mathbb{R}^N)$, then
    \[
    \int_{\mathbb{R}^N} v \nabla \varphi \, dx = \lim_{h \to \infty} \int_{\mathbb{R}_h^N} v_h \nabla \varphi \, dx = -\lim_{h \to \infty} \int_{\mathbb{R}_h^N} \varphi \nabla v_h \, dx = \int_{\mathbb{R}^N} \varphi g \, dx.
    \]
    This implies $\nabla v = g$. Consequently, Lemma \ref{lm:equivalence_of_spaces} applies to $v$, and we deduce that $v \in \mathcal{C}(\mathbb{R}^N)$. Next we show that $v = 0$ weakly on $K' \times \{0\}$. Let $W \supset K' \times \{0\}$ be open and bounded. Using \eqref{eq:sobolev's_inequality} and Hölder's inequality, we see that $(v_h)$ is uniformly bounded in $H^1(W)$. Consequently, the Rellich-Kondrachov compactness theorem gives $\|v_h - v\|_{L^2(W)} \to 0$ and we obtain the weak vanishing from Lemma \ref{lm:vanishing_weakly}. Therefore, $v$ is an admissible function in \eqref{eq:extended_capacity_1}. As a result
    \[
    \cpct(K' \times \{0\}, \mathbb{R}^N) \le \int_{\mathbb{R}^N} |\nabla v|^2 \, dx \le \liminf_{h \to \infty} \int_{\mathbb{R}_h^N} |\nabla v_h|^2 \, dx \le \liminf_{h \to \infty} \cpct_h(K', U') \le \cpct(K' \times \{0\}, \mathbb{R}^N),
    \]
    where we used the lower semi-continuity of the $L^2$-norm in the second inequality. This proves our claim.
\end{proof}

Next we characterize $\cpct_0(K', U')$ by a minimization problem.

\begin{proposition} \label{pr:minimization_problem_characterization}
    Assume $N > 3$. Let $K' \subset \mathbb{R}^{N - 1}$ be bounded, and let $U' \subset \mathbb{R}^{N - 1}$ be bounded and open with $\overline{K'} \subset U'$. For $v : \mathbb{R}^{N - 1} \rightarrow \mathbb{R}$, define $\hat{v}(x', x_N) := v(x')$ for $(x', x_N) \in \mathbb{R}^N$. Then,
    \begin{gather} \label{eq:minimization_problem_characterization}
        \cpct_0(K', U') = \inf \left\{\int_{U'} |\nabla' v|^2 \, dx' : v \in H_0^1(U'), \, \hat{v} = 1 \text{ weakly on $K' \times \{0\}$}\right\}, \\ \label{eq:extended_capacity_3}
        \cpct_0(K', \mathbb{R}^{N - 1}) = \inf \left\{\int_{\mathbb{R}^{N - 1}} |\nabla' v|^2 \, dx': v \in \mathcal{C}(\mathbb{R}^{N - 1}), \, \hat{v} = 1 \text{ weakly on } K' \times \{0\}\right\}.
    \end{gather}
\end{proposition}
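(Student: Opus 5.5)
The plan is to prove \eqref{eq:minimization_problem_characterization} by a two-sided inequality. Denote by $m(K',U')$ the right-hand side. We use the rescaled formulation \eqref{eq:capacity_rescaling}, $\cpct_h(K',U')/(2h) = \inf E_h(v)$ over admissible $v \in C_1^\infty(U')$. By the same density and weak-vanishing arguments as before, this infimum may equivalently be taken over $v \in H^1(U_1)$ with $v = 0$ weakly on $\Gamma_1$ and $v=1$ weakly on $K'\times\{0\}$.

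The upper bound $\cpct_0 \le m$ follows the last step of the proof of Proposition \ref{pr:continuity_in_h}. Take $v \in H_0^1(U')$ with $\hat v = 1$ weakly on $K'\times\{0\}$. Then $\hat v$ restricted to $U_1$ is admissible: it vanishes weakly on $\Gamma_1$, since approximations of $v$ in $C_c^\infty(U')$ extend constantly in $x_N$. Moreover $E_h(\hat v) = \int_{U'}|\nabla' v|^2\,dx'$ for every $h$. Letting $h\to0$ gives $\cpct_0(K',U') \le m(K',U')$.

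For the lower bound, pick $h_n \to 0$ and admissible $v_n$ with $E_{h_n}(v_n) \le \cpct_{h_n}/(2h_n) + 1/n$. These energies are uniformly bounded by \eqref{eq:finite_limit}. Hence $\|\nabla' v_n\|_{L^2(U_1)}$ is bounded and $\|\partial_N v_n\|_{L^2(U_1)} \le C h_n \to 0$. By Poincaré's inequality (using zero boundary values on $\Gamma_1$, with $U'$ bounded), $(v_n)$ is bounded in $H^1(U_1)$. Passing to a subsequence, $v_n \rightharpoonup V$ in $H^1(U_1)$ with $\partial_N V = 0$, so $V = \hat v$ for some $v \in H_0^1(U')$. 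By Rellich, $v_n \to V$ in $L^2$. Lower semicontinuity then gives
\[
\int_{U'}|\nabla' v|^2\,dx' = \tfrac12\int_{U_1}|\nabla' V|^2\,dx \le \liminf_n E_{h_n}(v_n) = \cpct_0(K',U').
\]
It remains to verify $\hat v = 1$ weakly on $K'\times\{0\}$. Here $v_n - 1$ vanishes weakly on $K'\times\{0\}$, so by a diagonal choice of smooth approximants we obtain smooth functions with supports avoiding $K'\times\{0\}$, converging to $V-1$ in $L^2$ near $K'\times\{0\}$ with bounded gradients. Lemma \ref{lm:vanishing_weakly} then yields $V = 1$ weakly on $K' \times\{0\}$. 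Since $\hat v$ and $V$ agree on $U_1$, a neighbourhood of $K'\times\{0\}$, this is what we need.

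For \eqref{eq:extended_capacity_3} on $\mathbb{R}^{N-1}$, the upper bound is identical, using $v \in \mathcal{C}(\mathbb{R}^{N-1})$ approximated by $C_c^\infty$ functions, which extend to elements of $C_1^\infty(\mathbb{R}^{N-1})$. For the lower bound, Poincaré is replaced by the sliced Sobolev inequality \eqref{eq:sobolev's_inequality} with $h=1$. This gives boundedness in $L^2(-1,1;L^{2(N-1)/(N-3)}(\mathbb{R}^{N-1}))$ and hence a weak limit $V$ with $\partial_N V=0$. Consequently $V=\hat v$ with $v\in L^{2(N-1)/(N-3)}(\mathbb{R}^{N-1})$ and $\nabla' v\in L^2$, i.e. $v\in\mathcal{C}(\mathbb{R}^{N-1})$. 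Local Rellich compactness on a bounded neighbourhood $W$ of $K'\times\{0\}$ then gives the weak boundary condition exactly as in Proposition \ref{pr:limit_at_infinity}.

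The main obstacle is the transfer of the weak constraint $v=1$ on $K'\times\{0\}$ through the limit. It is set on a codimension-one set and is not stable under mere weak $H^1$ convergence unless one uses Lemma \ref{lm:vanishing_weakly} together with strong $L^2$ convergence. A second point to check carefully is the identification of $\partial_N V = 0$ as meaning $V$ is independent of $x_N$ with trace class $H_0^1(U')$, respectively $\mathcal{C}(\mathbb{R}^{N-1})$.
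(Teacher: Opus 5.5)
Your proposal is correct. The upper bound is the same as the paper's: you test the $H^1$ formulation of $\cpct_h$ with the $x_N$-independent extension $\hat v$, whose energy $E_h(\hat v)=\int_{U'}|\nabla' v|^2\,dx'$ does not depend on $h$.

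The lower bound goes by a different route. The paper uses no compactness at all; it exploits the quadratic structure of $E_h$. For near-minimizers $v_{h_1},v_{h_2}$ with $h_1,h_2\le h$, three facts combine:
\begin{itemize}
\item the parallelogram identity;
\item admissibility of $(v_{h_1}+v_{h_2})/2$ for the problem at height $h$;
\item the monotonicity $E_h\le E_{h_i}$.
\end{itemize}
Together they give $E_h\bigl((v_{h_1}-v_{h_2})/2\bigr)\le \tfrac12E_{h_1}(v_{h_1})+\tfrac12E_{h_2}(v_{h_2})-\cpct_h(K',U')/(2h)$. Hence $(\nabla' v_h)$ is Cauchy in $L^2(U_1)$ as $h\to0$. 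With $\partial_N v_h\to0$ and Poincaré, this yields strong convergence $v_h\to\hat v$ in $H^1(U_1)$. The constraint $\hat v=1$ weakly on $K'\times\{0\}$ then passes to the limit directly from the definition of weak vanishing.

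You instead use weak compactness, Rellich, Lemma \ref{lm:vanishing_weakly} and lower semicontinuity. This is the scheme the paper itself uses in Proposition \ref{pr:limit_at_infinity} and Lemmas \ref{lm:infinite_height} and \ref{lm:finite_height}.

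The paper's argument gives strong convergence of near-minimizers, so a minimizer is obtained as a strong limit, and it needs neither a compactness theorem nor the constraint-transfer lemma. On $\mathbb{R}^{N-1}$, the Cauchy property combined with the sliced Sobolev inequality gives convergence in $L^2(-1,1;L^{2(N-1)/(N-3)}(\mathbb{R}^{N-1}))$, hence locally in $L^2$, with no Rellich step.

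Your argument does not use the Hilbert (quadratic) structure, so it would carry over to any convex lower semicontinuous energy. The cost is that you only get weak convergence and must carry the codimension-one constraint through strong local $L^2$ convergence, which you handle correctly. You also correctly flag the identification of $V$ with $\hat v$ for some $v\in H_0^1(U')$, respectively $v\in\mathcal{C}(\mathbb{R}^{N-1})$; the paper simply asserts this.
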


\begin{remark}
    The condition $\hat{v} = 0$ weakly on $K' \times \{0\}$ is not equivalent to $v = 0$ weakly on $K'$. For this reason, $\cpct_0(K', U') \neq \cpct(K', U')$ in general. For example, if $N > 3$ and the Hausdorff dimension of $K'$ is strictly between $N - 2$ and $N - 1$, then $\cpct(K', U') > 0$, whereas $\cpct_0(K', U') = 0$.
\end{remark}

\begin{proof}
    We only prove \eqref{eq:minimization_problem_characterization} since the proof of \eqref{eq:extended_capacity_3} differs only slightly. Define $m_h := \cpct_h(K', U')/(2h)$ and $m_0 := \lim_{h \to 0} m_h$. Denote the right-hand side of \eqref{eq:minimization_problem_characterization} by $m$. Let $v \in H_0^1(U')$ with $\hat{v} = 1$ weakly on $K' \times \{0\}$. Then $\hat{v} \in H^1(U_h)$, and, by Remark \ref{rmk:vanishing_weakly_on_boundary}, $\hat{v} = 0$ weakly on $\partial U' \times (-h, h)$ for all $h > 0$. Thus, \eqref{eq:extended_capacity_2} implies
    \[
    \cpct_h(K', U') \le \int_{U_h} |\nabla \hat{v}|^2 \, dx = 2h \int_{U'} |\nabla' v|^2 \, dx'.
    \]
    Since $v$ is arbitrary, dividing by $h$ and letting $h \to 0$ yields $m_0 \le m$. To prove the other inequality, choose $v_h \in C_1^\infty(U')$ with $K' \times \{0\} \subset \topint(\{v_h = 1\})$ such that $E_h(v_h) \to m_0$. If $h_1, h_2 \le h$, then
    \[
    E_h\left(\frac{v_{h_1} - v_{h_2}}{2}\right) = \frac{1}{2}E_h(v_{h_1}) + \frac{1}{2}E_h(v_{h_2}) - E_h\left(\frac{v_{h_1} + v_{h_2}}{2}\right).
    \]
    We have $m_h \le E_h((v_{h_1} + v_{h_2})/2)$, as $(v_{h_1} + v_{h_2})/2$ is an admissible function. Thus,
    \[
    E_h\left(\frac{v_{h_1} - v_{h_2}}{2}\right) \le \frac{1}{2}E_{h_1}(v_{h_1}) + \frac{1}{2}E_{h_2}(v_{h_2}) - m_h.
    \]
    Then
    \[
    \limsup_{h_1, h_2 \to 0} \int_{U_1} \left|\nabla' \left(\frac{v_{h_1} - v_{h_2}}{2}\right)\right|^2 \, dx' \le 2 \limsup_{h_1, h_2 \to 0} E_h\left(\frac{v_{h_1} - v_{h_2}}{2}\right) \le 2(m_0 - m_h).
    \]
    Because $m_h \to m_0$, we conclude that $(\nabla' v_h)$ is a Cauchy sequence in $L^2(U_1; \mathbb{R}^N)$. Furthermore,
    \[
    \limsup_{h \to 0} \int_{U_1} |\partial_N v_h|^2 \, dx \le 2 \limsup_{h \to 0} h^2 E_h(v_h) = 0,
    \]
    since $(E_h(v_h))$ is bounded. Then, by Poincaré's inequality for functions that are $0$ on $\partial U' \times (-1, 1)$, we obtain that $(v_n)$ is Cauchy in $L^2(U_1)$. Consequently, there exists $v \in H_0^1(U')$ such that $v_h \to \hat{v}$ in $H^1(U_1)$. In particular, $\hat{v} = 1$ weakly on $K' \times \{0\}$. Finally, we have
    \[
    m \le \int_{U'} |\nabla' v|^2 \, dx' \le \lim_{h \to 0} \frac{1}{2} \int_{U_1} |\nabla' v_h|^2 \, dx \le \lim_{h \to 0} E_h(v_h) = m_0.
    \]
\end{proof}

We end this section by stating two propositions that will prove useful later. If a function $v$, equal to $1$ on a compact set, has Dirichlet energy smaller than the capacity relative to a larger open set, then $v$ cannot vanish on the boundary of that open set. Consequently, its values must be large somewhere. We now make this statement precise for cylindrical domains. Recall that $C(l, h) = B'(0, l) \times (-h, h)$ for $l, h > 0$.

\begin{proposition} \label{pr:subcapacitary_growth_1}
    Let $K \subset \mathbb{R}^3$ be bounded. Fix $\theta \in (0, 1)$. There exist positive constants $L$, $H$, $\tau$ and $c$ such that if
    \[
    l \ge L, \quad h \ge H, \quad \max \left\{\frac{\ln(l)}{h}, \frac{h}{l^2} \right\} \le \tau,
    \]
    and $v \in H^1(C(l, h))$ satisfies
    \[
    v = 1 \text{ weakly on } K, \quad \int_{C(l, h)} |\nabla v|^2 \, dx < \theta \cpct(K, \mathbb{R}^3),
    \]
    then we have the lower bound
    \[
    \fint_{C(l, h)} v \, dx > c.
    \]
\end{proposition}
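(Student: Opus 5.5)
Argue by contradiction and construct a competitor for $\cpct(K,\mathbb{R}^3)$. Set $m:=\fint_{C(l,h)} v\,dx$. If $m\le c$, with $c$ small, I would build $w\in\mathcal{C}(\mathbb{R}^3)$ with $w=1$ weakly on $K$ and
\[
\int_{\mathbb{R}^3}|\nabla w|^2\,dx\le \frac{1}{(1-c)^2}\Bigl(\int_{C(l,h)}|\nabla v|^2\,dx+\eta\Bigr),
\]
where $\eta$ is a small error controlled by $L,H,\tau$. Then $\cpct(K,\mathbb{R}^3)<\theta\cpct(K,\mathbb{R}^3)/(1-c)^2+\eta$. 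This contradicts the hypothesis once $c$ and $\eta$ are small relative to $(1-\theta)\cpct(K,\mathbb{R}^3)$. We may assume $\cpct(K,\mathbb{R}^3)>0$, since otherwise no $v$ satisfies the strict inequality.

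The construction has two steps.

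\emph{Step 1 (reduce the mean).} Use Poincaré's inequality on the anisotropic cylinder, treating horizontal and vertical directions separately. First reduce to functions on $B'(0,l)$ via $x_3$-averages: the vertical oscillation on each fiber is bounded by $h^2\int|\partial_3 v|^2$. The horizontal oscillation on $B'(0,l)$ is bounded by $l^2\int|\nabla' v|^2$. This gives
\[
\fint_{C(l,h)}|v-m|^2\,dx\le C\,\frac{l^2+h^2}{lh^{...}}\int|\nabla v|^2\,dx
\]
in the appropriate normalization. Because $v$ has bounded energy and the volume of $C(l,h)$ is $\sim l^2h\to\infty$, $v-m$ is small in an averaged $L^2$ sense.

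\emph{Step 2 (cut off).} Multiply $(v-m)/(1-m)$ by a cutoff $\psi$ equal to $1$ on a smaller cylinder $C(l/2,h/2)$ and supported in $C(l,h)$. Then extend by zero. The factor $1/(1-m)$ restores the value $1$ on $K$ and costs at most the factor $(1-c)^{-2}$.

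The error $\eta$ comes from $\int|\nabla\psi|^2|v-m|^2$. The horizontal part is $\lesssim l^{-2}\int_{C(l,h)}|v-m|^2$ and the vertical part is $\lesssim h^{-2}\int_{C(l,h)}|v-m|^2$. With the Poincaré estimate these become roughly $(1+h^2/l^2)$ and $(1+l^2/h^2)$ times the energy, which are not small. This is the main obstacle.

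\emph{The key difficulty.} A naive cutoff in a thin, wide cylinder is not cheap. A function with a large jump in mean cannot be chopped off near the top and bottom faces. The fix should be a logarithmic cutoff in the horizontal direction, which exploits the two-dimensional behavior on scales between $h$ and $l$. The condition $\ln(l)/h\le\tau$ says that a 2D logarithmic cutoff from radius $\sim h$ to $l$ costs about $h/\ln(l/h)\cdot(\text{oscillation})^2$, while the three-dimensional cutoff at scale $h$ costs only $O(1/h)$.

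So the plan is as follows. First, work in the ball $B(0,h/2)\subset C(l,h)$, using $h\ge H$ so that $K\subset B(0,h/4)$. On the annulus $B(0,h/2)\setminus B(0,h/4)$, compare $v$ with its average $\bar m$ there. Poincaré on this annulus (scale $h$) makes the 3D cutoff error $\lesssim\int_{\text{annulus}}|\nabla v|^2$. I would instead use a radial average over many dyadic annuli to extract one with energy at most $\varepsilon$-small fraction, giving error $\le\eta$.

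This gives a competitor $(v-\bar m)\psi/(1-\bar m)$ with energy $\le(\theta\cpct+\eta)/(1-\bar m)^2$, forcing $\bar m\ge c_0>0$. It remains to transfer the lower bound on the local mean $\bar m$ to the global mean $m$. For that, compare averages over the annulus and over $C(l,h)$ through a chain of cylinders $C(2^kh,h)$, controlled by the 2D logarithmic Poincaré/trace estimate. The difference is bounded by $C\bigl(\tfrac{\ln(l/h)}{h}\int|\nabla v|^2\bigr)^{1/2}\le C\sqrt{\tau\,\cpct}$, which uses $\ln(l)/h\le\tau$. The vertical mismatch within cylinders of height $h$ costs $\lesssim(1/h)^{1/2}$, which uses $h\ge H$. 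The condition $h/l^2\le\tau$ should enter to guarantee that the outer cylinder is genuinely wider than tall, so the chain of comparisons is valid.

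Choosing $\tau$ small and $L,H$ large then yields $m>c:=c_0/2$.
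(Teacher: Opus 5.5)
Your local step is sound. Cutting off $(v-\bar m)/(1-\bar m)$ on a dyadic annulus that carries at most a $1/k$ fraction of the energy gives a competitor of energy at most $\bigl((1+\delta)+C(1+\delta^{-1})/k\bigr)\theta\cpct(K,\mathbb{R}^3)/(1-\bar m)^2$, and this forces $\bar m\ge c_0>0$. When $h\lesssim l$, the transfer from $\bar m$ to $m$ through the vertical averages $\fint_{-h}^h v\,dx_3$ and the two-dimensional logarithmic estimate also works, provided you run the chain through disjoint annular regions (or circular means) and sum with Cauchy--Schwarz; nested cylinders $C(2^kh,h)$ would lose a factor $(\ln(l/h))^{1/2}$.

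The genuine gap is the regime $h\gg l$, which you exclude on a false premise. The condition $h/l^2\le\tau$ does not make the cylinder wider than tall: $h=l^{3/2}$ satisfies every hypothesis for $l$ large while $h/l\to\infty$. This regime is not vacuous in the application either. In Lemma \ref{lm:bad_point_big_average} one has $l=r/(\varepsilon\delta_\varepsilon\rho)$ and $h=1/(\varepsilon^2\rho)$, so $h\gg l$ whenever $\delta_\varepsilon\gg\varepsilon$. For $h>2l$ the ball $B(0,h/2)$ is not contained in $C(l,h)$, and there is no chain $C(2^kh,h)$ with $2^kh\le l$, so neither half of your argument applies. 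In fact $h/l^2\le\tau$ is exactly the hypothesis that governs this regime, and it cannot be dropped. If $K\subset\{|x_3|<s\}$, the one-dimensional ramp $v=\min\{1,1-2(|x_3|-s)/h\}$ equals $1$ near $K$, has energy $8\pi l^2(h-s)/h^2\approx 8\pi l^2/h$, and has mean $1-(1-s/h)^2\approx 2s/h$.

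To close the gap, treat $h\ge l$ as in Step 2 of Lemma \ref{lm:infinite_height}:
\begin{enumerate}
\item Pigeonhole annuli at scales between a large $R_0$ and $l/2$ to obtain an annulus $A$ with $\fint_A v\ge c_0$.
\item Compare with $\fint_{C(l,l)}v$ using the scale-invariant Sobolev--Poincar\'e inequality in $C(l,l)$; this costs $\lesssim R_0^{-1/2}\cpct(K,\mathbb{R}^3)^{1/2}$.
\item Use the horizontal averages $\bar v(x_3):=\fint_{B'(0,l)}v(x',x_3)\,dx'$. Jensen gives $\int_{-h}^{h}|\bar v'|^2\,dx_3\le(\pi l^2)^{-1}\int_{C(l,h)}|\partial_3 v|^2\,dx$, hence $|\fint_{C(l,l)}v-m|\le\operatorname{osc}_{(-h,h)}\bar v\le\bigl(2h\int_{-h}^{h}|\bar v'|^2\,dx_3\bigr)^{1/2}\lesssim\bigl(\tau\cpct(K,\mathbb{R}^3)\bigr)^{1/2}$.
\end{enumerate}

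With this case added, your proof is a correct alternative to the paper's. The paper argues by contradiction and compactness. After subtracting the mean, it uses Lemma \ref{lm:vanishing_averages} (one-dimensional for tall cylinders, two-dimensional for flat ones) to show that the mean over $C(\min\{l_n,h_n\},\min\{l_n,h_n\})$ tends to $0$. It then takes a weak limit in $\mathcal{C}(\mathbb{R}^3)$ that is admissible in \eqref{eq:extended_capacity_1}. Your pigeonholed cutoff replaces the compactness step with an explicit competitor and gives quantitative constants. The regime split, and the role of each of the two conditions on $(l,h)$, is the same in both.
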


\begin{proposition} \label{pr:subcapacitary_growth_2}
    Let $N > 3$ and let $K' \subset \mathbb{R}^{N - 1}$ be bounded. Fix $\theta \in (0, 1)$. There exist positive constants $L$, $\tau$ and $c$ such that if $l \ge L$, $h/l^{N - 1} \le \tau$, and $v \in H^1(C(l, h))$ satisfies
    \[
    v = 1 \text{ weakly on } K' \times \{0\}, \quad \int_{C(l, h)} |\nabla v|^2 \, dx < \theta \cpct_h(K', \mathbb{R}^{N - 1}),
    \]
    then we have the lower bound
    \[
    \fint_{C(l, h)} v \, dx > c.
    \]
\end{proposition}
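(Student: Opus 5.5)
We argue by contradiction combined with a cut-off construction: if the mean of $v$ over $C(l,h)$ is small, we modify $v$ into an admissible competitor for $\cpct_h(K',\mathbb{R}^{N-1})$ whose energy is only slightly larger than $\int_{C(l,h)}|\nabla v|^2$. This contradicts the hypothesis that the energy is below $\theta\cpct_h$ with $\theta<1$.

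\emph{Normalization.} Since truncation does not increase the energy and preserves $v=1$ weakly on $K'\times\{0\}$, we may replace $v$ by $\min\{\max\{v,0\},1\}$. This changes the mean only if $v$ is negative somewhere. To handle that, we first show the statement with $v^+$ and then note that $\fint v\ge \fint \min\{v,1\}$ up to the negative part. It is cleaner to run the argument with $w:=\min\{v,1\}$ and bound $\fint |w|$ through Poincaré. Fix $R_0$ with $K'\subset B'(0,R_0)$, so that by scaling $\cpct_h(K',\mathbb{R}^{N-1})\ge c_0\min\{1,h\}$ for some $c_0>0$ (using Proposition \ref{pr:continuity_in_h} and Proposition \ref{pr:limit_at_infinity}).

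\emph{Cut-off.} Let $\bar v:=\fint_{C(l,h)}v$, and take a radial cut-off $\eta(x')$ equal to $1$ on $B'(0,l/2)$, supported in $B'(0,l)$, with $|\nabla'\eta|\le C/l$. Set
\[
\tilde v:=\frac{\eta\,(v-\bar v)}{1-\bar v},
\]
extended by zero outside $C(l,h)$. Then $\tilde v\in\mathcal{C}_h(\mathbb{R}^{N-1})$ and $\tilde v=1$ weakly on $K'\times\{0\}$ once $l/2>R_0$. Hence
\[
\cpct_h(K')\le \frac{1}{(1-\bar v)^2}\left((1+\sigma)\int_{C(l,h)}|\nabla v|^2+C_\sigma l^{-2}\int_{C(l,h)}|v-\bar v|^2\right).
\]

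\emph{Poincaré step.} On the thin cylinder $C(l,h)$ with $h\lesssim l$, the Poincaré constant is of order $l^2$. The estimate $\int_{C(l,h)}|v-\bar v|^2\le C l^2\int_{C(l,h)}|\nabla v|^2$ follows by a scaling/rescaling argument in the horizontal variables, with the vertical direction contributing $h^2\le l^2$. However, that only produces an $O(1)$ multiple of the energy, which is not small. This is the main obstacle: the cut-off term must be made small.

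To make it small we use Sobolev instead of Poincaré, in the form of \eqref{eq:sobolev's_inequality}, applied on each cross-section of an annulus, together with Hölder on the annulus $A_l=(B'(0,l)\setminus B'(0,l/2))\times(-h,h)$. This gives
\[
l^{-2}\int_{A_l}|v-\bar v|^2\le C\,l^{-2}|B'_l|^{2/(N-1)}\int_{C}|\nabla v|^2+ \text{(error)}.
\]
This is still $O(1)$, so we instead average the cut-off over dyadic radii $l2^{-k}$, $k=1,\dots,m$, and pick the annulus carrying at most $1/m$ of the energy. The hypothesis $h/l^{N-1}\le\tau$ is what should make $\bar v$, the mean of $v$ over the big cylinder, close to the mean over a chosen annulus. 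The difference is controlled by $C(h/l^{N-1})^{1/2}\cpct_h^{1/2}$-type terms, via the same Sobolev bound, and is small when $\tau$ is small. With $\sigma$ and $1/m$ small and $\bar v\le c$ small, this yields $\cpct_h\le \theta'(1-c)^{-2}\cpct_h$ with $\theta'<1$ close to $\theta$, a contradiction when $\cpct_h>0$. The constants $L,\tau,c$ are chosen in that order. We expect the delicate point to be exactly this comparison of the means and the selection of a low-energy annulus, uniformly in $h$.
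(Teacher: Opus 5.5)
\textbf{There is a genuine gap: the construction does not handle $h \gtrsim l$.}

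Your cut-off construction tacitly assumes $h\lesssim l$ (``on the thin cylinder $C(l,h)$ with $h\lesssim l$''). For $N>3$, however, the hypothesis $h\le\tau l^{N-1}$ allows $h\sim l$ and $l\ll h\le\tau l^{N-1}$, and in that range the step you leave as ``(error)'' cannot be controlled.

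Write $E:=\int_{C(l,h)}|\nabla v|^2$. Let $A'$ be the annulus of radius $R\sim l$ where $\nabla'\eta\neq0$. Because $\eta$ depends only on $x'$ and you subtract a constant $c$, the cut-off term $R^{-2}\int_{A'\times(-h,h)}|v-c|^2\,dx$ contains $R^{-2}|A'|\int_{-h}^{h}|\mu(x_N)-c|^2\,dx_N$, where $\mu(x_N):=\fint_{A'}v(x',x_N)\,dx'$.
\begin{itemize}
\item The condition $h/l^{N-1}\le\tau$ does make $\mu$ nearly constant pointwise: $\sup|\mu-\fint_{-h}^{h}\mu|^2\le 2h\int_{-h}^{h}|\mu'|^2\lesssim hE/R^{N-1}$.
\item But this is integrated over height $2h$ with weight $R^{-2}|A'|\sim R^{N-3}$, so you only get a bound of order $(h/R)^2E$. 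That is unbounded when $h\gg l$.
\item The energy bound alone cannot do better. Adding $\Delta x_N/h$ to $v$ preserves $v=1$ weakly on $K'\times\{0\}$ and preserves the mean. The added function has Dirichlet energy $\sim l^{N-1}\Delta^2/h$, yet it contributes $\sim l^{N-3}h\Delta^2$ to this term.
\item Choosing a dyadic annulus with energy $E_k\le E/m$ does not help. The Poincaré constant of $A'_k\times(-h,h)$ is of order $\max\{R_k,h\}^2$, not $R_k^2$. When $h\gtrsim l$ there are no dyadic scales between $h$ and $l$ to exploit.
\end{itemize}

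You have also put the hypothesis $h/l^{N-1}\le\tau$ in the wrong place. Comparing the mean over a full-height annulus with $\fint_{C(l,h)}v$ needs only $l$ large and the $(N-1)$-dimensional Sobolev inequality for $\bar v(x'):=\fint_{-h}^{h}v(x',x_N)\,dx_N$. Its Dirichlet energy is at most $E/(2h)\le\cpct_0(K',\mathbb{R}^{N-1})$.

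\textbf{How to repair it.} The missing idea is to stop paying for vertical variation through the horizontal cut-off. On the selected annulus $A'_k=B'(0,2R_k)\setminus B'(0,R_k)$, subtract the $x_N$-dependent mean $c(x_N):=\fint_{A'_k}v(x',x_N)\,dx'$ and use $\eta\,(v-c(x_N))/(1-c(0))$. Near $K'\times\{0\}$ this equals $1$ plus a Lipschitz function of $x_N$ vanishing at $x_N=0$, so it is $1$ weakly there. The estimates then close uniformly in $h$:
\begin{itemize}
\item The cut-off term is $\lesssim E_k$, by the Poincaré inequality on each cross-section $A'_k$.
\item The new term satisfies $\int\eta^2|c'|^2\lesssim\int_{A'_k\times(-h,h)}|\partial_N v|^2\le E_k$.
\item The cross term is at most $sE+C_sE_k$.
\item So the energy of the competitor is at most $(1+s+C_s/m)E/(1-c(0))^2$.
\item The hypothesis $h/l^{N-1}\le\tau$ is used exactly once: $|c(0)-\fint_{-h}^{h}c|^2\le 2hE/|A'_k|\lesssim 2^{m(N-1)}\tau E$.
\item The full-height mean comparison above finishes the argument.
\end{itemize}
Alternatively, when $h$ is not small compared with $l$, cut off in all variables inside $C(r,r)$ with $r=\min\{h,l\}$. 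Then compare $\fint_{C(r,r)}v$ with $\fint_{C(l,h)}v$, using the one-dimensional estimate above when $h\ge l$.

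\textbf{Comparison with the paper.} The paper uses no cut-offs at all. It argues by contradiction along sequences $l_n\to\infty$, $h_n/l_n^{N-1}\to0$, $h_n\to h_\infty$, and applies Lemma \ref{lm:infinite_height} or Lemma \ref{lm:finite_height}. There, weak compactness and lower semicontinuity produce an admissible limit for $\cpct(K'\times\{0\},\mathbb{R}^N)$, $\cpct_{h_\infty}(K',\mathbb{R}^{N-1})$ or $\cpct_0(K',\mathbb{R}^{N-1})$. Its tall case $h_n/l_n\to\infty$ is exactly the one-dimensional reduction showing $\fint_{C(l_n,l_n)}w_n\to0$. Once repaired, your route is a legitimate quantitative alternative that avoids the case split in $h_n/l_n$ and $h_\infty$. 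The paper's compactness argument is shorter but gives no explicit constants.
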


\begin{remark}
    For the claim in the propositions to hold, it is not enough to only assume that $l$ and $h$ are sufficiently large. Without further restrictions on the relative sizes of $l$ and $h$, the result is not true. The conditions we provide are sharp and are related to the conditions in Lemma \ref{lm:vanishing_averages} in Section \ref{sec:not_appendix}. Counterexamples can be constructed with the help of Remark \ref{rmk:explanation_of_the_factors}.
\end{remark}

Since the proofs of Propositions \ref{pr:subcapacitary_growth_1} and \ref{pr:subcapacitary_growth_2} are long and technical, we relegate them to Section \ref{sec:not_appendix} at the end of the paper.

\section{Setting of the problem and the main result} \label{sec:main_result}

In this section, we formulate the problem carefully and state our main result. Fix a natural number $N > 2$. Let $U' \subset \mathbb{R}^{N - 1}$ be a Lipschitz domain. We denote by $\varepsilon$ a positive parameter, and we let $(\delta_\varepsilon)$ be a positive sequence with $\lim_{\varepsilon \to 0} \delta_\varepsilon = 0$. We set
\[
U^+ := U' \times (0, 1), \quad U^- := U' \times (-1, 0), \quad U_\varepsilon^+ := U' \times (0, \delta_\varepsilon), \quad U_\varepsilon^- := U' \times (-\delta_\varepsilon, 0).
\]
Let $M : (\Omega, \mathcal{F}, \mathbb{P}) \rightarrow (\mathbb{M}^{N - 1}, \mathcal{M}^{N - 1})$ be a m.p.p. on $\mathbb{R}^{N - 1}$ with the associated measures $\lambda$ and $\xi$ as defined in Proposition \ref{pr:intensity_measure_stationary} and Lemma \ref{lm:conditional_expectation_random_measure}. Fix an open and bounded set $T' \subset B'(0, 1) \subset \mathbb{R}^{N - 1}$ containing $0$ such that $\cpct(T' \times \{0\}, \mathbb{R}^N) > 0$. For every $\varepsilon > 0$ and $\omega \in \Omega$, we define a random subset of $U'$ by
\begin{equation} \label{eq:random_set}
    T'_\varepsilon(\omega) := \bigcup \limits_{\substack{(y', \rho) \in M(\omega) \\ \varepsilon y' \in U'}} (\varepsilon y' + a_\varepsilon \rho T') \cap U',
\end{equation}
where $(a_\varepsilon)$ is a sequence of positive numbers such that
\[
a_\varepsilon \ll \varepsilon, \quad \frac{\delta_\varepsilon}{a_\varepsilon} =: h_\varepsilon \to h_0 \in [0, \infty] \quad \text{as } \varepsilon \to 0.
\]
Since $T'$ is open in $\mathbb{R}^{N - 1}$, $T'_\varepsilon(\omega)$ is open in $\mathbb{R}^{N - 1}$ as well. We also set
\[
U_\varepsilon(\omega) := U_\varepsilon^- \cup (T'_\varepsilon(\omega) \times \{0\}) \cup U_\varepsilon^+, \quad \hat{U}_\varepsilon(\omega) := U^- \cup (T'_\varepsilon(\omega) \times \{0\}) \cup U^+.
\]
Clearly, $U_\varepsilon(\omega)$ and $\hat{U}_\varepsilon(\omega)$ are both open in $\mathbb{R}^N$. See Figure \ref{fig:domain} for an illustration of $U_\varepsilon(\omega)$ for spherical holes.

Let $f \in L^2(U')$ and define $\hat{f}(x', x_N) := f(x')$ for $x = (x', x_N) \in U' \times \mathbb{R}$. We consider the boundary value problems
\begin{equation} \label{eq:boundary_value_problem}
    \begin{cases}
        \begin{alignedat}{2}
            -\Delta u_\varepsilon(\omega, \cdot) &= \hat{f} \quad && \text{in } U_\varepsilon(\omega), \\
            u_\varepsilon(\omega, \cdot) &= 0 \quad && \text{on } \partial U' \times (-\delta_\varepsilon, \delta_\varepsilon), \\
            \nabla u_\varepsilon(\omega, \cdot) \cdot \nu &= 0 \quad && \text{on } ((U' \setminus T'_\varepsilon(\omega)) \times \{0\}) \cup (U' \times \{-\delta_\varepsilon, \delta_\varepsilon\}).
        \end{alignedat}
    \end{cases}
\end{equation}
The Neumann boundary condition holds separately on both sides of $(U' \setminus T'_\varepsilon(\omega)) \times \{0\}$, and the unit normal vector $\nu$ takes either the value $e_N$ or $-e_N$ depending on the side, with $e_N = (0, \dots, 0, 1)$. To state the weak formulation of the problem, we define the following subset of the Sobolev space $H^1(U_\varepsilon(\omega))$:
\[
V_\varepsilon(\omega) := \{v \in H^1(U_\varepsilon(\omega)) : v = 0 \text{ weakly on } \partial U' \times (-\delta_\varepsilon, \delta_\varepsilon)\}.
\]
In this space, the boundary value problem \eqref{eq:boundary_value_problem} is formally equivalent to finding $u_\varepsilon(\omega, \cdot) \in V_\varepsilon(\omega)$ such that
\begin{equation} \label{eq:weak_formulation}
    \int_{U_\varepsilon(\omega)} \nabla u_\varepsilon(\omega, x) \nabla \varphi(x) \, dx = \int_{U_\varepsilon(\omega)} \hat{f}(x) \varphi(x) \, dx \quad \text{for all } \varphi \in V_\varepsilon(\omega).
\end{equation}
It is immediate from the Lax-Milgram theorem that \eqref{eq:weak_formulation} possesses a unique solution for all $\varepsilon > 0$ and $\omega \in \Omega$.

We are interested in the asymptotic behavior of the solutions $(u_\varepsilon(\omega, \cdot))$. By introducing the rescaled functions
\begin{equation} \label{eq:rescaled_solutions}
    \hat{u}_\varepsilon(\omega, (x', x_N)) := u_\varepsilon(\omega, (x', \delta_\varepsilon x_N)), \quad x = (x', x_N) \in \hat{U}_\varepsilon(\omega),
\end{equation}
we reduce the problem to a domain of fixed height. Our main theorem is concerned with the convergence of the sequence $(\hat{u}_\varepsilon(\omega, \cdot))$. As we will see, the characterization of the limit function depends on $h_0$. We define
\begin{equation*}
    J_{h_0}(\rho) :=
    \begin{cases}
        \rho^{N - 2} \cpct(T' \times \{0\}, \mathbb{R}^N) \quad &\text{if } h_0 = \infty, \\
        \rho^{N - 2} \cpct_\frac{h_0}{\rho}(T', \mathbb{R}^{N - 1}) \quad &\text{if } h_0 \in (0, \infty), \\
        2 \rho^{N - 3} \cpct_0(T', \mathbb{R}^{N - 1}) \quad &\text{if } h_0 = 0.
    \end{cases}
\end{equation*}

\begin{theorem} \label{thm:homogenization_theorem}
    Set
    \[
    a_\varepsilon :=
    \begin{cases}
        \varepsilon^\frac{N - 1}{N - 3} \quad &\text{if } \delta_\varepsilon^{N - 3} \ll \varepsilon^{N - 1}, \\
        \varepsilon^\frac{N - 1}{N - 2} {\delta_\varepsilon}^\frac{1}{N - 2} \quad &\text{otherwise}.
    \end{cases}
    \]
    If $N = 3$, then assume additionally that $\ln(1/\delta_\varepsilon) \ll 1/\varepsilon^2$. Let $M : (\Omega, \mathcal{F}, \mathbb{P}) \rightarrow (\mathbb{M}^{N - 1}, \mathcal{M}^{N - 1})$ be a stationary m.p.p. with finite intensity such that
    \begin{equation} \label{eq:moment_is_finite}
        \mathbb{E}\Bigg[\sum \limits_{\substack{(y', \rho) \in M \\ y' \in Q'}} J_{h_0}(\rho)\Bigg] = \int_0^\infty J_{h_0}(\rho) \, d\lambda(\rho) < \infty,
    \end{equation}
    where $Q' \subset \mathbb{R}^{N - 1}$ is the unit cube. Let $u_\varepsilon(\omega, \cdot)$ be the unique solution of \eqref{eq:weak_formulation} and define $\hat{u}_\varepsilon(\omega, \cdot)$ as in \eqref{eq:rescaled_solutions}. Then, for $\mathbb{P}$-a.e. $\omega \in \Omega$, there exist functions $u^\pm(\omega, \cdot) \in H_0^1(U')$ such that
    \[
    \hat{u}_\varepsilon(\omega, \cdot) \rightharpoonup \hat{u}^+(\omega, \cdot) \quad \text{in } H^1(U^+), \quad \hat{u}_\varepsilon(\omega, \cdot) \rightharpoonup \hat{u}^-(\omega, \cdot) \quad \text{in } H^1(U^-) \text{ as } \varepsilon \to 0,
    \]
    where $\hat{u}^\pm(\omega, (x', x_N)) := u^\pm(\omega, x')$. Furthermore, $u^\pm(\omega, \cdot)$ satisfy
    \begin{equation} \label{eq:homogenized_equation}
        \int_{U'} \nabla' u^\pm(\omega, x') \nabla \varphi(x') \, dx' \pm \frac{\gamma_{h_0}(\omega)}{2} \int_{U'} (u^+(\omega, x') - u^-(\omega, x')) \varphi(x') \, dx' = \int_{U'} f(x') \varphi(x')
    \end{equation}
    for all $\varphi \in H_0^1(U')$, with
    \[
    \gamma_{h_0}(\omega) = \frac{1}{2} \int_0^\infty J_{h_0}(\rho) \, d\xi(\omega, \rho).
    \]
\end{theorem}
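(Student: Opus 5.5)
The plan is to follow Tartar's method of oscillating test functions, as outlined in the introduction. I will take as given the forthcoming Theorem \ref{thm:oscillating_test_functions}: for $\mathbb{P}$-a.e. $\omega$ there are $w_\varepsilon(\omega,\cdot)\in H^1(U_\varepsilon(\omega))$ with the following properties.
\begin{enumerate}[label=(\alph*)]
    \item $\hat w_\varepsilon\rightharpoonup 1$ in $H^1(U^+)$ and $\hat w_\varepsilon\rightharpoonup -1$ in $H^1(U^-)$.
    \item $w_\varepsilon$ is controlled near $\partial U'\times(-\delta_\varepsilon,\delta_\varepsilon)$, so products with $H_0^1$ functions stay in $V_\varepsilon(\omega)$.
    \item For all $\phi\in C_c^\infty(U')$ and all bounded $z_\varepsilon\in V_\varepsilon(\omega)$ with $\hat z_\varepsilon\rightharpoonup \hat z^\pm$ on $U^\pm$,
    \[
    \frac{1}{\delta_\varepsilon}\int_{U_\varepsilon}\nabla w_\varepsilon\cdot\nabla(\phi z_\varepsilon)\,dx\to \gamma_{h_0}(\omega)\int_{U'}\phi\,(z^+-z^-)\,dx'.
    \]
\end{enumerate}
The constant in (c) is where the ergodic Theorem \ref{thm:ergodic_theorem_second_version} enters. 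It is applied with $g=J_{h_0}$, which is $\lambda$-integrable by \eqref{eq:moment_is_finite}, together with the critical choice of $a_\varepsilon$. For that choice, $\frac{a_\varepsilon^{N-2}}{\varepsilon^{N-1}\delta_\varepsilon}\cpct_{h_\varepsilon/\rho}(T')\rho^{N-2}\to J_{h_0}(\rho)$, using Propositions \ref{pr:continuity_in_h} and \ref{pr:limit_at_infinity}. The first task is then just to fix this normalization precisely, including the factor $\tfrac12$ in $\gamma_{h_0}$.

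\textbf{Step 1: a priori bounds.} Test \eqref{eq:weak_formulation} with $u_\varepsilon$ and divide by $\delta_\varepsilon$. A Poincaré inequality in $x'$ on each half $U_\varepsilon^\pm$ uses only the lateral Dirichlet condition, so it is uniform in $\varepsilon$ and independent of the holes. This gives $\|\hat u_\varepsilon\|_{H^1(U^\pm)}\le C$, and in the rescaled variables $\|\partial_N \hat u_\varepsilon\|_{L^2(U^\pm)}\le C\delta_\varepsilon\to0$. Hence along a subsequence $\hat u_\varepsilon\rightharpoonup \hat u^\pm$ in $H^1(U^\pm)$, with limits independent of $x_N$. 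The traces on $\partial U'\times(0,\pm1)$ vanish, so $u^\pm\in H_0^1(U')$.

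\textbf{Step 2: passing to the limit with test functions.} Fix $\varphi^\pm\in C_c^\infty(U')$ and test with
\[
v_\varepsilon=\tfrac12(\varphi^++\varphi^-)+\tfrac12 w_\varepsilon(\varphi^+-\varphi^-),
\]
which lies in $V_\varepsilon(\omega)$ by (b). The terms where the gradient hits $\varphi^\pm$ converge by weak–strong pairing. Those with $\nabla w_\varepsilon\,\varphi\,\nabla u_\varepsilon$ are handled by the integration by parts identity $\nabla w_\varepsilon\cdot\nabla(\phi u_\varepsilon)=\phi\nabla w_\varepsilon\cdot\nabla u_\varepsilon+u_\varepsilon\nabla w_\varepsilon\cdot\nabla\phi$. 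The last term there converges to zero, using strong $L^2$ convergence of $\hat u_\varepsilon$ (Rellich) and weak convergence of $\nabla\hat w_\varepsilon$ to $0$. The remaining term is then given by (c) with $z_\varepsilon=u_\varepsilon$, $\phi=\tfrac12(\varphi^+-\varphi^-)$. Collecting terms gives the sum of the two equations \eqref{eq:homogenized_equation} tested with $\varphi^+$ and $\varphi^-$. Taking $\varphi^-=0$ and then $\varphi^+=0$ separates them, and density extends this to $H_0^1(U')$. The limit system is coercive, with $\gamma_{h_0}\ge0$, so it has a unique solution. Hence the whole sequence converges, for each $\omega$ in the full-measure set on which (a)–(c) hold.

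\textbf{Main obstacle.} All the difficulty lies in (c), that is, in constructing $w_\varepsilon$; this is the content of the remaining sections. The plan there is as follows.
\begin{itemize}
    \item Discard a cluster set of holes, namely those that are too large or too close. Using Theorem \ref{thm:ergodic_theorem_second_version} and the integrability \eqref{eq:moment_is_finite}, show that their total rescaled capacity vanishes almost surely; on them $w_\varepsilon$ can be taken to be a crude cutoff.
    \item Around each remaining well-separated hole, place the rescaled capacitary potential for $\cpct_{h_\varepsilon/\rho}$ in a cylinder $C(l,h)$ with $a_\varepsilon \ll l\ll\varepsilon$, taking the values $\pm1$ away from the hole.
    \item Verify the variational property (c). This requires a lower bound matching the capacity, which is where Propositions \ref{pr:subcapacitary_growth_1} and \ref{pr:subcapacitary_growth_2} are used.
\end{itemize}
The case $N=3$, with its logarithmic condition, enters only through Proposition \ref{pr:subcapacitary_growth_1}.
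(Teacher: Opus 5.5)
Your proposal is correct and is essentially the paper's proof: the same a priori bound and compactness, testing with $\varphi$ and $w_\varepsilon\varphi$ (your $v_\varepsilon$ is just a linear combination of these), evaluating the coupling term through the oscillating-test-function property, and using uniqueness for the limit system to recover the whole sequence. The only bookkeeping to add is that the paper states that property on $U_\varepsilon^+$, for $\omega$-admissible sequences and along subsequences; your (c) follows from it by extending $w_\varepsilon$ oddly in $x_N$ and applying it to $\phi\,(z_\varepsilon(x',x_N)-z_\varepsilon(x',-x_N))$, exactly as the paper does with $u_\varepsilon$ (your property (b) is not needed, since $\varphi^\pm$ have compact support in $U'$).
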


\begin{remark}
    The equality in \eqref{eq:moment_is_finite} follows from Campbell's theorem \cite[Theorem 2.3]{B} and Proposition \ref{pr:intensity_measure_stationary}.
\end{remark}

To prove Theorem \ref{thm:homogenization_theorem}, we construct a sequence of oscillating test functions that satisfy the boundary conditions on $U' \times \{0\}$ and have Laplacians with good convergence properties. The existence of such a sequence is given in Theorem \ref{thm:oscillating_test_functions} below.

\begin{definition} \label{def:omega_admissible}
    For a fixed $\omega \in \Omega$, we call a sequence $(v_\varepsilon)$ \textit{$\omega$-admissible} if $v_\varepsilon \in H^1(U_\varepsilon^+)$, $v_\varepsilon = 0$ weakly on $T'_\varepsilon(\omega) \times \{0\}$, and there exists a constant $C$ such that
    \[
    \frac{1}{\delta_\varepsilon}\int_{U_\varepsilon^+} |\nabla v_\varepsilon|^2 \, dx \le C \quad \text{for all } \varepsilon > 0.
    \]
    We also define $\hat{v}_\varepsilon(x', x_N) := v_\varepsilon(x', \delta_\varepsilon x_N)$ for $(x', x_N) \in U^+$.
        
\end{definition}

\begin{theorem} \label{thm:oscillating_test_functions}
    Set
    \[
    a_\varepsilon :=
    \begin{cases}
        \varepsilon^\frac{N - 1}{N - 3} \quad &\text{if } \delta_\varepsilon^{N - 3} \ll \varepsilon^{N - 1}, \\
        \varepsilon^\frac{N - 1}{N - 2} {\delta_\varepsilon}^\frac{1}{N - 2} \quad &\text{otherwise}.
    \end{cases}
    \]
    If $N = 3$, then assume additionally that $\ln(1/\delta_\varepsilon) \ll 1/\varepsilon^2$. Let $M : (\Omega, \mathcal{F}, \mathbb{P}) \rightarrow (\mathbb{M}^{N - 1}, \mathcal{M}^{N - 1})$ be a stationary m.p.p. with finite intensity such that
    \[
    \int_0^\infty J_{h_0}(\rho) \, d\lambda(\rho) < \infty.
    \]
    For $\mathbb{P}$-a.e. $\omega \in \Omega$, there exists an $\omega$-admissible sequence $(w_\varepsilon(\omega, \cdot))$ such that $\hat{w}_\varepsilon(\omega, \cdot) \rightharpoonup 1$ in $H^1(U^+)$. Moreover, it satisfies the following property: given an $\omega$-admissible sequence $(v_\varepsilon)$, if there exists a function $\hat{v} \in H^1(U^+)$ and a subsequence $(\varepsilon_n)$ tending to $0$ such that $\hat{v}_{\varepsilon_n} \rightharpoonup \hat{v}$ in $H^1(U^+)$ as $n \to \infty$, we have
    \begin{equation} \label{eq:good_laplacians}
        \lim_{n \to \infty} \int_{U^+} \nabla' \hat{w}_{\varepsilon_n}(\omega, x) \nabla' \hat{v}_{\varepsilon_n}(x) + \frac{1}{\delta_{\varepsilon_n}^2} \partial_N \hat{w}_{\varepsilon_n}(\omega, x) \partial_N \hat{v}_{\varepsilon_n}(x) \, dx = \gamma_{h_0}(\omega) \int_{U^+} \hat{v}(x) \, dx.
    \end{equation}
\end{theorem}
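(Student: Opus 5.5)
The construction follows Cioranescu--Murat and the marked-point-process framework of \cite{GHV, B}, adapted to the thin geometry. Fix $\omega$ in the full-measure set on which Theorem \ref{thm:ergodic_theorem_second_version} holds for $g = J_{h_0}$ and for the countably many truncations used below.

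\textbf{Step 1: separate good and bad holes.} Split the points $(y', \rho) \in M(\omega)$ with $\varepsilon y' \in U'$ into two classes. A hole is \emph{good} if $\rho \le \rho_{\max}$ and its center is at distance at least $2\eta\varepsilon$ from all other centers. All remaining holes, large ones and clustered ones, are \emph{bad}. Following Section \ref{sec:perforations} and the approach of \cite{GHV, B}, I would cover the bad holes by a union $D_\varepsilon$ of slightly enlarged sets. Using only $\int J_{h_0}\,d\lambda < \infty$ and the ergodic theorem, the goal is to show
\[
\limsup_{\varepsilon\to 0}\frac{1}{\delta_\varepsilon}\cpct_{\delta_\varepsilon}(D_\varepsilon, \cdot) \longrightarrow 0 \quad\text{as } \rho_{\max}\to\infty,\ \eta\to 0 ,
\]
with the capacity taken as an energy in the slab. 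On the bad holes, $w_\varepsilon$ is defined as $1 - \phi_\varepsilon$, where $\phi_\varepsilon$ is a capacitary potential of $D_\varepsilon$ in the slab $U_\varepsilon^+$ with $\phi_\varepsilon = 1$ on $D_\varepsilon \times\{0\}$. Its scaled energy is then $o(1)$, so it contributes nothing to \eqref{eq:good_laplacians} in the limit, by Cauchy--Schwarz against the bounded energy of $v_\varepsilon$.

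\textbf{Step 2: local profiles around good holes.} Around each good hole centered at $\varepsilon y'_i$, work in the cylinder $C_i := B'(\varepsilon y'_i, \eta\varepsilon)\times(0,\delta_\varepsilon)$. Let $\psi_i$ be the minimizer of the Dirichlet energy in $C_i$ among functions equal to $0$ on the hole $(\varepsilon y'_i + a_\varepsilon\rho_i T')\times\{0\}$ and equal to $1$ on the lateral boundary, with Neumann conditions on the top and bottom and on the rest of $\{x_N = 0\}$. By even reflection across $x_N = 0$, its energy equals $\frac14$ times the capacity of the hole relative to the doubled cylinder, up to a factor $2$ from symmetry. Rescaling by $a_\varepsilon\rho_i$ reduces this to $\cpct_{h_\varepsilon/\rho_i}(T', B'(0, \eta\varepsilon/(a_\varepsilon\rho_i)))$. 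Since $\eta\varepsilon/(a_\varepsilon\rho_i)\to\infty$, Propositions \ref{pr:continuity_in_h}, \ref{pr:limit_at_infinity} and \ref{pr:minimization_problem_characterization} let this converge to the corresponding $J_{h_0}$ profile. Note that the choice of $a_\varepsilon$ makes
\[
\frac{a_\varepsilon^{N-2}}{\varepsilon^{N-1}\delta_\varepsilon}\cpct_{h_\varepsilon}
\]
tend to the normalization of $J_{h_0}$ in all three regimes. For $N=3$, the assumption $\ln(1/\delta_\varepsilon)\ll 1/\varepsilon^2$ is used to show that the relative capacity approaches the full harmonic capacity.

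Set $w_\varepsilon = \min(\psi_i, 1-\phi_\varepsilon)$, with $\psi_i$ on the good cylinders and $1$ elsewhere. The $\omega$-admissibility bound and the sum $\frac{1}{\delta_\varepsilon}\sum_i \mathrm{energy}(\psi_i)$ then follow from the ergodic theorem: that sum converges to $|U'|\gamma_{h_0}$, up to truncation errors. Moreover $\hat w_\varepsilon \to 1$ in $L^2$, because the cylinders have total measure $O(\eta^{N-1})$ and the capacitary potentials decay.

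\textbf{Step 3: the limit identity \eqref{eq:good_laplacians}.} For a good cylinder, integrate by parts. The function $\psi_i$ is harmonic in $C_i$ minus the hole, and $v_\varepsilon$ vanishes on the hole, so
\[
\int_{C_i}\nabla\psi_i\cdot\nabla v_\varepsilon = \int_{\partial B'\times(0,\delta_\varepsilon)} \partial_\nu\psi_i\, v_\varepsilon .
\]
Following Cioranescu--Murat, I would replace $\psi_i$ outside a smaller radius by an explicit radial or strip-adapted profile whose normal derivative is nearly constant on the lateral surface. The lateral term then becomes, approximately, (energy of $\psi_i$)$\,\times\,$(average of $v_\varepsilon$ over the lateral surface). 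Summing over $i$ and using the ergodic theorem, the lateral averages converge to $\gamma_{h_0}\int_{U^+}\hat v$. This convergence needs $v_\varepsilon$'s boundary traces to be close to its bulk averages, which follows from a trace-Poincaré estimate on annuli together with strong $L^2$ convergence of $\hat v_\varepsilon$.

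\textbf{Main obstacle.} I expect the main difficulty to be Step 3 combined with the cluster estimate. Identifying the flux constant requires sharp asymptotics of the relative capacity across the three regimes $h_0\in\{0\}\cup(0,\infty)\cup\{\infty\}$. It also requires controlling the error from holes whose $\rho_i$ sit near the truncation threshold, which is where the estimate from Step 1 must be uniform in $\rho_{\max}$. A further subtlety is that $v_\varepsilon$ vanishes only on $T'_\varepsilon\times\{0\}$, not on the enlarged $D_\varepsilon$, so the bad set must be handled by energy smallness rather than by integration by parts.
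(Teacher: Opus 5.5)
\textbf{How your route differs from the paper's.} You follow the Cioranescu--Murat/GHV scheme. You fix truncation parameters $(\eta,\rho_{\max})$, place Neumann-top capacitary profiles on cylinders of radius $\eta\varepsilon$, and read off \eqref{eq:good_laplacians} from the lateral fluxes after integrating by parts.

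The paper never computes a flux. Its construction is parameter-free: the isolation radii $\varepsilon r(\omega,y')$ are adaptive, and the cluster set of Definition~\ref{def:isolated_and_cluster_points} depends on $\varepsilon$. As a result the cluster capacity vanishes outright (Theorem~\ref{thm:vanishing_capacity_and_measure}), and the energy of $w_\varepsilon$ tends to exactly $\gamma_{h_0}|U'|$ (Proposition~\ref{pr:local_gradient_limit}). It then shows that $w_\varepsilon$ is asymptotically energy-minimal in $\mathcal{O}(\omega)$ (Proposition~\ref{pr:minimization_proposition}, via Propositions~\ref{pr:subcapacitary_growth_1}--\ref{pr:subcapacitary_growth_2}). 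Minimality is turned into \eqref{eq:good_laplacians} by a first variation (Proposition~\ref{pr:special_case}) and Casado-D\'{\i}az's trick. Thinning parameters appear only inside one-sided estimates, so no diagonal argument is needed.

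The $N=3$ hypothesis also enters differently. In the paper it is used only in the lower bound (Lemma~\ref{lm:bad_point_big_average}), because \eqref{eq:cell_problem_1} has Dirichlet data on the top face. You need it in the upper bound instead, because of your Neumann-top profiles. Your route avoids the subcapacitary-growth machinery, at the price of flux asymptotics in all three regimes.

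\textbf{Gap 1: removing the truncation.} As written, there is a gap at this step. For fixed $(\eta,\rho_{\max})$ you only get \eqref{eq:good_laplacians} up to errors governed by $\limsup_{\varepsilon\to0}\delta_\varepsilon^{-1}\cpct(D_\varepsilon)$ and by $\gamma^{\eta,\rho_{\max}}\neq\gamma_{h_0}$. The theorem, however, asks for one sequence $w_\varepsilon(\omega,\cdot)$ serving every $\omega$-admissible $(v_\varepsilon)$. So you must let $\eta(\varepsilon)\to0$ and $\rho_{\max}(\varepsilon)\to\infty$, with a choice depending on $\omega$ but not on $(v_\varepsilon)$.

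Your Step~3 does not survive this. Passing from lateral to annular averages and then using strong $L^2$ convergence of $\hat v_{\varepsilon_n}$ leaves an error of order $\eta^{-(N-1)/2}\|\hat v_{\varepsilon_n}-\hat v\|_{L^2(U^+)}$. The rate of that error depends on $v$.

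You need a $v$-independent replacement. For fixed parameters, the annular flux densities $\nu_\varepsilon=\sum_i c_i|A_i'|^{-1}\chi_{A_i'}$ are bounded in $L^\infty(U')$. By the ergodic theorem and Lemma~\ref{lm:thinned_limit}, their weak limit points lie between $0$ and $\gamma_{h_0}$, with an $L^1$ defect small in $(\eta,\rho_{\max})$. The compact embedding $L^2(U')\hookrightarrow(H^1(U'))^*$ then gives that $\limsup_{\varepsilon\to0}\|\nu_\varepsilon-\gamma_{h_0}\|_{(H^1(U'))^*}$ tends to $0$ as $\eta\to0$ and $\rho_{\max}\to\infty$. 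Your trace--Poincar\'e error is $C(\eta,\rho_{\max})\,\varepsilon$ times the energy bound of $v_\varepsilon$, which is also uniform in $v$. Only with these two estimates can you diagonalize. Taking $\varepsilon$-dependent thresholds from the start, as the paper does, would not rescue the flux argument: isolation radii as small as $2a_\varepsilon\rho$ make these constants blow up, and the paper's variational argument sidesteps exactly this.

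\textbf{Gap 2: separating good cylinders from the bad set.} Good holes only need centers $2\eta\varepsilon$ apart, so a bad hole with $a_\varepsilon\rho>\eta\varepsilon$ can reach into a good cylinder. Also, a capacitary potential of $D_\varepsilon$ relative to the whole slab is positive everywhere. In either case $\min(\psi_i,1-\phi_\varepsilon)\neq\psi_i$ on part of $C_i$, and the integration by parts in Step~3 picks up an error you have not estimated. You need cluster potentials supported away from the good cylinders. You also need a proof that good holes touched by large bad ones carry negligible total weight. This is what condition~(2) of Definition~\ref{def:isolated_and_cluster_points}, \eqref{eq:separation_layer}, and Lemmas~\ref{lm:cluster_within_cluster}--\ref{lm:negligible_cluster_2} provide in the paper.
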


We will devote the rest of the paper to the proof of Theorem \ref{thm:oscillating_test_functions}. Assuming the result for the moment, we can prove our main result.

\begin{proof}[Proof of Theorem \ref{thm:homogenization_theorem}]
    We fix a realization $\omega \in \Omega$ for which Theorem \ref{thm:oscillating_test_functions} holds. For simplicity of notation, we omit $\omega$ from the argument. First we prove that $(\hat{u}_\varepsilon)$ has a weak limit in $U^+$ and $U^-$. Testing \eqref{eq:weak_formulation} with $u_\varepsilon$ gives the a priori estimate $\|\nabla u_\varepsilon\|_{L^2(U_\varepsilon; \mathbb{R}^{N})} \le C \|\hat{f}\|_{L^2(U_\varepsilon)}$. Then, rescaling in the $N$-th variable gives
    \begin{equation*}
            \delta_\varepsilon \int_{U^\pm} |\nabla' \hat{u}_\varepsilon|^2 + \frac{1}{\delta_\varepsilon^2}|\partial_N \hat{u}_\varepsilon|^2 \, dx = \int_{U_\varepsilon} |\nabla u_\varepsilon|^2 \, dx \lesssim \int_{U_\varepsilon} |\hat{f}|^2 \, dx = \delta_\varepsilon \int_{U'} |f|^2 \, dx'.
    \end{equation*}
    Therefore, there exists a constant $C$ such that
    \begin{equation} \label{eq:a_priori_bound}
            \int_{U^\pm} |\nabla' \hat{u}_\varepsilon|^2 + \frac{1}{\delta_\varepsilon^2}|\partial_N \hat{u}_\varepsilon|^2 \, dx \le C \quad \text{for all } \varepsilon > 0.
    \end{equation}
    It follows from Poincaré's inequality and the Rellich-Kondrachov compactness theorem that there exist $\hat{u}^\pm \in H^1(U^\pm)$ and a subsequence $(\varepsilon_n)$ tending to $0$ such that
    \begin{equation} \label{eq:existence_of_limit}
            \hat{u}_{\varepsilon_n} \to \hat{u}^\pm \text{ in } L^2(U^\pm), \quad \nabla \hat{u}_{\varepsilon_n} \rightharpoonup \nabla \hat{u}^\pm \text{ in } L^2(U^\pm; \mathbb{R}^N)
    \end{equation}
    as $n \to \infty$. Furthermore, $\hat{u}^+$ and $\hat{u}^-$ vanish weakly on $\partial U' \times (0, 1)$ and $\partial U' \times (-1, 0)$, respectively. Since $\delta_\varepsilon \to 0$, the bound \eqref{eq:a_priori_bound} implies $\partial_N \hat{u}^\pm = 0$. Hence, there exists $u^\pm \in H_0^1(U')$ such that
    \begin{alignat*}{2}
        \hat{u}^+(\cdot, x_N) &= u^+, \quad \nabla' \hat{u}^+(\cdot, x_N) = \nabla' u^+ \quad \text{for a.e. } x_N \in (0, 1), \\ \label{eq:cross_section_function_2}
        \hat{u}^-(\cdot, x_N) &= u^-, \quad \nabla' \hat{u}^-(\cdot, x_N) = \nabla' u^- \quad \text{for a.e. } x_N \in (-1, 0).
    \end{alignat*}
    Let $(w_\varepsilon)$ be the sequence provided by Theorem \ref{thm:oscillating_test_functions}, and let $\varphi \in C_c^\infty(U')$. We extend $w_\varepsilon$ to $U_\varepsilon^-$ by $w_\varepsilon(x', x_N) := -w_\varepsilon(x', -x_N)$, and set $\hat{w}_\varepsilon(x', x_N) := w_\varepsilon(x', \delta_\varepsilon x_N)$ for $(x', x_N) \in \hat{U}_\varepsilon$. Since $w_\varepsilon = 0$ weakly on $T'_\varepsilon \times \{0\}$, $w_\varepsilon \in H^1(U_\varepsilon)$. Furthermore, we have
    \begin{equation} \label{eq:convergence_to_signed_1}
        \hat{w}_\varepsilon \to \pm 1 \text{ in } L^2(U^\pm), \quad \nabla \hat{w}_\varepsilon \rightharpoonup 0 \text{ in } L^2(U^\pm; \mathbb{R}^N).
    \end{equation}
    Now we prove that $u^\pm$ solves the homogenized equation \eqref{eq:homogenized_equation}. We test \eqref{eq:weak_formulation} with $w_\varepsilon \varphi$ to obtain
    \begin{equation*}
        \int_{U_\varepsilon} (\nabla' u_\varepsilon \nabla' \varphi) w_\varepsilon + \nabla w_\varepsilon \nabla ( u_\varepsilon \varphi) - (\nabla' w_\varepsilon \nabla' \varphi) u_\varepsilon \, dx = \int_{U_\varepsilon} \hat{f} w_\varepsilon \varphi \, dx
    \end{equation*}
    If we rescale in the $N$-th variable and use the convergences \eqref{eq:existence_of_limit} and \eqref{eq:convergence_to_signed_1}, then we get
    \begin{multline} \label{eq:testing_testing}
        \int_{U^+} \nabla' \hat{u}^+ \nabla' \varphi \, dx - \int_{U^-} \nabla' \hat{u}^- \nabla' \varphi \, dx \\
        + \lim_{n \to \infty} \int_{\hat{U}_{\varepsilon_n}} \nabla' \hat{w}_{\varepsilon_n} \nabla' (\hat{u}_{\varepsilon_n} \varphi) + \frac{1}{\delta_{\varepsilon_n}^2} \partial_N \hat{w}_{\varepsilon_n} \partial_N (\hat{u}_{\varepsilon_n} \varphi) \, dx = \int_{U^+} \hat{f} \varphi \, dx - \int_{U^-} \hat{f} \varphi \, dx = 0,
    \end{multline}
    assuming that the limit exists. On the other hand, testing with $\varphi$ in \eqref{eq:weak_formulation} and passing to the limit gives
    \begin{equation} \label{eq:easy_limit}
        \int_{U^+} \nabla' \hat{u}^+ \nabla' \varphi \, dx + \int_{U^-} \nabla' \hat{u}^- \nabla' \varphi \, dx \\
        = \lim_{n \to \infty} \int_{\hat{U}_{\varepsilon_n}} \nabla' \hat{u}_{\varepsilon_n} \nabla' \varphi \, dx = \lim_{n \to \infty} \int_{\hat{U}_{\varepsilon_n}} \hat{f} \varphi \, dx = 2 \int_{U'} f \varphi \, dx'.
    \end{equation}
    Hence, if we denote the limit term on the left-hand side of \eqref{eq:testing_testing} by $L$, then \eqref{eq:testing_testing} and \eqref{eq:easy_limit} imply
    \begin{equation} \label{eq:primitive_homogenization}
        \int_{U'} \nabla' u^+ \nabla' \varphi \, dx' + \frac{L}{2} = \int_{U'} f \varphi \, dx', \quad \int_{U'} \nabla' u^- \nabla' \varphi \, dx' - \frac{L}{2} = \int_{U'} f \varphi \, dx'.
    \end{equation}
    Finally, we compute $L$. Define
    \[
    v_\varepsilon(x', x_N) := \varphi(x') \frac{u_\varepsilon(x', x_N) - u_\varepsilon(x', -x_N)}{2}, \quad (x', x_N) \in U_\varepsilon^+.
    \]
    Then $v_\varepsilon \in H^1(U_\varepsilon^+)$ and $v_\varepsilon = 0$ weakly on $T'_\varepsilon \times \{0\}$. Furthermore, it follows from \eqref{eq:a_priori_bound} that
    \[
    \frac{1}{\delta_\varepsilon} \int_{U_\varepsilon^+} |\nabla v_\varepsilon|^2 \, dx \lesssim \frac{1}{\delta_\varepsilon} \int_{U_\varepsilon} |\nabla u_\varepsilon|^2 \, dx = \int_{\hat{U}_\varepsilon} |\nabla' \hat{u}_\varepsilon|^2 + \frac{1}{\delta_\varepsilon^2} |\partial_N \hat{u}_\varepsilon|^2 \, dx \le C.
    \]
    Hence, $(v_\varepsilon)$ is $\omega$-admissible. In addition, we have $\hat{v}_{\varepsilon_n} \rightharpoonup \hat{v}$ in $H^1(U^+)$ as $n \to \infty$, where
    \[
    \hat{v}(x', x_N) := \varphi(x')\frac{u^+(x') - u^-(x')}{2}.
    \]
    Consequently, Theorem \ref{thm:oscillating_test_functions} yields
    \[
     \lim_{n \to \infty} \int_{U^+} \nabla' \hat{w}_{\varepsilon_n} \nabla' \hat{v}_{\varepsilon_n} + \frac{1}{\delta_{\varepsilon_n}^2} \partial_N \hat{w}_{\varepsilon_n} \partial_N \hat{v}_{\varepsilon_n} \, dx = \gamma_{h_0} \int_{U^+} \hat{v}(x) \, dx = \frac{\gamma_{h_0}}{2} \int_{U'} (u^+ - u^-) \varphi \, dx'.
    \]
    However, using the fact that $w_\varepsilon$ is an odd function in $x_N$, it is easy to show that
    \begin{equation*}
        \frac{1}{2} \int_{\hat{U}_{\varepsilon_n}} \nabla' \hat{w}_{\varepsilon_n} \nabla' (\hat{u}_{\varepsilon_n} \varphi) + \frac{1}{\delta_{\varepsilon_n}^2} \partial_N \hat{w}_{\varepsilon_n} \partial_N (\hat{u}_{\varepsilon_n} \varphi) \, dx \\
        = \int_{U^+} \nabla' \hat{w}_{\varepsilon_n} \nabla' \hat{v}_{\varepsilon_n} + \frac{1}{\delta_{\varepsilon_n}^2} \partial_N \hat{w}_{\varepsilon_n} \partial_N \hat{v}_{\varepsilon_n} \, dx,
    \end{equation*}
    so that
    \[
    \frac{L}{2} = \frac{\gamma_{h_0}}{2} \int_{U'} (u^+ - u^-) \varphi \, dx'.
    \]
    Together with \eqref{eq:primitive_homogenization}, this proves \eqref{eq:homogenized_equation} for $\varphi \in C_c^\infty(U')$. For arbitrary $\varphi$, we use the density of $C_c^\infty(U')$ in $H_0^1(U')$. Finally, since $u^\pm$ is uniquely characterized by \eqref{eq:homogenized_equation}, the convergence of $(\hat{u}_\varepsilon)$ holds for the entire sequence, not just for a subsequence.
\end{proof}

\section{Analysis close to the contact regions} \label{sec:perforations}

In constructing the oscillating test functions, we will distinguish between isolated contact regions and clusters of contact regions. While our construction in the neighborhood of isolated contact regions is based on solving a ``cell problem” given by a PDE, this approach breaks down near the clusters. In this section, however, we prove that the clusters can be enclosed in a region with respect to which they have small capacity. This allows us to construct functions with small gradients that vanish on the clusters. To this end, we adapt the methods from our previous paper \cite{B}. 

For the rest of the paper, we assume
\[
a_\varepsilon :=
\begin{cases}
    \varepsilon^\frac{N - 1}{N - 3} \quad &\text{if } \delta_\varepsilon^{N - 3} \ll \varepsilon^{N - 1}, \\
    \varepsilon^\frac{N - 1}{N - 2} {\delta_\varepsilon}^\frac{1}{N - 2} \quad &\text{otherwise}
\end{cases}
\]
and suppose that $h_0 := \lim_{\varepsilon \to 0} \delta_\varepsilon/a_\varepsilon$ exists. It is easy to verify that $h_0 = 0$ if and only if $\delta_\varepsilon^{N - 3} \ll \varepsilon^{N - 1}$. Similarly, $h_0 \in (0, \infty)$ and $h_0 = \infty$ if and only if $\delta_\varepsilon^{N - 3} \sim \varepsilon^{N - 1}$ and $\delta_\varepsilon^{N - 3} \gg \varepsilon^{N - 1}$, respectively. In particular, for $N = 3$ only $h_0 = \infty$ is possible.

We now introduce some new notation. For $Y \in \mathbb{M}^{N - 1}$ and $(y', \rho) \in Y$, we define
\begin{equation} \label{eq:distance_to_closest_neighbor}
    d_Y(y') = \min \{|y' - \tilde{y}'| :  (\tilde{y}', \tilde{\rho}) \in Y \setminus (y', \rho) \}
\end{equation}
if $Y \setminus (y', \rho)$ is nonempty. Otherwise, we set $d_Y(y') = \infty$. In other words, $d_Y(y')$ is the distance between $y'$ and its closest neighbor in $Y$. We note that 
\begin{equation} \label{eq:nonintersecting_balls}
    B'(y', d_Y(y')/2) \cap B'(\tilde{y}', d_Y(\tilde{y}')/2) = \emptyset
\end{equation}
for all $(y', \rho)$, $(\tilde{y}', \tilde{\rho}) \in Y$ with $y' \neq \tilde{y}'$.

Let $M :(\Omega, \mathcal{F}, \mathbb{P}) \rightarrow (\mathbb{M}^{N - 1}, \mathcal{M}^{N - 1})$ be an m.p.p.\!\! and define $T'_\varepsilon(\omega)$ as in \eqref{eq:random_set}. For $\omega \in \Omega$ and $(y', \rho) \in M(\omega)$, we define the truncated radius
\[
r(\omega, y') := \min \left\{\frac{1}{2}d_{M(\omega)}(y'), 1\right\}.
\]
If $\varepsilon y'$ and $\varepsilon \tilde{y}'$ are the centers of two distinct contact regions in $T'_\varepsilon(\omega)$, then \eqref{eq:nonintersecting_balls} implies
\begin{equation} \label{eq:nonintersecting_scaled_balls}
    B'(\varepsilon y', \varepsilon r(\omega, y')) \cap B'(\varepsilon \tilde{y}', \varepsilon r(\omega, \tilde{y}')) = \emptyset.
\end{equation}
Given $\varepsilon > 0$, $\omega \in \Omega$, our first objective is to carefully distinguish isolated contact regions in $T'_\varepsilon(\omega)$ from clusters.

\begin{definition} \label{def:isolated_and_cluster_points}
    Let $(y', \rho) \in M(\omega)$ with $\varepsilon y' \in U'$. Then $(y', \rho)$ is called $\varepsilon$-isolated if it satisfies the following conditions:
    \begin{enumerate}
        \item[(1a)] $2 a_\varepsilon\rho < \min \{\varepsilon r(\omega, y'), \delta_\varepsilon\}$ if $h_0 = \infty$,
        \item[(1b)] $2 a_\varepsilon\rho < \varepsilon r(\omega, y')$ if $h_0 < \infty$,
        \item[(2)] If $(\tilde{y}', \tilde{\rho}) \in M(\omega)$ with $\varepsilon \tilde{y}' \in U'$ and $y' \neq \tilde{y}'$, then $B'(\varepsilon \tilde{y}', 2 a_\varepsilon \tilde{\rho}) \cap B'(\varepsilon y', \varepsilon r(\omega, y')) = \emptyset$.
    \end{enumerate}
    Otherwise $(y', \rho)$ is called an $\varepsilon$-cluster point.
\end{definition}

According to our definition, a point $(y', \rho) \in M(\omega)$ with $\varepsilon y' \in U'$ is considered isolated if $B'(\varepsilon y', \varepsilon r(\omega, y'))$ separates the contact region $\varepsilon y' + a_\varepsilon \rho T'$ from the other contact regions in $T'_\varepsilon(\omega)$. When $h_0 = \infty$, we require furthermore that the size of the contact region is not much larger than the height $\delta_\varepsilon$.

We denote by $I_{\varepsilon, h_0}(\omega)$ and $C_{\varepsilon, h_0}(\omega)$ the set of $\varepsilon$-isolated and $\varepsilon$-cluster points, respectively. We also define
\begin{gather*}
    (T'_{\varepsilon, h_0})^C(\omega) := \bigcup_{(y', \rho) \in C_{\varepsilon, h_0}(\omega)} (\varepsilon y' + a_\varepsilon \rho T') \cap U', \\
    S'_{\varepsilon, h_0}(\omega) := \bigcup_{(y', \rho) \in C_{\varepsilon, h_0}(\omega)} B'(\varepsilon y', 2a_\varepsilon \rho).
\end{gather*}
Here, $(T'_{\varepsilon, h_0})^C(\omega)$ is the family of clusters, and $S'_{\varepsilon, h_0}(\omega)$ is a layer that separates the clusters from isolated contact regions. Indeed, it is easy to verify that
\begin{equation} \label{eq:separation_layer}
    B'(\varepsilon y', \varepsilon r(\omega, y')) \cap S'_{\varepsilon, h_0}(\omega) = \emptyset \quad \text{for all } (y', \rho) \in I_{\varepsilon, h_0}(\omega).
\end{equation}
For $h_0 = \infty$ we also introduce
\begin{gather*}
    \rho_{\max} := \max \{\rho: (y', \rho) \in C_{\varepsilon, \infty}(\omega)\}, \\
    S_{\varepsilon, \infty}(\omega) := S'_{\varepsilon, \infty}(\omega) \times (-\max\{\delta_\varepsilon, 2 a_\varepsilon \rho_{\max}\}, \max\{\delta_\varepsilon, 2 a_\varepsilon \rho_{\max}\}).
\end{gather*}
The main result of this section is the following theorem.

\begin{theorem} \label{thm:vanishing_capacity_and_measure}
    Let $M : (\Omega, \mathcal{F}, \mathbb{P}) \rightarrow (\mathbb{M}^{N - 1}, \mathcal{M}^{N - 1})$ be a stationary m.p.p. with finite intensity such that
    \[
    \int_0^\infty J_{h_0}(\rho) \, d\lambda(\rho) < \infty.
    \]
    Then
    \begin{equation} \label{eq:capacity_vanishes_infinity}
        \lim_{\varepsilon \to 0} \frac{1}{\delta_\varepsilon} \cpct\left((T'_{\varepsilon, \infty})^C(\omega), S_{\varepsilon, \infty}(\omega)\right) = 0 \quad \mathbb{P}\text{-a.s.}
    \end{equation}
    If $h_0 < \infty$, then we have
    \begin{equation} \label{eq:capacity_vanishes_noninfinity}
        \lim_{\varepsilon \to 0} \frac{1}{\delta_\varepsilon} \cpct_{\delta_\varepsilon}\left((T'_{\varepsilon, h_0})^C(\omega), S'_{\varepsilon, h_0}(\omega)\right) = 0 \quad \mathbb{P}\text{-a.s.}
    \end{equation}
    We also have
    \begin{equation} \label{eq:measure_vanishes}
        \lim_{\varepsilon \to 0} |S'_{\varepsilon, h_0}(\omega)| = 0 \quad \mathbb{P}\text{-a.s. for all } h_0 \in [0, \infty].
    \end{equation}
\end{theorem}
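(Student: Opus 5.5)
The plan is to use subadditivity and monotonicity of capacity to bound the capacity of the clusters by a sum of single-hole capacities. Each hole is compared in the enlarged ball $B'(\varepsilon y', 2a_\varepsilon\rho)$ (times a suitable vertical interval when $h_0=\infty$). After that, the ergodic theorem (Theorem \ref{thm:ergodic_theorem_second_version}) shows that the total contribution of cluster points vanishes, following the scheme of \cite{B}. Concretely, $S'_{\varepsilon,h_0}$ contains the ball $B'(\varepsilon y',2a_\varepsilon\rho)$ around each cluster hole $\varepsilon y'+a_\varepsilon\rho T'\subset B'(\varepsilon y',a_\varepsilon\rho)$. Hence:
\begin{itemize}
\item for $h_0<\infty$, $\cpct_{\delta_\varepsilon}((T'_{\varepsilon,h_0})^C,S'_{\varepsilon,h_0})\le\sum_{C_{\varepsilon,h_0}}\cpct_{\delta_\varepsilon}(a_\varepsilon\rho T',B'(0,2a_\varepsilon\rho))$;
\item for $h_0=\infty$, the analogous bound holds with balls $B(\varepsilon y',2a_\varepsilon\rho)$ or cylinders of height $\max\{\delta_\varepsilon,2a_\varepsilon\rho_{\max}\}$, which lie in $S_{\varepsilon,\infty}$.
\end{itemize}

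The first step is a scaling estimate for each term, of the form $\cpct_{\delta_\varepsilon}(a_\varepsilon\rho T',B'(0,2a_\varepsilon\rho))\lesssim a_\varepsilon^{N-2}\rho^{N-2}\cpct_{h_\varepsilon/\rho}(T',B'(0,2))$. The relative capacity in $B'(0,2)$ should be controlled by the whole-space one: a cut-off argument gives $\cpct_h(T',B'(0,2))\le C\,\cpct_h(T',\mathbb{R}^{N-1})$ uniformly in $h$, using Sobolev along cross-sections as in Proposition \ref{pr:limit_at_infinity}. For $N=3$ and $h_0=\infty$, use instead the three-dimensional capacity of $T'\times\{0\}$ relative to $B(0,2)$, which is bounded by $C\rho$ times a constant. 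Dividing by $\delta_\varepsilon$ and using the choice of $a_\varepsilon$, which gives $a_\varepsilon^{N-2}/\delta_\varepsilon\sim\varepsilon^{N-1}$ in regimes (i) and (ii) and $a_\varepsilon^{N-3}\sim\varepsilon^{N-1}$ in regime (iii), each term becomes $\lesssim\varepsilon^{N-1}J_\varepsilon(\rho)$. Here $J_\varepsilon(\rho)\to J_{h_0}(\rho)$, with a uniform domination $J_\varepsilon\lesssim J_{h_0}$ (or a monotone majorant obtained from Proposition \ref{pr:continuity_in_h}: $\cpct_h/(2h)$ is decreasing and $\cpct_h$ is bounded by the full capacity). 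The quantity to kill is therefore $\varepsilon^{N-1}\sum_{(y',\rho)\in C_{\varepsilon,h_0},\,\varepsilon y'\in U'}J_{h_0}(\rho)$.

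The main obstacle is this step, showing that the cluster points carry asymptotically zero weight. I would split cluster points into three kinds:
\begin{itemize}
\item points with large mark, $\rho>R$;
\item points with $\rho\le R$ but nearest-neighbour distance $d(y')<\eta$;
\item points with $\rho\le R$, $d(y')\ge\eta$, that are nevertheless non-isolated.
\end{itemize}
For fixed $R,\eta$, a point in the third class must satisfy $2a_\varepsilon\tilde\rho\gtrsim\varepsilon\eta$ for some neighbour, or $2a_\varepsilon\rho\ge\delta_\varepsilon$ (when $h_0=\infty$). Since $a_\varepsilon\ll\varepsilon$ and, for $h_0=\infty$, $\delta_\varepsilon/a_\varepsilon\to\infty$, this forces an interaction with a point of mark $\gtrsim\varepsilon\eta/a_\varepsilon\to\infty$. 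I expect the resulting contribution to be controlled by the large-mark sum plus a counting argument. The first class is handled by Theorem \ref{thm:ergodic_theorem_second_version} applied to $g=J_{h_0}\mathbf 1_{(R,\infty)}$, which gives a limit $\int_R^\infty J_{h_0}\,d\xi$ that tends to $0$ as $R\to\infty$, almost surely, since $\int J_{h_0}\,d\lambda<\infty$. The second class is handled by applying the ergodic theorem to the stationary marked process with marks $(\rho,d(y'))$ and the function $J_{h_0}(\rho)\mathbf 1_{\{d<\eta\}}$; local finiteness makes $d>0$, so this tends to $0$ as $\eta\to0$.

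The delicate part is a small hole that is non-isolated only because a huge neighbour's $2a_\varepsilon\tilde\rho$-ball covers it. Such small holes are not controlled by their own $J$. For them I would avoid estimating individually and instead note that they lie inside $B'(\varepsilon\tilde y',2a_\varepsilon\tilde\rho)$, and enlarge the capacitary test function of the large neighbour, e.g. to $B'(\varepsilon\tilde y',4a_\varepsilon\tilde\rho)$, which costs a factor comparable to the neighbour's own $J$. The constants in Definition \ref{def:isolated_and_cluster_points} look designed for this; if the enlarged balls overlap, I would take connected unions as in \cite{B}. Finally, \eqref{eq:measure_vanishes} follows the same way:
\[
|S'_{\varepsilon,h_0}|\le\sum_{C_{\varepsilon,h_0}}C(a_\varepsilon\rho)^{N-1}=C\,\varepsilon^{N-1}\sum\big(\tfrac{a_\varepsilon}{\varepsilon}\big)^{N-1}\rho^{N-1}.
\]
I would bound $(a_\varepsilon\rho/\varepsilon)^{N-1}$ by $\rho^{N-2}(a_\varepsilon/\varepsilon)^{N-2}$ (resp. $\rho^{N-3}$) on $\{a_\varepsilon\rho\le\varepsilon\}$, which is $o(1)\cdot J_{h_0}(\rho)$ times the ergodic average. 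On the complementary event the mark is huge, and the same tail argument as for the first class applies.
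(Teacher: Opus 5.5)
Your reduction is sound and parallels the paper's proof. Subadditivity places each cluster hole in its own ball $B'(\varepsilon y',2a_\varepsilon\rho)\subset S'_{\varepsilon,h_0}$, and scaling gives the bound $\lesssim\varepsilon^{N-1}\delta_\varepsilon J_{h_0}(\rho)$. Your cut-off/Sobolev bound $\cpct_h(T',B'(0,2))\le C\cpct_h(T',\mathbb{R}^{N-1})$, uniform in $h$, is a valid and more direct substitute for Lemma \ref{lm:comparison_old_and_new}. Your classes (a)+(b) are exactly the thinned process $M_\sigma$ with $R=1/\sigma$, $\eta=2\sigma$, and are handled as in Lemma \ref{lm:thinned_limit}.

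The gap is class (c). You correctly flag it as the delicate part but do not close it, and the mechanism you propose fails:
\begin{itemize}
\item Enlarging the huge neighbour's capacitary test function to $B'(\varepsilon\tilde y',4a_\varepsilon\tilde\rho)$ does not give an admissible competitor. The statement concerns capacity relative to $S'_{\varepsilon,h_0}$ (resp. $S_{\varepsilon,\infty}$), which contains only the $2a_\varepsilon\tilde\rho$-balls of cluster points. Enlarging $S'$ instead would destroy \eqref{eq:separation_layer}, which the construction in Section \ref{ssec:otf_construction} relies on.
\item Violating condition (2) only means that $B'(\varepsilon\tilde y',2a_\varepsilon\tilde\rho)$ meets $B'(\varepsilon y',\varepsilon r(\omega,y'))$. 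It does not mean the small hole lies inside the neighbour's $2a_\varepsilon\tilde\rho$-ball.
\item It is not true that these holes are ``not controlled by their own $J$''. They already have their own balls in $S'$, and with $\rho\le R$ each contributes $\lesssim\|J_{h_0}\|_{L^\infty(0,R)}\varepsilon^{N-1}$ after dividing by $\delta_\varepsilon$. What must be controlled is their number.
\end{itemize}

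The missing idea is a packing bound, as in Lemmas \ref{lm:cluster_within_cluster} and \ref{lm:negligible_cluster_2}:
\begin{itemize}
\item For class (c) points and small $\varepsilon$, (1a)/(1b) hold, so (2) fails. This gives $\varepsilon|y'-\tilde y'|<\varepsilon r(\omega,y')+2a_\varepsilon\tilde\rho$ together with $|y'-\tilde y'|\ge 2r(\omega,y')$.
\item Hence $\varepsilon r(\omega,y')<2a_\varepsilon\tilde\rho$ and $B'(\varepsilon y',\varepsilon r(\omega,y'))\subset B'(\varepsilon\tilde y',6a_\varepsilon\tilde\rho)$.
\item These balls are pairwise disjoint by \eqref{eq:nonintersecting_scaled_balls} and have radius at least $\varepsilon\min\{\eta/2,1\}$. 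So each huge neighbour absorbs at most $\bigl(6a_\varepsilon\tilde\rho/(\varepsilon\min\{\eta/2,1\})\bigr)^{N-1}$ of them.
\item Therefore $\varepsilon^{N-1}\card(\text{class (c)})\lesssim\eta^{1-N}\sum(a_\varepsilon\tilde\rho)^{N-1}$, where the sum runs over points with $\varepsilon\tilde y'\in U'$ and $\tilde\rho\ge\varepsilon\min\{\eta/2,1\}/(2a_\varepsilon)\to\infty$.
\end{itemize}

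To show this last sum vanishes, you cannot simply invoke ``the same tail argument''. The same applies to the huge-mark part of your bound for \eqref{eq:measure_vanishes}. The reason is that $(a_\varepsilon\rho)^{N-1}$ is not bounded by $C\varepsilon^{N-1}J_{h_0}(\rho)$ uniformly in $\rho$: the ratio grows like $a_\varepsilon\delta_\varepsilon\rho$, resp. $(a_\varepsilon\rho)^2$. You need an extra step such as $\|\cdot\|_{\ell^{N-1}}\le\|\cdot\|_{\ell^{N-2}}$, or $\|\cdot\|_{\ell^{N-1}}\le\|\cdot\|_{\ell^{N-3}}$ where $J_{h_0}$ grows like $\rho^{N-3}$. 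This converts the sum into a power of the tail sum $\sum_{\tilde\rho>R}\varepsilon^{N-1}J_{h_0}(\tilde\rho)$, as in the proof of Lemma \ref{lm:negligible_cluster_1}.
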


We reduce the proof of Theorem \ref{thm:vanishing_capacity_and_measure} to the computation of limits of weighted sums of $J_{h_0}$ and $\rho^{N - 1}$. The result is given in the next proposition.

\begin{proposition} \label{pr:weighted_sums_go_to_zero}
     Let $M : (\Omega, \mathcal{F}, \mathbb{P}) \rightarrow (\mathbb{M}^{N - 1}, \mathcal{M}^{N - 1})$ be a stationary m.p.p. with finite intensity such that
    \[
    \int_0^\infty J_{h_0}(\rho) \, d\lambda(\rho) < \infty.
    \]
    Then
    \begin{equation} \label{eq:weighted_sums_go_to_zero}
        \lim_{\varepsilon \to 0} \sum_{(y', \rho) \in C_{\varepsilon, h_0}(\omega)} \varepsilon^{N - 1} J_{h_0}(\rho) = \lim_{\varepsilon \to 0} \sum_{(y', \rho) \in C_{\varepsilon, h_0}(\omega)} a_\varepsilon^{N - 1} \rho^{N - 1} = 0 \quad \mathbb{P}\text{-a.s.}
    \end{equation}
\end{proposition}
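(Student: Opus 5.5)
We reduce \eqref{eq:weighted_sums_go_to_zero} to the ergodic theorem (Theorem \ref{thm:ergodic_theorem_second_version}) via a truncation argument, as in \cite{B}. First we handle the normalisation. By the choice of $a_\varepsilon$, the number $\varepsilon^{N-1}$ multiplies a sum over points with $\varepsilon y' \in U'$, so the sum equals $|U'|$ times the spatial average $\frac{1}{|\frac{1}{\varepsilon}U'|}\sum_{\varepsilon y'\in U'}$. We may enlarge $U'$ to a cube $Q'$ with $|\partial Q'| = 0$, since only nonnegative terms are added. For the second sum we note that $a_\varepsilon^{N-1}\rho^{N-1} \le C\varepsilon^{N-1}J_{h_0}(\rho)\cdot \eta_\varepsilon(\rho)$, where $\eta_\varepsilon(\rho)\to 0$ for fixed $\rho$ and $\eta_\varepsilon(\rho) \lesssim 1$ on the relevant range. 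Indeed, $a_\varepsilon^{N-1}/\varepsilon^{N-1}\cdot \rho^{N-1}$ compared to $J_{h_0}(\rho)\sim \rho^{N-2}$ (or $\rho^{N-3}$) gives a factor $a_\varepsilon\rho/\varepsilon$ (or $(a_\varepsilon\rho)^2/\varepsilon^{2}$ times powers). For cluster points with large $\rho$ we use $a_\varepsilon\rho \lesssim$ diam$(U')$, and we check that the relevant comparison of $\cpct_{h}$ with $\rho$ is given by Proposition \ref{pr:continuity_in_h} and Proposition \ref{pr:limit_at_infinity}. So the second limit will follow from the same mechanism as the first, applied with the bounded weight $\min\{a_\varepsilon\rho/\varepsilon,1\}$. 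Hence the main task is the first sum.

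Fix $R>0$ (a radius cap) and $\kappa\in(0,1)$ (a separation threshold). We split $C_{\varepsilon,h_0}$ into three parts. The first part consists of points with $\rho > R$. By the ergodic theorem applied to $g = J_{h_0}\mathbf 1_{(R,\infty)}$, this contribution converges a.s. to $|Q'|\int_{(R,\infty)}J_{h_0}\,d\xi(\omega,\cdot)$. This tends to $0$ as $R\to\infty$ because $\int J_{h_0}d\xi<\infty$ a.s., which follows from $\mathbb E\int J_{h_0} d\xi = \int J_{h_0}d\lambda<\infty$. Here we use a countable sequence $R\in\mathbb N$ to keep the null set countable. The second part consists of points with $\rho\le R$ and $d_{M(\omega)}(y') < 2\kappa$. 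These are not counted via $\rho$ but via the mark-and-distance functional. Applying the ergodic theorem to the stationary marked process whose marks are enriched by $(\rho, d_M(y'))$ (stationarity is preserved since $d_M$ is translation covariant), we get a limit bounded by $|Q'|\,\mathbb E[\text{same}\mid\mathcal I]$. By monotone convergence this tends to $0$ as $\kappa\to 0$, since $d_M(y')>0$ for every point by local finiteness. If the enriched version of Theorem \ref{thm:ergodic_theorem_second_version} is not available in this form, we instead bound this part by counting pairs in a $2\kappa$-neighbourhood and use the second factorial moment. The enrichment route is cleaner and is the one we take.

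The third part consists of points with $\rho\le R$ and $d_M(y')\ge 2\kappa$, which are nevertheless $\varepsilon$-cluster points. Since $a_\varepsilon\ll\varepsilon$, condition (1b) holds for $\varepsilon$ small. In case (1a), $2a_\varepsilon R<\delta_\varepsilon$ holds because $h_0=\infty$ means $\delta_\varepsilon\gg a_\varepsilon$. So such a point violates (2): some other point $\tilde y'$ with $\varepsilon\tilde y'\in U'$ has $2a_\varepsilon\tilde\rho \ge \varepsilon(|y'-\tilde y'| - r(\omega,y')) \ge \varepsilon\kappa$, so $\tilde\rho\ge \kappa\varepsilon/(2a_\varepsilon)\to\infty$. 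Thus every such $y'$ lies in a ball $B'(\tilde y', 2a_\varepsilon\tilde\rho/\varepsilon)$ around a point with huge mark, and distinct $y'$ there are $2\kappa$-separated. The number of such $y'$ is therefore at most $C(a_\varepsilon\tilde\rho/(\varepsilon\kappa))^{N-1}$. Their total contribution is at most $\varepsilon^{N-1}\sup_{\rho\le R}J_{h_0}(\rho)\,C\kappa^{1-N}\sum_{\tilde\rho\ge \kappa\varepsilon/2a_\varepsilon} (a_\varepsilon\tilde\rho/\varepsilon)^{N-1}$.

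This last estimate is the main obstacle: we need to control $\sum_{\tilde\rho\ge \kappa\varepsilon/(2a_\varepsilon)} a_\varepsilon^{N-1}\tilde\rho^{N-1}$ only by $\int J_{h_0}d\lambda<\infty$. The plan is to use $\tilde\rho^{N-1}\le (\varepsilon/a_\varepsilon)\tilde\rho^{N-2}\cdot(2a_\varepsilon\tilde\rho/(\kappa\varepsilon))$ together with the bound $a_\varepsilon\tilde\rho\lesssim 1$, which holds because otherwise the ball covers $U'$ entirely and only finitely many such configurations can occur. With these, the sum is dominated by $C_\kappa \varepsilon^{N-1}\sum_{\tilde\rho\ge \kappa\varepsilon/2a_\varepsilon}J_{h_0}(\tilde\rho)$, after matching the scaling $a_\varepsilon^{N-2}\sim\varepsilon^{N-1}\delta_\varepsilon$ (respectively $a_\varepsilon^{N-3}\sim\varepsilon^{N-1}$) case by case using Propositions \ref{pr:continuity_in_h} and \ref{pr:limit_at_infinity}. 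By the ergodic theorem with tail truncation at a level $\to\infty$, this is eventually smaller than $|Q'|\int_{(R',\infty)}J_{h_0}d\xi$ for every $R'$, hence tends to $0$. Letting $\varepsilon\to0$, then $\kappa\to0$ and $R\to\infty$ yields the claim.
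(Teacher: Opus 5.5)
Your parts 1--2 are sound and coincide with the paper's thinned-process estimate (Lemma \ref{lm:thinned_limit}). Your part 3 is a valid variant of the covering count in Lemmas \ref{lm:cluster_within_cluster} and \ref{lm:negligible_cluster_2}. It has one nice feature: restricting to $d_{M(\omega)}(y')\ge 2\kappa$ forces the absorbing point to have $\tilde\rho\ge\kappa\varepsilon/(2a_\varepsilon)$, so $\varepsilon^{N-1}\sum J_{h_0}(\tilde\rho)$ over those points vanishes by the ergodic tail alone.

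The gap is exactly the step you flag as the main obstacle: passing from this to $\sum a_\varepsilon^{N-1}\tilde\rho^{N-1}\to 0$. You need the same step for the second limit in \eqref{eq:weighted_sums_go_to_zero}. For $h_0=\infty$ one has $a_\varepsilon^{N-1}\tilde\rho^{N-1}=\delta_\varepsilon\,a_\varepsilon\tilde\rho\cdot\varepsilon^{N-1}\tilde\rho^{N-2}$, and for $h_0=0$ one has $a_\varepsilon^{N-1}\tilde\rho^{N-1}=(a_\varepsilon\tilde\rho)^2\,\varepsilon^{N-1}\tilde\rho^{N-3}$. So termwise domination by $\varepsilon^{N-1}J_{h_0}(\tilde\rho)$ requires an \emph{upper} bound on $a_\varepsilon\tilde\rho$. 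Your displayed inequality reduces to $1\le 2/\kappa$ and carries no information, and the lower bound on $\tilde\rho$ points the wrong way.

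The bound $a_\varepsilon\tilde\rho\lesssim 1$ is not a deterministic fact, and neither is your earlier claim $a_\varepsilon\rho\lesssim\operatorname{diam}(U')$ for cluster points. Only the hole is intersected with $U'$, not its radius. A ball covering $U'$ also says nothing about how many points carry such marks, so ``only finitely many such configurations can occur'' proves nothing.

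The paper closes this step without any cap on individual marks, using $\|\cdot\|_{\ell^{N-1}}\le\|\cdot\|_{\ell^{N-2}}$. For $h_0=\infty$,
\[
\Bigl(\sum a_\varepsilon^{N-1}\tilde\rho^{N-1}\Bigr)^{\frac{N-2}{N-1}}\le\sum a_\varepsilon^{N-2}\tilde\rho^{N-2}=\delta_\varepsilon\sum\varepsilon^{N-1}\tilde\rho^{N-2}\lesssim\delta_\varepsilon\sum\varepsilon^{N-1}J_\infty(\tilde\rho).
\]
For $h_0=0$ one uses $\ell^{N-3}$ together with $a_\varepsilon^{N-3}=\varepsilon^{N-1}$. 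For $h_0\in(0,\infty)$ one splits at $\rho=1$ via Lemma \ref{lm:equivalence_of_functions}; see the proof of Lemma \ref{lm:negligible_cluster_1}.

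Alternatively, you could prove that a.s.\ $\max_{\varepsilon y'\in U'}a_\varepsilon\rho\to 0$. This follows from
\[
\max_{\varepsilon y'\in U'}\varepsilon^{N-1}J_{h_0}(\rho)\le\varepsilon^{N-1}\sup_{(0,R']}J_{h_0}+\sum_{\varepsilon y'\in U',\,\rho>R'}\varepsilon^{N-1}J_{h_0}(\rho),
\]
the ergodic theorem, and the scaling relations above. Either way this step has to be supplied, and the same fix repairs your treatment of the second sum in your first paragraph.
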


To keep the computations compact, we introduce the functions $\tilde{J}_0$, $\tilde{J}_\infty$ and $\tilde{J}_{h_0}$ for $h_0 \in (0, \infty)$ as follows:
\begin{alignat*}{3}
    \tilde{J}_0 &: (0, \infty)^2 \rightarrow [0, \infty), &\quad  &\tilde{J}_0(\rho, l) := 2 \rho^{N - 3} \cpct_0(T', B'(0, l)), \\
    \tilde{J}_{h_0} &: (0, \infty)^3 \rightarrow [0, \infty), &\quad &\tilde{J}_{h_0}(\rho, l, h) := \rho^{N - 2} \cpct_h(T', B'(0, l)), \\
    \tilde{J}_\infty &: (0, \infty)^3 \rightarrow [0, \infty), &\quad &\tilde{J}_\infty(\rho, l, h) := \rho^{N - 2} \cpct(T' \times \{0\}, C(l, h)).
\end{alignat*}
Although the definition of $\tilde{J}_{h_0}$ does not depend on $h_0$ for $h_0 \in (0, \infty)$, the notation is convenient when expressing estimates uniformly in $h_0$. In the next lemma, we show that we can bound $\tilde{J}_{h_0}$ from above by $J_{h_0}$ for all $h_0 \in [0, \infty]$.

\begin{lemma} \label{lm:comparison_old_and_new}
    There exists a positive number $C$, depending only on $T'$ and $N$, such that for all $\rho > 0$,
    \begin{alignat}{3} \label{eq:upper_bound_zero}
        \tilde{J}_0(\rho, l) &\le C J_0(\rho), &\quad &\text{for all } l \ge 2, \\ \label{eq:upper_bound_finite}
        \tilde{J}_{h_0}\left(\rho, l, \frac{h}{\rho}\right) &\le C \max \left\{\frac{h_0}{h}, \frac{h}{h_0}\right\} J_{h_0}(\rho), &\quad &\text{for all } l \ge 2, \, h > 0, \\ \label{eq:upper_bound_infinity}
        \tilde{J}_\infty(\rho, l, h) &\le C J_\infty(\rho), &\quad &\text{for all } l, h \ge 2.
    \end{alignat}
    
\end{lemma}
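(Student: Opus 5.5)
The three bounds all come from the same two ingredients. First, a capacity relative to a bounded set $B'(0,l)$ with $l\ge 2$ is dominated by a fixed multiple of the capacity relative to the whole space, obtained by cutting off a near-optimal whole-space potential. Second, the comparison in $h$ from Proposition \ref{pr:continuity_in_h} handles the change of height. Throughout we use $T'\subset B'(0,1)$ and the assumption $\cpct(T'\times\{0\},\mathbb{R}^N)>0$, together with Remarks \ref{rmk:jointly_equal_to_zero} and \ref{rmk:jointly_equal_to_zero_2}, which guarantee that all the whole-space quantities $\cpct_h(T',\mathbb{R}^{N-1})$ and $\cpct_0(T',\mathbb{R}^{N-1})$ are positive. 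Monotonicity of all capacities in the reference set (a larger set admits more test functions) lets us reduce to $l=2$ and, in \eqref{eq:upper_bound_infinity}, to $h=2$. For $N=3$ only the $h_0=\infty$ case is relevant, and the same argument applies there.

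For \eqref{eq:upper_bound_infinity}, fix once and for all $\eta\in C_c^\infty(C(2,2))$ with $\eta=1$ on $C(3/2,3/2)$. For an admissible $v$ in \eqref{eq:classical_capacity} with $K=T'\times\{0\}$ and $U=\mathbb{R}^N$, the product $\eta v$ is admissible for $\cpct(T'\times\{0\},C(2,2))$. We expand $\int|\nabla(\eta v)|^2\le 2\int|\nabla v|^2+2\|\nabla\eta\|_\infty^2\int_{C(2,2)}v^2$. By Hölder and Sobolev, $\int_{C(2,2)}v^2\le C\|v\|_{L^{2N/(N-2)}}^2\le C\int|\nabla v|^2$. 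Taking the infimum gives $\cpct(T'\times\{0\},C(l,h))\le C\cpct(T'\times\{0\},\mathbb{R}^N)$, and multiplying by $\rho^{N-2}$ yields the claim.

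For \eqref{eq:upper_bound_finite}, note first that $\tilde J_{h_0}(\rho,l,h/\rho)=\rho^{N-2}\cpct_{h/\rho}(T',B'(0,l))$. We prove $\cpct_k(T',B'(0,2))\le C\cpct_k(T',\mathbb{R}^{N-1})$ uniformly in $k>0$, using a cut-off $\eta(x')$ depending only on $x'$ (so the Neumann-type boundary at $x_N=\pm k$ is untouched) and the sliced Sobolev inequality \eqref{eq:sobolev's_inequality}. Hölder on $B'(0,2)$ in $x'$ gives $\int_{-k}^k\int_{B'(0,2)}v^2\le C\int_{-k}^k\|v(\cdot,x_N)\|^2_{L^{2(N-1)/(N-3)}}\le C\int|\nabla v|^2$, with $C$ independent of $k$. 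Then \eqref{eq:comparison_in_h} applied with heights $h/\rho$ and $h_0/\rho$ gives $\cpct_{h/\rho}\le\max\{h_0/h,h/h_0\}\cpct_{h_0/\rho}$, which combined with the previous bound gives \eqref{eq:upper_bound_finite}.

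For \eqref{eq:upper_bound_zero}, the same cut-off argument, applied either directly to the characterization \eqref{eq:extended_capacity_3} and \eqref{eq:minimization_problem_characterization} (now with Sobolev in $\mathbb{R}^{N-1}$, $N-1>2$) or to $\frac{1}{2k}\cpct_k$ letting $k\to 0$ using the uniformity above, gives $\cpct_0(T',B'(0,l))\le C\cpct_0(T',\mathbb{R}^{N-1})$. The cut-off preserves the weak condition $\hat v=1$ on $T'\times\{0\}$ since $\eta=1$ near $T'$. The main obstacle is the uniformity of the constant in the height $k$ in the second step. This is exactly why the cut-off must be in $x'$ only and why we use the sliced $L^{2(N-1)/(N-3)}$ Sobolev bound rather than the $N$-dimensional one.
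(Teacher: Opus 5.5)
Your proof is correct, but it reaches the constant by a different route from the paper.

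For \eqref{eq:upper_bound_infinity} and \eqref{eq:upper_bound_zero}, the paper uses no cut-off. After reducing to $l=2$ (and $h=2$), $\cpct(T'\times\{0\},C(2,2))$ and $\cpct_0(T',B'(0,2))$ are single finite numbers depending only on $T'$ and $N$. The denominators $\cpct(T'\times\{0\},\mathbb{R}^N)$ and $\cpct_0(T',\mathbb{R}^{N-1})$ are positive by assumption and Remark \ref{rmk:jointly_equal_to_zero_2}, so their ratio is the constant.

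The real difference is in \eqref{eq:upper_bound_finite}. After the same use of \eqref{eq:comparison_in_h}, the paper must bound the function $k\mapsto \cpct_k(T',B'(0,2))/\cpct_k(T',\mathbb{R}^{N-1})$ uniformly on $(0,\infty)$. It does so qualitatively:
\begin{itemize}
\item the ratio is continuous in $k$ by Proposition \ref{pr:continuity_in_h};
\item it has a finite limit as $k\to\infty$ by Proposition \ref{pr:limit_at_infinity};
\item it has a finite limit as $k\to 0$ by the definition of $\cpct_0$.
\end{itemize}
Both limits again rely on positivity of the whole-space capacities.

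Your cut-off in $x'$ alone, combined with the slicewise Sobolev inequality, gives $\cpct_k(T',B'(0,2))\le C\,\cpct_k(T',\mathbb{R}^{N-1})$ directly. The constant depends only on $N$ and the fixed cut-off, and the argument needs neither a limiting step nor positivity. It also yields \eqref{eq:upper_bound_zero} for free: divide by $2k$ and let $k\to 0$.

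The paper's route is shorter given the preliminaries it has already proved. Yours is quantitative and self-contained. Both need $N>3$ in the finite- and zero-height cases; for you this enters through Sobolev in $\mathbb{R}^{N-1}$. This is consistent with your remark that only $h_0=\infty$ is relevant when $N=3$; for $N=3$ the other two bounds would in fact fail as literally stated, since $J_0$ and $J_{h_0}$ vanish there.
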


\begin{proof}
    If $h_0 = \infty$ and $l, h \ge 2$, then
    \[
    \cpct(T' \times \{0\}, C(l, h)) \le \cpct(T' \times \{0\}, C(2, 2)) \lesssim \cpct(T' \times \{0\}, \mathbb{R}^N).
    \]
    Hence, \eqref{eq:upper_bound_infinity} follows immediately. If $h_0 = 0$, then
    \[
    \cpct_0(T', B'(0, l)) \le \cpct_0(T', B'(0, 2)) \lesssim \cpct_0(T', \mathbb{R}^{N - 1}),
    \]
    from which we deduce \eqref{eq:upper_bound_zero}. Assume $h_0 \in (0, \infty)$ and $l \ge 2$, $h > 0$. By Proposition \ref{pr:continuity_in_h}
    \[
    \cpct_\frac{h}{\rho}(T', B'(0, l)) \le \cpct_\frac{h}{\rho}(T', B'(0, 2)) \le \max \left\{\frac{h_0}{h}, \frac{h}{h_0}\right\} \cpct_\frac{h_0}{\rho}(T', B'(0, 2))
    \]
    We proceed to show that
    \[
    \sup_{\rho > 0} \frac{\cpct_\frac{h_0}{\rho}(T', B'(0, 2))}{\cpct_\frac{h_0}{\rho}(T', \mathbb{R}^{N - 1})} < \infty.
    \]
    Since $h_0 \in (0, \infty)$ is only possible for $N > 3$ and $\cpct(T' \times \{0\}, \mathbb{R}^N) > 0$, Remark \ref{rmk:jointly_equal_to_zero_2} implies that the quotient is finite for any $\rho$. By Proposition \ref{pr:limit_at_infinity}
    \[
    \lim_{\rho \to 0} \frac{\cpct_\frac{h_0}{\rho}(T', B'(0, 2))}{\cpct_\frac{h_0}{\rho}(T', \mathbb{R}^{N - 1})} = \frac{\cpct(T' \times \{0\}, B'(0, 2) \times \mathbb{R})}{\cpct(T' \times \{0\}, \mathbb{R}^N)}.
    \]
    On the other hand, by Proposition \ref{pr:continuity_in_h}
    \[
    \frac{\cpct_\frac{h_0}{\rho}(T', B'(0, 2))}{\cpct_\frac{h_0}{\rho}(T', \mathbb{R}^{N - 1})} = \frac{\cpct_\frac{h_0}{\rho}(T', B'(0, 2))}{2\frac{h_0}{\rho}} \frac{2\frac{h_0}{\rho}}{\cpct_\frac{h_0}{\rho}(T', \mathbb{R}^{N - 1})} \to \frac{\cpct_0(T', B'(0, 2))}{\cpct_0(T', \mathbb{R}^{N - 1})}
    \]
    as $\rho \to \infty$. As the limits for $\rho \to 0$ and $\rho \to \infty$ exist, and the capacity function is continuous for $\rho \in (0, \infty)$ by Proposition \ref{pr:continuity_in_h}, we conclude that the quotient is globally bounded.
\end{proof}

We can now prove our main result.

\begin{proof}[Proof of Theorem \ref{thm:vanishing_capacity_and_measure}]
    To begin with, assume $h_0 = \infty$. By the subadditivity and the scaling property of the capacity, we have
    \begin{align*}
        \cpct \big((T'_{\varepsilon, \infty})^C(\omega), S_{\varepsilon, \infty}(\omega)\big) &\le \sum_{(y', \rho) \in C_{\varepsilon, \infty}(\omega)} \cpct\left(\varepsilon y + a_\varepsilon \rho (T' \times \{0\}), S_{\varepsilon, \infty}(\omega))\right) \\
        &\le \sum_{(y', \rho) \in C_{\varepsilon, \infty}(\omega)} \cpct\left(\varepsilon y + a_\varepsilon \rho (T' \times \{0\}), B'(\varepsilon y', 2a_\varepsilon \rho) \times (-2a_\varepsilon \rho, 2a_\varepsilon \rho)\right) \\
        &= \sum_{(y', \rho) \in C_{\varepsilon, \infty}(\omega)} a_\varepsilon^{N - 2} \tilde{J}_\infty(\rho, 2, 2) \lesssim \sum_{(y', \rho) \in C_{\varepsilon, \infty}(\omega)} \varepsilon^{N - 1} \delta_\varepsilon J_\infty(\rho)
    \end{align*}
    Then \eqref{eq:capacity_vanishes_infinity} follows from the first limit in Proposition \ref{pr:weighted_sums_go_to_zero} by dividing by $\delta_\varepsilon$ and letting $\varepsilon \to 0$. Next, let $h_0 < \infty$. A similar computation yields
    \begin{equation*}
        \cpct_{\delta_\varepsilon}\left((T'_{\varepsilon, h_0})^C(\omega), S'_{\varepsilon, h_0}(\omega)\right) \\ 
        \le \sum_{(y', \rho) \in C_{\varepsilon, h_0}(\omega)} a_\varepsilon^{N - 2} \rho^{N - 2} \cpct_{\frac{h_\varepsilon}{\rho}}(T', B'(0, 2)).
    \end{equation*}
    If $h_0 \in (0, \infty)$, then by Lemma \ref{lm:comparison_old_and_new},
    \[
    a_\varepsilon^{N - 2} \rho^{N - 2} \cpct_{\frac{h_\varepsilon}{\rho}}(T', B'(0, 2)) = a_\varepsilon^{N - 2} \tilde{J}_{h_0}\left(\rho, 2, \frac{h_\varepsilon}{\rho}\right) \lesssim \max \left\{\frac{h_\varepsilon}{h_0}, \frac{h_0}{h_\varepsilon}\right\} \varepsilon^{N - 1} \delta_\varepsilon J_{h_0}(\rho).
    \]
    On the other hand, if $h_0 = 0$, then
    \[
    a_\varepsilon^{N - 2} \rho^{N - 2} \cpct_{\frac{h_\varepsilon}{\rho}}(T', B'(0, 2)) = a_\varepsilon^{N - 3} \delta_\varepsilon \rho^{N - 3} \frac{1}{2\frac{h_\varepsilon}{\rho}}\cpct_{\frac{h_\varepsilon}{\rho}}(T', B'(0, 2)) \le \varepsilon^{N - 1} \delta_\varepsilon \tilde{J}_0(\rho, 2) \lesssim \varepsilon^{N - 1} \delta_\varepsilon J_0(\rho),
    \]
    since $\cpct_h(T', B'(0, 2))/(2h) \le \cpct_0(T', B'(0, 2))$ for all $h$. Therefore, for all $h_0 < \infty$, we have
    \begin{gather*}
        \limsup_{\varepsilon \to 0} \frac{1}{\delta_\varepsilon} \cpct_{\delta_\varepsilon}\left((T'_{\varepsilon, h_0})^C(\omega), S'_{\varepsilon, h_0}(\omega)\right) \lesssim \limsup_{\varepsilon \to 0} \sum_{(y', \rho) \in C_{\varepsilon, h_0}(\omega)} \varepsilon^{N - 1} J_{h_0}(\rho).
    \end{gather*}
    Hence, \eqref{eq:capacity_vanishes_noninfinity} follows from Proposition \ref{pr:weighted_sums_go_to_zero}. To prove \eqref{eq:measure_vanishes}, we use subadditivity again:
    \[
    |S'_{\varepsilon, h_0}(\omega)| \le \sum_{(y', \rho) \in C_{\varepsilon, h_0}(\omega)} |B'(\varepsilon y', 2 a_\varepsilon \rho)| \lesssim \sum_{(y', \rho) \in C_{\varepsilon, h_0}(\omega)} a_\varepsilon^{N - 1} \rho^{N - 1}.
    \]
    We obtain the result by the second limit in Proposition \ref{pr:weighted_sums_go_to_zero}.
\end{proof}

In order to prove Proposition \ref{pr:weighted_sums_go_to_zero}, we consider cluster points that violate conditions (1a) or (1b) in Definition \ref{def:isolated_and_cluster_points} separately from the rest.
We define
\begin{equation} \label{eq:cluster_points_classification}
    \begin{aligned}
        C_{\varepsilon, \infty}^1(\omega) &:= \left\{(y', \rho) \in C_{\varepsilon, \infty}(\omega) : 2 a_\varepsilon \rho \ge \min \{\varepsilon r(\omega, y'), \delta_\varepsilon\} \right\}, \\
        C_{\varepsilon, h_0}^1(\omega) &:= \left\{(y', \rho) \in C_{\varepsilon, h_0}(\omega) : 2 a_\varepsilon \rho \ge \varepsilon r(\omega, y') \right\} \quad \text{if } h_0 < \infty, \\
        C_{\varepsilon, h_0}^2(\omega) &:= C_{\varepsilon, h_0}(\omega) \setminus C_{\varepsilon, h_0}^1(\omega) \quad \text{for all } h_0 \in [0, \infty]
    \end{aligned}
\end{equation}
for all $\varepsilon > 0$ and $\omega \in \Omega$. As we will see in the proof of Lemma \ref{lm:negligible_cluster_1}, $C_{\varepsilon, h_0}^1(\omega)$ consists of points $(y', \rho)$ with very large $\rho$ or very small $r(\omega, y')$. To be more quantitative, we introduce the notion of thinning of a marked point process. Let $\sigma > 0$. We define a new marked point process $M_\sigma : (\Omega, \mathcal{F}, \mathbb{P}) \rightarrow (\mathbb{M}^{N - 1}, \mathcal{M}^{N - 1})$ as follows: given $\omega \in \Omega$, the point $(y', \rho)$ belongs to $M_\sigma(\omega)$ if and only if $(y', \rho) \in M(\omega)$ and
\[
\min \left\{\frac{d_{M(\omega)}(y')}{2}, \frac{1}{\rho}\right\} < \sigma.
\]
We call $M_\sigma$ the \textit{thinned process}. The relevance of thinned processes in our analysis is mainly motivated by the following result.

\begin{lemma} \label{lm:thinned_limit}
    Let $M :(\Omega, \mathcal{F}, \mathbb{P}) \rightarrow (\mathbb{M}^{N - 1}, \mathcal{M}^{N - 1})$ be a stationary m.p.p. with finite intensity such that
    \[
    \int_0^\infty J_{h_0}(\rho) \, d\lambda(\rho) < \infty.
    \]
    Then
    \[
    \lim_{\sigma \to 0} \, \lim_{\varepsilon \to 0} \sum \limits_{\substack{(y', \rho) \in M_\sigma(\omega) \\ \varepsilon y' \in U'}} \varepsilon^{N - 1} J_{h_0}(\rho) = 0 \quad \mathbb{P}\text{-a.s.}
    \]
\end{lemma}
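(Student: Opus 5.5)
The plan is to apply the ergodic theorem (Theorem \ref{thm:ergodic_theorem_second_version}) to the thinned process $M_\sigma$ for each fixed $\sigma$, and then let $\sigma \to 0$ by dominated convergence. The obstacle is that Theorem \ref{thm:ergodic_theorem_second_version} is stated for sums of the form $\sum g(\rho)$, while membership in $M_\sigma$ depends on the configuration through $d_{M(\omega)}(y')$.

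\emph{Step 1: the thinned process.} First we check that $M_\sigma$ is a stationary m.p.p. with finite intensity. Measurability follows by approximating $d_{M}(y')$ through counts of points in small balls. Stationarity holds because the thinning rule commutes with translations: $d_{Y_\tau}(y'+\tau) = d_Y(y')$. Finite intensity follows since $M_\sigma(\omega) \subset M(\omega)$.

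\emph{Step 2: the ergodic limit.} Let $\lambda_\sigma$ and $\xi_\sigma$ be the measures attached to $M_\sigma$ by Proposition \ref{pr:intensity_measure_stationary} and Lemma \ref{lm:conditional_expectation_random_measure}. Since $\Lambda_\sigma \le \Lambda$, we have $\lambda_\sigma \le \lambda$, so $J_{h_0}$ is $\lambda_\sigma$-integrable. Applying Theorem \ref{thm:ergodic_theorem_second_version} with $B = U'$ gives, almost surely,
\[
\lim_{\varepsilon\to0}\sum_{\substack{(y',\rho)\in M_\sigma(\omega)\\ \varepsilon y'\in U'}}\varepsilon^{N-1}J_{h_0}(\rho) = |U'|\int_0^\infty J_{h_0}\,d\xi_\sigma(\omega,\rho).
\]
This requires $|\partial U'|=0$, which holds because $U'$ is Lipschitz. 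The theorem needs a countable family of $\sigma$; we take $\sigma = 1/k$. Monotonicity in $\sigma$ then passes the almost sure statement to all $\sigma$, because the inner limit is nondecreasing in $\sigma$.

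\emph{Step 3: the limit $\sigma \to 0$.} It remains to show that $\int J_{h_0}\,d\xi_{1/k}(\omega,\cdot)\to0$ almost surely. Taking expectations, $\mathbb{E}\int J_{h_0}\,d\xi_\sigma = \int J_{h_0}\,d\lambda_\sigma = \mathbb{E}\sum_{(y',\rho)\in M_\sigma, y'\in Q'} J_{h_0}(\rho)$ by Campbell's theorem and Lemma \ref{lm:conditional_expectation_random_measure}. We then apply dominated convergence with dominating quantity $\sum_{y'\in Q'}J_{h_0}(\rho)$, which is integrable by \eqref{eq:moment_is_finite}. Pointwise, for each point $(y',\rho)$ one has $d_M(y')>0$ by local finiteness and $1/\rho>0$, so the point leaves $M_\sigma$ once $\sigma$ is small. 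Since only finitely many points lie in $Q'$, the sum tends to zero. Hence $\mathbb{E}\int J_{h_0}\,d\xi_\sigma\to0$.

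Because $\xi_\sigma(\omega,\cdot)$ is pointwise nonincreasing in $\sigma$, which follows from the nesting $M_{\sigma}\subset M_{\sigma'}$ for $\sigma<\sigma'$ together with the conditional expectation representation \eqref{eq:conditional_expectation_measure_representation}, the random variables $\int J_{h_0}\,d\xi_{1/k}$ are almost surely monotone in $k$. Their expectations tend to $0$, so the variables themselves tend to $0$ almost surely. This last point is delicate, since $\xi$ is only defined almost surely. The clean way is to work with the conditional expectations $\mathbb{E}[\,\cdot \mid M^{-1}(\mathcal I)]$ directly, where monotonicity holds almost surely for countably many $k$, and then conclude by monotone convergence.
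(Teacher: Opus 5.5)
Your argument is correct and follows the route the paper relies on (the paper defers the proof to Section 4 of \cite{B}): $M_\sigma$ is a stationary m.p.p.\ with finite intensity and $\lambda_\sigma\le\lambda$, Theorem \ref{thm:ergodic_theorem_second_version} is applied for $\sigma=1/k$, and $\int J_{h_0}\,d\lambda_\sigma\to0$ follows from Campbell's formula and dominated convergence. The only step to tidy is the monotonicity in Step 3. The measure $\xi_\sigma$ comes from conditioning on $M_\sigma^{-1}(\mathcal I)$, which depends on $\sigma$ and may be strictly coarser than $M^{-1}(\mathcal I)$. So the nesting $M_\sigma\subset M_{\sigma'}$ does not give monotonicity through the conditional-expectation representation, and Theorem \ref{thm:ergodic_theorem_second_version} does not identify the limit with $\mathbb E[\,\cdot\mid M^{-1}(\mathcal I)]$. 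Instead, read the monotonicity off the pointwise ordering of the $\varepsilon$-sums, which you already noted in Step 2 and whose a.s.\ limits are the quantities $\int J_{h_0}\,d\xi_{1/k}$; a monotone nonnegative sequence converging to $0$ in $L^1$ then converges to $0$ almost surely.
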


 A proof of the lemma can be found in Section 4 of \cite{B}. We will need the following result on the growth of $J_{h_0}(\rho)$ for $h_0 \in (0, \infty)$ in the proof of Lemma \ref{lm:negligible_cluster_1}.

\begin{lemma} \label{lm:equivalence_of_functions}
    If $h_0 \in (0, \infty)$, then there exists a positive constant $C$ such that 
    \begin{align} \label{eq:equivalence_of_functions_1}
        C^{-1} \rho^{N - 2} &\le J_{h_0}(\rho) \le C \rho^{N - 2} \quad \text{for } 0 < \rho \le 1, \\ \label{eq:equivalence_of_functions_2}
        C^{-1} \rho^{N - 3} &\le J_{h_0}(\rho) \le C \rho^{N - 3} \quad \text{for } 1 \le \rho < \infty.
    \end{align}
    In particular, $J_{h_0} \in L_\mathrm{loc}^\infty(0, \infty)$.
\end{lemma}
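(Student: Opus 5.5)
Since $J_{h_0}(\rho) = \rho^{N-2}\cpct_{h_0/\rho}(T',\mathbb{R}^{N-1})$, the statement reduces to the behaviour of $g(h) := \cpct_h(T',\mathbb{R}^{N-1})$ as a function of $h\in(0,\infty)$. Writing $h = h_0/\rho$, we have $\rho\le 1 \iff h \ge h_0$ and $\rho \ge 1 \iff h \le h_0$. The plan is to show three things:
\begin{itemize}
\item $g(h)$ is bounded above and below by positive constants for $h\ge h_0$;
\item $g(h)/(2h)$ is bounded above and below by positive constants for $h \le h_0$;
\item consequently $J_{h_0}(\rho) = \rho^{N-2} g(h_0/\rho) = 2h_0\rho^{N-3}\, g(h)/(2h)$, which gives both bounds.
\end{itemize}
Note that $h_0\in(0,\infty)$ forces $N>3$, so Propositions \ref{pr:continuity_in_h} and \ref{pr:limit_at_infinity} and Remark \ref{rmk:jointly_equal_to_zero_2} all apply with $U' = \mathbb{R}^{N-1}$.

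For $h \ge h_0$, Proposition \ref{pr:limit_at_infinity} gives the upper bound $g(h) \le \cpct(T'\times\{0\},\mathbb{R}^N) < \infty$. Finiteness holds because $T'$ is bounded, and the same proposition is stated for open $U'$, hence covers $U' = \mathbb{R}^{N-1}$. For the lower bound, Proposition \ref{pr:continuity_in_h} says $g(h)/(2h)$ is decreasing, so $g(h_1)\le (h_2/h_1) g(h_2)$. The first inequality of \eqref{eq:comparison_in_h} gives $g(h) \ge (h_0/h)\, g(h_0)$, which degenerates as $h\to\infty$, so that alone is not enough. Instead I would argue directly that $g$ is nondecreasing in $h$: restricting an admissible function on the strip of height $h_2$ to the smaller strip of height $h_1\le h_2$ is still admissible and has no larger energy. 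Hence $g(h) \ge g(h_0)$ for $h \ge h_0$, and $g(h_0) > 0$ by Remark \ref{rmk:jointly_equal_to_zero_2}, since $\cpct(T'\times\{0\},\mathbb{R}^N)>0$ by assumption. This yields \eqref{eq:equivalence_of_functions_1}.

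For $h \le h_0$, monotonicity of $g(h)/(2h)$ gives the lower bound $g(h)/(2h) \ge g(h_0)/(2h_0) > 0$. For the upper bound, $g(h)/(2h) \le \cpct_0(T',\mathbb{R}^{N-1})$, which is finite by \eqref{eq:finite_limit}. That estimate was proved with $U'$ bounded, but its proof only needs a test function $w \in C_c^\infty(\mathbb{R}^{N-1})$ equal to $1$ near $T'$, so it works verbatim for $U' = \mathbb{R}^{N-1}$. Then $J_{h_0}(\rho) = 2h_0\rho^{N-3}\, g(h)/(2h)$ gives \eqref{eq:equivalence_of_functions_2}. Finally, the two-sided bounds make $J_{h_0}$ bounded on compact subsets of $(0,\infty)$, so $J_{h_0} \in L^\infty_{\mathrm{loc}}$.

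The only delicate points are checking that the earlier propositions hold for the unbounded set $U'=\mathbb{R}^{N-1}$, and supplying the monotonicity of $g$ itself. The latter is not stated explicitly earlier, but it follows immediately from the restriction argument above.
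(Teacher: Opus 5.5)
Your argument is correct, but it reaches the two-sided bounds differently from the paper. The paper relies on continuity of $h \mapsto \cpct_h(T', \mathbb{R}^{N-1})$ (Proposition \ref{pr:continuity_in_h}) and on positive, finite limits at the open ends of the two ranges:
\begin{itemize}
\item $\cpct_{h_0/\rho}(T', \mathbb{R}^{N-1}) \to \cpct(T' \times \{0\}, \mathbb{R}^N)$ as $\rho \to 0$, by Proposition \ref{pr:limit_at_infinity};
\item $\rho\, \cpct_{h_0/\rho}(T', \mathbb{R}^{N-1}) \to 2h_0 \cpct_0(T', \mathbb{R}^{N-1})$ as $\rho \to \infty$.
\end{itemize}
A continuous positive function on $(0,1]$, respectively $[1,\infty)$, with such a limit at the open end is then bounded above and below.

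You instead trap $g$ between two monotone quantities. One is that $g$ itself is nondecreasing, by your restriction argument. That argument is valid: $T' \times \{0\}$ lies inside every strip, and restricting to a thinner strip preserves admissibility and does not increase the energy. The other is that $g(h)/(2h)$ is nonincreasing. Your bound $g(h_1) \le g(h_2)$ is sharper than the second inequality of \eqref{eq:comparison_in_h}, which only gives $g(h_1) \le (h_2/h_1)\, g(h_2)$. This sharper bound is exactly what gives a uniform lower bound on $[h_0,\infty)$ without passing to a limit.

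What your route buys is explicit constants: $g(h_0)\rho^{N-2} \le J_{h_0}(\rho) \le \cpct(T'\times\{0\},\mathbb{R}^N)\,\rho^{N-2}$ for $\rho \le 1$, and $g(h_0)\rho^{N-3} \le J_{h_0}(\rho) \le 2h_0\cpct_0(T',\mathbb{R}^{N-1})\,\rho^{N-3}$ for $\rho \ge 1$. It also avoids the continuity-plus-limits step. The paper's route is shorter because it reuses limits it has already established.

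Two small slips, neither affecting the proof:
\begin{itemize}
\item In your aside, $g(h) \ge (h_0/h)\, g(h_0)$ comes from the second inequality of \eqref{eq:comparison_in_h}, not the first. The first inequality is equivalent to the monotonicity of $g(h)/(2h)$, and does not imply $g(h_1) \le (h_2/h_1)\, g(h_2)$ as your ``so'' suggests.
\item Proposition \ref{pr:continuity_in_h}, including \eqref{eq:finite_limit}, is already stated for any open $U' \supset \overline{K'}$, not only bounded $U'$. So $U' = \mathbb{R}^{N-1}$ is covered directly and your re-check is unnecessary, though harmless.
\end{itemize}
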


\begin{proof}
    Since $h_0 \in (0, \infty)$, we have $N > 3$. The quotient $J_{h_0}(\rho)/\rho^{N - 2}$ is equal to $\cpct_{h_0/\rho}(T', \mathbb{R}^{N - 1})$, which is continuous and positive for $\rho \in (0, 1]$. In addition, Proposition \ref{pr:limit_at_infinity} yields $\cpct_{h_0/\rho}(T', \mathbb{R}^{N - 1}) \to \cpct(T' \times \{0\}, \mathbb{R}^N) > 0$ as $\rho \to 0$. Hence, $J_{h_0}(\rho)/\rho^{N - 2}$ is bounded away from $0$ on $(0, 1]$. On the other hand, $J_{h_0}(\rho)/\rho^{N - 3}$ equals $\rho \cpct_{h_0/\rho}(T', \mathbb{R}^{N - 1})$, which is continuous and positive for $\rho \ge 1$. Proposition \ref{pr:continuity_in_h} gives $\rho \cpct_{h_0/\rho}(T', \mathbb{R}^{N - 1}) \to 2h_0 \cpct_0(T', \mathbb{R}^{N - 1}) > 0$ as $\rho \to \infty$. Therefore, $J_{h_0}(\rho)/\rho^{N - 3}$ is bounded at infinity. This concludes the proofs of \eqref{eq:equivalence_of_functions_1} and \eqref{eq:equivalence_of_functions_2}. The local boundedness of $J_{h_0}$ follows immediately.
\end{proof}

\begin{lemma} \label{lm:negligible_cluster_1}
    Let $M :(\Omega, \mathcal{F}, \mathbb{P}) \rightarrow (\mathbb{M}^{N - 1}, \mathcal{M}^{N - 1})$ be a stationary m.p.p. with finite intensity such that
    \[
    \int_0^\infty J_{h_0}(\rho) \, d\lambda(\rho) < \infty.
    \]
    Then
    \begin{equation} \label{eq:negligible_cluster_1}
        \lim_{\varepsilon \to 0} \sum_{(y', \rho) \in C_{\varepsilon, h_0}^1(\omega)} \varepsilon^{N - 1} J_{h_0}(\rho) = \lim_{\varepsilon \to 0} \sum_{(y', \rho) \in C_{\varepsilon, h_0}^1(\omega)} a_\varepsilon^{N - 1} \rho^{N - 1} = 0 \quad \mathbb{P}\text{-a.s.}
    \end{equation}
\end{lemma}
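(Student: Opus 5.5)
The plan is to show that every point of $C^1_{\varepsilon,h_0}(\omega)$ belongs to the thinned process $M_\sigma(\omega)$ once $\varepsilon$ is small enough, for any fixed $\sigma>0$. Lemma \ref{lm:thinned_limit} then gives the first limit. The second limit will follow by comparing $a_\varepsilon^{N-1}\rho^{N-1}$ with $\varepsilon^{N-1}J_{h_0}(\rho)$ for the points in question.

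\textbf{Step 1: inclusion in the thinned process.} Take $(y',\rho)\in C^1_{\varepsilon,h_0}(\omega)$ and consider the case $2a_\varepsilon\rho \ge \varepsilon r(\omega,y')$. Either $r(\omega,y')=1$, in which case $\rho\ge \varepsilon/(2a_\varepsilon)\to\infty$. Or $r(\omega,y')=d_{M(\omega)}(y')/2$, in which case $\rho\, d_{M(\omega)}(y')/2 \ge \varepsilon/(2a_\varepsilon)$. In the latter case either $d/2<\sigma$ or $\rho\ge \varepsilon/(2a_\varepsilon\sigma)\to\infty$. So for $\varepsilon$ small (depending on $\sigma$) we get $\min\{d/2,1/\rho\}<\sigma$, i.e. $(y',\rho)\in M_\sigma(\omega)$. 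When $h_0=\infty$ there is the additional alternative $2a_\varepsilon\rho\ge\delta_\varepsilon$, which gives $\rho\ge h_\varepsilon/2\to\infty$, hence again $1/\rho<\sigma$. Therefore
\[
\limsup_{\varepsilon\to0}\sum_{C^1_{\varepsilon,h_0}}\varepsilon^{N-1}J_{h_0}(\rho)\le \lim_{\varepsilon\to0}\sum_{(y',\rho)\in M_\sigma(\omega),\,\varepsilon y'\in U'}\varepsilon^{N-1}J_{h_0}(\rho)
\]
for every $\sigma$. Letting $\sigma\to0$ and using Lemma \ref{lm:thinned_limit}, the first limit in \eqref{eq:negligible_cluster_1} is zero almost surely.

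\textbf{Step 2: the volume sum.} I want a bound of the form $a_\varepsilon^{N-1}\rho^{N-1}\le \eta_\varepsilon\,\varepsilon^{N-1}J_{h_0}(\rho)$ with $\eta_\varepsilon$ bounded, valid on $C^1_{\varepsilon,h_0}$. For $\rho\ge 1$ the constraint $2a_\varepsilon\rho\ge\varepsilon r$ is not available as an upper bound, so I plan to use instead that every hole center satisfies $\varepsilon y'\in U'$ and each hole lies in $U'$. The cleaner route, though, is to compare the two sums directly for all $\rho$, using $a_\varepsilon\rho\lesssim 1$ on relevant points. A crude alternative is to intersect with $U'$, but the bound $|(\varepsilon y'+a_\varepsilon\rho T')\cap U'|$ does not control $a_\varepsilon^{N-1}\rho^{N-1}$. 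So instead I would use the explicit scalings case by case.
\begin{itemize}
\item If $h_0=\infty$: $J_\infty\sim\rho^{N-2}$ and $\varepsilon^{N-1}\sim a_\varepsilon^{N-2}/\delta_\varepsilon$. The ratio is then $a_\varepsilon^{N-1}\rho^{N-1}/(\varepsilon^{N-1}\rho^{N-2})\sim a_\varepsilon\rho\,\delta_\varepsilon/a_\varepsilon=\rho\,\delta_\varepsilon$.
\item If $h_0=0$: $J_0\sim\rho^{N-3}$ and $\varepsilon^{N-1}=a_\varepsilon^{N-3}$, so the ratio is $a_\varepsilon^2\rho^2$.
\item If $h_0\in(0,\infty)$: use Lemma \ref{lm:equivalence_of_functions}. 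For $\rho\le1$ the ratio is $\lesssim a_\varepsilon\rho\,h_\varepsilon\to0$, and for $\rho\ge1$ it is $\lesssim a_\varepsilon^2\rho^2$.
\end{itemize}

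\textbf{Main obstacle: large marks.} Everything reduces to controlling $\max\rho$ over points with $\varepsilon y'\in U'$. I would show $\max_{\varepsilon y'\in U'}\varepsilon^{N-1}J_{h_0}(\rho)\to0$. This follows from Theorem \ref{thm:ergodic_theorem_second_version} applied to $J_{h_0}\mathbf 1_{\rho>R}$: a single term is bounded by the tail sum, whose limit is $\int_{\rho>R}J_{h_0}\,d\xi$, and this tends to $0$ as $R\to\infty$. It gives $\varepsilon^{N-1}\rho^{N-3}\to0$ uniformly, i.e. $a_\varepsilon\rho\ll(\varepsilon^{N-1}/a_\varepsilon^{N-3})^{-1/(N-3)}$. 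In the case $h_0=0$ this means $a_\varepsilon\rho\to0$ uniformly, and the other cases are similar using $\rho^{N-2}$ weights together with $\delta_\varepsilon\to0$. For $N=3$ only $h_0=\infty$ occurs, and there $\varepsilon^2\rho\to0$ uniformly gives $\rho\delta_\varepsilon\lesssim\rho\,a_\varepsilon/\varepsilon^2\cdot\delta_\varepsilon^{2}$, which is bounded. So the ratio is uniformly bounded, indeed tending to $0$, and the second limit follows from the first.
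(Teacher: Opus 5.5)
Your Step 1 is the paper's argument. There is one harmless slip: if $r(\omega,y')=d_{M(\omega)}(y')/2\ge\sigma$, you only get $\rho\ge\varepsilon/(2a_\varepsilon)$ (using $d_{M(\omega)}(y')/2\le 1$), not $\rho\ge\varepsilon/(2a_\varepsilon\sigma)$, but this still tends to infinity.

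Step 2, however, contains a computational error that breaks the case $h_0=\infty$ as written. Since $a_\varepsilon^{N-2}=\varepsilon^{N-1}\delta_\varepsilon$ there,
\[
\frac{a_\varepsilon^{N-1}\rho^{N-1}}{\varepsilon^{N-1}\rho^{N-2}}=\frac{a_\varepsilon^{N-1}\rho\,\delta_\varepsilon}{a_\varepsilon^{N-2}}=a_\varepsilon\rho\,\delta_\varepsilon ,
\]
not $\rho\,\delta_\varepsilon$. The quantity $\rho\,\delta_\varepsilon$ that you set out to bound is not controlled by your estimates. For $N=3$, your tail estimate only gives $\rho=o(\varepsilon^{-2})$. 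So with $\delta_\varepsilon=\varepsilon$, which is admissible, you only get $\rho\,\delta_\varepsilon=o(\varepsilon^{-1})$. Your closing $N=3$ line, where $a_\varepsilon/\varepsilon^2=\delta_\varepsilon$, does not establish boundedness either.

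With the correct ratio the argument goes through. Your uniform estimate $\varepsilon^{N-1}\rho^{N-2}=a_\varepsilon^{N-2}\rho^{N-2}/\delta_\varepsilon\to0$ gives $a_\varepsilon\rho\to 0$ uniformly, so the ratio tends to $0$. The same slip for $h_0\in(0,\infty)$, $\rho\le 1$, is harmless.

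Once repaired, your route differs from the paper's in how large marks are handled. You bound the volume sum by the supremum of the ratio times $\sum\varepsilon^{N-1}J_{h_0}(\rho)$. That forces you to prove uniform smallness of $a_\varepsilon\rho$, which you do with an extra ergodic-theorem tail argument. The paper needs no uniform bound. It applies $\|\cdot\|_{\ell^{N-1}}\le\|\cdot\|_{\ell^{N-2}}$ (resp.\ $\|\cdot\|_{\ell^{N-3}}$) to the family $(a_\varepsilon\rho)$. For $h_0=\infty$ this reads
\[
\Bigl(\sum_{C_{\varepsilon,\infty}^1(\omega)}a_\varepsilon^{N-1}\rho^{N-1}\Bigr)^{\frac{N-2}{N-1}}\le\sum_{C_{\varepsilon,\infty}^1(\omega)}a_\varepsilon^{N-2}\rho^{N-2}\lesssim\delta_\varepsilon\sum_{C_{\varepsilon,\infty}^1(\omega)}\varepsilon^{N-1}J_\infty(\rho).
\]
The analogue with exponent $N-3$ and $a_\varepsilon^{N-3}\sim\varepsilon^{N-1}$ handles $h_0=0$, and also $\rho>1$ when $h_0\in(0,\infty)$ (splitting at $\rho=1$ via Lemma~\ref{lm:equivalence_of_functions}). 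This is shorter and purely deterministic once the first limit is known.

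Even within your approach the tail argument is unnecessary. You only need the supremum over $C_{\varepsilon,h_0}^1(\omega)$, and there each single term $\varepsilon^{N-1}J_{h_0}(\rho)$ is bounded by the whole sum, which already tends to $0$ by Step 1. This is exactly the information the $\ell^p$ inequality packages.
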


\begin{proof}
     Let $\varepsilon > 0$ and $\omega \in \Omega$. If $h_0 < \infty$ and $(y', \rho) \in C_{\varepsilon, h_0}^1(\omega)$, then $r(\omega, y')/\rho \le 2 a_\varepsilon/\varepsilon$ by definition. Consequently, we must have
    \[
    r(\omega, y') \le \left(\frac{2 a_\varepsilon}{\varepsilon}\right)^\frac{1}{2} \quad \text{or} \quad \frac{1}{\rho} \le \left(\frac{2 a_\varepsilon}{\varepsilon}\right)^\frac{1}{2} \quad \text{for all } (y', \rho) \in C_{\varepsilon, h_0}^1(\omega), \, h_0 < \infty.
    \]
    If $h_0 = \infty$, then $1/\rho < 2 a_\varepsilon/\delta_\varepsilon$ might alternatively hold. Therefore,
    \[
    r(\omega, y') \le \left(\frac{2 a_\varepsilon}{\varepsilon}\right)^\frac{1}{2} \quad \text{or} \quad \frac{1}{\rho} \le \max \left\{\left(\frac{2 a_\varepsilon}{\varepsilon}\right)^\frac{1}{2}, \frac{2}{h_\varepsilon}\right\} \quad \text{for all } (y', \rho) \in C_{\varepsilon, \infty}^1(\omega).
    \]
    We observe that $a_\varepsilon/\varepsilon \to 0$ as $\varepsilon \to 0$ for all $h_0 \in [0, \infty]$. Also, for $h_0 = \infty$, we have $h_\varepsilon \to \infty$ by definition. Hence, for all $h_0 \in [0, \infty]$ and $\sigma > 0$, if $\varepsilon$ is small enough, then
    \[
    \min \left\{\frac{d_{M(\omega)}(y')}{2}, \frac{1}{\rho}\right\} < \sigma \quad \text{for all } (y', \rho) \in C_{\varepsilon, h_0}^1(\omega).
    \]
    This yields
    \[
    \limsup_{\varepsilon \to 0} \sum_{(y', \rho) \in C_{\varepsilon, h_0}^1(\omega)} \varepsilon^{N - 1} J_{h_0}(\rho) \le \limsup_{\varepsilon \to 0} \sum \limits_{\substack{(y', \rho) \in M_\sigma(\omega) \\ \varepsilon y' \in U'}} \varepsilon^{N - 1} J_{h_0}(\rho).
    \]
    By passing to the limit in $\sigma$, we obtain
    \[
    \limsup_{\varepsilon \to 0} \sum_{(y', \rho) \in C_{\varepsilon, h_0}^1(\omega)} \varepsilon^{N - 1} J_{h_0}(\rho) \le \lim_{\sigma \to 0} \, \limsup_{\varepsilon \to 0} \sum \limits_{\substack{(y', \rho) \in M_\sigma(\omega) \\ \varepsilon y' \in U'}} \varepsilon^{N - 1} J_{h_0}(\rho) = 0 \quad \mathbb{P}\text{-a.s.},
    \]
    where the last equality follows from Lemma \ref{lm:thinned_limit}. 
    
    To prove that the second limit in \eqref{eq:negligible_cluster_1} is zero, we argue case by case and use the first step. If $h_0 = \infty$, then
    \[
    \Bigg(\sum_{(y', \rho) \in C_{\varepsilon, \infty}^1(\omega)} a_\varepsilon^{N - 1} \rho^{N - 1}\Bigg)^\frac{N - 2}{N - 1} \le \sum_{(y', \rho) \in C_{\varepsilon, \infty}^1(\omega)} a_\varepsilon^{N - 2} \rho^{N - 2} \lesssim \sum_{(y', \rho) \in C_{\varepsilon, \infty}^1(\omega)} \varepsilon^{N - 1} \delta_\varepsilon J_\infty(\rho),
    \]
    where we used $\|\cdot\|_{\ell^{N - 1}} \le \|\cdot\|_{\ell^{N - 2}}$ in the first inequality. For $h_0 \in (0, \infty)$, we split the sum in two parts and use Lemma \ref{lm:equivalence_of_functions}:
    \[
    \Bigg(\sum \limits_{\substack{(y', \rho) \in C_{\varepsilon, h_0}^1(\omega) \\ 0 < \rho \le 1}} a_\varepsilon^{N - 1} \rho^{N - 1}\Bigg)^\frac{N - 2}{N - 1} \le \sum \limits_{\substack{(y', \rho) \in C_{\varepsilon, h_0}^1(\omega) \\ 0 < \rho \le 1}} a_\varepsilon^{N - 2} \rho^{N - 2} \lesssim \sum_{(y', \rho) \in C_{\varepsilon, h_0}^1(\omega)} \varepsilon^{N - 1} \delta_\varepsilon J_{h_0}(\rho)
    \]
    and
    \[
    \Bigg(\sum \limits_{\substack{(y', \rho) \in C_{\varepsilon, h_0}^1(\omega) \\ \rho > 1}} a_\varepsilon^{N - 1} \rho^{N - 1}\Bigg)^\frac{N - 3}{N - 1} \le \sum \limits_{\substack{(y', \rho) \in C_{\varepsilon, h_0}^1(\omega) \\ \rho > 1}} a_\varepsilon^{N - 3} \rho^{N - 3} \lesssim \sum_{(y', \rho) \in C_{\varepsilon, h_0}^1(\omega)} \varepsilon^{N - 1} J_{h_0}(\rho).
    \]
    In the last inequality we used the fact that $h_0 \in (0, \infty)$ implies $a_\varepsilon \sim \delta_\varepsilon$ and $\delta_\varepsilon^{N - 3} \sim \varepsilon^{N - 1}$. Lastly, if $h_0 = 0$, then
    \[
    \Bigg(\sum_{(y', \rho) \in C_{\varepsilon, 0}^1(\omega)} a_\varepsilon^{N - 1} \rho^{N - 1}\Bigg)^\frac{N - 3}{N - 1} \le \sum_{(y', \rho) \in C_{\varepsilon, 0}^1(\omega)} a_\varepsilon^{N - 3} \rho^{N - 3} \lesssim \sum_{(y', \rho) \in C_{\varepsilon, 0}^1(\omega)} \varepsilon^{N - 1} J_0(\rho).
    \]
    In all three cases, the claim follows from the first limit in \eqref{eq:negligible_cluster_1}.
\end{proof}

To prove the corresponding claim for $C_{\varepsilon, h_0}^2(\omega)$, we show that its points cluster around points in $C_{\varepsilon, h_0}^1(\omega)$.

\begin{lemma} \label{lm:cluster_within_cluster}
    Let $\varepsilon > 0$ and $\omega \in \Omega$. For all $(y', \rho) \in C_{\varepsilon, h_0}^2(\omega)$, there exists $(\tilde{y}', \tilde{\rho}) \in C_{\varepsilon, h_0}^1(\omega)$ such that
    \begin{equation} \label{eq:absorbed_by_cluster_point}
        B'(\varepsilon y', \varepsilon r(\omega, y')) \subset B'\left(\varepsilon \tilde{y}', 6 a_\varepsilon \tilde{\rho}\right).
    \end{equation}
\end{lemma}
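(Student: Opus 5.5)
Take $(y', \rho) \in C_{\varepsilon, h_0}^2(\omega)$. By definition $(y',\rho)$ is a cluster point, so it fails (1a)/(1b) or (2) of Definition \ref{def:isolated_and_cluster_points}. Being outside $C^1_{\varepsilon,h_0}(\omega)$ means (1a)/(1b) holds, that is $2a_\varepsilon\rho < \varepsilon r(\omega,y')$ (and also $<\delta_\varepsilon$ if $h_0=\infty$). Hence (2) fails: there is $(\tilde y',\tilde\rho)\in M(\omega)$ with $\varepsilon\tilde y'\in U'$, $\tilde y'\neq y'$ and
\[
B'(\varepsilon\tilde y', 2a_\varepsilon\tilde\rho)\cap B'(\varepsilon y',\varepsilon r(\omega,y'))\neq\emptyset .
\]
I will show this $(\tilde y',\tilde\rho)$ satisfies $2a_\varepsilon\tilde\rho\ge \varepsilon r(\omega,\tilde y')$, and then check the inclusion \eqref{eq:absorbed_by_cluster_point}.

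For the distance bound, note $r(\omega,y')\le d_{M(\omega)}(y')/2\le |y'-\tilde y'|/2$. Then the nonempty intersection gives
\[
\varepsilon|y'-\tilde y'| < 2a_\varepsilon\tilde\rho+\varepsilon r(\omega,y') \le 2a_\varepsilon\tilde\rho+\tfrac{\varepsilon}{2}|y'-\tilde y'|,
\]
so $\varepsilon|y'-\tilde y'|<4a_\varepsilon\tilde\rho$. We also have $r(\omega,\tilde y')\le |y'-\tilde y'|/2$, hence $\varepsilon r(\omega,\tilde y')<2a_\varepsilon\tilde\rho$. This is condition (1b) failing for $\tilde y'$. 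It also makes (1a) fail when $h_0=\infty$, because (1a) involves a minimum with $\delta_\varepsilon$ and that only makes it harder to satisfy. So $(\tilde y',\tilde\rho)$ is a cluster point in $C^1_{\varepsilon,h_0}(\omega)$.

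For the inclusion, any $z'\in B'(\varepsilon y',\varepsilon r(\omega,y'))$ satisfies
\[
|z'-\varepsilon\tilde y'|<\varepsilon r(\omega,y')+\varepsilon|y'-\tilde y'|\le \tfrac32\varepsilon|y'-\tilde y'|<6a_\varepsilon\tilde\rho .
\]
This gives \eqref{eq:absorbed_by_cluster_point}.

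The only delicate point is the bookkeeping between the cases $h_0=\infty$ and $h_0<\infty$. In both cases the strict inequality $\varepsilon r(\omega,\tilde y')<2a_\varepsilon\tilde\rho$ implies membership in $C^1_{\varepsilon,h_0}(\omega)$, so the argument does not need to split. Observe also that the hypothesis (1a)/(1b) on $(y',\rho)$ itself is not needed: only the failure of (2) is used.
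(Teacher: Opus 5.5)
Your proof is correct and follows essentially the same route as the paper: you use the failure of condition (2) to find $(\tilde y',\tilde\rho)$, deduce $\varepsilon|y'-\tilde y'|<4a_\varepsilon\tilde\rho$ from $r(\omega,y')\le|y'-\tilde y'|/2$, and conclude the inclusion with radius $6a_\varepsilon\tilde\rho$. The only cosmetic difference is that you obtain $\varepsilon r(\omega,\tilde y')<2a_\varepsilon\tilde\rho$ from $r(\omega,\tilde y')\le|y'-\tilde y'|/2$ together with the distance bound. The paper instead gets it directly from the disjointness property \eqref{eq:nonintersecting_scaled_balls}.
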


\begin{proof}
    Assume $(y', \rho) \in C_{\varepsilon, h_0}^2(\omega)$. As $(y', \rho)$ violates condition $(2)$ in Definition \ref{def:isolated_and_cluster_points}, there exists a point $(\tilde{y}', \tilde{\rho}) \in M(\omega)$ with $\varepsilon \tilde{y}' \in U'$ such that
    \begin{equation} \label{eq:cluster_equation}
        B'(\varepsilon \tilde{y}', 2 a_\varepsilon \tilde{\rho}) \cap B'(\varepsilon y', \varepsilon r(\omega, y')) \neq \emptyset \iff \varepsilon |y' - \tilde{y}'| < \varepsilon r(\omega, y') + 2 a_\varepsilon \tilde{\rho}.
    \end{equation}
    From \eqref{eq:nonintersecting_scaled_balls} and \eqref{eq:cluster_equation} we see that $2 a_\varepsilon \tilde{\rho} > \varepsilon r(\omega, \tilde{y}')$. Therefore, $(\tilde{y}', \tilde{\rho}) \in C_{\varepsilon, h_0}^1(\omega)$. Since $r(\omega, y') \le |y' - \tilde{y}'|/2$, \eqref{eq:cluster_equation} implies $\varepsilon |y' - \tilde{y}'| < 4 a_\varepsilon \tilde{\rho}$. Thus, $B'(\varepsilon y', \varepsilon r(\omega, y')) \subset B'(\varepsilon \tilde{y}', 6 a_\varepsilon \tilde{\rho})$.
\end{proof}

\begin{lemma} \label{lm:negligible_cluster_2}
    Let $M :(\Omega, \mathcal{F}, \mathbb{P}) \rightarrow (\mathbb{M}^{N - 1}, \mathcal{M}^{N - 1})$ be a stationary m.p.p. with finite intensity such that
    \[
    \int_0^\infty J_{h_0}(\rho) \, d\lambda(\rho) < \infty.
    \]
    Then
    \[
    \lim_{\varepsilon \to 0} \sum_{(y', \rho) \in C_{\varepsilon, h_0}^2(\omega)} \varepsilon^{N - 1} J_{h_0}(\rho) = 0 \quad \mathbb{P}\text{-a.s.}
    \]
\end{lemma}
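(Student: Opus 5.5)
The plan is to split $C_{\varepsilon, h_0}^2(\omega)$ by means of the thinned process $M_\sigma$. One part is handled by Lemma \ref{lm:thinned_limit}. The other part has bounded marks and well-separated centers, and can therefore be counted using the geometric absorption in Lemma \ref{lm:cluster_within_cluster} together with the second limit in Lemma \ref{lm:negligible_cluster_1}.

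Fix $\sigma \in (0, 1)$ and write
\[
\sum_{(y', \rho) \in C_{\varepsilon, h_0}^2(\omega)} \varepsilon^{N - 1} J_{h_0}(\rho) \le \sum \limits_{\substack{(y', \rho) \in M_\sigma(\omega) \\ \varepsilon y' \in U'}} \varepsilon^{N - 1} J_{h_0}(\rho) + \sum_{(y', \rho) \in C_{\varepsilon, h_0}^2(\omega) \setminus M_\sigma(\omega)} \varepsilon^{N - 1} J_{h_0}(\rho).
\]
The first sum has vanishing iterated limit (first $\varepsilon \to 0$, then $\sigma \to 0$) by Lemma \ref{lm:thinned_limit}. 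It therefore suffices to show that, for each fixed $\sigma$, the second sum tends to $0$ almost surely as $\varepsilon \to 0$.

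Points $(y', \rho)$ of $C_{\varepsilon, h_0}^2(\omega) \setminus M_\sigma(\omega)$ satisfy $d_{M(\omega)}(y')/2 \ge \sigma$ and $\rho \le 1/\sigma$. Hence $r(\omega, y') \ge \sigma$ and $J_{h_0}(\rho) \le K_\sigma := \sup_{(0, 1/\sigma]} J_{h_0} < \infty$. This supremum is finite because $J_{h_0}$ is a power of $\rho$ times a constant when $h_0 \in \{0, \infty\}$, and is locally bounded by Lemma \ref{lm:equivalence_of_functions} when $h_0 \in (0, \infty)$. So the second sum is at most $K_\sigma \varepsilon^{N - 1}$ times the number of such points.

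To count them, assign to each such point a point $(\tilde{y}', \tilde{\rho}) \in C_{\varepsilon, h_0}^1(\omega)$ via Lemma \ref{lm:cluster_within_cluster}. For a fixed $(\tilde{y}', \tilde{\rho})$, the assigned points have balls $B'(\varepsilon y', \varepsilon \sigma) \subset B'(\varepsilon y', \varepsilon r(\omega, y'))$. These balls are pairwise disjoint by \eqref{eq:nonintersecting_scaled_balls} and contained in $B'(\varepsilon \tilde{y}', 6 a_\varepsilon \tilde{\rho})$. Comparing volumes, their number is at most $(6 a_\varepsilon \tilde{\rho}/(\varepsilon \sigma))^{N - 1}$. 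Summing over $C_{\varepsilon, h_0}^1(\omega)$ gives
\[
\sum_{(y', \rho) \in C_{\varepsilon, h_0}^2(\omega) \setminus M_\sigma(\omega)} \varepsilon^{N - 1} J_{h_0}(\rho) \le K_\sigma \left(\frac{6}{\sigma}\right)^{N - 1} \sum_{(\tilde{y}', \tilde{\rho}) \in C_{\varepsilon, h_0}^1(\omega)} a_\varepsilon^{N - 1} \tilde{\rho}^{N - 1}.
\]
The right-hand side tends to $0$ $\mathbb{P}$-a.s. by the second limit in \eqref{eq:negligible_cluster_1}. Taking $\limsup_{\varepsilon \to 0}$ and then $\sigma \to 0$ concludes the argument, after intersecting the countably many full-measure events for $\sigma = 1/k$.

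The main point requiring care is the counting step. The mismatch of exponents ($J_{h_0}$ grows like $\rho^{N - 2}$ or $\rho^{N - 3}$, while the volume comparison naturally controls $\rho^{N - 1}$-type quantities) is exactly why I do not estimate the $C^2$ points directly through the disjoint balls. Instead, I pass to the thinned process, which forces a uniform lower bound $\sigma$ on $r$ and an upper bound $1/\sigma$ on $\rho$. This makes the estimate a pure cardinality count, at the price of a $\sigma$-dependent constant that is harmless because $\varepsilon \to 0$ first.
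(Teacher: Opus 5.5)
Your proposal is correct and follows the paper's proof essentially step for step. You restrict to $C_{\varepsilon,h_0}^2(\omega)\setminus M_\sigma(\omega)$ and bound $J_{h_0}$ by its supremum on $(0,1/\sigma]$. You then count the points via Lemma \ref{lm:cluster_within_cluster} and the disjoint balls of radius at least $\varepsilon\sigma$ inside $B'(\varepsilon\tilde{y}', 6a_\varepsilon\tilde{\rho})$, and conclude with the second limit in Lemma \ref{lm:negligible_cluster_1} and Lemma \ref{lm:thinned_limit}. The countable intersection over $\sigma = 1/k$ is harmless but unnecessary, since the full-measure event from Lemma \ref{lm:negligible_cluster_1} does not depend on $\sigma$.
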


\begin{proof}
    We prove
    \[
    \lim_{\varepsilon \to 0} \sum_{(y', \rho) \in C_{\varepsilon, h_0}^2(\omega) \setminus M_\sigma(\omega)} \varepsilon^{N - 1} J_{h_0}(\rho) = 0 \quad \mathbb{P}\text{-a.s.}
    \]
    for $\sigma < 1$. Let $\varepsilon > 0$ and $\omega \in \Omega$. As $\rho \le 1/\sigma$ for all $(y', \rho) \in M(\omega) \setminus M_\sigma(\omega)$, we have
    \begin{equation} \label{eq:bound_by_cardinality}
        \sum_{(y', \rho) \in C_{\varepsilon, h_0}^2(\omega) \setminus M_\sigma(\omega)} \varepsilon^{N - 1} J_{h_0}(\rho) \le \varepsilon^{N - 1} \|J_{h_0}\|_{L^\infty(0, 1/\sigma)} \card\left(C_{\varepsilon, h_0}^2(\omega) \setminus M_\sigma(\omega)\right).
    \end{equation}
    Fixing $\sigma$ and $\omega$ for the moment, let us define
    \[
    A(\tilde{y}', \tilde{\rho}) := \left\{(y', \rho) \in C_{\varepsilon, h_0}^2(\omega) \setminus M_\sigma(\omega) : B'(\varepsilon y', \varepsilon r(\omega, y')) \subset B'(\varepsilon \tilde{y}', 6 a_\varepsilon \tilde{\rho})\right\}.
    \]
    It follows from Lemma \ref{lm:cluster_within_cluster} that
    \begin{equation} \label{eq:cardinality_estimate}
        \card\left(C_{\varepsilon, h_0}^2(\omega) \setminus M_\sigma(\omega)\right) \le \sum_{(\tilde{y}', \tilde{\rho}) \in C_{\varepsilon, h_0}^1(\omega)} \card(A(\tilde{y}', \tilde{\rho})).
    \end{equation}
    To estimate $\card(A(\tilde{y}', \tilde{\rho}))$, we sum the volumes of the balls $B'(\varepsilon y', \varepsilon r(\omega, y'))$ for $(y', \rho) \in A(\tilde{y}', \tilde{\rho})$. Since they are disjoint and their union is contained in $B'(\varepsilon \tilde{y}', 6 a_\varepsilon \tilde{\rho})$, we get
    \[
    \sum_{(y', \rho) \in A(\tilde{y}', \tilde{\rho})} \varepsilon^{N - 1} r(\omega, y')^{N - 1} \le (6 a_\varepsilon \tilde{\rho})^{N - 1}.
    \]
    As $r(\omega, y') \ge \sigma$ for $(y', \rho) \in M(\omega) \setminus M_\sigma(\omega)$ and $A(\tilde{y}', \tilde{\rho}) \subset M(\omega) \setminus M_\sigma(\omega)$, we conclude that
    \[
    \card(A(\tilde{y}', \tilde{\rho})) \le \left(\frac{6 a_\varepsilon \tilde{\rho}}{\varepsilon \sigma}\right)^{N - 1}.
    \]
    Combining this with \eqref{eq:bound_by_cardinality} and \eqref{eq:cardinality_estimate} yields
    \begin{equation*}
        \sum_{(y', \rho) \in C_{\varepsilon, h_0}^2(\omega) \setminus M_\sigma(\omega)} \varepsilon^{N - 1} J_{h_0}(\rho) \le \|J_{h_0}\|_{L^\infty(0, 1/\sigma)}\left(\frac{6}{\sigma}\right)^{N - 1} \sum_{(\tilde{y}', \tilde{\rho}) \in C_{\varepsilon, h_0}^1(\omega)} a_\varepsilon^{N - 1} \tilde{\rho}^{N - 1}.
    \end{equation*}
    Thus, Lemma \ref{lm:negligible_cluster_1} and the local boundedness of $J_{h_0}$ implies
    \[
    \limsup_{\varepsilon \to 0} \sum_{(y', \rho) \in C_{\varepsilon, h_0}^2(\omega) \setminus M_\sigma(\omega)} \varepsilon^{N - 1} J_{h_0}(\rho) = 0 \quad \mathbb{P}\text{-a.s.}
    \]
    With the help of Lemma \ref{lm:thinned_limit}, we conclude the proof.
\end{proof}

\begin{proof}[Proof of Proposition \ref{pr:weighted_sums_go_to_zero}]
    The first limit in \eqref{eq:weighted_sums_go_to_zero} is zero by Lemmas \ref{lm:negligible_cluster_1} and \ref{lm:negligible_cluster_2}. The second limit follows from the first exactly as in the proof of Lemma \ref{lm:negligible_cluster_1}.
\end{proof}

\section{The oscillating test functions} \label{sec:otf}

In this section, we construct the oscillating test functions $(w_\varepsilon(\omega, \cdot))$ that satisfy Theorem \ref{thm:oscillating_test_functions}, and we prove their properties. The construction is based on the solutions to certain cell problems.

\subsection{The cell problem}

For $h_0 \in [0, \infty]$ and $l, h > 0$, we define a family of functions $\eta_{l, h}^{h_0} \in H^1(C(l, h))$ as solutions of boundary value problems. Let $\eta_{l, h}^\infty$ be the unique weak solution of
\begin{equation} \label{eq:cell_problem_1}
    \begin{cases}
        \begin{aligned}
            -\Delta \eta_{l, h}^\infty &= 0 \quad \text{in } C(l, h), \\
            \eta_{l, h}^\infty &= 0 \quad \text{on } \partial C(l, h), \\
            \eta_{l, h}^\infty &= 1 \quad \text{on } T' \times \{0\}. \\
        \end{aligned}
    \end{cases}
\end{equation}
For $h_0 \in (0, \infty)$, let $\eta_{l, h}^{h_0}$ be the unique weak solution of
\begin{equation} \label{eq:cell_problem_2}
    \begin{cases}
        \begin{aligned}
            -\Delta \eta_{l, h}^{h_0} &= 0 \quad \text{in } C(l, h), \\
            \eta_{l, h}^{h_0} &= 0 \quad \text{on } \partial B'(0, l) \times (-h, h), \\
            \eta_{l, h}^{h_0} &= 1 \quad \text{on } T' \times \{0\}, \\
            \nabla \eta_{l, h}^{h_0} \cdot \nu &= 0 \quad \text{on } (B'(0, l) \times \{-h, h\}).
        \end{aligned}
    \end{cases}
\end{equation}
Finally, for $h_0 = 0$ and $(x', x_N) \in C(l, h)$, we define $\eta_{l, h}^0(x', x_N) := \eta_l^0(x')$, where $\eta_l^0$ is the unique weak solution of
\begin{equation} \label{eq:cell_problem_3}
    \begin{cases}
        \begin{aligned}
            -\Delta \eta_l^0 &= 0 \quad \text{in } B'(0, l) \setminus T', \\
            \eta_l^0 &= 1 \quad \text{on } \partial B'(0, l), \\
            \eta_l^0 &= 0 \quad \text{on } T'.
        \end{aligned}
    \end{cases}
\end{equation}
It is easy to verify that the solutions of the boundary value problems are capacitary potentials:
\begin{equation} \label{eq:capacitary_potentials}
    \begin{aligned}
        \int_{C(l, h)} |\nabla \eta_{l, h}^\infty|^2 \, dx &= \cpct(T' \times \{0\}, C(l, h)), \\
        \int_{C(l, h)} |\nabla \eta_{l, h}^{h_0}|^2 \, dx &= \cpct_h(T', B'(0, l)) \quad \text{for } h_0 \in (0, \infty), \\
        \int_{B'(0, l)} |\nabla \eta_l^0|^2 \, dx &= \cpct_0(T', B'(0, l)).
    \end{aligned}
\end{equation}
Furthermore, due to symmetry, we have that
\begin{equation} \label{eq:symmetry_considerations}
    \int_{B'(0, l) \times (0, h)} |\nabla \eta_{l, h}^{h_0}|^2 \, dx = \frac{1}{2} \int_{C(l, h)} |\nabla \eta_{l, h}^{h_0}|^2 \, dx \quad \text{for all } h_0 \in [0, \infty].
\end{equation}
Finally, we note that $0 \le \eta_{l, h}^{h_0} \le 1$ a.e. in $C(l, h)$ by the weak maximum principle.

\subsection{Construction of the random oscillating test functions} \label{ssec:otf_construction}

For the remainder of this section, we fix a stationary m.p.p. $M :(\Omega, \mathcal{F}, \mathbb{P}) \rightarrow (\mathbb{M}^{N - 1}, \mathcal{M}^{N - 1})$ with finite intensity such that
\[
\int_0^\infty J_{h_0}(\rho) \, d\lambda(\rho) < \infty.
\]
Let $\varepsilon > 0$ and $\omega \in \Omega$ be fixed. Given $(y', \rho) \in I_{\varepsilon, h_0}(\omega)$, we define $w_\varepsilon(\omega, x)$ for $x \in B'(\varepsilon y', \varepsilon r(\omega, y')) \times (0, \delta_\varepsilon)$ by
\begin{equation} \label{eq:local_definition}
    w_\varepsilon(\omega, x) := 1 - \eta_{l, h}^{h_0}\left(\frac{x - \varepsilon y}{a_\varepsilon \rho}\right), \quad l := \frac{\varepsilon r(\omega, y')}{a_\varepsilon \rho}, \, h := \frac{h_\varepsilon}{\rho}.
\end{equation}
This defines $w_\varepsilon(\omega, x)$ uniquely for
\[
x \in \bigcup_{(y', \rho) \in I_{\varepsilon, h_0}(\omega)} B'(\varepsilon y', \varepsilon r(\omega, y')) \times (0, \delta_\varepsilon),
\]
since it is a disjoint union. We note that
\begin{equation} \label{eq:lateral_boundary_values_1}
    w_\varepsilon(\omega, x) = 1 \quad \text{for } x \in \bigcup_{(y', \rho) \in I_{\varepsilon, h_0}(\omega)} \partial B'(\varepsilon y', \varepsilon r(\omega, y')) \times (0, \delta_\varepsilon).
\end{equation}
Next, we define $w_\varepsilon(\omega, x)$ for $x \in S'_{\varepsilon, h_0}(\omega) \times (0, \delta_\varepsilon)$. First, we consider the case $h_0 = \infty$. Choose $\zeta_{\varepsilon, \infty} \in C_c^\infty(S_{\varepsilon, \infty}(\omega); [0, 1])$ such that $(T'_{\varepsilon, \infty})^C(\omega) \times \{0\} \subset \topint(\{\zeta_{\varepsilon, \infty} = 1\})$ and
\[
\int_{S_{\varepsilon, \infty}(\omega)} |\zeta_{\varepsilon, \infty}|^2 \, dx \le 2 \cpct\left((T'_{\varepsilon, \infty})^C(\omega) \times \{0\}, S_{\varepsilon, \infty}(\omega)\right).
\]
Set $w_\varepsilon(\omega, x) := 1 - \zeta_{\varepsilon, \infty}(x)$ for $x \in S'_{\varepsilon, \infty}(\omega) \times (0, \delta_{\varepsilon}) \subset S_{\varepsilon, \infty}(\omega)$. By \eqref{eq:separation_layer}, the previous definition of $w_\varepsilon(\omega, \cdot)$ does not interfere with the one in $S_{\varepsilon, \infty}(\omega)$. We also have
\begin{equation} \label{eq:vanishing_gradient_1}
    \frac{1}{\delta_\varepsilon} \int_{S'_{\varepsilon, \infty}(\omega) \times (0, \delta_\varepsilon)} |\nabla w_\varepsilon(\omega, x)|^2 \, dx \le \frac{2}{\delta_\varepsilon} \cpct\left((T'_{\varepsilon, \infty})^C(\omega) \times \{0\}, S_{\varepsilon, \infty}(\omega)\right).
\end{equation}
For $h_0 < \infty$ we similarly choose $\zeta_{\varepsilon, h_0} \in C_{\delta_\varepsilon}^\infty(S'_{\varepsilon, h_0}(\omega); [0, 1])$ such that $(T'_{\varepsilon, h_0})^C(\omega) \times \{0\} \subset \topint(\{\zeta_{\varepsilon, h_0} = 1\})$ and
\[
\int_{S'_{\varepsilon, h_0}(\omega) \times (-\delta_\varepsilon, \delta_\varepsilon)} |\nabla \zeta_{\varepsilon, h_0}|^2 \, dx \le 2 \cpct_{\delta_\varepsilon}\left((T'_{\varepsilon, h_0})^C(\omega), S'_{\varepsilon, h_0}(\omega)\right).
\]
We set $w_\varepsilon(\omega, x) := 1 - \zeta_{\varepsilon, h_0}(x)$ for $x \in S'_{\varepsilon, h_0}(\omega) \times (0, \delta_\varepsilon)$. Then
\begin{equation} \label{eq:vanishing_gradient_2}
    \frac{1}{\delta_\varepsilon} \int_{S'_{\varepsilon, h_0}(\omega) \times (0, \delta_\varepsilon)} |\nabla w_\varepsilon(\omega, x)|^2 \, dx \le \frac{2}{\delta_\varepsilon} \cpct_{\delta_\varepsilon}\left((T'_{\varepsilon, h_0})^C(\omega), S'_{\varepsilon, h_0}(\omega)\right).
\end{equation}
This completes the definition in $S'_{\varepsilon, h_0} \times (0, \delta_\varepsilon)$. Observe that 
\begin{equation} \label{eq:lateral_boundary_values_2}
    w_\varepsilon(\omega, x) = 1 \quad \text{for } x \in \partial S'_{\varepsilon, h_0}(\omega) \times (0, \delta_\varepsilon) \quad \text{for all } h_0 \in [0, \infty].
\end{equation}
Finally, we extend $w_\varepsilon(\omega, \cdot)$ by 1 to $\mathbb{R}^{N - 1} \times (0, \delta_\varepsilon)$. It is easy to verify that $w_\varepsilon(\omega, \cdot)$ is weakly differentiable in $\mathbb{R}^{N - 1} \times (0, \delta_\varepsilon)$ and it belongs to $H^1(U_\varepsilon^+)$ with $0 \le w_\varepsilon(\omega, \cdot) \le 1$ a.e. in $\mathbb{R}^{N - 1} \times (0, \delta_\varepsilon)$. Furthermore, it vanishes weakly on $T'_\varepsilon(\omega)$.

In the remainder of this section, we prove that for $\mathbb{P}$-a.e. $\omega \in \Omega$, $(w_\varepsilon(\omega, \cdot))$ is $\omega$-admissible. To this end, we first compute the limit of the scaled $L^2$-norm of the gradients locally. Let $(y', \rho) \in I_{\varepsilon, h_0}(\omega)$ and set $l := \varepsilon r(\omega, y')/(a_\varepsilon \rho)$, $h := h_\varepsilon/\rho$. Applying a change of variables to \eqref{eq:local_definition} and using \eqref{eq:capacitary_potentials} and \eqref{eq:symmetry_considerations} gives
\begin{equation*}
    \int_{B'(\varepsilon y', \varepsilon r(\omega, y')) \times (0, \delta_\varepsilon)} |\nabla w_\varepsilon(\omega, x)|^2 \, dx = 
    \begin{cases}
        \frac{1}{2} a_\varepsilon^{N - 2} \rho^{N - 2} \cpct(T' \times \{0\}, C(l, h)) \quad \text{if } h_0 = \infty, \\
        \frac{1}{2} a_\varepsilon^{N - 2} \rho^{N - 2} \cpct_h(T', B'(0, l)) \quad \text{if } h_0 \in (0, \infty), \\
        a_\varepsilon^{N - 3} \delta_\varepsilon \rho^{N - 3} \cpct_0(T', B'(0, l)) \quad \text{if } h_0 = 0.
    \end{cases}
\end{equation*}
Dividing by $\delta_\varepsilon$ and rewriting the right-hand side in terms of the function $\tilde{J}_{h_0}$ yields
\begin{equation} \label{eq:gradient_divided_by_delta}
    \frac{1}{\delta_\varepsilon} \int_{B'(\varepsilon y', \varepsilon r(\omega, y')) \times (0, \delta_\varepsilon)} |\nabla w_\varepsilon(\omega, x)|^2 \, dx = 
    \begin{cases}
        \frac{1}{2} \varepsilon^{N - 1} \tilde{J}_{h_0}(\rho, l, h) \quad \text{if } h_0 \in (0, \infty], \\
        \frac{1}{2} \varepsilon^{N - 1} \tilde{J}_0(\rho, l) \quad \text{if } h_0 = 0.
    \end{cases}
\end{equation}
In the next lemma, we sum the right-hand side in \eqref{eq:gradient_divided_by_delta} over certain subsets of $I_{\varepsilon, h_0}(\omega)$ and evaluate its limit as $\varepsilon \to 0$. The computation is an application of the ergodic theorem.

\begin{lemma} \label{lm:really_ugly_sum_limit}
    Let $O' \subset \mathbb{R}^{N - 1}$ be a Borel measurable set such that $|\partial O'| = 0$. Then,
    \begin{equation} \label{eq:really_ugly_sum_limit_zero}
        \lim_{\varepsilon \to 0} \frac{1}{2} \sum \limits_{\substack{(y', \rho) \in I_{\varepsilon, h_0}(\omega) \\ \varepsilon y' \in O'}} \varepsilon^{N - 1} \tilde{J}_0\left(\rho, \frac{\varepsilon r(\omega, y')}{a_\varepsilon \rho}\right) = \gamma_0(\omega) |U' \cap O'| \quad \mathbb{P}\text{-a.s.}
    \end{equation}
    If $h_0 \in (0, \infty]$, then
    \begin{equation} \label{eq:really_ugly_sum_limit_nonzero}
        \lim_{\varepsilon \to 0} \frac{1}{2} \sum \limits_{\substack{(y', \rho) \in I_{\varepsilon, h_0}(\omega) \\ \varepsilon y' \in O'}} \varepsilon^{N - 1} \tilde{J}_{h_0}\left(\rho, \frac{\varepsilon r(\omega, y')}{a_\varepsilon \rho}, \frac{h_\varepsilon}{\rho}\right) = \gamma_{h_0}(\omega) |U' \cap O'| \quad \mathbb{P}\text{-a.s.}
    \end{equation}
\end{lemma}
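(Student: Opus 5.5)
The plan is to compare the sum with the "ideal" sum in which each term $\tilde J_{h_0}$ is replaced by $J_{h_0}(\rho)$. The ideal sum is then evaluated with the ergodic theorem (Theorem \ref{thm:ergodic_theorem_second_version}). Once the comparison error and the cluster points are shown to be negligible, the claim follows.

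\textbf{Step 1: the ideal sum.} Apply Theorem \ref{thm:ergodic_theorem_second_version} to the set $B = U' \cap O'$ and the $\lambda$-integrable function $g = J_{h_0}$. The boundary of $B$ has measure zero since $U'$ is Lipschitz and $|\partial O'| = 0$. We get, almost surely,
\[
\sum_{\substack{(y',\rho)\in M(\omega)\\ \varepsilon y'\in U'\cap O'}} \varepsilon^{N-1} J_{h_0}(\rho) \to |U'\cap O'| \int_0^\infty J_{h_0}\, d\xi(\omega,\cdot) = 2\gamma_{h_0}(\omega)|U'\cap O'| .
\]
Points with $\varepsilon y' \in O' \setminus U'$ are excluded because $I_{\varepsilon,h_0}$ only contains points with $\varepsilon y' \in U'$. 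By Proposition \ref{pr:weighted_sums_go_to_zero}, removing the cluster points $C_{\varepsilon,h_0}(\omega)$ changes the limit by zero. So it suffices to show that
\[
\sum_{I_{\varepsilon,h_0},\ \varepsilon y' \in O'} \varepsilon^{N-1}\bigl|\tilde J_{h_0}(\rho,l,h) - J_{h_0}(\rho)\bigr| \to 0,
\qquad l = \frac{\varepsilon r}{a_\varepsilon\rho},\quad h = \frac{h_\varepsilon}{\rho}.
\]

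\textbf{Step 2: pointwise convergence and domination.} Fix a threshold $\sigma>0$ and split the isolated points into those in $M_\sigma(\omega)$ and the rest.

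On the rest we have $r(\omega,y') \ge \sigma$ and $\rho \le 1/\sigma$. Hence $l \ge \varepsilon\sigma^2/a_\varepsilon \to \infty$ uniformly, and $h = h_\varepsilon/\rho \to h_0/\rho$ uniformly in $\rho \in (0,1/\sigma]$ when $h_0<\infty$. Then $\tilde J_{h_0}(\rho,l,h) \to J_{h_0}(\rho)$ uniformly on these points, by the following facts:
\begin{itemize}
\item for $h_0 = 0$: monotone convergence $\cpct_0(T',B'(0,l)) \downarrow \cpct_0(T',\mathbb{R}^{N-1})$ as $l\to\infty$;
\item for $h_0 \in (0,\infty)$: continuity in $h$ from Proposition \ref{pr:continuity_in_h}, together with $\cpct_h(T',B'(0,l)) \to \cpct_h(T',\mathbb{R}^{N-1})$ as $l\to\infty$, uniformly for $h$ in compacts;
\item for $h_0 = \infty$: $\cpct(T'\times\{0\},C(l,h)) \to \cpct(T'\times\{0\},\mathbb{R}^N)$ as $l,h \to\infty$. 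Here $h = h_\varepsilon/\rho \ge \sigma h_\varepsilon \to \infty$.
\end{itemize}
These domain-exhaustion limits are standard truncation/density arguments for capacities. For $h_0\in(0,\infty)$, uniformity in $h$ should follow from the two-sided bound \eqref{eq:comparison_in_h}. Since the number of points is $O(\varepsilon^{-(N-1)})$ by the ergodic theorem with $g=1$, the sum over the non-thinned points tends to zero.

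On $M_\sigma$ we use domination. Isolated points satisfy $2a_\varepsilon\rho < \varepsilon r$, so $l \ge 2$. For $h_0=\infty$, condition (1a) gives $h \ge 2$. Lemma \ref{lm:comparison_old_and_new} then yields $\tilde J_{h_0} \le C J_{h_0}$; for $h_0 \in (0,\infty)$ the constant is $C\max\{h_\varepsilon/h_0,\,h_0/h_\varepsilon\}$, which is bounded. The sum over $M_\sigma$ is therefore bounded by $C\sum_{M_\sigma}\varepsilon^{N-1}J_{h_0}$, and this vanishes as $\sigma \to 0$ by Lemma \ref{lm:thinned_limit}.

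\textbf{Main obstacle.} The delicate part is the uniform convergence $\tilde J_{h_0} \to J_{h_0}$ in Step 2. This is most delicate for $h_0=\infty$, where $l$ and $h$ tend to infinity jointly and one needs $\cpct(T'\times\{0\},C(l,h))$ to converge to the full-space capacity. I would handle it by comparison with $C(m,m)$, $m=\min\{l,h\}$, and a cutoff of the full-space capacitary potential. The decay $N>2$ gives an error of order $m^{-(N-2)}$.
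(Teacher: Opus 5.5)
Your proof is correct in structure, and most of its ingredients are the paper's: the ergodic theorem for $J_{h_0}$, Proposition \ref{pr:weighted_sums_go_to_zero} to discard clusters, and the split along $M_\sigma$ with domination from Lemma \ref{lm:comparison_old_and_new} and Lemma \ref{lm:thinned_limit}. You treat the non-thinned isolated points differently, though.

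The paper never estimates $|\tilde J_{h_0}-J_{h_0}|$; it sandwiches. The lower bound comes from monotonicity of capacity in the domain: $\tilde J_{h_0}\ge J_{h_0}$, with the factor $\min\{h_\varepsilon/h_0,h_0/h_\varepsilon\}$ from \eqref{eq:comparison_in_h} when $h_0\in(0,\infty)$. For the upper bound it freezes the parameters. On non-thinned points $\varepsilon r/(a_\varepsilon\rho)\ge\varepsilon\sigma^2/a_\varepsilon$ eventually exceeds any fixed $l$, so $\tilde J_{h_0}$ is bounded by its value at that fixed $l$ (and at a fixed $h$ if $h_0=\infty$). The ergodic theorem is then applied to a fixed integrand: a constant multiple of $J_\infty$ if $h_0=\infty$, and $q_{h_0}(\cdot,l)J_{h_0}$ if $h_0\in(0,\infty)$. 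Finally $l,h\to\infty$ is taken by dominated convergence with respect to $\xi(\omega,\cdot)$, which needs only pointwise convergence in $\rho$. You instead prove the stronger statement $\sum\varepsilon^{N-1}|\tilde J_{h_0}-J_{h_0}|\to0$. This uses the ergodic theorem only for $J_{h_0}$ and $g\equiv1$, at the price of needing $\tilde J_{h_0}\to J_{h_0}$ uniformly in $\rho\in(0,1/\sigma]$.

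That uniformity is the one step your sketch does not justify, and it is not where you place the difficulty. For $h_0=\infty$ it is immediate from monotonicity:
\[
0\le\tilde J_\infty-J_\infty\le\sigma^{2-N}\bigl(\cpct(T'\times\{0\},C(m_\varepsilon,m_\varepsilon))-\cpct(T'\times\{0\},\mathbb{R}^N)\bigr),\qquad m_\varepsilon=\min\{\varepsilon\sigma^2/a_\varepsilon,\sigma h_\varepsilon\}.
\]
For $h_0\in(0,\infty)$, however, the height $h_0/\rho$ ranges over $[\sigma h_0,\infty)$ as $\rho\to0$, so convergence ``uniformly for $h$ in compacts'' is not enough. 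Also, $h_\varepsilon/\rho\to h_0/\rho$ holds uniformly only in ratio, which is exactly what \eqref{eq:comparison_in_h} uses.

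You can close this by splitting at some $\rho_0\le1$:
\begin{itemize}
\item For $\rho\le\rho_0$, Lemma \ref{lm:comparison_old_and_new} and Lemma \ref{lm:equivalence_of_functions} give $|\tilde J_{h_0}-J_{h_0}|\lesssim J_{h_0}(\rho)\lesssim\rho_0^{N-2}$.
\item For $\rho\in[\rho_0,1/\sigma]$ the heights lie in a compact interval. After using monotonicity in $l$ to pass to $l=\varepsilon\sigma^2/a_\varepsilon$, note that $\cpct_H(T',B'(0,l))$ decreases in $l$ to $\cpct_H(T',\mathbb{R}^{N-1})$, with both sides continuous in $H$ by Proposition \ref{pr:continuity_in_h}. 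Dini's theorem then gives the uniform convergence.
\end{itemize}
With this addition your argument is complete.
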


\begin{proof}
    First we prove that
    \begin{equation} \label{eq:fewer_arguments_limit}
        \lim_{\varepsilon \to 0} \frac{1}{2} \sum \limits_{\substack{(y', \rho) \in I_{\varepsilon, h_0}(\omega) \\ \varepsilon y' \in O'}} \varepsilon^{N - 1} J_{h_0}(\rho) = \gamma_{h_0}(\omega) |U' \cap O'| \quad \mathbb{P}\text{-a.s.}
    \end{equation}
    We observe that
    \begin{align*}
        \sum \limits_{\substack{(y', \rho) \in M(\omega) \\ \varepsilon y' \in U' \cap O'}} \varepsilon^{N - 1} J_{h_0}(\rho) &\ge \sum \limits_{\substack{(y', \rho) \in I_{\varepsilon, h_0}(\omega) \\ \varepsilon y' \in O'}} \varepsilon^{N - 1} J_{h_0}(\rho) \\
        &= \sum \limits_{\substack{(y', \rho) \in M(\omega) \\ \varepsilon y' \in U' \cap O'}} \varepsilon^{N - 1} J_{h_0}(\rho) - \sum \limits_{\substack{(y', \rho) \in C_{\varepsilon, h_0}(\omega) \\ \varepsilon y' \in O'}} \varepsilon^{N - 1} J_{h_0}(\rho).
    \end{align*}
    Applying Theorem \ref{thm:ergodic_theorem_second_version} with $g(\rho) = J_{h_0}(\rho)$ gives
    \begin{equation} \label{eq:simpler_limit}
        \lim_{\varepsilon \to 0} \sum \limits_{\substack{(y', \rho) \in M(\omega) \\ \varepsilon y' \in U' \cap O'}} \varepsilon^{N - 1} J_{h_0}(\rho) = |U' \cap O'| \int_0^\infty J_{h_0}(\rho) \, d\xi(\omega, \rho) = 2 \gamma_{h_0}(\omega) |U' \cap O'| \quad \mathbb{P}\text{-a.s.}
    \end{equation}
    Hence, \eqref{eq:fewer_arguments_limit} follows from \eqref{eq:simpler_limit} and Proposition \ref{pr:weighted_sums_go_to_zero}. Next we use \eqref{eq:fewer_arguments_limit} to obtain lower and upper bounds for the limits in \eqref{eq:really_ugly_sum_limit_zero} and \eqref{eq:really_ugly_sum_limit_nonzero}. We note that
    \begin{equation*}
        \begin{aligned}
            \tilde{J}_{h_0}\left(\rho, \frac{\varepsilon r(\omega, y')}{a_\varepsilon \rho}, \frac{h_\varepsilon}{\rho}\right) &\ge 
            \begin{cases}
                J_\infty(\rho) \quad &\text{if } h_0 = \infty, \\
                \min \left\{\frac{h_\varepsilon}{h_0}, \frac{h_0}{h_\varepsilon}\right\} J_{h_0}(\rho) \quad &\text{if } h_0 \in (0, \infty),
            \end{cases}
            \\
            \tilde{J}_0\left(\rho, \frac{\varepsilon r(\omega, y')}{a_\varepsilon \rho}\right) &\ge J_0(\rho).
        \end{aligned}
    \end{equation*}
    The inequalities above follow from the monotonicity of the capacity with respect to set inclusion and, for $h_0 \in (0, \infty)$, from Proposition \ref{pr:continuity_in_h}. If $h_0 \in (0, \infty]$, then from \eqref{eq:fewer_arguments_limit} we get the lower bound
    \begin{multline} \label{eq:liminf_bound}
        \liminf_{\varepsilon \to 0} \frac{1}{2} \sum \limits_{\substack{(y', \rho) \in I_{\varepsilon, h_0}(\omega) \\ \varepsilon y' \in O'}} \varepsilon^{N - 1} \tilde{J}_{h_0}\left(\rho, \frac{\varepsilon r(\omega, y')}{a_\varepsilon \rho}, \frac{h_\varepsilon}{\rho}\right) \\
        \ge \liminf_{\varepsilon \to 0} \frac{1}{2} \sum \limits_{\substack{(y', \rho) \in M(\omega) \\ \varepsilon y' \in U' \cap O'}} \varepsilon^{N - 1} J_{h_0}(\rho) = \gamma_{h_0}(\omega) |U' \cap O'| \quad \mathbb{P}\text{-a.s.}
    \end{multline}
    Similarly, we obtain $\gamma_0(\omega)|U' \cap O'|$ as a lower bound $\mathbb{P}$-a.s. for the limit in \eqref{eq:really_ugly_sum_limit_zero}.

    To prove an upper bound, we fix $h > 0$, $l \ge 2$. We assume $h_0 = \infty$. If $\sigma$ is fixed and $\varepsilon$ is sufficiently small, then every $(y', \rho) \in I_{\varepsilon, \infty}(\omega) \setminus M_\sigma(\omega)$ satisfies
    \begin{equation} \label{eq:better_isolation}
        \frac{\varepsilon r(\omega, y')}{a_\varepsilon \rho} \ge \frac{\varepsilon \sigma^2}{a_\varepsilon} > l, \quad \frac{h_\varepsilon}{\rho} \ge h_\varepsilon \sigma > h.
    \end{equation}
    Therefore, we deduce
    \begin{equation} \label{eq:good_upper_bound}
        \tilde{J}_\infty\left(\rho, \frac{\varepsilon r(\omega, y')}{a_\varepsilon \rho}, \frac{h_\varepsilon}{\rho}\right) \le \tilde{J}_\infty(\rho, l, h) \quad \text{for all } (y', \rho) \in I_{\varepsilon, h_0}(\omega) \setminus M_\sigma(\omega).
    \end{equation}
    On the other hand, by (1a) in Definition \ref{def:isolated_and_cluster_points}, every $(y', \rho) \in I_{\varepsilon, \infty}(\omega)$ satisfies
    \[
    \frac{\varepsilon r(\omega, y')}{a_\varepsilon \rho} \ge 2, \quad \frac{h_\varepsilon}{\rho} \ge 2.
    \]
    Consequently,
    \begin{equation} \label{eq:bad_upper_bound}
        \tilde{J}_\infty\left(\rho, \frac{\varepsilon r(\omega, y')}{a_\varepsilon \rho}, \frac{h_\varepsilon}{\rho}\right) \le \tilde{J}_\infty(\rho, 2 , 2) \quad \text{for all } (y', \rho) \in I_{\varepsilon, h_0}(\omega).
    \end{equation}
    Then \eqref{eq:good_upper_bound} and \eqref{eq:bad_upper_bound} give us the following upper bound for small enough $\varepsilon$:
    \begin{equation} \label{eq:splitting_sum_argument}
        \begin{aligned}
            \sum \limits_{\substack{(y', \rho) \in I_{\varepsilon, \infty}(\omega) \\ \varepsilon y' \in O'}} \varepsilon^{N - 1} &\tilde{J}_\infty\left(\rho, \frac{\varepsilon r(\omega, y')}{a_\varepsilon \rho}, \frac{h_\varepsilon}{\rho}\right) \\
            &\le \sum \limits_{\substack{(y', \rho) \in I_{\varepsilon, \infty}(\omega) \setminus M_\sigma(\omega) \\ \varepsilon y' \in O'}} \varepsilon^{N - 1} \tilde{J}_\infty(\rho, l, h) + \sum_{(y', \rho) \in I_{\varepsilon, \infty}(\omega) \cap M_\sigma(\omega)} \varepsilon^{N - 1} \tilde{J}_\infty(\rho, 2, 2) \\
            &\le \frac{\cpct(T' \times \{0\}, C(l, h))}{\cpct(T' \times \{0\}, \mathbb{R}^N)} \sum \limits_{\substack{(y', \rho) \in M(\omega) \\ \varepsilon y' \in U' \cap O'}} \varepsilon^{N - 1} J_\infty(\rho) + \sum \limits_{\substack{(y', \rho) \in M_\sigma(\omega) \\ \varepsilon y' \in U'}} \varepsilon^{N - 1} \tilde{J}_\infty(\rho, 2, 2)
        \end{aligned}
    \end{equation}
    By Lemma \ref{lm:thinned_limit}, we have
    \[
    \limsup_{\sigma \to 0} \limsup_{\varepsilon \to 0} \sum \limits_{\substack{(y', \rho) \in M_\sigma(\omega) \\ \varepsilon y' \in U'}} \varepsilon^{N - 1} \tilde{J}_\infty(\rho, 2, 2) \lesssim \limsup_{\sigma \to 0} \limsup_{\varepsilon \to 0} \sum \limits_{\substack{(y', \rho) \in M_\sigma(\omega) \\ \varepsilon y' \in U'}} \varepsilon^{N - 1} J_\infty(\rho) = 0 \quad \mathbb{P}\text{-a.s.},
    \]
    so that
    \begin{multline}
        \limsup_{\varepsilon \to 0}\sum \limits_{\substack{(y', \rho) \in I_{\varepsilon, \infty}(\omega) \\ \varepsilon y' \in O'}} \varepsilon^{N - 1} \tilde{J}_\infty\left(\rho, \frac{\varepsilon r(\omega, y')}{a_\varepsilon \rho}, \frac{h_\varepsilon}{\rho}\right)  \\
        \le \frac{\cpct(T' \times \{0\}, C(l, h))}{\cpct(T' \times \{0\}, \mathbb{R}^N)} \limsup_{\varepsilon \to 0} \sum \limits_{\substack{(y', \rho) \in M(\omega) \\ \varepsilon y' \in U' \cap O'}} \varepsilon^{N - 1} J_\infty(\rho) \quad \mathbb{P}\text{-a.s.}
    \end{multline}
    We can let $h, l \to \infty$ and use \eqref{eq:simpler_limit} to obtain
    \[
    \limsup_{\varepsilon \to 0}\sum \limits_{\substack{(y', \rho) \in I_{\varepsilon, \infty}(\omega) \\ \varepsilon y' \in O'}} \varepsilon^{N - 1} \tilde{J}_\infty\left(\rho, \frac{\varepsilon r(\omega, y')}{a_\varepsilon \rho}, \frac{h_\varepsilon}{\rho}\right) \le 2 \gamma_\infty(\omega) |U' \cap O'| \quad \mathbb{P}\text{-a.s.}
    \]
    Together with \eqref{eq:liminf_bound}, this proves the lemma for $h_0 = \infty$. The case $h_0 = 0$ is very similar so its proof is omitted. 
    
    Now assume $h_0 \in (0, \infty)$ and fix $l \ge 2$. If $\sigma$ is fixed and $\varepsilon$ is sufficiently small, then, similar to \eqref{eq:good_upper_bound} and \eqref{eq:bad_upper_bound}, we have the following inequalities:
    \begin{gather*}
        \tilde{J}_{h_0}\left(\rho, \frac{\varepsilon r(\omega, y')}{a_\varepsilon \rho}, \frac{h_\varepsilon}{\rho}\right) \le \max\left\{\frac{h_\varepsilon}{h_0}, \frac{h_0}{h_\varepsilon}\right\} \tilde{J}_{h_0}\left(\rho, l, \frac{h_0}{\rho}\right) \quad \text{for all } (y', \rho) \in I_{\varepsilon, h_0}(\omega) \setminus M_\sigma(\omega), \\
        \tilde{J}_{h_0}\left(\rho, \frac{\varepsilon r(\omega, y')}{a_\varepsilon \rho}, \frac{h_\varepsilon}{\rho}\right) \le \tilde{J}_{h_0}\left(\rho, 2 , \frac{h_0}{\rho}\right) \quad \text{for all } (y', \rho) \in I_{\varepsilon, h_0}(\omega).
    \end{gather*}
    Then, by splitting the sum in \eqref{eq:really_ugly_sum_limit_nonzero} as in \eqref{eq:splitting_sum_argument} and using Proposition \ref{pr:weighted_sums_go_to_zero}, we obtain    
    \begin{equation} \label{eq:same_for_finite_height}
        \limsup_{\varepsilon \to 0}\sum \limits_{\substack{(y', \rho) \in I_{\varepsilon, h_0}(\omega) \\ \varepsilon y' \in O'}} \varepsilon^{N - 1} \tilde{J}_{h_0}\left(\rho, \frac{\varepsilon r(\omega, y')}{a_\varepsilon \rho}, \frac{h_\varepsilon}{\rho}\right)  \\
        \le \limsup_{\varepsilon \to 0} \sum \limits_{\substack{(y', \rho) \in M(\omega) \\ \varepsilon y' \in U' \cap O'}} \varepsilon^{N - 1} q_{h_0}(\rho, l) J_{h_0}(\rho) \quad \mathbb{P}\text{-a.s.},
    \end{equation}
    where
    \[
    q_{h_0}(\rho, l) := \frac{\tilde{J}_{h_0}\left(\rho, l, \frac{h_0}{\rho}\right)}{J_{h_0}(\rho)} = \frac{\cpct_{\frac{h_0}{\rho}}(T', B'(0, l))}{\cpct_{\frac{h_0}{\rho}}(T', \mathbb{R}^{N - 1})}.
    \]
    We know by Lemma \ref{lm:comparison_old_and_new} that $q_{h_0}$ is bounded in $(0, \infty) \times [2, \infty)$. Thus, we can apply Theorem \ref{thm:ergodic_theorem_second_version} to the right-hand side of \eqref{eq:same_for_finite_height} to get
    \[
    \lim_{\varepsilon \to 0} \sum \limits_{\substack{(y', \rho) \in M(\omega) \\ \varepsilon y' \in U' \cap O'}} \varepsilon^{N - 1} q_{h_0}(\rho, l) J_{h_0}(\rho) = |U' \cap O'| \int_0^\infty q_{h_0}(\rho, l) J_{h_0}(\rho) \, d\xi(\omega, \rho) \quad \mathbb{P}\text{-a.s.}
    \]
    Since $q_{h_0}(\rho, l) \to 1$ as $l \to \infty$ for any fixed $\rho$, the dominated convergence theorem yields
    \[
    \lim_{\varepsilon \to 0} \frac{1}{2} \sum \limits_{\substack{(y', \rho) \in M(\omega) \\ \varepsilon y' \in U' \cap O'}} \varepsilon^{N - 1} q_{h_0}(\rho, l) J_{h_0}(\rho) = \frac{1}{2} |U' \cap O'| \int_0^\infty J_{h_0}(\rho) \, d\xi(\omega, \rho) = \gamma_{h_0}(\omega) |U' \cap O'| \quad \mathbb{P}\text{-a.s.}
    \]
    Finally, \eqref{eq:liminf_bound}, \eqref{eq:same_for_finite_height} and the limit above combine to give the result for $h_0 \in (0, \infty)$.
\end{proof}

\begin{corollary} \label{co:gradient_computation}
    Let $O' \subset \mathbb{R}^{N - 1}$ be a Borel measurable set such that $|\partial O'| = 0$. Then
    \[
    \lim_{\varepsilon \to 0} \frac{1}{\delta_\varepsilon} \sum \limits_{\substack{(y', \rho) \in I_{\varepsilon, h_0}(\omega) \\ \varepsilon y' \in O'}} \int_{B'(\varepsilon y', \varepsilon r(\omega, y')) \times (0, \delta_\varepsilon)} |\nabla w_\varepsilon(\omega, x)|^2 \, dx = \gamma_{h_0}(\omega) |U' \cap O'| \quad \mathbb{P}\text{-a.s.}
    \]
\end{corollary}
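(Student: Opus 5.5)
The corollary follows by summing the local identity \eqref{eq:gradient_divided_by_delta} over the isolated points and then applying Lemma \ref{lm:really_ugly_sum_limit}. Fix $\varepsilon>0$ and $\omega\in\Omega$. By \eqref{eq:nonintersecting_scaled_balls}, the cylinders $B'(\varepsilon y', \varepsilon r(\omega, y')) \times (0, \delta_\varepsilon)$ with $(y',\rho) \in I_{\varepsilon, h_0}(\omega)$ are pairwise disjoint. Because of this, the sum on the left-hand side of the corollary is just the sum, over $(y',\rho)\in I_{\varepsilon,h_0}(\omega)$ with $\varepsilon y'\in O'$, of the individual terms computed in \eqref{eq:gradient_divided_by_delta}.

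For each isolated point, \eqref{eq:gradient_divided_by_delta} applies with $l = \varepsilon r(\omega,y')/(a_\varepsilon\rho)$ and $h = h_\varepsilon/\rho$. This gives
\[
\frac{1}{\delta_\varepsilon}\sum_{\substack{(y',\rho)\in I_{\varepsilon,h_0}(\omega)\\ \varepsilon y'\in O'}} \int_{B'(\varepsilon y', \varepsilon r(\omega, y')) \times (0, \delta_\varepsilon)} |\nabla w_\varepsilon(\omega,x)|^2\,dx
= \frac12 \sum_{\substack{(y',\rho)\in I_{\varepsilon,h_0}(\omega)\\ \varepsilon y'\in O'}} \varepsilon^{N-1}\tilde J_{h_0}\Big(\rho,\frac{\varepsilon r(\omega,y')}{a_\varepsilon\rho},\frac{h_\varepsilon}{\rho}\Big)
\]
when $h_0\in(0,\infty]$. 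When $h_0=0$, the same identity holds with $\tilde J_0\big(\rho, \varepsilon r(\omega,y')/(a_\varepsilon\rho)\big)$ on the right. Before using this, I need to confirm that \eqref{eq:gradient_divided_by_delta} really holds for every isolated point. This amounts to checking the change of variables $x \mapsto (x-\varepsilon y)/(a_\varepsilon\rho)$, where the center is $\varepsilon y = (\varepsilon y', 0)$. This map sends the cylinder $B'(\varepsilon y',\varepsilon r)\times(0,\delta_\varepsilon)$ onto $B'(0,l)\times(0,h)$, and the gradient energy picks up the factor $(a_\varepsilon\rho)^{N-2}$.

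The next step is to convert that energy into the stated form, and it differs by case. For $h_0\in(0,\infty]$, I combine the half-cylinder symmetry \eqref{eq:symmetry_considerations} with the capacitary identities \eqref{eq:capacitary_potentials}, then divide by $\delta_\varepsilon$. The relation $a_\varepsilon^{N-2}/\delta_\varepsilon=\varepsilon^{N-1}$ then follows from the choice $a_\varepsilon=\varepsilon^{(N-1)/(N-2)}\delta_\varepsilon^{1/(N-2)}$. For $h_0=0$, the potential $\eta^0_{l,h}$ does not depend on $x_N$. Its energy over $B'(0,l)\times(0,h)$ is therefore $h\,\cpct_0(T',B'(0,l))$, and after scaling this becomes $a_\varepsilon^{N-3}\delta_\varepsilon\rho^{N-3}\cpct_0$. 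Here we use $a_\varepsilon^{N-3}=\varepsilon^{N-1}$. In both cases the sums match exactly the expressions in Lemma \ref{lm:really_ugly_sum_limit}.

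Finally, I pass to the limit $\varepsilon\to0$ and invoke \eqref{eq:really_ugly_sum_limit_zero} or \eqref{eq:really_ugly_sum_limit_nonzero}, according to whether $h_0=0$ or $h_0>0$. These give the value $\gamma_{h_0}(\omega)|U'\cap O'|$ almost surely, which is the claimed limit. The one step I expect to need care is the bookkeeping of the constant factors $\tfrac12$ and $2$ in the case $h_0=0$. There, $J_0$ and $\tilde J_0$ carry a factor $2$, and this must cancel against the half-height coming from symmetry so that the result is exactly $\gamma_0$. All the other steps are direct substitutions.
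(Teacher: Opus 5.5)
Your proposal is correct and is essentially the paper's own argument: sum the local identity \eqref{eq:gradient_divided_by_delta} over the $\varepsilon$-isolated points with $\varepsilon y' \in O'$, then invoke Lemma \ref{lm:really_ugly_sum_limit}; your case-by-case rescaling check (using $a_\varepsilon^{N-2} = \varepsilon^{N-1}\delta_\varepsilon$ for $h_0 > 0$ and $a_\varepsilon^{N-3} = \varepsilon^{N-1}$ for $h_0 = 0$) correctly re-derives that identity, including the factor $2$ in $\tilde J_0$. The appeal to disjointness of the cylinders is harmless but not needed here, since the left-hand side is already a sum of integrals over the individual cylinders.
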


\begin{proof}
    The claim follows from \eqref{eq:gradient_divided_by_delta} and Lemma \ref{lm:really_ugly_sum_limit}.
\end{proof}

We proceed to compute the limit of the scaled $L^2$-norm of the gradients, localizing by continuous test functions.

\begin{proposition} \label{pr:local_gradient_limit}
    For $\mathbb{P}$-a.e. $\omega \in \Omega$ and all $\varphi \in C^0(\overline{U'})$, we have
    \begin{equation} \label{eq:local_gradient_limit}
        \lim_{\varepsilon \to 0} \frac{1}{\delta_\varepsilon} \int_{U_\varepsilon^+} |\nabla w_\varepsilon(\omega, x)|^2 \varphi(x') \, dx = \gamma_{h_0}(\omega) \int_{U'} \varphi(x') \, dx' \quad \mathbb{P}\text{-a.s.}
    \end{equation}
    In particular,
    \begin{equation} \label{eq:bounded_gradients}
        \lim_{\varepsilon \to 0} \frac{1}{\delta_\varepsilon} \int_{U_\varepsilon^+} |\nabla w_\varepsilon(\omega, x)|^2 \, dx = \gamma_{h_0}(\omega) |U'| \quad \mathbb{P}\text{-a.s.}
    \end{equation}
\end{proposition}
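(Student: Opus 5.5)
We first localize the gradient energy of $w_\varepsilon(\omega,\cdot)$ and then pass from indicator-type weights to continuous weights by uniform approximation. By construction, $\nabla w_\varepsilon(\omega,\cdot)$ vanishes outside the union of the cylinders $B'(\varepsilon y',\varepsilon r(\omega,y'))\times(0,\delta_\varepsilon)$, $(y',\rho)\in I_{\varepsilon,h_0}(\omega)$, and the layer $S'_{\varepsilon,h_0}(\omega)\times(0,\delta_\varepsilon)$. On the layer, \eqref{eq:vanishing_gradient_1}, \eqref{eq:vanishing_gradient_2} and Theorem \ref{thm:vanishing_capacity_and_measure} give
\[
\lim_{\varepsilon\to 0}\frac{1}{\delta_\varepsilon}\int_{S'_{\varepsilon,h_0}(\omega)\times(0,\delta_\varepsilon)}|\nabla w_\varepsilon|^2\,dx=0\quad\mathbb{P}\text{-a.s.}
\]
Since $\varphi$ is bounded, this part does not contribute. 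Part of a cylinder may lie outside $U'$ for centers near $\partial U'$. This is handled by monotonicity: the integrand is nonnegative, and the cylinders with $\varepsilon y'\in U'$ are exactly the ones indexed. We therefore bound the integral over $U_\varepsilon^+$ above by the full cylinder sums and below by the sums over centers in $\{x'\in U' : \operatorname{dist}(x',\partial U')>\eta\}$, for $\varepsilon$ small (since $\varepsilon r\le\varepsilon$).

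The main step is a Riemann-sum argument. Fix $\eta>0$. Cover $\overline{U'}$ by finitely many disjoint half-open cubes $Q'_j$ of side length $\eta$, so that $|\partial Q'_j|=0$. Because $\varphi$ is continuous on the compact set $\overline{U'}$, its oscillation on each $Q'_j\cap\overline{U'}$ is at most $\omega_\varphi(\sqrt{N-1}\,\eta)$. The cylinder centered at $\varepsilon y'$ has lateral radius at most $\varepsilon$, so on it $\varphi(x')$ differs from $\varphi(\varepsilon y')$ by at most $\omega_\varphi(\sqrt{N-1}\,\eta+\varepsilon)$. Group the isolated points by the cube containing $\varepsilon y'$, apply Corollary \ref{co:gradient_computation} with $O'=Q'_j$ for each $j$, and sum. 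The $\limsup$ and $\liminf$ of the left side of \eqref{eq:local_gradient_limit} are then within
\[
\omega_\varphi(2\sqrt{N-1}\,\eta)\,\gamma_{h_0}(\omega)\,|U'|
\]
of $\gamma_{h_0}(\omega)\sum_j\varphi(x'_j)|U'\cap Q'_j|$, which converges to $\gamma_{h_0}(\omega)\int_{U'}\varphi\,dx'$ as $\eta\to 0$. Here $\gamma_{h_0}(\omega)<\infty$ almost surely, because $\int J_{h_0}\,d\xi$ has finite expectation $\int J_{h_0}\,d\lambda$. For signed $\varphi$, either split $\varphi=\varphi^+-\varphi^-$ or simply note that the estimate above is linear in $\varphi$.

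The main obstacle is the order of quantifiers: the null set must not depend on $\varphi$. To handle this, take $\eta=1/k$, $k\in\mathbb{N}$, with cubes drawn from the dyadic-type grid $\eta\mathbb{Z}^{N-1}$. This gives countably many sets $O'$, together with the sets used near $\partial U'$ (which may also be taken from the grid), so the union of the exceptional null sets from Corollary \ref{co:gradient_computation} and Theorem \ref{thm:vanishing_capacity_and_measure} is still a null set. Off this null set, the argument above works for every continuous $\varphi$ simultaneously, since it uses only the modulus of continuity of $\varphi$. Finally, \eqref{eq:bounded_gradients} is the special case $\varphi\equiv 1$.
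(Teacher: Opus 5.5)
Your proposal is correct and follows essentially the paper's route: discard the cluster layer via \eqref{eq:vanishing_gradient_1}, \eqref{eq:vanishing_gradient_2} and Theorem \ref{thm:vanishing_capacity_and_measure}, apply Corollary \ref{co:gradient_computation} on a grid of cubes, and conclude with the uniform continuity of $\varphi$ and the smallness of $U'$ near $\partial U'$. The differences are minor. You assign each cylinder to the cube containing its center, which skips the paper's intermediate step \eqref{eq:discrete_approximation} with the sandwich $Q'_a \subset Q' \subset Q'_b$. You also state explicitly that only countably many sets $O'$ are used, so the exceptional null set does not depend on $\varphi$, a point the paper leaves implicit.
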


\begin{proof}
    We start by showing that
    \begin{equation} \label{eq:discrete_approximation}
        \lim_{\varepsilon \to 0} \frac{1}{\delta_\varepsilon} \int_{Q' \times (0, \delta_\varepsilon)} |\nabla w_\varepsilon(\omega, x)|^2 \, dx = \gamma_{h_0}(\omega) |U' \cap Q'| \quad \mathbb{P}\text{-a.s.}
    \end{equation}
    for any closed cube $Q' \subset \mathbb{R}^{N - 1}$. Let $\varepsilon > 0$ and $\omega \in \Omega$. Then
    \begin{multline} \label{eq:splitting_over_the_balls}
        \int_{Q' \times (0, \delta_\varepsilon)} |\nabla w_\varepsilon(\omega, x)|^2 \, dx = \int_{(Q' \cap S'_{\varepsilon, h_0}(\omega)) \times (0, \delta_\varepsilon)} |\nabla w_\varepsilon(\omega, x)|^2 \, dx \\
        + \sum_{(y', \rho) \in I_{\varepsilon, h_0}(\omega)} \int_{(Q' \cap B'(\varepsilon y', \varepsilon r(\omega, y'))) \times (0, \delta_\varepsilon)} |\nabla w_\varepsilon(\omega, x)|^2 \, dx.
    \end{multline}
    It follows from \eqref{eq:vanishing_gradient_1}, \eqref{eq:vanishing_gradient_2} and Theorem \ref{thm:vanishing_capacity_and_measure} that
    \[
    \lim_{\varepsilon \to 0} \frac{1}{\delta_\varepsilon} \int_{(Q' \cap S'_{\varepsilon, h_0}(\omega)) \times (0, \delta_\varepsilon)} |\nabla w_\varepsilon(\omega, x)|^2 \, dx = 0 \quad \mathbb{P}\text{-a.s.}
    \]
    To deal with the second term on the right-hand side of \eqref{eq:splitting_over_the_balls}, we choose closed cubes $Q'_a$, $Q'_b$, different from $Q'$, such that they have the same center as $Q'$ and $Q'_a \subset Q' \subset Q'_b$. If $Q' \cap B'(\varepsilon y', \varepsilon r(\omega, y')) \neq \emptyset$ for some $(y', \rho) \in I_{\varepsilon, h_0}(\omega)$, then $\varepsilon y' \in Q'_b$ for small enough $\varepsilon$, since $r(\omega, y') \le 1$. Similarly, if $\varepsilon y' \in Q'_a$, then $B'(\varepsilon y', \varepsilon r(\omega, y')) \subset Q'$ for small enough $\varepsilon$. Thus,
    \begin{multline*}
        \sum \limits_{\substack{(y', \rho) \in I_{\varepsilon, h_0}(\omega) \\ \varepsilon y' \in Q'_a}} \int_{B'(\varepsilon y', \varepsilon r(\omega, y')) \times (0, \delta_\varepsilon)} |\nabla w_\varepsilon(\omega, x)|^2 \, dx \\
        \le \sum_{(y', \rho) \in I_{\varepsilon, h_0}(\omega)} \int_{(Q' \cap B'(\varepsilon y', \varepsilon r(\omega, y')))\times (0, \delta_\varepsilon)} |\nabla w_\varepsilon(\omega, x)|^2 \, dx \\
        \le \sum \limits_{\substack{(y', \rho) \in I_{\varepsilon, h_0}(\omega) \\ \varepsilon y' \in Q'_b}} \int_{B'(\varepsilon y', \varepsilon r(\omega, y')) \times (0, \delta_\varepsilon)} |\nabla w_\varepsilon(\omega, x)|^2 \, dx.
    \end{multline*}
    for small enough $\varepsilon$. Corollary \ref{co:gradient_computation} immediately gives
    \begin{multline*}
        \gamma_{h_0}(\omega) |U' \cap Q'_a| \le \liminf_{\varepsilon \to 0} \frac{1}{\delta_\varepsilon} \sum_{(y', \rho) \in I_{\varepsilon, h_0}(\omega)} \int_{B'(\varepsilon y', \varepsilon r(\omega, y')) \times (0, \delta_\varepsilon)} |\nabla w_\varepsilon(\omega, x)|^2 \, dx\\
        \le \limsup_{\varepsilon \to 0} \frac{1}{\delta_\varepsilon} \sum_{(y', \rho) \in I_{\varepsilon, h_0}(\omega)} \int_{B'(\varepsilon y', \varepsilon r(\omega, y')) \times (0, \delta_\varepsilon)} |\nabla w_\varepsilon(\omega, x)|^2 \, dx \le \gamma_{h_0}(\omega) |U' \cap Q'_b| \quad \mathbb{P}\text{-a.s.}
    \end{multline*}
    Since $|U' \cap Q'_a|$ and $|U' \cap Q'_b|$ can be made arbitrarily close to $|U' \cap Q'|$, we conclude the proof of \eqref{eq:discrete_approximation}.

    Let $\varphi \in C^0(\overline{U'})$. In order to prove \eqref{eq:local_gradient_limit} we approximate $U'$ by cubes. Let $(Q'_1)^k, \dots, (Q'_{m_k})^k$ and $(P'_1)^k, \dots, (P'_{n_k})^k$ denote those cubes of side length $1/(2k)$ centered at points of $\frac{1}{k}\mathbb{Z}^N$ such that $(Q'_j)^k \subset U'$ and $(P'_j)^k \cap \partial U' \neq \emptyset$ for all $j$. We claim that
    \begin{equation} \label{eq:divide_and_conquer}
        \lim_{k \to \infty} \lim_{\varepsilon \to 0} \frac{1}{\delta_\varepsilon}\left|\int_{U' \times (0, \delta_\varepsilon)} |\nabla w_\varepsilon(\omega, x)|^2 \varphi(x') \, dx - \sum_{j = 1}^{m_k} \int_{(Q'_j)^k \times (0, \delta_\varepsilon)} |\nabla w_\varepsilon(\omega, x)|^2 \varphi(x') \, dx \right| = 0 \quad \mathbb{P}\text{-a.s.}
    \end{equation}
    The difference is clearly bounded by
    \[
    \|\varphi\|_{C^0({\overline{U'}})} \sum_{j = 1}^{n_k} \int_{(P'_j)^k} |\nabla w_\varepsilon(\omega, x)|^2 \, dx.
    \]
    Then, \eqref{eq:discrete_approximation} immediately yields
    \begin{align*}
        &\limsup_{k \to \infty} \limsup_{\varepsilon \to 0} \frac{1}{\delta_\varepsilon} \left|\int_{U' \times (0, \delta_\varepsilon)} |\nabla w_\varepsilon(\omega, x)|^2 \varphi(x') \, dx - \sum_{j = 1}^{m_k} \int_{(Q'_j)^k \times (0, \delta_\varepsilon)} |\nabla w_\varepsilon(\omega, x)|^2 \varphi(x') \, dx \right| \\
        &\le \limsup_{k \to \infty} \gamma_{h_0}(\omega) \|\varphi\|_{C^0(\overline{U'})}\left|U' \cap \bigcup_{j = 1}^{n_k} (P'_j)^k \right| = 0 \quad \mathbb{P}\text{-a.s.,}
    \end{align*}
    since $|\partial U'| = 0$. Next, we evaluate the limit of the second term in \eqref{eq:divide_and_conquer}. We shall do this by approximating $\varphi$ by its average in the cubes. Due to the uniform continuity of $\varphi$ and \eqref{eq:discrete_approximation}, we know that
    \[
    \lim_{k \to \infty} \lim_{\varepsilon \to 0} \frac{1}{\delta_\varepsilon} \sum_{j = 1}^{m_k} \int_{(Q'_j)^k \times (0, \delta_\varepsilon)} |\nabla w_\varepsilon(\omega, x)|^2 |\varphi(x') - (\varphi)_{(Q'_j)^k}| \, dx = 0 \quad \mathbb{P}\text{-a.s.},
    \]
    where $(\varphi)_{(Q'_j)^k}$ denotes the average of $\varphi$ in $(Q'_j)^k$. Another application of \eqref{eq:discrete_approximation} also yields
    \[
    \lim_{\varepsilon \to 0} \frac{1}{\delta_\varepsilon} \sum_{j = 1}^{m_k} \int_{(Q'_j)^k \times (0, \delta_\varepsilon)} |\nabla w_\varepsilon(\omega, x)|^2 (\varphi)_{(Q'_j)^k} \, dx = \gamma_{h_0}(\omega) \sum_{j = 1}^{m_k} |U' \cap (Q'_j)^k| (\varphi)_{(Q'_j)^k} \quad \mathbb{P}\text{-a.s.}
    \]
    Finally, it is clear that
    \[
    \lim_{k \to \infty} \gamma_{h_0}(\omega) \sum_{j = 1}^{m_k} |U' \cap (Q'_j)^k| (\varphi)_{(Q'_j)^k} = \gamma_{h_0}(\omega) \int_{U'} \varphi \, dx'.
    \]
    This concludes the proof.
\end{proof}

Recall from Definition \ref{def:omega_admissible} the notion of an $\omega$-admissible sequence.

\begin{theorem} \label{thm:omega_admissible}
    For $\mathbb{P}$-a.e. $\omega \in \Omega$, $(w_\varepsilon(\omega, \cdot))$ is $\omega$-admissible and $\hat{w}_\varepsilon(\omega, \cdot) \rightharpoonup 1$ in $H^1(U^+)$.
\end{theorem}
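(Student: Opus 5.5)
Most of the work for $\omega$-admissibility is already done by \eqref{eq:bounded_gradients} in Proposition \ref{pr:local_gradient_limit}. We fix $\omega$ in the full-measure set where it holds, together with the conclusions of Theorem \ref{thm:vanishing_capacity_and_measure}. By construction, $w_\varepsilon(\omega,\cdot) \in H^1(U_\varepsilon^+)$ and it vanishes weakly on $T'_\varepsilon(\omega)\times\{0\}$. On the isolated balls this holds because $\eta^{h_0}_{l,h}=1$ weakly on $T'\times\{0\}$. On the clusters it holds because $\zeta_{\varepsilon,h_0}=1$ on a neighborhood of $(T'_{\varepsilon,h_0})^C(\omega)\times\{0\}$. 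Since $\frac{1}{\delta_\varepsilon}\int_{U_\varepsilon^+}|\nabla w_\varepsilon|^2\,dx$ converges to the finite number $\gamma_{h_0}(\omega)|U'|$, it is bounded for small $\varepsilon$. For the remaining $\varepsilon$ in any range bounded away from $0$ there are finitely many holes in $U'$, so each energy is finite. I would simply restrict attention to $\varepsilon\le\varepsilon_0$, which suffices for the limit statement. This gives $\omega$-admissibility.

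For the weak convergence, rescaling gives
\[
\int_{U^+} |\nabla'\hat w_\varepsilon|^2+\frac{1}{\delta_\varepsilon^2}|\partial_N\hat w_\varepsilon|^2\,dx=\frac{1}{\delta_\varepsilon}\int_{U_\varepsilon^+}|\nabla w_\varepsilon|^2\,dx\le C .
\]
Hence $\nabla\hat w_\varepsilon$ is bounded in $L^2$, $\partial_N\hat w_\varepsilon\to0$ strongly, and $0\le\hat w_\varepsilon\le1$. So $(\hat w_\varepsilon)$ is bounded in $H^1(U^+)$, and every subsequence has a further subsequence converging weakly in $H^1$ and strongly in $L^2$. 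It therefore suffices to show $\hat w_\varepsilon\to1$ in $L^1(U^+)$. Since $0\le 1-\hat w_\varepsilon\le1$ and $1-\hat w_\varepsilon$ is supported in
\[
\Big(S'_{\varepsilon,h_0}(\omega)\cup\bigcup_{I_{\varepsilon,h_0}}B'(\varepsilon y',\varepsilon r)\Big)\times(0,1),
\]
the cluster part contributes at most $|S'_{\varepsilon,h_0}(\omega)|\to0$ by \eqref{eq:measure_vanishes}.

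The main obstacle is the isolated part. The balls $B'(\varepsilon y',\varepsilon r)$ have no small total measure (they can fill a positive fraction of $U'$), so $\eta$ itself must be small on average. I would proceed by a Poincaré-type estimate in each cylinder. After rescaling, the cylinder is $C(l,h)$ with $l=\varepsilon r/(a_\varepsilon\rho)$ and $h=h_\varepsilon/\rho$, and $1-w_\varepsilon=\eta^{h_0}_{l,h}((x-\varepsilon y)/(a_\varepsilon\rho))$ vanishes on the lateral boundary of $B'(\varepsilon y',\varepsilon r)\times(0,\delta_\varepsilon)$. For $h_0<\infty$, apply the $(N-1)$-dimensional Poincaré inequality on each slice of the ball of radius $\varepsilon r\le\varepsilon$. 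This gives
\[
\int_{B'\times(0,1)}|1-\hat w_\varepsilon|^2\le C\varepsilon^2\int_{B'\times(0,1)}|\nabla'\hat w_\varepsilon|^2 .
\]
For $h_0=\infty$, the same slice-wise Poincaré with zero lateral data works, since $\eta^\infty$ also vanishes on the lateral boundary. Summing over the disjoint balls yields
\[
\|1-\hat w_\varepsilon\|_{L^2}^2\le |S'_{\varepsilon,h_0}|+C\varepsilon^2\|\nabla'\hat w_\varepsilon\|^2_{L^2}\to0 .
\]
The delicate point is handling balls meeting $\partial U'$, which I would avoid by working on $\mathbb{R}^{N-1}\times(0,\delta_\varepsilon)$, where $w_\varepsilon$ is defined globally, and restricting afterwards. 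Finally, uniqueness of the limit $1$ gives convergence of the whole sequence.
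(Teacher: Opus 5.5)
Your proposal is correct and follows the paper's proof essentially step for step: admissibility comes from \eqref{eq:bounded_gradients}, the cluster part is controlled by $|S'_{\varepsilon,h_0}(\omega)| \to 0$ using $0 \le \hat w_\varepsilon \le 1$, and each isolated cylinder is handled by a slice-wise Poincaré inequality with zero lateral data and constant $\lesssim \varepsilon^2$. For the balls that cross $\partial U'$, the bound you need is on the energy summed over the full balls, not just over $U_\varepsilon^+$; this is exactly the sum controlled by Corollary \ref{co:gradient_computation}, which is what the paper invokes at that step.
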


\begin{proof}
    The function $w_\varepsilon(\omega, \cdot)$ vanishes weakly on $T'_\varepsilon(\omega) \times \{0\}$ by construction. Hence, $(w_\varepsilon(\omega, \cdot))$ is $\omega$-admissible $\mathbb{P}$-a.s. by \eqref{eq:bounded_gradients} in Proposition \ref{pr:local_gradient_limit}. We need to prove that $\hat{w}_\varepsilon(\omega, \cdot) \to 1$ in $L^2(U^+)$. We have
    \begin{multline*}
        \int_{U^+} |\hat{w}_\varepsilon(\omega, x) - 1|^2 \, dx \le \int_{S'_{\varepsilon, h_0}(\omega) \times (0, 1)} |\hat{w}_\varepsilon(\omega, x) - 1|^2 \, dx \\
        + \sum_{(y', \rho) \in I_{\varepsilon, h_0}(\omega)} \int_{B'(\varepsilon y', \varepsilon r(\omega, y')) \times (0, 1)} |\hat{w}_\varepsilon(\omega, x) - 1|^2 \, dx
    \end{multline*}
    Since $|\hat{w}_\varepsilon(\omega, \cdot)| \le 1$ a.e., Theorem \ref{thm:vanishing_capacity_and_measure} gives
    \[
    \limsup_{\varepsilon \to 0} \int_{S'_{\varepsilon, h_0}(\omega) \times (0, 1)} |\hat{w}_\varepsilon(\omega, x) - 1|^2 \, dx \lesssim \limsup_{\varepsilon \to 0} |S'_{\varepsilon, h_0}(\omega)| \quad \mathbb{P}\text{-a.s.}
    \]
    On the other hand, by Poincaré's inequality we have
    \begin{align*}
        \int_{B'(\varepsilon y', \varepsilon r(\omega, y')) \times (0, 1)} |\hat{w}_\varepsilon(\omega, x) - 1|^2 \, dx &= \frac{1}{\delta_\varepsilon} \int_0^{\delta_\varepsilon} \int_{B'(\varepsilon y', \varepsilon r(\omega, y'))} |w_\varepsilon(\omega, x) - 1|^2 \, dx' \, dx_N \\
        &\lesssim \frac{\varepsilon^2}{\delta_\varepsilon} \int_0^{\delta_\varepsilon} \int_{B'(\varepsilon y', \varepsilon r(\omega, y'))} |\nabla' w_\varepsilon(\omega, x)|^2 \, dx' \, dx_N.
    \end{align*}
    Hence, Corollary \ref{co:gradient_computation} yields
    \begin{multline*}
    \limsup_{\varepsilon \to 0} \sum_{(y', \rho) \in I_{\varepsilon, h_0}(\omega)} \int_{B'(\varepsilon y', \varepsilon r(\omega, y')) \times (0, 1)} |\hat{w}_\varepsilon(\omega, x) - 1|^2 \, dx \\
    \lesssim \limsup_{\varepsilon \to 0} \sum_{(y', \rho) \in I_{\varepsilon, h_0}(\omega)} \frac{\varepsilon^2}{\delta_\varepsilon} \int_{B'(\varepsilon y', \varepsilon r(\omega, y')) \times (0, \delta_\varepsilon)} |\nabla w_\varepsilon(\omega, x)|^2 \, dx = 0 \quad \mathbb{P}\text{-a.s.}
    \end{multline*}
\end{proof}

\subsection{A variational property of the oscillating test functions} \label{ssec:variational_property}

In this section, we establish a special case of Theorem \ref{thm:oscillating_test_functions}. As we will see later, the special case is an important step in the proof of the theorem itself. From now on, we assume that if $N = 3$, then $\ln(1/\delta_\varepsilon) \ll 1/\varepsilon^2$. The main result is stated in the next proposition.

\begin{proposition} \label{pr:special_case}
    For $\mathbb{P}$-a.e. $\omega \in \Omega$, the sequence $(w_\varepsilon(\omega, \cdot))$ satisfies the following property: Given an $\omega$-admissible sequence $(v_\varepsilon)$, if there exists a subsequence $(\varepsilon_n)$ tending to $0$ such that $\hat{v}_{\varepsilon_n} \rightharpoonup 0$ in $H^1(U^+)$ as $n \to \infty$, we have
    \begin{equation} 
        \lim_{n \to \infty} \frac{1}{\delta_{\varepsilon_n}} \int_{U_{\varepsilon_n}^+} \nabla w_{\varepsilon_n}(\omega, x) \nabla v_{\varepsilon_n}(x) \, dx = 0.
    \end{equation}
\end{proposition}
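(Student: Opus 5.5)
We split the integral according to the decomposition of the strip used to build $w_\varepsilon$: the cluster layer $S'_{\varepsilon, h_0}(\omega) \times (0, \delta_\varepsilon)$, the isolated cells $B'(\varepsilon y', \varepsilon r(\omega, y')) \times (0, \delta_\varepsilon)$ for $(y', \rho) \in I_{\varepsilon, h_0}(\omega)$, and the complement, where $w_\varepsilon \equiv 1$ and nothing contributes.

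\textbf{Cluster layer.} By Cauchy--Schwarz and $\omega$-admissibility of $(v_\varepsilon)$, the scaled integral over the cluster layer is at most $C \big(\delta_\varepsilon^{-1} \int_{S'_{\varepsilon, h_0} \times (0, \delta_\varepsilon)} |\nabla w_\varepsilon|^2 \, dx\big)^{1/2}$. By \eqref{eq:vanishing_gradient_1}, \eqref{eq:vanishing_gradient_2} and Theorem \ref{thm:vanishing_capacity_and_measure}, this tends to $0$ almost surely.

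\textbf{Isolated cells.} In each cell, $w_\varepsilon = 1 - \eta(\cdot)$ is harmonic. It satisfies $w_\varepsilon = 1$ on the lateral boundary (\eqref{eq:lateral_boundary_values_1}), the Neumann condition on $\{x_N = \delta_\varepsilon\}$ when $h_0 < \infty$, and $w_\varepsilon = 0$ on the contact region, where $v_\varepsilon$ also vanishes weakly. Integrating by parts cell by cell, the bulk term vanishes, and so do the contact region term and the lateral term (because $w_\varepsilon - 1 = 0$ there; more carefully, we integrate by parts against $v_\varepsilon$ using $\nabla(w_\varepsilon - 1)$). What remains is a boundary flux of $\partial_N w_\varepsilon$ against $v_\varepsilon$ on the top face $B'(\varepsilon y', \varepsilon r) \times \{\delta_\varepsilon\}$ when $h_0 = \infty$. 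For $h_0 < \infty$ this flux is zero, and for $h_0 = 0$ the function $w_\varepsilon$ is independent of $x_N$. In the latter case the lateral normal derivative is nonzero, however, so the flux $\partial_\nu \eta$ on $\partial B'(\varepsilon y', \varepsilon r)$ paired with the trace of $v_\varepsilon$ must be handled instead.

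A cleaner route avoids boundary fluxes altogether. Fix $\theta \in (0, 1)$ and use the variational characterization instead of integration by parts: $w_\varepsilon$ minimizes the Dirichlet energy in each cell among functions equal to $0$ on the contact region and matching its boundary data. Hence for every $t \in \mathbb{R}$, $w_\varepsilon + t v_\varepsilon \chi$ is a competitor. Here $\chi$ is a cutoff localizing to the cell, which is needed because $v_\varepsilon$ does not vanish on the cell boundary. The resulting error is governed by the cell averages of $v_\varepsilon$ and by $\nabla \chi$.

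This is where I expect the main obstacle, and Propositions \ref{pr:subcapacitary_growth_1} and \ref{pr:subcapacitary_growth_2} are designed for it. Arguing by contradiction, suppose the limit is $\ne 0$, say $\ge 2\kappa > 0$ along a subsequence. Perturb: $\tilde w = w_\varepsilon - t v_\varepsilon$ has strictly lower energy than $w_\varepsilon$ by a fixed fraction of $\gamma_{h_0}|U'|$ for a suitable small $t$. Since $\hat v_\varepsilon \rightharpoonup 0$, hence $\hat v_\varepsilon \to 0$ in $L^2$, the function $1 - \tilde w$ still approximates the potentials in most cells. The energy lowering then forces a positive fraction of the isolated cells (weighted by $J_{h_0}(\rho)$, via Lemma \ref{lm:really_ugly_sum_limit} applied on small cubes) to carry energy below $\theta$ times the capacity. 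After rescaling to $C(l, h)$ with $l = \varepsilon r/(a_\varepsilon \rho)$ and $h = h_\varepsilon/\rho$, the subcapacitary-growth propositions give a lower bound on the cell averages of $1 - \tilde w$, i.e.\ on the averages of $t v_\varepsilon$. The hypotheses on $l, h$ hold for cells outside the thinned process $M_\sigma$, by the choice of $a_\varepsilon$ and, for $N = 3$, by $\ln(1/\delta_\varepsilon) \ll 1/\varepsilon^2$. Cells inside $M_\sigma$ are negligible by Lemma \ref{lm:thinned_limit}. Summing over cells, this bound contradicts $\hat v_\varepsilon \to 0$ in $L^1(U^+)$.

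Finally, the sign: replacing $v_\varepsilon$ by $-v_\varepsilon$ handles a negative limit, so the limit must be $0$. The bookkeeping converting the energy deficit into many subcapacitary cells is the step that needs the most care.
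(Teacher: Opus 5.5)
Your final contradiction argument is correct and is essentially the paper's proof. The paper first proves the liminf inequality of Proposition \ref{pr:minimization_proposition} for every sequence in $\mathcal{O}(\omega)$, using exactly your count of subcapacitary cells (Lemma \ref{lm:bad_point_big_average}, Corollary \ref{co:counting_bad_points}, Lemma \ref{lm:thinned_limit}, and strong $L^2$ convergence). It then obtains the statement from the first variation of $w_\varepsilon + t v_\varepsilon$, letting $t \to 0$ and replacing $v_\varepsilon$ by $-v_\varepsilon$, which is just the contrapositive of your perturbation by $w_\varepsilon - t v_\varepsilon$.

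Your preliminary detours (the cluster-layer estimate, integration by parts, and cell-wise competitors with cutoffs) can be dropped. Your claim that $w_\varepsilon$ exactly minimizes the cell energy is inaccurate for $h_0 = 0$: there $w_\varepsilon$ is independent of $x_N$ and vanishes on the whole column above the contact region, so it is only asymptotically optimal. Your contradiction argument never uses that claim, so this does not affect correctness.
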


The proof of Proposition \ref{pr:special_case} is based on a minimization property of $(w_\varepsilon(\omega, \cdot))$. We shall compare the oscillating test functions with a subclass of $\omega$-admissible sequences.

\begin{definition}
    For $\omega \in \Omega$, let $\mathcal{O}(\omega)$ be the set of all $\omega$-admissible sequences $(v_\varepsilon)$ such that $\hat{v}_\varepsilon \rightharpoonup 1$ in $H^1(U^+)$ as $\varepsilon \to 0$.    
\end{definition}

\begin{proposition} \label{pr:minimization_proposition}
    For $\mathbb{P}$-a.e. $\omega \in \Omega$ and for all $(v_\varepsilon) \in \mathcal{O}(\omega)$, we have
    \begin{equation} \label{eq:liminf_inequality}
        \liminf_{\varepsilon \to 0} \frac{1}{\delta_\varepsilon} \int_{U_\varepsilon^+} |\nabla w_\varepsilon(\omega, x)|^2 \, dx \le \liminf_{\varepsilon \to 0} \frac{1}{\delta_\varepsilon} \int_{U_\varepsilon^+} |\nabla v_\varepsilon(x)|^2 \, dx.
    \end{equation}
\end{proposition}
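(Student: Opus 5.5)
By \eqref{eq:bounded_gradients}, the left-hand side of \eqref{eq:liminf_inequality} equals $\gamma_{h_0}(\omega)|U'|$ almost surely. It therefore suffices to show that every $(v_\varepsilon) \in \mathcal{O}(\omega)$ satisfies
\[
\liminf_{\varepsilon \to 0} \frac{1}{\delta_\varepsilon}\int_{U_\varepsilon^+} |\nabla v_\varepsilon|^2 \, dx \ge \gamma_{h_0}(\omega)|U'|.
\]
I would prove this lower bound locally, near each isolated contact region, and then sum. Passing to a subsequence, I may assume the liminf is a finite limit.

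\textbf{Localization.} Fix $\theta \in (0,1)$ and $\sigma > 0$. Consider the isolated points $(y',\rho) \in I_{\varepsilon,h_0}(\omega) \setminus M_\sigma(\omega)$. For these, $l = \varepsilon r(\omega,y')/(a_\varepsilon\rho) \ge \varepsilon\sigma^2/a_\varepsilon \to \infty$ and $\rho \le 1/\sigma$, and the cylinders $B'(\varepsilon y',\varepsilon r(\omega,y')) \times (0,\delta_\varepsilon)$ are disjoint. On each cylinder I extend $1 - v_\varepsilon$ evenly in $x_N$ across $\{0\}$. The extension equals $1$ weakly on the hole and, after the rescaling $x \mapsto (x-\varepsilon y)/(a_\varepsilon\rho)$, lives on $C(l, h_\varepsilon/\rho)$. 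Call a cell \emph{bad} if the rescaled energy of the extension is below $\theta$ times the relevant capacity: $\cpct(T'\times\{0\},\mathbb{R}^3)$ for $N=3$, or $\cpct_{h_\varepsilon/\rho}(T',\mathbb{R}^{N-1})$ for $N>3$. For $h_0=0$ the relevant quantity is this capacity divided by $2h$, which is still handled by Proposition \ref{pr:subcapacitary_growth_2}. On a good cell, undoing the scaling gives
\[
\frac{1}{\delta_\varepsilon}\int_{\text{cell}} |\nabla v_\varepsilon|^2 \ge \tfrac{\theta}{2}\,\varepsilon^{N-1} J_{h_0}(\rho)\,(1+o(1)).
\]
The factor $1+o(1)$ comes from $h_\varepsilon \to h_0$, via Proposition \ref{pr:continuity_in_h}.

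\textbf{Bad cells via Propositions \ref{pr:subcapacitary_growth_1} and \ref{pr:subcapacitary_growth_2}.} On a bad cell the rescaled function has average bounded below by $c$, so $\fint_{\text{cell}} |1-v_\varepsilon| \ge c$. Before applying the propositions I must check their hypotheses:
\begin{enumerate}
\item For $N>3$ I need $h/l^{N-1} \le \tau$. This amounts to $\delta_\varepsilon a_\varepsilon^{N-2} \lesssim \varepsilon^{N-1}$ up to $\sigma$-dependent constants. If the ratio is too large, I would shrink the cell in $x_N$, taking height $\min(h,l)$.
\item For $N=3$ I need $\ln l / h \to 0$. This is $\ln(\varepsilon/a_\varepsilon) \ll \delta_\varepsilon/a_\varepsilon$, which follows from $\ln(1/\delta_\varepsilon) \ll 1/\varepsilon^2$ and $a_\varepsilon = \varepsilon^2\delta_\varepsilon$.
\end{enumerate}
I expect this verification, together with adjusting the cell geometry so the hypotheses hold uniformly, to be the main obstacle.

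\textbf{Counting bad cells.} Granting the hypotheses, summing over bad cells gives
\[
c\sum_{\text{bad}} |\text{cell}| \le \int_{U^+} |1-\hat v_\varepsilon| \, dx \to 0,
\]
because $\hat v_\varepsilon \to 1$ in $L^2$. Each cell has $|\text{cell}| \gtrsim \varepsilon^{N-1}\sigma^{N-1}$, so $\varepsilon^{N-1}\cdot\#\{\text{bad}\} \to 0$. Since $J_{h_0}$ is bounded on $(0,1/\sigma]$ (Lemma \ref{lm:equivalence_of_functions} or direct), the bad cells carry
\[
\sum_{\text{bad}} \varepsilon^{N-1}J_{h_0}(\rho) \to 0 .
\]
Here I use that the average over the half-cell controls the full cell by symmetry of the even extension.

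\textbf{Conclusion.} Summing over good cells, the quantity $\frac{1}{\delta_\varepsilon}\int |\nabla v_\varepsilon|^2$ is at least $\frac{\theta}{2}$ times the sum of $\varepsilon^{N-1}J_{h_0}(\rho)$ over all isolated non-thinned points, minus $o(1)$. By \eqref{eq:fewer_arguments_limit} with $O'=\mathbb{R}^{N-1}$ and Lemma \ref{lm:thinned_limit}, this sum tends to $2\gamma_{h_0}(\omega)|U'|$ up to an error vanishing as $\sigma \to 0$. Letting $\varepsilon \to 0$, then $\sigma \to 0$, then $\theta \to 1$ gives the lower bound $\gamma_{h_0}(\omega)|U'|$. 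The exceptional null sets are countable in number (rational $\sigma$, $\theta$), so the statement holds for $\mathbb{P}$-a.e. $\omega$.
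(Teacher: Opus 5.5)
Your proposal is correct and is essentially the paper's proof. Isolated, non-thinned cells are split into good and bad ones. Propositions \ref{pr:subcapacitary_growth_1} and \ref{pr:subcapacitary_growth_2}, applied to the even reflection of $1-v_\varepsilon$, force $\fint(1-\hat v_\varepsilon)>c$ on bad cells. Then $\hat v_\varepsilon\to1$ in $L^2(U^+)$ gives $\varepsilon^{N-1}\card(\text{bad})\to0$, and \eqref{eq:fewer_arguments_limit} together with Lemma \ref{lm:thinned_limit} handles the good cells. The only cosmetic difference is where the conversion happens: you measure badness against $\cpct_{h_\varepsilon/\rho}$ and convert to $J_{h_0}(\rho)$ on the good cells, while the paper measures against $J_{h_0}(\rho)$ and converts (via $\tilde\theta$) inside Lemma \ref{lm:bad_point_big_average}.

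There are three minor points to fix.
\begin{itemize}
\item As in the paper, you must use only cells with $B'(\varepsilon y',\varepsilon r(\omega,y'))\subset O'\csubset U'$ and then let $O'\uparrow U'$. Taking $O'=\mathbb{R}^{N-1}$ includes cells leaving $U'$, where $v_\varepsilon$ is not defined, so the subcapacitary propositions cannot be applied there.
\item The obstacle you anticipate does not occur, so no reshaping of cells is needed. The choice of $a_\varepsilon$ gives $h/l^{N-1}\le\sigma^{3-2N}\delta_\varepsilon^2$, or $\sigma^{3-2N}\delta_\varepsilon a_\varepsilon$ if $h_0=0$. For $N=3$ it gives $h/l^2\le\sigma^{-3}\delta_\varepsilon^2$.
\item For $h_0=0$, your $1+o(1)$ factor is uniform only for $\rho$ bounded below; the paper's corresponding step has the same limitation. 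You should also discard marks $\rho<\eta$. Their contribution to $\sum\varepsilon^{N-1}J_0(\rho)$ tends to $|U'|\int_{(0,\eta)}J_0\,d\xi(\omega,\cdot)$, which vanishes as $\eta\to0$.
\end{itemize}
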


\begin{proof}[Proof of Proposition \ref{pr:special_case}]
    Fix an $\omega \in \Omega$ for which $(w_\varepsilon(\omega, \cdot)) \in \mathcal{O}(\omega)$ and Proposition \ref{pr:minimization_proposition} holds. As $\omega$ is fixed, we omit it from the notation. Assume $(v_\varepsilon)$ is an $\omega$-admissible sequence such that $\hat{v}_\varepsilon \rightharpoonup 0$ in $H^1(U^+)$. Clearly, $(w_\varepsilon + tv_\varepsilon)$ lies in $\mathcal{O}(\omega)$ for all $t > 0$. Hence, Proposition \ref{pr:minimization_proposition} implies
        \begin{equation} \label{eq:nonnegative_liminf}
            \liminf_{\varepsilon \to 0} \frac{1}{\delta_\varepsilon} \int_{U_\varepsilon^+} (|\nabla (w_\varepsilon + tv_\varepsilon)|^2 - |\nabla w_\varepsilon|^2) \, dx \ge 0.
        \end{equation}
    Since
    \[
    \nabla w_\varepsilon \nabla v_\varepsilon = \frac{1}{2t}\left(|\nabla (w_\varepsilon + tv_\varepsilon)|^2 - |\nabla w_\varepsilon|^2 - t^2|\nabla v_\varepsilon|^2\right),
    \]
    by \eqref{eq:nonnegative_liminf} we get
    \[
    \liminf_{\varepsilon \to 0} \frac{1}{\delta_\varepsilon} \int_{U_\varepsilon^+} \nabla w_\varepsilon \nabla v_\varepsilon \, dx \ge \liminf_{\varepsilon \to 0} \frac{1}{\delta_\varepsilon} \frac{1}{2t} \int_{U_\varepsilon^+} (|\nabla (w_\varepsilon + tv_\varepsilon)|^2 - |\nabla w_\varepsilon|^2) \, dx - \frac{Ct}{2} \ge -\frac{Ct}{2},
    \]
    where $\sup_{\varepsilon} \|\nabla v_\varepsilon\|_{L^2(U_\varepsilon^+; \mathbb{R}^N)}^2 \le C \delta_\varepsilon$. Letting $t \to 0$ yields
    \[
    \liminf_{\varepsilon \to 0} \frac{1}{\delta_\varepsilon} \int_{U_\varepsilon^+} \nabla w_\varepsilon \nabla v_\varepsilon \, dx \ge 0.
    \]
    Replacing $v_\varepsilon$ by $-v_\varepsilon$ provides also the reverse inequality.

    Given a sequence $(v_\varepsilon)$ for which $(\hat{v}_\varepsilon)$ converges weakly to $0$ in $H^1(U^+)$ only along a subsequence $(\varepsilon_n)$, we define
    \[
    \vartheta_\varepsilon :=
        \begin{cases}
            v_{\varepsilon_n} \quad &\text{if } \varepsilon = \varepsilon_n \text{ for some } n \in \mathbb{N}, \\
            0 \quad &\text{otherwise}.
        \end{cases}
    \]
    Then $(\vartheta_\varepsilon)$ is $\omega$-admissible and $\vartheta_\varepsilon \rightharpoonup 0$ in $H^1(U^+)$ as $\varepsilon \to 0$. Hence, we can apply the argument given above to conclude that
    \[
    \lim_{n \to \infty} \frac{1}{\delta_{\varepsilon_n}} \int_{U_{\varepsilon_n}^+} \nabla w_{\varepsilon_n} \nabla v_{\varepsilon_n} \, dx = 0.
    \]    
\end{proof}

Before we give the proof of Proposition \ref{pr:minimization_proposition}, we argue heuristically to see why the result holds. Fix $\omega \in \Omega$ and $(v_\varepsilon) \in \mathcal{O}(\omega)$. Assume $(y', \rho) \in I_{\varepsilon, h_0}(\omega)$ and $B(\varepsilon y', \varepsilon r(\omega, y')) \subset U'$. Since $\delta_\varepsilon^{-1} \int_{U_\varepsilon^+} |1 - v_\varepsilon|^2 \, dx \to 0$, we expect that $1 - v_\varepsilon \approx 0$ in $B'(\varepsilon y', \varepsilon r(\omega, y')) \times (0, \delta_\varepsilon)$. At the same time, $1 - v_\varepsilon = 1$ weakly on the contact region $\varepsilon y + a_\varepsilon \rho(T' \times \{0\})$. Among all $H^1$ functions that are close to $0$ in $B'(\varepsilon y', \varepsilon r(\omega, y')) \times (0, \delta_\varepsilon)$ and equal $1$ on the contact region, we expect the capacitary potential of $\varepsilon y + a_\varepsilon \rho(T' \times \{0\})$ in $B'(\varepsilon y', \varepsilon r(\omega, y')) \times (-\delta_\varepsilon, \delta_\varepsilon)$ to have the smallest Dirichlet energy. As $w_\varepsilon(\omega, \cdot)$ is defined to be the capacitary potential near isolated contact regions, we expect
\[
\int_{B'(\varepsilon y', \varepsilon r(\omega, y')) \times (0, \delta_\varepsilon)} |\nabla v_\varepsilon|^2 \, dx \gtrsim \int_{B'(\varepsilon y', \varepsilon r(\omega, y')) \times (0, \delta_\varepsilon)} |\nabla w_\varepsilon(\omega, x)|^2 \, dx \approx \frac{1}{2} \varepsilon^{N - 1} \delta_\varepsilon J_{h_0}(\rho),
\]
where the last approximation follows from \eqref{eq:gradient_divided_by_delta}.

To make this idea rigorous, we single out the points where the desired inequality holds and prove that, asymptotically, they constitute the majority of the isolated points. Our main tools will be Propositions \ref{pr:subcapacitary_growth_1} and \ref{pr:subcapacitary_growth_2}.

\begin{definition}
    Let $\sigma, \theta \in (0, 1)$. Let $O' \csubset U'$ be open.  For a fixed sequence $(v_\varepsilon) \in \mathcal{O}(\omega)$, we denote by $\mathcal{G}_{\varepsilon, h_0}(\sigma, \theta, O')$ the set of points $(y', \rho) \in I_{\varepsilon, h_0}(\omega) \setminus M_\sigma(\omega)$ such that $B'(\varepsilon y', \varepsilon r(\omega, y')) \subset O'$ and
    \begin{equation} \label{eq:good_point}
        \int_{B'(\varepsilon y', \varepsilon r(\omega, y')) \times (0, \delta_\varepsilon)} |\nabla v_\varepsilon|^2 \, dx \ge \frac{\theta}{2} \varepsilon^{N - 1} \delta_\varepsilon J_{h_0}(\rho).
    \end{equation}
    In contrast, $\mathcal{N}_{\varepsilon, h_0}(\sigma, \theta, O')$ denotes the set of points $(y', \rho) \in I_{\varepsilon, h_0}(\omega) \setminus M_\sigma(\omega)$ such that $B'(\varepsilon y', \varepsilon r(\omega, y')) \subset O'$ and \eqref{eq:good_point} fails.
\end{definition}

\begin{lemma} \label{lm:bad_point_big_average}
    Let $\omega \in \Omega$ and $(v_\varepsilon) \in \mathcal{O}(\omega)$ be fixed. Let $\sigma, \theta \in (0, 1)$ and let $O' \csubset U'$ be open. There exist positive constants $c$ and $\varepsilon_0$ such that if $\varepsilon < \varepsilon_0$, then we have
    \begin{equation} \label{eq:bad_point_big_average}
        \fint_{B'(\varepsilon y', \varepsilon r(\omega, y')) \times (0, 1)} 1 - \hat{v}_\varepsilon \, dx > c
    \end{equation}
    for all $(y', \rho) \in \mathcal{N}_{\varepsilon, h_0}(\sigma, \theta, O')$.
\end{lemma}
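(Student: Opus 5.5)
The plan is to rescale each bad cell to the reference cylinder and then invoke Propositions \ref{pr:subcapacitary_growth_1} and \ref{pr:subcapacitary_growth_2}. Fix $(y',\rho)\in\mathcal{N}_{\varepsilon,h_0}(\sigma,\theta,O')$ and write $l:=\varepsilon r(\omega,y')/(a_\varepsilon\rho)$ and $h:=h_\varepsilon/\rho$. Define $z(\xi):=1-v_\varepsilon(\varepsilon y'+a_\varepsilon\rho\,\xi)$ for $\xi\in B'(0,l)\times(0,h)$, and extend it evenly in $\xi_N$ to $C(l,h)$. This gives $z\in H^1(C(l,h))$ with $z=1$ weakly on $T'\times\{0\}$, because $v_\varepsilon=0$ weakly on the contact region and the even reflection keeps the weak trace. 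By the change of variables together with \eqref{eq:symmetry_considerations},
\[
\int_{C(l,h)}|\nabla z|^2\,d\xi=\frac{2}{(a_\varepsilon\rho)^{N-2}}\int_{B'(\varepsilon y',\varepsilon r)\times(0,\delta_\varepsilon)}|\nabla v_\varepsilon|^2\,dx<\frac{\theta\,\varepsilon^{N-1}\delta_\varepsilon}{(a_\varepsilon\rho)^{N-2}}J_{h_0}(\rho),
\]
since \eqref{eq:good_point} fails.

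Next, by the choice of $a_\varepsilon$, the right-hand side equals $\theta$ times $\cpct(T'\times\{0\},\mathbb{R}^N)$ for $h_0=\infty$, and $\theta$ times $\cpct_{h_0/\rho}(T',\mathbb{R}^{N-1})$ for $h_0$ finite and positive. For $h_0=0$ it equals $\theta\cdot 2h\,\cpct_0(T',\mathbb{R}^{N-1})$, up to factors $a_\varepsilon^{N-2}\rho^{N-2}\varepsilon^{-(N-1)}\delta_\varepsilon^{-1}$ that tend to $1$. To reach the hypothesis of the propositions we need the energy bounded by $\theta'\cpct_h(T',\mathbb{R}^{N-1})$ for some $\theta'\in(\theta,1)$. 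For $h_0\in(0,\infty)$ this follows from Proposition \ref{pr:continuity_in_h}, since $h_\varepsilon/h_0\to1$ and $\rho\le 1/\sigma$ is bounded. For $h_0=0$ it follows from $\cpct_h/(2h)\to\cpct_0$ as $h\to0$, which is uniform in $\rho\in(0,1/\sigma]$ because $h\le h_\varepsilon/\rho$. For $h_0=\infty$ we work directly with $\cpct(T'\times\{0\},\mathbb{R}^3)$ when $N=3$; when $N>3$ we use $\cpct_h\to\cpct(T'\times\{0\},\mathbb{R}^N)$ as $h\to\infty$ (Proposition \ref{pr:limit_at_infinity}), where $h\ge\sigma h_\varepsilon\to\infty$. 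In every case we take $\varepsilon_0$ small enough that the factor is at most $\theta'$.

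Then we check the geometric conditions. Since $(y',\rho)\notin M_\sigma(\omega)$, we have $\rho<1/\sigma$ and $r\ge\sigma$, so $l\ge\sigma^2\varepsilon/a_\varepsilon\to\infty$, which gives $l\ge L$. For $N>3$ we need $h/l^{N-1}\le\tau$, i.e.
\[
h_\varepsilon\rho^{N-2}a_\varepsilon^{N-1}\big/(\varepsilon r)^{N-1}\lesssim \delta_\varepsilon a_\varepsilon^{N-2}\varepsilon^{-(N-1)}\sigma^{-(2N-3)}.
\]
This is bounded by $\sigma^{-(2N-3)}a_\varepsilon^{N-2}\varepsilon^{-(N-1)}\delta_\varepsilon$; it tends to $0$ in cases (i) and (ii), since there $a_\varepsilon^{N-2}\varepsilon^{-(N-1)}\delta_\varepsilon^{-1}=1$ and $\delta_\varepsilon^2\to0$. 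In case (iii) it is $(\delta_\varepsilon/a_\varepsilon)\cdot a_\varepsilon^{N-1}\varepsilon^{-(N-1)}\to0$. For $N=3$ we need $h\ge H$, $\ln(l)/h\le\tau$ and $h/l^2\le\tau$. The first holds since $h\ge\sigma h_\varepsilon\to\infty$. The second reduces to $\ln(\varepsilon/a_\varepsilon)\ll\delta_\varepsilon/a_\varepsilon$, which follows from $a_\varepsilon=\varepsilon^2\delta_\varepsilon$ and the standing assumption $\ln(1/\delta_\varepsilon)\ll1/\varepsilon^2$. The third holds because $h/l^2\lesssim\delta_\varepsilon a_\varepsilon/\varepsilon^2=\delta_\varepsilon^2\to0$. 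I expect this parameter bookkeeping to be the main obstacle, especially the $N=3$ logarithmic condition and the uniformity of the capacity ratios in $\rho$.

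The propositions then give $\fint_{C(l,h)}z>c$. Using the even extension, and undoing the scaling (first $x=\varepsilon y'+a_\varepsilon\rho\xi$, then $x_N=\delta_\varepsilon t$), this average equals $\fint_{B'(\varepsilon y',\varepsilon r)\times(0,1)}(1-\hat v_\varepsilon)\,dx$. Since $c$ depends only on $\theta'$ and $T'$, and $\varepsilon_0$ depends only on $\sigma,\theta$ and the sequences, this yields \eqref{eq:bad_point_big_average}. The set $O'$ enters only to ensure $B'(\varepsilon y',\varepsilon r)\times(0,\delta_\varepsilon)\subset U_\varepsilon^+$, so that $z$ is defined on the whole cylinder.
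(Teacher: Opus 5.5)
Your argument is the paper's proof step for step: rescale to $C(l,h)$, reflect evenly, turn the failure of \eqref{eq:good_point} into a subcapacitary bound, and apply Propositions \ref{pr:subcapacitary_growth_1} and \ref{pr:subcapacitary_growth_2}. The bookkeeping for $N=3$, for $h_0=\infty$ with $N>3$, and for $h_0\in(0,\infty)$ is sound.

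\textbf{The gap is the case $h_0=0$.} There you need $\theta\, 2h\cpct_0(T',\mathbb{R}^{N-1})\le\theta'\cpct_h(T',\mathbb{R}^{N-1})$ with $h=h_\varepsilon/\rho$. Since $h\mapsto\cpct_h/(2h)$ is decreasing with limit $\cpct_0$ as $h\to0$, this holds only for $h$ below some threshold $h^*$, i.e.\ only for $\rho\ge h_\varepsilon/h^*$.

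Your claim of uniformity in $\rho\in(0,1/\sigma]$ ``because $h\le h_\varepsilon/\rho$'' does not work. The quantity $h$ \emph{is} $h_\varepsilon/\rho$, and membership in $M(\omega)\setminus M_\sigma(\omega)$ gives $\rho\le 1/\sigma$ and $r\ge\sigma$, but no lower bound on $\rho$.

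For $\rho\ll h_\varepsilon$ the implication is false, not merely unproved. The quantity $\cpct_h\le\cpct(T'\times\{0\},\mathbb{R}^N)$ stays bounded while $2h\cpct_0\to\infty$. Take a sequence in $\mathcal{O}(\omega)$ that equals $1-\eta^\infty_{l,h}((x-\varepsilon y)/(a_\varepsilon\rho))$ near such a point. That point is then bad, yet the average in \eqref{eq:bad_point_big_average} tends to $0$ as $l\to\infty$. Such points occur for arbitrarily small $\varepsilon$ when, e.g., the marks have a positive density near $0$ and $\varepsilon^{1-N}h_\varepsilon\to\infty$. The paper's own proof skips the same point (``for sufficiently small $h$ \dots for small enough $\varepsilon$''), so this defect is shared with it rather than a deviation from it.

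\textbf{A fix.} Also require $\rho\ge\rho_0$, for instance by adding $\rho<\sigma$ to the thinning criterion. Then $h\le h_\varepsilon/\rho_0\to0$ uniformly, and your argument goes through.

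The discarded points are harmless in the proof of Proposition \ref{pr:minimization_proposition}. By Theorem \ref{thm:ergodic_theorem_second_version},
$\lim_{\varepsilon\to0}\sum_{(y',\rho)\in M(\omega),\,\varepsilon y'\in U',\,\rho<\rho_0}\varepsilon^{N-1}J_0(\rho)=|U'|\int_{(0,\rho_0)}J_0\,d\xi(\omega,\rho)$,
which tends to $0$ as $\rho_0\to0$.

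A smaller slip: for $h_0=0$ the identity $\varepsilon^{N-1}\delta_\varepsilon J_0(\rho)=(a_\varepsilon\rho)^{N-2}\,2h\cpct_0(T',\mathbb{R}^{N-1})$ is exact, because $a_\varepsilon^{N-3}=\varepsilon^{N-1}$. The factor you say tends to $1$ actually equals $\rho^{N-2}/h_\varepsilon$.
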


\begin{proof}
    We extend $v_\varepsilon$ to $U' \times (-\delta_\varepsilon, \delta_\varepsilon)$ by reflection, that is, we set $v_\varepsilon(x', x_N) := v_\varepsilon(x', -x_N)$ for $(x', x_N) \in U' \times (-\delta_\varepsilon, 0)$. Then $v_\varepsilon \in H^1(U' \times (-\delta_\varepsilon, \delta_\varepsilon))$. If $(y', \rho) \in \mathcal{N}_{\varepsilon, h_0}(\sigma, \theta, O')$, we have
    \begin{equation} \label{eq:blow_up}
        \int_{B'(\varepsilon y', \varepsilon r(\omega, y')) \times (-\delta_\varepsilon, \delta_\varepsilon)} |\nabla v_\varepsilon|^2  \, dx < \theta \varepsilon^{N - 1} \delta_\varepsilon J_{h_0}(\rho).
    \end{equation}
    An easy computation shows that
    \begin{equation} \label{eq:capacity_identities}
    \varepsilon^{N - 1} \delta_\varepsilon J_{h_0}(\rho) = 
        \begin{cases}
            a_\varepsilon^{N - 2} \rho^{N - 2} \cpct(T' \times \{0\}, \mathbb{R}^N) \quad &\text{if } h_0 = \infty, \\
            a_\varepsilon^{N - 2} \rho^{N - 2} \cpct_{\frac{h_0}{\rho}}(T', \mathbb{R}^{N - 1}) \quad &\text{if } h_0 \in (0, \infty), \\
            2 a_\varepsilon^{N - 3} \delta_\varepsilon \rho^{N - 3} \cpct_0(T', \mathbb{R}^{N - 1}) \quad &\text{if } h_0 = 0.
        \end{cases}
    \end{equation}
    Let us now assume $N = 3$. In this case $h_0 = \infty$. Then \eqref{eq:blow_up} and \eqref{eq:capacity_identities} give
    \[
    \frac{1}{a_\varepsilon^{N - 2} \rho^{N - 2}} \int_{B'(\varepsilon y', \varepsilon r(\omega, y')) \times (-\delta_\varepsilon, \delta_\varepsilon)} |\nabla v_\varepsilon|^2 \, dx < \theta \cpct(T' \times \{0\}, \mathbb{R}^N).
    \]
    By applying a change of variables in Proposition \ref{pr:subcapacitary_growth_1}, we obtain positive constants $L$, $H$, $\tau$ and $c$, depending on $\theta$, such that if
    \begin{equation} \label{eq:desired_implication}
         \frac{\varepsilon r(\omega, y')}{a_\varepsilon \rho} \ge L, \quad \frac{h_\varepsilon}{\rho} \ge H, \quad \max \left\{\ln\left(\frac{\varepsilon r(\omega, y')}{a_\varepsilon \rho}\right) \frac{a_\varepsilon \rho}{\delta_\varepsilon}, \frac{h_\varepsilon}{\rho}\frac{a_\varepsilon^2 \rho^2}{\varepsilon^2 r(\omega, y')^2}\right\} \le \tau,
    \end{equation}
    then
    \begin{equation}
        c < \fint_{B'(\varepsilon y', \varepsilon r(\omega, y')) \times (-\delta_\varepsilon, \delta_\varepsilon)} 1 - v_\varepsilon \, dx = \fint_{B'(\varepsilon y', \varepsilon r(\omega, y')) \times (0, 1)} 1 - \hat{v}_\varepsilon \, dx.
    \end{equation}
    We show that there exists $\varepsilon_0$ such that the conditions in \eqref{eq:desired_implication} are satisfied for all $(y', \rho) \in \mathcal{N}_{\varepsilon, h_0}(\sigma, \theta, O')$ if $\varepsilon < \varepsilon_0$. We write the first two conditions as
    \begin{equation} \label{eq:equivalent_bounds}
        \frac{\varepsilon}{a_\varepsilon} \ge \frac{L \rho}{r(\omega, y')}, \quad h_\varepsilon \ge H \rho.
    \end{equation}
    Since $(y', \rho) \in M(\omega) \setminus M_\sigma(\omega)$, we know that $r(\omega, y') \ge \sigma$ and $\rho \le 1/\sigma$. Hence, the following conditions imply \eqref{eq:equivalent_bounds} and are independent of $(y', \rho)$:
    \begin{equation} \label{eq:equivalent_bounds_independent}
        \frac{\varepsilon}{a_\varepsilon} \ge \frac{L}{\sigma^2}, \quad h_\varepsilon \ge \frac{H}{\sigma}.
    \end{equation}
    Next, we rewrite the third condition in \eqref{eq:desired_implication} as
    \begin{equation} \label{eq:equivalent_bounds_2}
        \left(\ln\left(\frac{r(\omega, y')}{\varepsilon \rho}\right) + \ln\left(\frac{1}{\delta_\varepsilon}\right) \right) \varepsilon^2 \rho \le \tau, \quad \frac{\rho}{r(\omega, y')} \delta_\varepsilon^2 \le \tau.
    \end{equation}
    We know that $x \ln(1/x)$ is increasing for $x \le 1/e$. Hence, using $r(\omega, y') \le 1$ and $\rho \le 1/\sigma$, we get
    \[
    \varepsilon^2 \rho \ln\left(\frac{r(\omega, y')}{\varepsilon \rho}\right) + \varepsilon^2 \rho \ln\left(\frac{1}{\delta_\varepsilon}\right) \le \frac{1}{\sigma} \left(\varepsilon^2 \ln\left(\frac{\sigma}{\varepsilon}\right) + \varepsilon^2 \ln\left(\frac{1}{\delta_\varepsilon}\right)\right)
    \]
    for $\varepsilon \le \sigma/e$. Thus
    \begin{equation} \label{eq:equivalent_bounds_independent_2}
        \tau \ge \frac{1}{\sigma} \left(\varepsilon^2 \ln\left(\frac{\sigma}{\varepsilon}\right) + \varepsilon^2 \ln\left(\frac{1}{\delta_\varepsilon}\right)\right)
    \end{equation}
    implies the first inequality in \eqref{eq:equivalent_bounds_2} for sufficiently small $\varepsilon$ and is independent of $(y', \rho)$. On the other hand,
    \begin{equation} \label{eq:equivalent_bounds_independent_3}
        \tau \ge \frac{\delta_\varepsilon^2}{\sigma^3}
    \end{equation}
    implies the second inequality in \eqref{eq:equivalent_bounds_2} and is independent of $(y', \rho)$. To conclude the proof for $N = 3$, we need to show that \eqref{eq:equivalent_bounds_independent}, \eqref{eq:equivalent_bounds_independent_2} and \eqref{eq:equivalent_bounds_independent_3} are all satisfied for sufficiently small $\varepsilon$. However, this follows easily from the following limits:
    \[
    \lim_{\varepsilon \to 0} \varepsilon^2 \ln\left(\frac{1}{\delta_\varepsilon}\right) = \lim_{\varepsilon \to 0} \delta_\varepsilon = 0, \quad \lim_{\varepsilon \to 0} \frac{\varepsilon}{a_\varepsilon} = \lim_{\varepsilon \to 0} h_\varepsilon = \infty.
    \]
    The first limit holds by assumption, and the last limit holds because $h_0 = \infty$.

    Let $N > 3$. We prove that for any $\tilde{\theta} \in (\theta, 1)$ and for sufficiently small $\varepsilon$, we have
    \begin{equation} \label{eq:change_of_capacity_function}
        \frac{1}{a_\varepsilon^{N - 2} \rho^{N - 2}} \int_{B'(\varepsilon y', \varepsilon r(\omega, y')) \times (-\delta_\varepsilon, \delta_\varepsilon)} |\nabla v_\varepsilon|^2 \, dx \le \tilde{\theta} \cpct_{\frac{h_\varepsilon}{\rho}}(T', \mathbb{R}^{N - 1})
    \end{equation}
    for all $(y', \rho) \in \mathcal{N}_{\varepsilon, h_0}(\sigma, \theta, O')$. Given the claim, we can use Proposition \ref{pr:subcapacitary_growth_2} to obtain $L$, $c$ and $\tau$, depending on $\tilde{\theta}$, such that if
    \begin{equation} \label{eq:desired_implication_2}
        \frac{\varepsilon r(\omega, y')}{a_\varepsilon \rho} \ge L, \quad \frac{h_\varepsilon}{\rho} \left(\frac{a_\varepsilon \rho}{\varepsilon r(\omega, y')}\right)^{N - 1} \le \tau,
    \end{equation}
    then
    \[
    c < \fint_{B'(\varepsilon y', \varepsilon r(\omega, y')) \times (-\delta_\varepsilon, \delta_\varepsilon)} 1 - v_\varepsilon \, dx = \fint_{B'(\varepsilon y', \varepsilon r(\omega, y')) \times (0, 1)} 1 - \hat{v}_\varepsilon \, dx.
    \]
    We can establish, as in the case $N = 3$, that there exists $\varepsilon_0$ such that if $\varepsilon < \varepsilon_0$, then the conditions of \eqref{eq:desired_implication_2} are satisfied for all $(y', \rho) \in \mathcal{N}_{\varepsilon, h_0}(\sigma, \theta, O')$. We do not carry out the computations here and turn to the proof of \eqref{eq:change_of_capacity_function} instead.

    Let $\tilde{\theta} \in (\theta, 1)$  be arbitrary. Assume $h_0 = \infty$. By Proposition \ref{pr:limit_at_infinity}, we have that $\theta \cpct(T' \times \{0\}, \mathbb{R}^N) \le \tilde{\theta} \cpct_h(T', \mathbb{R}^{N - 1})$ for sufficiently large $h$. Hence, \eqref{eq:blow_up} and \eqref{eq:capacity_identities} give
    \begin{equation*}
        \frac{1}{a_\varepsilon^{N - 2} \rho^{N - 2}} \int_{B'(\varepsilon y', \varepsilon r(\omega, y')) \times (-\delta_\varepsilon, \delta_\varepsilon)} |\nabla v_\varepsilon|^2 \, dx \le \theta \cpct(T' \times \{0\}, \mathbb{R}^N) \le \tilde{\theta} \cpct_{\frac{h_\varepsilon}{\rho}}(T', \mathbb{R}^{N - 1}).
    \end{equation*}
    for small enough $\varepsilon$. Next, assume $h_0 = 0$. Since $\cpct_h(T', \mathbb{R}^{N - 1})/(2h) \to \cpct_0(T', \mathbb{R}^{N - 1})$ as $h \to 0$ by definition, we know that $\theta \, 2h \cpct_0(T', \mathbb{R}^{N - 1}) \le \tilde{\theta} \cpct_h(T', \mathbb{R}^{N - 1})$ for sufficiently small $h$. Therefore, by \eqref{eq:blow_up} and \eqref{eq:capacity_identities},
    \begin{equation*}
        \frac{1}{a_\varepsilon^{N - 2} \rho^{N - 2}} \int_{B'(\varepsilon y', \varepsilon r(\omega, y')) \times (-\delta_\varepsilon, \delta_\varepsilon)} |\nabla v_\varepsilon|^2 \, dx \le \theta \frac{2h_\varepsilon}{\rho} \cpct_0(T', \mathbb{R}^{N - 1}) \le \tilde{\theta} \cpct_{\frac{h_\varepsilon}{\rho}}(T', \mathbb{R}^{N - 1})
    \end{equation*}
    for small enough $\varepsilon$. Finally, let $h_0 \in (0, \infty)$. Pick $\varepsilon$ small enough such that $\theta \max \{h_\varepsilon/h_0, h_0/h_\varepsilon\} \le \tilde{\theta}$. Then, Proposition \ref{pr:continuity_in_h}, together with \eqref{eq:blow_up} and \eqref{eq:capacity_identities}, implies
    \begin{equation*}
        \frac{1}{a_\varepsilon^{N - 2} \rho^{N - 2}} \int_{B'(\varepsilon y', \varepsilon r(\omega, y')) \times (-\delta_\varepsilon, \delta_\varepsilon)} |\nabla v_\varepsilon|^2 \, dx < \theta \cpct_{\frac{h_0}{\rho}}(T', \mathbb{R}^{N - 1}) \le \tilde{\theta} \cpct_\frac{h_\varepsilon}{\rho}(T', \mathbb{R}^{N - 1}).
    \end{equation*}
    This concludes the proof of the inequality \eqref{eq:change_of_capacity_function}.
\end{proof}

\begin{corollary} \label{co:counting_bad_points}
    Let $\omega \in \Omega$ and $(v_\varepsilon) \in \mathcal{O}(\omega)$ be fixed. Let $\sigma, \theta \in (0, 1)$ and let $O' \csubset U'$ be open. Then
    \[
    \lim_{\varepsilon \to 0} \varepsilon^{N - 1} \card\left(\mathcal{N}_{\varepsilon, h_0}(\sigma, \theta, O')\right) = 0.
    \]
\end{corollary}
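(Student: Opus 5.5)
The plan is to combine the lower bound on averages from Lemma \ref{lm:bad_point_big_average} with the strong $L^2$ convergence $\hat{v}_\varepsilon \to 1$, which comes from weak convergence in $H^1(U^+)$ together with Rellich--Kondrachov, since $U^+$ is a bounded Lipschitz domain. The point is that each bad point forces a definite amount of $L^1$-mass of $1 - \hat{v}_\varepsilon$ on its own cylinder. These cylinders are disjoint and each has volume comparable to $\varepsilon^{N-1}$, so their total mass would stay bounded below unless $\varepsilon^{N-1}$ times the number of bad points goes to zero.

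Concretely, I fix $\sigma, \theta, O'$ and take $c, \varepsilon_0$ from Lemma \ref{lm:bad_point_big_average}. For $\varepsilon < \varepsilon_0$ and $(y', \rho) \in \mathcal{N}_{\varepsilon, h_0}(\sigma, \theta, O')$, write $B'_{y'} := B'(\varepsilon y', \varepsilon r(\omega, y'))$. Multiplying the lemma's inequality by $|B'_{y'} \times (0,1)|$ gives
\[
c\, \omega_{N-1} \varepsilon^{N-1} r(\omega, y')^{N-1} < \int_{B'_{y'} \times (0,1)} (1 - \hat{v}_\varepsilon) \, dx \le \int_{B'_{y'} \times (0,1)} |1 - \hat{v}_\varepsilon| \, dx ,
\]
where $\omega_{N-1}$ is the volume of the unit ball in $\mathbb{R}^{N-1}$. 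Because $(y', \rho) \notin M_\sigma(\omega)$, we have $d_{M(\omega)}(y')/2 \ge \sigma$ and hence $r(\omega, y') \ge \sigma$ (using $\sigma < 1$). The left-hand side is therefore at least $c\, \omega_{N-1} \sigma^{N-1} \varepsilon^{N-1}$.

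Next I sum over all bad points. By \eqref{eq:nonintersecting_scaled_balls} the balls $B'_{y'}$ are pairwise disjoint, and by the definition of $\mathcal{N}_{\varepsilon, h_0}$ they lie in $O' \subset U'$. This gives
\[
c\, \omega_{N-1} \sigma^{N-1} \varepsilon^{N-1} \card\left(\mathcal{N}_{\varepsilon, h_0}(\sigma, \theta, O')\right) \le \int_{U^+} |1 - \hat{v}_\varepsilon| \, dx \le |U^+|^{1/2} \|1 - \hat{v}_\varepsilon\|_{L^2(U^+)} .
\]
Since $(v_\varepsilon) \in \mathcal{O}(\omega)$, the rescaled functions satisfy $\hat{v}_\varepsilon \rightharpoonup 1$ in $H^1(U^+)$. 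Compactness of the embedding $H^1(U^+) \hookrightarrow L^2(U^+)$ turns this into $\hat{v}_\varepsilon \to 1$ strongly in $L^2(U^+)$; if needed this is done along subsequences, but every subsequence has the same limit, so the whole family converges. Letting $\varepsilon \to 0$ then gives the claim, because $c$ and $\sigma$ do not depend on $\varepsilon$.

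There is no serious obstacle, since the analytic work has already been done in Lemma \ref{lm:bad_point_big_average}. The only points to check are the uniform lower bound $r(\omega, y') \ge \sigma$ on $\mathcal{N}_{\varepsilon, h_0}$ and the fact that the cylinders $B'_{y'} \times (0,1)$ are disjoint subsets of $U^+$.
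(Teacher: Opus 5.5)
Your proposal is correct and follows essentially the paper's argument. The paper also turns the average bound of Lemma \ref{lm:bad_point_big_average} into a lower bound of order $\varepsilon^{N-1}$ per bad cylinder via $r(\omega, y') \ge \sigma$, sums over the disjoint cylinders inside $U^+$, and concludes from $\hat{v}_\varepsilon \to 1$ in $L^2(U^+)$. The only cosmetic difference is that the paper applies Hölder locally to bound $\int |1 - \hat{v}_\varepsilon|^2$ on each cylinder, whereas you work with the $L^1$ mass and apply Cauchy--Schwarz once on all of $U^+$.
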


\begin{proof}
    By Lemma \ref{lm:bad_point_big_average}, there exist positive constants $c$ and $\varepsilon_0$, such that
    \[
    \int_{B'(\varepsilon y', \varepsilon r(\omega, y')) \times (0, 1)} |1 - \hat{v}_\varepsilon|^2 \, dx \ge c \varepsilon^{N - 1} r(\omega, y')^{N - 1} \gtrsim \varepsilon^{N - 1}
    \]
    for all $\varepsilon < \varepsilon_0$ and $(y', \rho) \in \mathcal{N}_{\varepsilon, h_0}(\sigma, \theta, O')$. The first inequality follows from \eqref{eq:bad_point_big_average} via Hölder's inequality, and the second one follows from the fact that $r(\omega, y') \ge \sigma$ for all $(y', \rho) \in \mathcal{N}_{\varepsilon, h_0}(\sigma, \theta, O')$. Since $(v_\varepsilon) \in \mathcal{O}(\omega)$, we immediately obtain
    \begin{multline*}
        \limsup_{\varepsilon \to 0} \varepsilon^{N - 1} \card\left(\mathcal{N}_{\varepsilon, h_0}(\sigma, \theta, O')\right) \\
        \lesssim \limsup_{\varepsilon \to 0} \sum_{(y', \rho) \in \mathcal{N}_{\varepsilon, h_0}(\sigma, \theta, O')} \int_{B'(\varepsilon y', \varepsilon r(\omega, y')) \times (0, 1)} |1 - \hat{v}_\varepsilon|^2 \, dx \le \limsup_{\varepsilon \to 0} \int_{U^+} |1 - \hat{v}_\varepsilon|^2 \, dx = 0.
    \end{multline*}
\end{proof}

We now give a proof of Proposition \ref{pr:minimization_proposition} using a thinning argument.

\begin{proof}[Proof of Proposition \ref{pr:minimization_proposition}]
    Fix $\omega \in \Omega$ and $(v_\varepsilon) \in \mathcal{O}(\omega)$. Let $\sigma, \theta \in (0, 1)$ and let $O' \csubset U'$ be open with $|\partial O'| = 0$. Then
    \begin{multline} \label{eq:good_points_lower_bound}
        \frac{1}{\delta_\varepsilon} \int_{U_\varepsilon^+} |\nabla v_\varepsilon|^2 \, dx \\
        \ge \sum_{(y', \rho) \in \mathcal{G}_{\varepsilon, h_0}(\sigma, \theta, O')} \frac{1}{\delta_\varepsilon} \int_{B'(\varepsilon y', \varepsilon r(\omega, y')) \times (0, \delta_\varepsilon)} |\nabla v_\varepsilon|^2 \, dx \ge \sum_{(y', \rho) \in \mathcal{G}_{\varepsilon, h_0}(\sigma, \theta, O')} \frac{\theta}{2} \varepsilon^{N - 1} J_{h_0}(\rho).
    \end{multline}
    We have
    \begin{equation} \label{eq:add_and_subtract_bad_points}
        \sum_{(y', \rho) \in \mathcal{G}_{\varepsilon, h_0}(\sigma, \theta, O')} \frac{\theta}{2} \varepsilon^{N - 1} J_{h_0}(\rho) = \sum \limits_{\substack{(y', \rho) \in I_{\varepsilon, h_0}(\omega) \setminus M_\sigma(\omega) \\ \varepsilon y' \in O'}} \frac{\theta}{2} \varepsilon^{N - 1} J_{h_0}(\rho) - \sum_{(y', \rho) \in \mathcal{N}_{\varepsilon, h_0}(\sigma, \theta, O')} \frac{\theta}{2} \varepsilon^{N - 1} J_{h_0}(\rho)
    \end{equation}
    for sufficiently small $\varepsilon$. Since $\rho \le 1/\sigma$ and $J_{h_0}$ is locally bounded, Corollary \ref{co:counting_bad_points} implies
    \begin{equation} \label{eq:bad_points_estimate}
        \sum_{(y', \rho) \in \mathcal{N}_{\varepsilon, h_0}(\sigma, \theta, O')} \frac{\theta}{2} \varepsilon^{N - 1} J_{h_0}(\rho) \lesssim \varepsilon^{N - 1} \card\left(\mathcal{N}_{\varepsilon, h_0}(\sigma, \theta, O')\right) \to 0.
    \end{equation}
    On the other hand, it follows from Lemma \ref{lm:thinned_limit} that
    \begin{equation} \label{eq:two_sums_are_close}
        \limsup_{\sigma \to 0} \limsup_{\varepsilon \to 0} \Bigg|\sum \limits_{\substack{(y', \rho) \in I_{\varepsilon, h_0}(\omega) \setminus M_\sigma(\omega) \\ \varepsilon y' \in O'}} \varepsilon^{N - 1} J_{h_0}(\rho) - \sum \limits_{\substack{(y', \rho) \in I_{\varepsilon, h_0}(\omega) \\ \varepsilon y' \in O'}} \varepsilon^{N - 1} J_{h_0}(\rho)\Bigg| = 0 \quad \mathbb{P}\text{-a.s.}
    \end{equation}
    Hence, \eqref{eq:fewer_arguments_limit} gives
    \begin{equation} \label{eq:good_points_limit}
        \lim_{\sigma \to 0} \liminf_{\varepsilon \to 0} \sum \limits_{\substack{(y', \rho) \in I_{\varepsilon, h_0}(\omega) \setminus M_\sigma(\omega) \\ \varepsilon y' \in O'}} \frac{\theta}{2} \varepsilon^{N - 1} J_{h_0}(\rho) = \theta \gamma_{h_0}(\omega) |O'| \quad \mathbb{P}\text{-a.s.}
    \end{equation}
    If $\omega$ satisfies \eqref{eq:two_sums_are_close} and \eqref{eq:good_points_limit}, then by \eqref{eq:good_points_lower_bound}, \eqref{eq:add_and_subtract_bad_points} and \eqref{eq:bad_points_estimate} we obtain
    \[
    \liminf_{\varepsilon \to 0} \frac{1}{\delta_\varepsilon} \int_{U_\varepsilon^+} |\nabla v_\varepsilon|^2 \, dx \ge \theta \gamma_{h_0}(\omega) |O'|. 
    \]
    Since $\theta$ and $O'$ are arbitrary, we have just proved that for $\mathbb{P}$-a.e. $\omega \in \Omega$ and for all $(v_\varepsilon) \in \mathcal{O}(\omega)$, we have
    \[
    \liminf_{\varepsilon \to 0} \frac{1}{\delta_\varepsilon} \int_{U_\varepsilon^+} |\nabla v_\varepsilon|^2 \, dx \ge \gamma_{h_0}(\omega) |U'| = \lim_{\varepsilon \to 0} \frac{1}{\delta_\varepsilon} \int_{U_\varepsilon^+} |\nabla w_\varepsilon(\omega, x)|^2 \, dx,
    \]
    where the last equality follows from Proposition \ref{pr:local_gradient_limit}.
\end{proof}

\section{Proof of Theorem \ref{thm:oscillating_test_functions}} \label{sec:proof_of_main_theorem}

In this section, we conclude the proof of Theorem \ref{thm:oscillating_test_functions} by showing that the oscillating test functions $(w_\varepsilon(\omega, \cdot))$ satisfy \eqref{eq:good_laplacians} for $\mathbb{P}$-a.e. $\omega \in \Omega$. Our proof is based on an idea introduced by Casado-Díaz in \cite[Theorem 2.1]{CD}. We need a generalization of Proposition \ref{pr:local_gradient_limit} to bounded test functions in $H^1$.

\begin{lemma} \label{lm:extended_local_gradient}
    For $\mathbb{P}$-a.e. $\omega \in \Omega$ and all $v \in H^1(U') \cap L^\infty(U')$, we have
    \[
    \lim_{\varepsilon \to 0} \frac{1}{\delta_\varepsilon} \int_{U_\varepsilon^+} |\nabla w_\varepsilon(\omega, x)|^2 v(x') \, dx = \gamma_{h_0}(\omega) \int_{U'} v(x') \, dx'.
    \]
\end{lemma}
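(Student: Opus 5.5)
We extend Proposition \ref{pr:local_gradient_limit} from continuous to bounded $H^1$ weights by approximation. The energy densities $\mu_\varepsilon := \delta_\varepsilon^{-1}\int_0^{\delta_\varepsilon}|\nabla w_\varepsilon(\omega,x',x_N)|^2\,dx_N$ are nonnegative functions on $U'$. By Proposition \ref{pr:local_gradient_limit} they converge weakly-* as measures on $\overline{U'}$ to $\gamma_{h_0}(\omega)\,dx'$. The difficulty is that weak-* convergence of measures only lets us pass to the limit against continuous weights. A general $v\in H^1(U')\cap L^\infty(U')$ may be discontinuous, while $\mu_\varepsilon$ concentrates near the contact regions, i.e.\ on sets of vanishing measure. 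So we need an extra estimate controlling $\int \mu_\varepsilon |v-\varphi|\,dx'$ for a continuous approximant $\varphi$, and this must be uniform in $\varepsilon$.

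\textbf{Step 1 (reduction).} Fix $\omega$ in the full-measure set where Proposition \ref{pr:local_gradient_limit}, Theorem \ref{thm:omega_admissible} and Proposition \ref{pr:special_case} hold. Given $v$, take $\varphi_k\in C^\infty(\overline{U'})$ with $\varphi_k\to v$ in $H^1(U')$ and $\|\varphi_k\|_\infty\le\|v\|_\infty$ (mollification after a Lipschitz extension, then truncation). Then
\[
\Bigl|\int\mu_\varepsilon v - \gamma_{h_0}\int v\Bigr| \le \int\mu_\varepsilon|v-\varphi_k| + \Bigl|\int\mu_\varepsilon\varphi_k-\gamma_{h_0}\int\varphi_k\Bigr| + \gamma_{h_0}\|\varphi_k-v\|_{L^1}.
\]
The middle term vanishes as $\varepsilon\to0$ by Proposition \ref{pr:local_gradient_limit}, and the last term vanishes as $k\to\infty$. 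It therefore remains to show
\[
\lim_{k\to\infty}\limsup_{\varepsilon\to0}\int_{U'}\mu_\varepsilon|v-\varphi_k|\,dx' = 0.
\]

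\textbf{Step 2 (key estimate, main obstacle).} Write $g=|v-\varphi_k|$. Because $g$ is not $C^0$, the direct route is unavailable. Instead, since $g\in H^1\cap L^\infty$, I would use the identity
\[
|\nabla w|^2 g = \nabla w\cdot\nabla\bigl((1-w)g\bigr)\cdot(-1) + (1-w)\nabla w\cdot\nabla g,
\]
i.e.\ $\nabla w\cdot\nabla((w-1)g) = |\nabla w|^2 g + (w-1)\nabla w\cdot\nabla g$. Then:
\begin{enumerate}[label=(\alph*)]
\item The sequence $v_\varepsilon := (w_\varepsilon-1)g$ vanishes weakly on $T'_\varepsilon\times\{0\}$ only after subtracting $g$. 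So instead use $z_\varepsilon:=w_\varepsilon g$. It vanishes on the contact regions, and $\hat z_\varepsilon\rightharpoonup g$. Its scaled energy is bounded, because $0\le w_\varepsilon\le1$ and $g$ is bounded in $L^\infty$, so $\delta_\varepsilon^{-1}\int|\nabla w_\varepsilon|^2g^2\le C$.
\item Apply Proposition \ref{pr:special_case} to $z_\varepsilon - \psi_\varepsilon$ with a suitable correction, or more simply: test $\nabla w_\varepsilon$ against $w_\varepsilon g - g$. This sequence is not admissible near the holes, so instead control $\int\mu_\varepsilon g$ through $\delta_\varepsilon^{-1}\int\nabla w_\varepsilon\cdot\nabla(w_\varepsilon g)$ minus the error term $\delta_\varepsilon^{-1}\int w_\varepsilon\nabla w_\varepsilon\cdot\nabla g$.
\end{enumerate}
The error term is bounded by $\|\nabla w_\varepsilon\|\,\|(1-w_\varepsilon)\nabla g\|$ in the scaled norms. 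Its limit is zero, since $1-\hat w_\varepsilon\to0$ in $L^2$, $0\le1-w_\varepsilon\le1$, and $|\nabla g|^2$ is equi-integrable (dominated convergence along a.e.-convergent subsequences).

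The term $\delta_\varepsilon^{-1}\int\nabla w_\varepsilon\cdot\nabla(w_\varepsilon g)$ should then converge to $\gamma_{h_0}\int g$. That is exactly what we are trying to prove, so this route is circular unless combined with positivity. I would therefore aim for an upper bound in $\|g\|_{H^1}$ rather than an exact limit. Using the minimization property (Proposition \ref{pr:minimization_proposition}) in a localized form, compare $w_\varepsilon$ on $\{g \text{ large}\}$ with competitors. Alternatively, use a capacitary bound: for $s>0$, the set $\{g>s\}$ has small capacity when $\|g\|_{H^1}$ is small. Split $g\le s+\|g\|_\infty\chi_{\{g>s\}}$, and bound $\int\mu_\varepsilon\chi_{E}$ for a quasi-open $E$ of small capacity by covering $E$ with an open set $G$ of small capacity and a continuous potential $\phi_G\ge\chi_G$, $\phi_G\in H^1_0$ with small norm. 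This reduces the task to proving $\limsup_\varepsilon\int\mu_\varepsilon\phi\le C\|\phi\|_{H^1}$ for nonnegative $\phi\in H^1\cap C^0$.

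\textbf{Step 3.} That last bound follows from Step 2's identity with $g=\phi$ continuous, where the left side converges by Proposition \ref{pr:local_gradient_limit}. The identity together with Cauchy--Schwarz then gives $\gamma_{h_0}\int\phi \le C\|\phi\|_{L^1}+o(1)$. However, we need a bound by $\|\phi\|_{H^1}$ for arbitrary smooth bounded $\phi$ before passing to the limit, i.e.\ uniform in $\varepsilon$. I expect this uniformity (a Casado-Díaz type estimate: the energy measures of $w_\varepsilon$ do not charge sets of small capacity, uniformly in $\varepsilon$) to be the genuine obstacle. I would try to prove it locally, cell by cell: on each isolated ball, use the explicit capacitary profile $\eta$ and a trace/Poincaré inequality on the annulus. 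On the cluster set $S'_{\varepsilon,h_0}$, use Theorem \ref{thm:vanishing_capacity_and_measure} together with $\|g\|_\infty$.
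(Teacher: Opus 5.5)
There is a genuine gap. Your Step 1 correctly reduces the lemma to $\lim_{k\to\infty}\limsup_{\varepsilon\to0}\int_{U'}\mu_\varepsilon|v-\varphi_k|\,dx'=0$, but this estimate is never established.

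You set aside the identity $\nabla w_\varepsilon\cdot\nabla(w_\varepsilon g)=|\nabla w_\varepsilon|^2g+w_\varepsilon\nabla'w_\varepsilon\cdot\nabla'g$ as circular. What you put in its place, a capacitary splitting that reduces everything to a bound $\int\mu_\varepsilon\phi\le C\|\phi\|_{H^1}$ uniform in $\varepsilon$, is left as an open obstacle with only a cell-by-cell sketch. The capacitary reduction is also not sound as written. A continuous $\phi\ge\chi_G$ is automatically $\ge1$ on $\overline G$, whose capacity need not be small. Moreover, for continuous $\phi$ the bound after $\varepsilon\to0$ is trivial by Proposition \ref{pr:local_gradient_limit}. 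So all the content sits in the uniform estimate, which you do not prove.

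The identity route is only circular if $g$ is held fixed: then $w_\varepsilon g\rightharpoonup g\neq0$, and Proposition \ref{pr:special_case} does not apply. The missing idea is to let the weight move with $k$ and diagonalize.

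Put $g_k:=|v-\varphi_k|$, so that $g_k\to0$ in $H^1(U')$ and $\|g_k\|_\infty\le2\|v\|_\infty$. Suppose the double limit were positive. Then you could pick $k_j\to\infty$ and $\varepsilon_j\downarrow0$ with $\delta_{\varepsilon_j}^{-1}\int_{U_{\varepsilon_j}^+}|\nabla w_{\varepsilon_j}|^2g_{k_j}\,dx\ge\eta>0$.

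Define $z_j:=w_{\varepsilon_j}g_{k_j}$, and set the sequence to $0$ for $\varepsilon\notin\{\varepsilon_j\}$. This is exactly your $z_\varepsilon$ from Step 2(a), now with a $k$-dependent weight. It is $\omega$-admissible:
\begin{itemize}
\item it vanishes weakly on the contact regions;
\item its scaled energy is bounded by $2\|g_{k_j}\|_\infty^2\,\delta_{\varepsilon_j}^{-1}\int|\nabla w_{\varepsilon_j}|^2\,dx+2\|\nabla'g_{k_j}\|_{L^2(U')}^2$;
\item $\hat z_j\rightharpoonup0$ in $H^1(U^+)$, since $|\hat z_j|\le g_{k_j}\to0$ in $L^2$.
\end{itemize}

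Proposition \ref{pr:special_case} then gives $\delta_{\varepsilon_j}^{-1}\int\nabla w_{\varepsilon_j}\cdot\nabla z_j\,dx\to0$. The cross term $\delta_{\varepsilon_j}^{-1}\int w_{\varepsilon_j}\nabla'w_{\varepsilon_j}\cdot\nabla'g_{k_j}\,dx$ is bounded by $C\|\nabla'g_{k_j}\|_{L^2(U')}\to0$, by Cauchy--Schwarz and $0\le w_\varepsilon\le1$. Hence the $|\nabla w_{\varepsilon_j}|^2g_{k_j}$ term tends to $0$, which is a contradiction.

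This is exactly the paper's proof. Proposition \ref{pr:special_case} plays the role of strong $H^{-1}$ convergence in the Cioranescu--Murat framework, and it is only ever needed for weakly null sequences. No uniform capacitary or Casado-D\'iaz-type estimate is required.
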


\begin{proof}
    Fix some $\omega \in \Omega$ that satisfies Propositions \ref{pr:local_gradient_limit}, \ref{pr:special_case} and Theorem \ref{thm:omega_admissible}. For simplicity, we omit $\omega$ from the notation. Let $v \in H^1(U') \cap L^\infty(U')$, and approximate it by a sequence $(\varphi_n) \subset C^0(\overline{U'})$ in the $H^1$-norm. Furthermore, we can take $(\varphi_n)$ uniformly bounded in $C^0(\overline{U'})$. Then Proposition \ref{pr:local_gradient_limit} implies
    \[
    \lim_{n \to \infty} \lim_{\varepsilon \to 0} \frac{1}{\delta_\varepsilon} \int_{U_\varepsilon^+} |\nabla w_\varepsilon|^2 \varphi_n \, dx = \lim_{n \to \infty} \gamma_{h_0} \int_{U'} \varphi_n \, dx' = \gamma_{h_0} \int_{U'} v \, dx'.
    \]
    Hence, to conclude, we need to show that
    \[
    \lim_{n \to \infty} \lim_{\varepsilon \to 0} \frac{1}{\delta_\varepsilon} \int_{U_\varepsilon^+} |\nabla w_\varepsilon|^2 |v - \varphi_n| \, dx = 0.
    \]
    This is implied by the assertion that
    \begin{equation} \label{eq:does_it_go_to_zero?}
        \lim_{n \to \infty} \frac{1}{\delta_{\varepsilon_n}} \int_{U_{\varepsilon_n}^+} |\nabla w_{\varepsilon_n}|^2 |v - \varphi_n| \, dx = 0
    \end{equation}
    for all subsequences $(\varepsilon_n)$ converging to $0$. Given a subsequence $(\varepsilon_n)$, we define $v_n := w_{\varepsilon_n}|v - \varphi_n|$. Then $(v_n)$ is $\omega$-admissible and $\hat{v}_n \rightharpoonup 0$ in $H^1(U^+)$. Thus, Proposition \ref{pr:special_case} implies
    \[
    \lim_{n \to \infty} \frac{1}{\delta_{\varepsilon_n}}\int_{U_{\varepsilon_n}^+} \nabla w_{\varepsilon_n} \nabla v_n \, dx = 0.
    \]
    Computing the gradient of $v_n$ gives
    \begin{equation} \label{eq:sum_of_two_terms}
        0 = \lim_{n \to \infty} \frac{1}{\delta_{\varepsilon_n}} \int_{U_{\varepsilon_n}^+} |\nabla w_{\varepsilon_n}|^2 |v - \varphi_n| + (\nabla' w_{\varepsilon_n} \nabla' |v - \varphi_n|) w_{\varepsilon_n} \, dx.
    \end{equation}
    Since $(w_\varepsilon)$ is $\omega$-admissible and $|w_\varepsilon| \le 1$ a.e. in $U_{\varepsilon_n}^+$, we get
    \[
     \lim_{n \to \infty} \frac{1}{\delta_{\varepsilon_n}} \int_{U_{\varepsilon_n}^+} (\nabla' w_{\varepsilon_n} \nabla' |v - \varphi_n|) w_{\varepsilon_n} \, dx = 0.
    \]
    Therefore, equation \eqref{eq:sum_of_two_terms} yields \eqref{eq:does_it_go_to_zero?}.
\end{proof}

\begin{proof}[Proof of Theorem \ref{thm:oscillating_test_functions}]
     We fix some $\omega \in \Omega$ for which Theorem \ref{thm:omega_admissible}, Proposition \ref{pr:special_case} and Lemma \ref{lm:extended_local_gradient} hold. Having fixed $\omega$, we omit it from the notation. Let $(v_\varepsilon)$ be an $\omega$-admissible sequence such that $\hat{v}_{\varepsilon_n} \rightharpoonup \hat{v}$ in $H^1(U^+)$ as $n \to \infty$ for some $\hat{v}$. Since $(v_\varepsilon)$ is $\omega$-admissible, $\partial_N \hat{v}_\varepsilon \to 0$ in $L^2(U^+)$ so that $\partial_N \hat{v} = 0$. Hence, there exists $v \in H^1(U')$ such that $\hat{v}(\cdot, x_N) = v$ for a.e. $x_N \in (0, 1)$. We prove that
     \[
     \lim_{n \to \infty} \frac{1}{\delta_{\varepsilon_n}} \int_{U_{\varepsilon_n}^+} \nabla w_{\varepsilon_n} \nabla v_{\varepsilon_n} \, dx = \gamma_{h_0}(\omega) \int_{U'} v \, dx',
     \]
    which is equivalent to \eqref{eq:good_laplacians}.
     
    In the first part of the proof, we assume additionally that $v \in L^\infty(U')$. Set $\vartheta_\varepsilon := v_\varepsilon - w_\varepsilon v$. We can check easily that $\vartheta_\varepsilon$ is $\omega$-admissible and $\hat{\vartheta}_{\varepsilon_n} \rightharpoonup 0$ in $H^1(U^+)$ as $n \to \infty$. Therefore, Proposition \ref{pr:special_case} implies
    \[
    \lim_{n \to \infty} \frac{1}{\delta_{\varepsilon_n}} \int_{U_{\varepsilon_n}^+} \nabla w_{\varepsilon_n} \nabla \vartheta_{\varepsilon_n} \, dx = 0.
    \]
    Computing the gradient of $\vartheta_{\varepsilon_n}$ explicitly, we obtain
    \begin{equation} \label{eq:explicitly_written_out}
        \frac{1}{\delta_{\varepsilon_n}} \int_{U_{\varepsilon_n}^+} \nabla w_{\varepsilon_n} \nabla v_{\varepsilon_n} - |\nabla w_{\varepsilon_n}|^2 v  -  (\nabla' w_{\varepsilon_n} \nabla' v) w_{\varepsilon_n} \, dx \to 0.
    \end{equation}
    It follows from Theorem \ref{thm:omega_admissible} that
    \[
    \lim_{n \to \infty} \frac{1}{\delta_{\varepsilon_n}} \int_{U_{\varepsilon_n}^+} (\nabla' w_{\varepsilon_n} \nabla' v) w_{\varepsilon_n} \, dx = 0.
    \]
    Hence, with the help of Lemma \ref{lm:extended_local_gradient}, we deduce from \eqref{eq:explicitly_written_out} that
    \[
    \lim_{n \to \infty} \frac{1}{\delta_{\varepsilon_n}} \int_{U_{\varepsilon_n}^+} \nabla w_{\varepsilon_n} \nabla v_{\varepsilon_n} \, dx = \gamma_{h_0} \int_{U'} v \, dx'.
    \]

    Let us now consider the general case. We define $v_\varepsilon^k := \min \{\max \{v_\varepsilon, -k\}, k\}$, $v^k := \min \{\max \{v, -k\}, k\}$ and set $\hat{v}^k(x', x_N) := v^k(x')$ for $(x', x_N) \in U^+$. Then, $\hat{v}_{\varepsilon_n}^k \rightharpoonup \hat{v}^k$ in $H^1(U^+)$. Therefore, by the first step we have
    \begin{equation*}
        \lim_{k \to \infty} \lim_{n \to \infty} \frac{1}{\delta_{\varepsilon_n}} \int_{U_{\varepsilon_n}^+} \nabla w_{\varepsilon_n} \nabla v_{\varepsilon_n}^k \, dx = \lim_{k \to \infty} \gamma_{h_0} \int_{U'} v^k \, dx' = \gamma_{h_0} \int_{U'} v \, dx'.
    \end{equation*}
    Thus, all that remains to be proved is
    \[
    \lim_{k \to \infty} \lim_{n \to \infty} \frac{1}{\delta_{\varepsilon_n}} \int_{U_{\varepsilon_n}^+} \nabla w_{\varepsilon_n} (\nabla v_{\varepsilon_n} - \nabla v_{\varepsilon_n}^k) \, dx = 0.
    \]
    To this end, we let $(\varepsilon_{n_k})$ be a further subsequence. Then, it is easy to verify that $(v_{\varepsilon_{n_k}} - v_{\varepsilon_{n_k}}^k)$ is $\omega$-admissible and $\hat{v}_{\varepsilon_{n_k}} - \hat{v}_{\varepsilon_{n_k}}^k \rightharpoonup 0$ in $H^1(U^+)$ as $k \to \infty$. Hence, as a result of Proposition \ref{pr:special_case}, we obtain
    \[
    \lim_{k \to \infty} \frac{1}{\delta_{\varepsilon_{n_k}}} \int_{U_{\varepsilon_{n_k}}^+} \nabla w_{\varepsilon_{n_k}} \nabla (v_{\varepsilon_{n_k}} - v_{\varepsilon_{n_k}}^k) \, dx = 0.
    \]
    The claim now follows from the arbitrariness of the sequence $(\varepsilon_{n_k})$.
\end{proof}

\section{Proofs of Propositions \ref{pr:subcapacitary_growth_1} and \ref{pr:subcapacitary_growth_2}} \label{sec:not_appendix}

An important role in the proof of Theorem \ref{thm:oscillating_test_functions} was played by Propositions \ref{pr:subcapacitary_growth_1} and \ref{pr:subcapacitary_growth_2}. The remainder of the paper is devoted to their proofs.

\subsection{Auxiliary lemmas}

Here, we collect some preliminary results. We begin with a lemma that provides sufficient conditions for the local averages of a sequence of zero-mean functions to converge to $0$.

\begin{lemma} \label{lm:vanishing_averages}
    Let $(r_n)$ and $(\rho_n)$ be sequences of positive numbers with
    \[
    \lim_{n \to \infty} r_n = \lim_{n \to \infty} \rho_n = \infty, \quad \rho_n \le r_n \text{ for all } n.
    \]
    Assume $v_n \in C^\infty(\overline{B(0, r_n)})$ has zero mean. Assume one of the following conditions hold:
    \begin{enumerate}
        \item[(1)] $N = 1$ and $\lim_{n \to \infty} r_n \int_{-r_n}^{r_n} |v_n'|^2 \, dx = 0$,
        \item[(2)] $N = 2$ and $\lim_{n \to \infty} \ln(r_n) \int_{B(0, r_n)} |\nabla v_n|^2 \, dx = 0$,
        \item[(3)] $N > 2$ and $\lim_{n \to \infty} \int_{B(0, r_n)} |\nabla v_n|^2 \, dx = 0$.
    \end{enumerate}
    Then $v_n \to 0$ in $H_\mathrm{loc}^1(\mathbb{R}^N)$ and
    \begin{equation} \label{eq:subaverages_go_to_zero}
        \lim_{n \to \infty} \fint_{B(0, \rho_n)} v_n \, dx = 0.
    \end{equation}
\end{lemma}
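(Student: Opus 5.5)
The proof has two parts: first $v_n \to 0$ in $H^1_{\mathrm{loc}}$, then a dyadic-shell estimate for the averages over $B(0,\rho_n)$.

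\emph{Local convergence.} Fix $R>0$. For $n$ large we have $r_n \ge R$, and the Poincaré inequality on $B(0,R)$ gives
\[
\|v_n - (v_n)_{B(0,R)}\|_{L^2(B(0,R))} \lesssim_R \|\nabla v_n\|_{L^2(B(0,R))} \to 0 .
\]
In cases (1) and (2) the gradient norm tends to zero because $r_n \to \infty$ and $\ln r_n \to \infty$. So it suffices to show $(v_n)_{B(0,R)} \to 0$. Since $v_n$ has zero mean on $B(0,r_n)$, this is \eqref{eq:subaverages_go_to_zero} with $\rho_n$ replaced by $R$. Both statements therefore reduce to a single estimate:
\[
\sup_{R\le s\le r_n}\big|(v_n)_{B(0,s)} - (v_n)_{B(0,r_n)}\big| \to 0 \quad\text{for every fixed } R>0 .
\]
This gives the claim for $s=R$ and for $s=\rho_n$ (valid as soon as $\rho_n \ge R$).

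\emph{Dyadic chain.} For $s \le t \le 2s$, Poincaré on $B(0,t)$ together with scaling gives
\[
|(v)_{B(0,s)} - (v)_{B(0,t)}| \lesssim t^{1-N/2}\|\nabla v\|_{L^2(B(0,t))}.
\]
Write $r_n = 2^{K} s'$ with $s' \in [s,2s)$ and chain over the dyadic radii $s' 2^j$, $j = 0,\dots,K$; one more step of the estimate above connects $s$ to $s'$. Summing,
\[
|(v_n)_{B(0,s)} - (v_n)_{B(0,r_n)}| \lesssim \|\nabla v_n\|_{L^2(B(0,r_n))}\sum_{j=0}^{K}(s2^j)^{1-N/2}.
\]
The behaviour of the geometric sum depends on the dimension:
\begin{enumerate}
\item[(3)] For $N>2$ the sum is bounded by $C s^{1-N/2} \le C R^{1-N/2}$, so the difference is at most $C_R\|\nabla v_n\|_{L^2} \to 0$.
\item[(2)] For $N=2$ each term equals $1$, and there are $K+1 \lesssim 1+\log_2(r_n/R)$ of them. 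The difference is therefore $\lesssim \ln(r_n)\,\|\nabla v_n\|_{L^2}$, which is not yet in the form of the hypothesis.
\end{enumerate}

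\emph{Main obstacle: the borderline cases.} In case (2) the chain gives $\ln(r_n)\|\nabla v_n\|_{L^2}$, whereas the hypothesis controls $\ln(r_n)\|\nabla v_n\|_{L^2}^2$. The crude bound $\ln(r_n)\|\nabla v_n\| = \sqrt{\ln r_n}\,\sqrt{\ln r_n\,\|\nabla v_n\|^2}$ loses a factor $\sqrt{\ln r_n}$. The fix is to apply Cauchy–Schwarz to the chain itself, with the per-step energies $E_j := \|\nabla v_n\|_{L^2(B_{j+1}\setminus B_{j-1})}^2$ taken over overlapping shells. Using Poincaré on annuli, each step costs $\lesssim E_j^{1/2}$, and since the shells overlap at most twice, $\sum_j E_j \le 2\|\nabla v_n\|^2$. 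Hence
\[
\sum_{j=0}^{K} E_j^{1/2} \le \sqrt{K+1}\,\Big(\sum_j E_j\Big)^{1/2} \lesssim \sqrt{\ln(r_n)\,\|\nabla v_n\|^2} \to 0 .
\]
The analogous computation in case (1) has step costs $(s2^j)^{1/2}E_j^{1/2}$. Cauchy–Schwarz gives $\big(\sum_j s2^j\big)^{1/2}\|v_n'\|_{L^2} \lesssim \big(r_n\|v_n'\|^2\big)^{1/2} \to 0$. Case (3) needs no such refinement, and the same annular Cauchy–Schwarz argument would work there too.
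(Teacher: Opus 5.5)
Your reduction to $\sup_{R\le s\le r_n}|(v_n)_{B(0,s)}-(v_n)_{B(0,r_n)}|\to0$ is fine, and so are the local convergence and the case $N>2$ (the paper instead uses the scale-invariant Sobolev--Poincar\'e inequality on $B(0,r_n)$).

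\textbf{The gap is in the $N=2$ refinement, which is the only genuinely borderline case.} Your chain is a chain of \emph{ball} averages $b_j:=(v_n)_{B_j}$. For these, the claimed step bound $|b_{j+1}-b_j|\lesssim E_j^{1/2}$, with $E_j$ the energy on the shell $B_{j+1}\setminus B_{j-1}$, is false. Indeed $b_{j+1}-b_j=\frac{|B_{j+1}\setminus B_j|}{|B_{j+1}|}\bigl((v_n)_{B_{j+1}\setminus B_j}-b_j\bigr)$ depends on $v_n$ on the whole ball. For example, let $v\equiv c$ outside $B(0,1)$ with $\fint_{B(0,1)}v=a\neq c$, and take radii $2^j$. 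Then $b_j=c+(a-c)4^{-j}$, so every step is nonzero, although $E_j=0$ for all $j\ge1$.

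What Poincar\'e on annuli actually controls are \emph{annular} averages $a_j:=(v_n)_{A_j}$, where $A_j:=B_{j+1}\setminus B_j$. Since $A_j\cup A_{j+1}$ is a connected annulus of fixed ratio, you get $|a_{j+1}-a_j|\lesssim\|\nabla v_n\|_{L^2(A_j\cup A_{j+1})}$ with a scale-invariant constant when $N=2$. Your Cauchy--Schwarz argument then gives $\max_{j,k}|a_j-a_k|\lesssim(\ln(r_n)\|\nabla v_n\|^2)^{1/2}$.

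What is missing is the passage back to ball averages, which is exactly where the zero mean enters. It is short. Take $B_0=B(0,s')$ and $B_K=B(0,r_n)$. Then $(v_n)_{B(0,r_n)}$ is a convex combination of $a_0,\dots,a_{K-1}$ and $(v_n)_{B_0}$. Poincar\'e on the ball $B_1$ gives $|(v_n)_{B_0}-a_0|\lesssim\|\nabla v_n\|_{L^2(B_1)}$. Hence
\[
|(v_n)_{B_0}-(v_n)_{B(0,r_n)}|\le\max_j|a_j-(v_n)_{B_0}|\lesssim(\ln(r_n)\|\nabla v_n\|^2)^{1/2}+\|\nabla v_n\|_{L^2(B(0,r_n))}\to0,
\]
and one more Poincar\'e step connects $B(0,s)$ to $B_0$.

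This is the structure of the paper's proof. It works with circular means $\bar v_n(r)$, the continuous analogue of your $a_j$. It proves $|\bar v_n(r_n)-\bar v_n(r)|\le(\ln(r_n/r))^{1/2}\bigl(\int_0^{r_n}s|\bar v_n'|^2\,ds\bigr)^{1/2}$, which is the continuous form of your Cauchy--Schwarz over dyadic shells. It then needs a separate Fubini--H\"older argument, $|\bar v_n(r_n)|^2\lesssim\int_{B(0,r_n)}|\nabla v_n|^2$, to bring in the zero mean.

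Separately, in case (1) the shell argument fails as written: $B_{j+1}\setminus B_{j-1}$ is a union of two disjoint intervals, and Poincar\'e fails there. You do not need it, however, because case (1) is not borderline. In your unrefined chain (intervals are connected), $\sum_{j=0}^{K}(s'2^j)^{1/2}\lesssim(s'2^K)^{1/2}=r_n^{1/2}$. This already gives $(r_n\|v_n'\|^2)^{1/2}\to0$. The paper gets the same bound via a uniform oscillation estimate.
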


\begin{remark} \label{rmk:explanation_of_the_factors}
    The conditions given in the hypotheses are sharp. For $N = 1$, define
    \[
    u_n(x) := \frac{1}{r_n}(x + r_n) \chi_{(-r_n, 0)}(x) - \frac{1}{r_n}(x - r_n) \chi_{[0, r_n)}(x), \quad a_n := \fint_{-r_n}^{r_n} u_n \, dx.
    \]
    If $v_n := u_n - a_n$, then $v_n$ has zero mean and $r_n \int_{-r_n}^{r_n} |v_n'|^2 \, dx = 1$. On the other hand, $v_n \to 1/2$ in $H_\mathrm{loc}^1(\mathbb{R})$ and $\fint_{-r_n/2}^{r_n/2} v_n \, dx \to 1/4$. For $N > 1$, counterexamples are provided by taking $u_n$ as the solution of
    \begin{equation*}
        \begin{cases}
            \begin{alignedat}{1}
                - \Delta u_n &= 0 \quad \text{in } B(0, r_n) \setminus B(0, 1), \\
                u_n &= 1 \quad \text{on } \partial B(0, 1), \\
                u_n &= 0 \quad \text{on } \partial B(0, r_n).
            \end{alignedat}
        \end{cases}
    \end{equation*}
    and setting $v_n := u_n - a_n$, where $a_n := \fint_{B(0, r_n)} u_n \, dx$. The solutions $u_n$ can be computed explicitly, and the reader can verify that $\ln(r_n) \int_{B(0, r_n)} |\nabla v_n|^2 \, dx \gtrsim 1$ for $N = 2$ and $\int_{B(0, r_n)} |\nabla v_n|^2 \, dx \gtrsim 1$ for $N > 2$. On the other hand, we have $v_n \to 1$ in $H_\mathrm{loc}^1(\mathbb{R}^N)$. We leave it to the reader to check that \eqref{eq:subaverages_go_to_zero} also fails for some sequence $(\rho_n)$.
\end{remark}

\begin{proof}
    \underline{Step 1:} Let $N = 1$. For $x < y$ we have
    \[
    |v_n(y) - v_n(x)| \le \int_x^y |v_n'| \, dt \le |y - x|^\frac{1}{2} \left(\int_x^y |v_n'|^2 \, dt\right)^\frac{1}{2} \lesssim \left(r_n \int_{-r_n}^{r_n} |v_n'|^2 \, dt\right)^\frac{1}{2}.
    \]
    Thus, $\sup_{x, y \in (-r_n, r_n)} |v_n(y) - v_n(x)| \to 0$ as $n \to \infty$. Since $v_n$ is continuous, there exists $x_n \in (-r_n, r_n)$ such that $v_n(x_n) = \fint_{-r_n}^{r_n} v_n \, dx = 0$. Hence, $\|v_n\|_{L^\infty(-r_n, r_n)} \to 0$. The claim now follows easily.

    \underline{Step 2:} Let $N = 2$. Define the circular means
    \[
    \bar{v}_n(r) := \frac{1}{2\pi r} \int_{\partial B(0, r)} v_n \, d\mathcal{H}^1, \quad r \in (0, r_n].
    \]
    Then
    \[
    |\bar{v}_n'(r)|^2 = \left|\frac{1}{2\pi r} \int_{\partial B(0, r)} \nabla v_n \cdot \nu \, d\mathcal{H}^1\right|^2 \le \frac{1}{2\pi r} \int_{\partial B(0, r)} |\nabla v_n|^2 \, d\mathcal{H}^1, 
    \]
    where $\nu$ is the outer unit normal of $B(0, r)$. Therefore,
    \begin{equation} \label{eq:circular_means_energy}
        \int_0^{r_n} r |\bar{v}_n'(r)|^2 \, dr \le \frac{1}{2\pi} \int_0^{r_n} \int_{\partial B(0, r)} |\nabla v_n(x)|^2 \, d\mathcal{H}^1(x) \, dr = \frac{1}{2\pi} \int_{B(0, r_n)} |\nabla v_n(x)|^2 \, dx.
    \end{equation}
    Consequently, for $1 < r < r_n$ we have
    \[
    |\bar{v}_n(r_n) - \bar{v}_n(r)| \le \int_r^{r_n} |\bar{v}_n'(s)| \, ds \le \left(\int_r^{r_n} \frac{1}{s} \, ds\right)^\frac{1}{2} \left(\int_r^{r_n} s |\bar{v}_n'(s)|^2 \, ds\right)^\frac{1}{2} \lesssim \left(\ln(r_n) \int_{B(0, r_n)} |\nabla v_n(x)|^2 \, dx\right)^\frac{1}{2}.
    \]
    From the hypothesis, we have $\|\bar{v}_n(r_n) - \bar{v}_n\|_{L^\infty(1, r_n)} \to 0$ as $n \to \infty$. Consequently, if $\bar{v}_n(r_n) \to 0$, then $\|\bar{v}_n\|_{L^\infty(1, r_n)} \to 0$ as well. Since $\bar{v}_n$ has zero mean, we have
    \[
    \bar{v}_n(r_n) = \bar{v}_n(r_n) - \fint_{B(0, r_n)} v_n \, dx = \frac{2}{r_n^2} \int_0^{r_n} r(\bar{v}_n(r_n) - \bar{v}_n(r)) \, dr = \frac{2}{r_n^2} \int_0^{r_n} \int_r^{r_n} r \bar{v}_n'(s) \, ds \, dr.
    \]
    By Fubini's theorem and Hölder's inequality we get
    \[
    |\bar{v}_n(r_n)| = \left|\frac{2}{r_n^2} \int_0^{r_n} \bar{v}_n'(s) \int_0^s r \, dr \, ds \right| = \left|\frac{1}{r_n^2} \int_0^{r_n} s^2 \bar{v}_n'(s) \, ds \right| \le \frac{1}{r_n^2} \left(\int_0^{r_n} s^3 \, ds \right)^\frac{1}{2} \left(\int_0^{r_n} s |\bar{v}_n'(s)|^2 \, ds\right)^\frac{1}{2}
    \]
    Hence, \eqref{eq:circular_means_energy} implies
    \[
    |\bar{v}_n(r_n)|^2 \lesssim \int_{B(0, r_n)} |\nabla v_n|^2 \, dx \to 0 \quad \text{as } n \to \infty.
    \]
    Next, we show that $v_n \to 0$ in $H_\mathrm{loc}^1(\mathbb{R}^2)$. Fix $r > 0$. Set
    \[
    a_n := \fint_{B(0, r) \setminus B(0, 1)} v_n \, dx, \quad b_n := \fint_{B(0, r)} v_n \, dx.
    \]
    By Hölder's and Poincaré's inequalities we have
    \begin{equation} \label{eq:difference_of_averages}
        |a_n - b_n|^2 \le \fint_{B(0, r) \setminus B(0, 1)} |v_n - b_n|^2 \, dx \lesssim \fint_{B(0, r)} |\nabla v_n|^2 \, dx \to 0.
    \end{equation}
    Since $\|\bar{v}_n\|_{L^\infty(1, r_n)} \to 0$, we see that
    \begin{equation} \label{eq:annular_average}
        |a_n| = \left|\frac{2}{r^2 - 1} \int_1^{r} s \bar{v}_n(s) \, ds\right| \le \|\bar{v}_n\|_{L^\infty(1, r_n)} \to 0 \quad \text{as } n \to \infty.
    \end{equation}
    Thus \eqref{eq:difference_of_averages} implies $b_n \to 0$. Applying Poincaré's inequality again, we obtain
    \[
    \|v_n\|_{L^2(B(0, r))} \le \|v_n - b_n\|_{L^2(B(0, r))} + |b_n||B(0, r)|^\frac{1}{2} \lesssim \|\nabla v_n\|_{L^2(B(0, r); \mathbb{R}^N)} + |b_n| \to 0.
    \]
    Finally, we prove that $\fint_{B(0, \rho_n)} v_n \, dx \to 0$ for any sequence $(\rho_n)$ with $\rho_n \to \infty$ and $\rho_n \le r_n$ for all $n$.
    We decompose the integral as follows:
    \[
    \fint_{B(0, \rho_n)} v_n \, dx = \frac{2}{\rho_n^2} \int_1^{\rho_n} s \bar{v}_n(s) \, ds + \frac{1}{|B(0, \rho_n)|} \int_{B(0, 1)} v_n \, dx
    \]
    The first term on the right-hand side goes to $0$ by a computation similar to \eqref{eq:annular_average}. The second term goes to $0$ due to the local $H^1$-convergence. This concludes the proof of the claim in $N = 2$.

    \underline{Step 3:} Let $N > 2$. By the Poincaré-Sobolev inequality and the hypothesis,
    \[
    \limsup_{n \to \infty} \left(\int_{B(0, r_n)} |v_n|^\frac{2N}{N - 2} \, dx\right)^\frac{N - 2}{2N} \le C \limsup_{n \to \infty} \left(\int_{B(0, r_n)} |\nabla v_n|^2 \, dx\right)^\frac{1}{2} = 0.
    \]
    Then Hölder's inequality immediately yields the local $H^1$-convergence. On the other hand
    \[
    \limsup_{n \to \infty} \left|\fint_{B(0, \rho_n)} v_n \, dx\right| \le \limsup_{n \to \infty} \left(\fint_{B(0, \rho_n)} |v_n|^\frac{2N}{N - 2} \, dx\right)^\frac{N - 2}{2N} = 0.
    \]
    Therefore, the claim is proved.
\end{proof}

\begin{lemma} \label{lm:infinite_height}
    Let $N > 2$ and let $K \subset \mathbb{R}^N$ be a bounded set. Assume $(l_n)$ and $(h_n)$ are sequences of real numbers such that
    \[
    \lim_{n \to \infty} l_n = \lim_{n \to \infty} h_n = \infty, \quad \lim_{n \to \infty} \frac{h_n}{l_n^{N - 1}} = 0.
    \]
    If $N = 3$, we also assume $\ln(l_n)/h_n \to 0$. Suppose $w_n \in C^\infty(\overline{C(l_n, h_n)})$ has zero mean and $K \subset \topint(\{w_n = \alpha_n\})$. If $\alpha_n \to \alpha \neq 0$ and there is a constant $\Lambda > 0$ such that
    \begin{equation} \label{eq:finite_limsup}
        \limsup_{n \to \infty} \int_{C(l_n, h_n)} |\nabla w_n|^2 \, dx \le \Lambda,
    \end{equation}
    then
    \[
    \cpct(K, \mathbb{R}^N) \le \frac{\Lambda}{\alpha^2}.
    \]
\end{lemma}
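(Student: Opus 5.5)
The plan is to produce, from $w_n$, an admissible competitor for $\cpct(K,\mathbb{R}^N)$ in the extended form \eqref{eq:extended_capacity_1}, after normalizing by $\alpha_n$, and then pass to the limit. Set $u_n := w_n/\alpha_n$. Then $u_n$ has zero mean on $C(l_n,h_n)$, satisfies $K \subset \topint(\{u_n = 1\})$, and obeys
\[
\limsup_{n\to\infty}\int_{C(l_n,h_n)}|\nabla u_n|^2\,dx \le \frac{\Lambda}{\alpha^2}.
\]
The goal is to show that, after passing to a subsequence, $u_n$ converges locally in $\mathbb{R}^N$ to some $u$ with $\nabla u \in L^2(\mathbb{R}^N;\mathbb{R}^N)$, $u = 1$ weakly on $K$, and $u \in L^{2N/(N-2)}(\mathbb{R}^N)$. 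Lower semicontinuity then gives
\[
\cpct(K,\mathbb{R}^N) \le \int |\nabla u|^2 \le \frac{\Lambda}{\alpha^2}.
\]

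\textbf{Step 1: local compactness.} The gradients are uniformly bounded in $L^2(C(l_n,h_n))$, and $C(l_n,h_n)$ exhausts $\mathbb{R}^N$. For each fixed cylinder $C(R,R)$ I subtract the average $c_n(R) := \fint_{C(R,R)} u_n$. By Poincaré and Rellich, and a diagonal argument over $R \to \infty$, the functions $u_n - c_n(R_0)$ converge in $L^2_{\mathrm{loc}}$ and weakly in $H^1_{\mathrm{loc}}$ to some $\tilde u$ with $\|\nabla\tilde u\|_{L^2(\mathbb{R}^N)}^2 \le \Lambda/\alpha^2$. This uses that the averages over nested cylinders differ by an amount controlled by the gradient.

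The key point is to show that the constants $c_n := c_n(R_0)$ stay bounded and converge to $0$. If so, then $u_n \to u := \tilde u$ in $L^2_{\mathrm{loc}}$, and Lemma \ref{lm:vanishing_weakly} gives $u = 1$ weakly on $K$, since $u_n - 1$ vanishes on a fixed neighborhood of $K$.

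\textbf{Step 2: the local average tends to $0$ (main obstacle).} Here the zero-mean condition on the huge, flat cylinder $C(l_n,h_n)$ must be transferred to a fixed ball. This is exactly where the aspect-ratio hypotheses enter, and the intended tool is Lemma \ref{lm:vanishing_averages} (and the sharpness in Remark \ref{rmk:explanation_of_the_factors}).

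I would decompose $u_n$ into its vertical average
\[
\bar u_n(x') := \frac{1}{2h_n}\int_{-h_n}^{h_n} u_n(x',t)\,dt
\]
plus a remainder. The function $\bar u_n$ lives on $B'(0,l_n) \subset \mathbb{R}^{N-1}$, has zero mean, and satisfies
\[
\int |\nabla'\bar u_n|^2 \le \frac{1}{2h_n}\int|\nabla u_n|^2 \lesssim \frac{1}{h_n}.
\]
For $N > 3$, i.e.\ $N-1 > 2$, case (3) of Lemma \ref{lm:vanishing_averages} applies directly, since $1/h_n \to 0$. For $N = 3$, case (2) requires $\ln(l_n)/h_n \to 0$, which is assumed. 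Either way, averages of $\bar u_n$ over balls $B'(0,\rho_n)$ with $\rho_n \to \infty$ tend to $0$.

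It remains to compare $\bar u_n$ near the origin with $u_n$ near the origin, that is, to control how $u_n$ varies in the $x_N$-direction over height $h_n$. For this I would average horizontally instead. Let $g_n(t) := \fint_{B'(0,l_n)} u_n(\cdot,t)\,dx'$; this is a one-dimensional function on $(-h_n,h_n)$ with zero mean and
\[
\int |g_n'|^2 \lesssim \frac{1}{l_n^{N-1}}.
\]
Case (1) of Lemma \ref{lm:vanishing_averages} requires $h_n/l_n^{N-1} \to 0$, which is again exactly the hypothesis. So $g_n \to 0$ uniformly.

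Combining the two directions, by choosing intermediate scales $\rho_n \to \infty$ in $x'$ and an analogous scale in $x_N$, and using the $L^{2N/(N-2)}$ Sobolev-type estimate on the fluctuation $u_n - g_n$ as in Step 3 of the proof of Lemma \ref{lm:vanishing_averages}, I expect to obtain $\fint_{C(\rho_n,\rho_n)} u_n \to 0$ along suitable $\rho_n \to \infty$. Hence the limit $u$ has zero average over large cylinders. This bookkeeping, namely making the horizontal and vertical averaging commute with the correct error terms, is where I expect the main difficulty.

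\textbf{Step 3: conclusion.} From $\nabla u \in L^2(\mathbb{R}^N)$ and the Sobolev argument in Lemma \ref{lm:equivalence_of_spaces}, $u - a \in L^{2N/(N-2)}$ for some constant $a$. Step 2 forces the averages of $u$ over large cylinders to tend to $0$, so $a = 0$ and $u \in \mathcal{C}(\mathbb{R}^N)$. Then $u$ is admissible in \eqref{eq:extended_capacity_1}, which gives $\cpct(K,\mathbb{R}^N) \le \Lambda/\alpha^2$.
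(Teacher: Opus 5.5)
You have the right ingredients. The horizontal average $g_n$ with case (1) of Lemma \ref{lm:vanishing_averages} (where $h_n/l_n^{N-1}\to 0$ enters) and the vertical average $\bar u_n$ with case (2) or (3) (where $\ln(l_n)/h_n\to 0$ enters for $N=3$) are exactly the paper's tools, and both energy bounds hold for every aspect ratio.

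\textbf{The combination step is missing.} The step that actually proves the lemma, passing from these lower-dimensional statements to an $N$-dimensional one, is left as ``I expect''. The one concrete tool you name for it, a uniform $L^{2N/(N-2)}$ estimate for the fluctuation $u_n-g_n$, is not available when $h_n\ll l_n$. Take $\phi\in C_c^\infty(B'(0,1))$ with zero mean and $h_n\ll R\le l_n$. Then $f(x',x_N)=\phi(x'/R)$ has zero mean on every horizontal slice, yet $\|f\|_{L^{2N/(N-2)}}/\|\nabla f\|_{L^2}\sim (R/h_n)^{1/N}\to\infty$.

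The missing idea is that no intermediate scales and no commuting of averages are needed. Compare with the cube-like cylinder $C(m_n,m_n)$, where $m_n:=\min\{l_n,h_n\}$.
If $h_n\ge l_n$, this cylinder spans the whole cross-section, so $\fint_{C(l_n,l_n)}u_n=\fint_{-l_n}^{l_n}g_n\to 0$.
If $h_n\le l_n$, it spans the whole height, so $\fint_{C(h_n,h_n)}u_n=\fint_{B'(0,h_n)}\bar u_n\to 0$ (take $\rho_n=h_n$).
On $C(m_n,m_n)$ the Sobolev--Poincar\'e inequality is scale invariant. Hence $u_n-\fint_{C(m_n,m_n)}u_n$ is bounded in $L^{2N/(N-2)}$ uniformly in $n$. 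Its weak limit lies in $\mathcal{C}(\mathbb{R}^N)$ directly, equals $1$ weakly on $K$ by Rellich and Lemma \ref{lm:vanishing_weakly}, and lower semicontinuity concludes.
This is the paper's proof: after extracting a subsequence with $h_n/l_n$ convergent, its Steps 2 and 3 reduce the tall and flat cases to Step 1 on $C(l_n,l_n)$, resp.\ $C(h_n,h_n)$. It also makes your local-compactness bookkeeping unnecessary.

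\textbf{The target in your Step 1 is false.} The constant $c_n(R_0)=\fint_{C(R_0,R_0)}u_n$ need not tend to $0$. Take any $v\in C_c^\infty(\mathbb{R}^N;[0,1])$ with $v=1$ near $K$, and set $w_n:=v-\fint_{C(l_n,h_n)}v$ and $\alpha_n:=1-\fint_{C(l_n,h_n)}v\to 1$. All hypotheses hold, but $c_n(R_0)\to\fint_{C(R_0,R_0)}v>0$ once $K\subset C(R_0,R_0)$.

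What you need is only that $c_n(R_0)$ stays bounded and that the limit has vanishing averages at infinity, which is the criterion you correctly use in Step 3. That does follow from $\fint_{C(m_n,m_n)}u_n\to 0$ together with the dyadic bound $\bigl|\fint_{C(R,R)}u_n-\fint_{C(m_n,m_n)}u_n\bigr|\lesssim R^{(2-N)/2}\|\nabla u_n\|_{L^2}$ for $R\le m_n$, which is a geometric sum since $N\ge 3$. But again this works only once the comparison cylinder is chosen as above.
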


\begin{proof}
    Without loss of generality, we assume that $h_n/l_n$ converges to a limit in $[0, \infty]$. Otherwise, we can prove the statement for a subsequence. Set $C_n := C(l_n, h_n)$.

    \underline{Step 1:} Let $\lim_{n \to \infty} h_n/l_n$ be finite and nonzero. Scaling the Poincaré-Sobolev inequality in the unit cylinder $C(1, 1)$ appropriately yields
    \begin{equation} \label{eq:poincaré_sobolev}
        \left(\int_{C_n} |w_n|^{\frac{2N}{N - 2}} \, dx\right)^\frac{N - 2}{2N} \le C \left(\int_{C_n} \left(\frac{l_n}{h_n}\right)^\frac{2}{N}|\nabla' w_n|^2 +  \left(\frac{h_n}{l_n}\right)^\frac{2(N - 1)}{N} |\partial_N w_n|^2 \, dx\right)^\frac{1}{2},
    \end{equation}
    where $C$ depends only on $N$. Since $h_n/l_n$ converges to a finite, nonzero limit, the bound \eqref{eq:finite_limsup} implies
    \begin{equation} \label{eq:bound_in_sobolev_exponent}
        \limsup_{n \to \infty} \left(\int_{C_n} |w_n|^{\frac{2N}{N - 2}} \, dx\right)^\frac{N - 2}{2N} < \infty.
    \end{equation}
    Passing to a subsequence (not relabeled), we know that there exist functions $w \in L^\frac{2N}{N - 2}(\mathbb{R}^N)$, $g \in L^2(\mathbb{R}^N; \mathbb{R}^N)$ such that $w_n \rightharpoonup w$ in $L^\frac{2N}{N - 2}(\mathbb{R}^N)$ and $\nabla w_n \rightharpoonup g$ in $L^2(\mathbb{R}^N; \mathbb{R}^N)$. If $\varphi \in C_c^\infty(\mathbb{R}^N)$, then
    \[
    \int_{\mathbb{R}^N} w \nabla \varphi \, dx = \lim_{n \to \infty} \int_{\mathbb{R}^N} w_n \nabla \varphi \, dx = -\lim_{n \to \infty} \int_{\mathbb{R}^N} \varphi \nabla w_n \, dx = \int_{\mathbb{R}^N} \varphi g \, dx.
    \]
    Hence, $\nabla w = g$ and we deduce that $w \in \mathcal{C}(\mathbb{R}^N)$. Next, we show that $w = \alpha$ weakly on $K$. Let $W$ be a bounded open neighborhood of $K$. Using \eqref{eq:finite_limsup}, \eqref{eq:bound_in_sobolev_exponent} and Hölder's inequality, we see that $(w_n)$ is uniformly bounded in $H^1(W)$. Consequently, the Rellich-Kondrachov compactness theorem gives $\lim_{n \to \infty} \|w_n - w\|_{L^2(W)} = 0$. On the other hand, $K \cap \overline{\supp(w_n - \alpha_n)} = \emptyset$ holds by assumption. Hence, by Lemma \ref{lm:vanishing_weakly}, $w = \alpha$ weakly on $K$. As a result, $w/\alpha$ is an admissible test function in \eqref{eq:extended_capacity_1}. Therefore,
    \[
    \alpha^2 \cpct(K, \mathbb{R}^N) \le \int_{\mathbb{R}^N} |\nabla w|^2 \, dx.
    \]
    It also follows from the weak lower semi-continuity of the $L^2$-norm and \eqref{eq:finite_limsup} that
    \[
    \int_{\mathbb{R}^N} |\nabla w|^2 \, dx \le \liminf_{n \to \infty} \int_{C_n} |\nabla w_n|^2 \, dx \le \Lambda.
    \]
    We conclude that $\cpct(K, \mathbb{R}^N) \le \Lambda/\alpha^2$.

    \underline{Step 2:} Next, we assume that $\lim_{n \to \infty} h_n/l_n = \infty$. In this case, the proof in Step 1 breaks down since the right-hand side of \eqref{eq:poincaré_sobolev} does not necessarily remain bounded. However, we will show that
    \[
    \fint_{C(l_n, l_n)} w_n \, dx \to 0 \quad \text{as } n \to \infty.
    \]
    It is then enough to apply Step 1 to $w_n - \fint_{C(l_n, l_n)} w_n \, dx$ in the cylinders $C(l_n, l_n)$.
    
    As $h_n \gg l_n$, the domain $C_n$ behaves like a $1$-dimensional set at large scales. Accordingly, we reduce the problem to the study of the growth of functions of a single variable. For $x_N \in (-h_n, h_n)$, define the $(N - 1)$-dimensional cross-sectional average
    \[
    \bar{w}_n(x_N) := \fint_{B'(0, l_n)} w_n(x', x_N) \, dx'.
    \]
    Then,
    \[
    \fint_{-h_n}^{h_n} \bar{w}_n \, dx_N = \fint_{C_n} w_n \, dx = 0.
    \]
    By Jensen's inequality and \eqref{eq:finite_limsup}, we also obtain
    \[
    \int_{-h_n}^{h_n} |\bar{w}_n'|^2 \, dx_N = \int_{-h_n}^{h_n} \left| \fint_{B'(0, l_n)} \partial_N w_n \, dx' \right|^2 \, dx_N \le \int_{-h_n}^{h_n} \fint_{B'(0, l_n)} |\partial_N w_n|^2 \, dx' \lesssim \frac{1}{l_n^{N - 1}}
    \]
    Thus, our hypothesis $h_n \ll l_n^{N - 1}$ implies that
    \[
    h_n \int_{-h_n}^{h_n} |\bar{w}_n'|^2 \, dx_N \lesssim \frac{h_n}{l_n^{N - 1}} \to 0 \quad \text{as } n \to \infty.
    \]
    With the help of Lemma \ref{lm:vanishing_averages}, we conclude that
    \[
    \fint_{C(l_n, l_n)} w_n \, dx = \fint_{-l_n}^{l_n} \bar{w}_n \, dx \to 0 \quad \text{as } n \to \infty.
    \]

    \underline{Step 3:} To finish the proof, we finally assume $\lim_{n \to \infty} h_n/l_n = 0$. Our approach will be similar to Step 2. We will prove that
    \[
    \fint_{C(h_n, h_n)} w_n \, dx \to 0 \quad \text{as } n \to \infty.
    \]
    The rest of the proof is analogous to Step 1. This time, we observe that $C_n$ behaves like an $(N - 1)$-dimensional set at large scales. Hence, we consider the functions defined by
    \[
    \bar{w}_n(x') := \fint_{-h_n}^{h_n} w_n(x', x_N) \, dx_N, \quad x' \in B'(0, l_n)
    \]
    By \eqref{eq:finite_limsup}
    \[
    \int_{B'(0, l_n)} |\nabla' \bar{w}_n|^2 \, dx' = \int_{B'(0, l_n)} \left|\fint_{-h_n}^{h_n} \nabla' w_n \, dx_N \right|^2 \, dx' \le \int_{B'(0, l_n)} \fint_{-h_n}^{h_n} |\nabla' w_n|^2 \, dx \lesssim \frac{1}{h_n}.
    \]
    Hence $\|\nabla' \bar{w}_n\|_{L^2(B'(0, l_n); \mathbb{R}^N)} \to 0$. If $N = 3$, then by assumption
    \[
    \limsup_{n \to \infty} \ln(l_n) \int_{B'(0, l_n)} |\nabla \bar{w}_n|^2 \, dx' \le \lim_{n \to \infty} \frac{\ln(l_n)}{h_n} = 0.
    \]
    Since $\bar{w}_n$ has zero mean, we deduce from Lemma \ref{lm:vanishing_averages} that
    \[
    \fint_{C(h_n, h_n)} w_n \, dx = \fint_{B'(0, h_n)} \bar{w}_n \, dx' \to 0 \quad \text{as } n \to \infty.
    \]
    for all $N$.
\end{proof}

\begin{lemma} \label{lm:finite_height}
    Let $N > 3$ and let $K' \subset \mathbb{R}^{N - 1}$ be a bounded set. Assume $(l_n)$ and $(h_n)$ are sequences of real numbers such that
    \[
    \lim_{n \to \infty} l_n = \infty, \quad \lim_{n \to \infty} h_n = h_\infty \in [0, \infty).
    \]
    Assume $\Lambda : (0, \infty) \rightarrow (0, \infty)$ is a continuous function satisfying
    \[
    \limsup_{h \to 0} \frac{\Lambda(h)}{2h} =: \Lambda_0 < \infty.
    \]
    Suppose $w_n \in C^\infty(\overline{C(l_n, h_n)})$ has zero mean and $K' \times \{0\} \subset \topint(\{w_n = \alpha_n\})$. If $\alpha_n \to \alpha \neq 0$ and
    \begin{equation} \label{eq:lambda_bound}
        \int_{C(l_n, h_n)} |\nabla w_n|^2 \, dx \le \Lambda(h_n) \quad \text{for all } n,
    \end{equation}
    then
    \[
    \alpha^2 \cpct_{h_\infty}(K', \mathbb{R}^{N - 1}) \le 
    \begin{cases}
        \Lambda_0 \quad &\text{if } h_\infty = 0, \\
        \Lambda(h_\infty) \quad &\text{if } h_\infty \in (0, \infty).
    \end{cases}
    \]
\end{lemma}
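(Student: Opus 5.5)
The plan is to mirror Step 3 of the proof of Lemma \ref{lm:infinite_height}: since $h_n$ stays bounded while $l_n \to \infty$, the cylinder $C_n := C(l_n, h_n)$ looks $(N-1)$-dimensional at large scales. First I will show that the local averages of $w_n$ vanish. Then I will subtract them and pass to a weak limit in the space $\mathcal{C}_{h_\infty}(\mathbb{R}^{N-1})$, or, if $h_\infty = 0$, in $\mathcal{C}(\mathbb{R}^{N-1})$. The limit will be compared with the characterizations \eqref{eq:extended_capacity_2} and \eqref{eq:extended_capacity_3}.

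\textbf{Vanishing averages.} Set $\bar w_n(x') := \fint_{-h_n}^{h_n} w_n(x',x_N)\,dx_N$ on $B'(0,l_n)$. This function has zero mean. By Jensen's inequality,
\[
\int_{B'(0,l_n)} |\nabla'\bar w_n|^2\,dx' \le \frac{1}{2h_n}\int_{C_n}|\nabla w_n|^2\,dx \le \frac{\Lambda(h_n)}{2h_n}.
\]
The right-hand side is bounded: if $h_\infty>0$ this follows from continuity of $\Lambda$, and if $h_\infty=0$ from the hypothesis on $\Lambda_0$. It is not small, however, so Lemma \ref{lm:vanishing_averages}(3) in dimension $N-1>2$ does not apply directly. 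Instead I will use the Poincaré--Sobolev inequality on balls in $\mathbb{R}^{N-1}$, with constant independent of the radius:
\[
\|\bar w_n\|_{L^{2(N-1)/(N-3)}(B'(0,l_n))} \le C\|\nabla'\bar w_n\|_{L^2} \le C.
\]
For $\tilde w_n := w_n - \bar w_n$, the one-dimensional Poincaré inequality in $x_N$ on intervals of length $2h_n$ gives $\|\tilde w_n\|_{L^2(C_n)} \lesssim h_n \|\partial_N w_n\|_{L^2} \lesssim h_n \Lambda(h_n)^{1/2}$. From these two bounds, $(w_n)$ is bounded in $L^2(-h_n,h_n;L^{2(N-1)/(N-3)})$ up to a locally $L^2$-small correction, and hence bounded in $H^1$ on each fixed bounded set $W' \times (-h_n,h_n)$. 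This is enough; no subtraction of averages is even needed, because the Sobolev bound already excludes constants at infinity. This is the point where $N>3$ is used.

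\textbf{Case $h_\infty \in (0,\infty)$.} I will rescale vertically to the fixed height $h_\infty$ by setting $\tilde w_n(x',x_N) := w_n(x', h_n x_N/h_\infty)$. The gradient energy changes by factors tending to $1$. Extending by zero, I extract weak limits $\tilde w_n \rightharpoonup w$ in $L^2(-h_\infty,h_\infty;L^{2(N-1)/(N-3)})$ and $\nabla\tilde w_n \rightharpoonup \nabla w$ in $L^2$, exactly as in the proof of Proposition \ref{pr:limit_at_infinity}. The condition $K'\times\{0\}\subset \topint\{w_n=\alpha_n\}$ is preserved by the rescaling. Rellich--Kondrachov on a bounded neighborhood, together with Lemma \ref{lm:vanishing_weakly} applied to $\tilde w_n - \alpha_n$, gives $w=\alpha$ weakly on $K'\times\{0\}$. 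Lower semicontinuity and \eqref{eq:extended_capacity_2} then yield $\alpha^2\cpct_{h_\infty}(K',\mathbb{R}^{N-1}) \le \liminf \Lambda(h_n) = \Lambda(h_\infty)$.

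\textbf{Case $h_\infty=0$ (the main obstacle).} I will rescale to height $1$ and consider $E_{h_n}$-type energies, as in the proof of Proposition \ref{pr:minimization_problem_characterization}. Let $\hat w_n(x',x_N) := w_n(x', h_n x_N)$ on $B'(0,l_n)\times(-1,1)$. Then
\[
\frac12\int |\nabla'\hat w_n|^2 + h_n^{-2}|\partial_N\hat w_n|^2 \le \frac{\Lambda(h_n)}{2h_n},
\]
which is bounded by $\Lambda_0 + o(1)$. It follows that $\partial_N\hat w_n\to 0$ in $L^2$, and $\nabla'\hat w_n$ is bounded. By the Sobolev bound from the first step, a subsequence converges weakly to $\hat v(x')=v(x')$ with $v\in\mathcal{C}(\mathbb{R}^{N-1})$. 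Rellich--Kondrachov on bounded sets, together with Lemma \ref{lm:vanishing_weakly}, gives $\hat v=\alpha$ weakly on $K'\times\{0\}$. The delicate point is making sure the weak-vanishing condition survives the vertical rescaling and the collapse to functions independent of $x_N$. I expect this to work because $\topint\{\hat w_n = \alpha_n\}$ contains $K'\times\{0\}$ for every $n$. By \eqref{eq:extended_capacity_3},
\[
\alpha^2\cpct_0(K',\mathbb{R}^{N-1}) \le \int|\nabla' v|^2 \le \liminf \frac12\int_{B'(0,l_n)\times(-1,1)}|\nabla'\hat w_n|^2 \le \Lambda_0 .
\]
This is the claim, reading $\cpct_{h_\infty}$ at $h_\infty=0$ as $\cpct_0$.
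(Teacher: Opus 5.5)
Your route differs from the paper's and works for $h_\infty=0$, but the case $h_\infty\in(0,\infty)$ has a gap at the compactness step. Your two bounds are $\|\bar w_n\|_{L^p(B'(0,l_n))}\le C$, with $p:=\frac{2(N-1)}{N-3}$, and $\|w_n-\bar w_n\|_{L^2(C_n)}\lesssim h_n\Lambda(h_n)^{1/2}$.

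When $h_\infty>0$ the second bound tends to $h_\infty\Lambda(h_\infty)^{1/2}\neq0$. Nothing forces the remainder to be ``locally $L^2$-small''; for near-optimal $w_n$ it is of order one near $K'\times\{0\}$. Moreover, an $L^2(C_n)$ bound gives no control in $L^2(-h_n,h_n;L^p(B'(0,l_n)))$. So your extraction of a weak limit in $L^2(-h_\infty,h_\infty;L^p(\mathbb{R}^{N-1}))$ is unjustified.

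Appealing to Proposition \ref{pr:limit_at_infinity} does not help. There the slice-wise Sobolev inequality is applied to functions vanishing on the lateral boundary, whereas $w_n$ satisfies no lateral condition. What you actually obtain is a limit $w\in H^1_{\mathrm{loc}}$ with $\nabla w\in L^2$. You have not shown $w\in\mathcal{C}_{h_\infty}(\mathbb{R}^{N-1})$, i.e.\ that $w/\alpha$ is admissible in \eqref{eq:extended_capacity_2}. Excluding a constant at infinity is exactly the content of the lemma, so this step cannot be skipped. Your remark that ``the Sobolev bound already excludes constants at infinity'' is the right intuition, but it needs an estimate.

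The fix fits your framework. Since $\int_{C_n}|\nabla'\bar w_n|^2\,dx=2h_n\int_{B'(0,l_n)}|\nabla'\bar w_n|^2\,dx'\le\Lambda(h_n)$, the gradient of $w_n-\bar w_n$ is bounded in $L^2(C_n)$. Apply the scale-invariant inequality $\|u\|_{L^p(B'(0,l))}\le C\bigl(\|\nabla'u\|_{L^2(B'(0,l))}+l^{-1}\|u\|_{L^2(B'(0,l))}\bigr)$ to $u=(w_n-\bar w_n)(\cdot,x_N)$ and integrate in $x_N$. This bounds $w_n-\bar w_n$, and hence $w_n$, in $L^2(-h_n,h_n;L^p)$.

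The paper instead subtracts the horizontal slice means $m_n(x_N):=\fint_{B'(0,l_n)}w_n(x',x_N)\,dx'$ and applies Sobolev--Poincar\'e on each slice to $w_n-m_n$. It then uses that $m_n$ has zero mean in $x_N$, so the one-dimensional Poincar\'e inequality gives $|B'(0,l_n)|^{\frac{N-3}{N-1}}\int|m_n|^2\,dx_N\lesssim(h_n/l_n)^2\int_{C_n}|\nabla w_n|^2\,dx$. This yields the $L^2(L^p)$ bound in both cases at once.

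Your vertical average is the more natural choice when $h_\infty=0$. After rescaling, $\|\hat w_n-\bar w_n\|_{L^2}\lesssim(h_n\Lambda(h_n))^{1/2}\to0$ globally, so the limit is simply the weak $L^p$-limit of $\bar w_n$. Both its independence of $x_N$ and its membership in $\mathcal{C}(\mathbb{R}^{N-1})$ are then immediate.
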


\begin{proof}
    As in the previous proof, we set $C_n := C(l_n, h_n)$.
    
    \underline{Step 1:} Assume $h_\infty \in (0, \infty)$. Set
    \[
    \bar{w}_n(x_N) := \fint_{B'(0, l_n)} w_n(x', x_N) \, dx', \quad x_N \in (-h_n, h_n).
    \]
    Applying the Sobolev-Poincaré inequality along $(N - 1)$-dimensional cross-sections and integrating in $x_N$, we obtain
    \begin{equation} \label{eq:poincaré_sobolev_2}
        \int_{-h_n}^{h_n} \left(\int_{B'(0, l_n)} |w_n(x) - \bar{w}_n(x_N)|^\frac{2(N - 1)}{N - 3} \, dx'\right)^\frac{N - 3}{N - 1} \, dx_N \le C \int_{C_n} |\nabla w_n(x)|^2 \, dx,
    \end{equation}
    where $C$ depends only on $N$. Since $\bar{w}_n$ has zero mean, Poincaré's and Jensen's inequalities yield
    \[
    \int_{-h_n}^{h_n} |\bar{w}_n|^2 \, dx_N \lesssim h_n^2 \int_{-h_n}^{h_n} |\bar{w}_n'|^2 \, dx_N \le h_n^2 \int_{-h_n}^{h_n} \fint_{B'(0, l_n)} |\partial_N w_n|^2 \, dx' \, dx_N.
    \]
    As a result, we get
    \begin{equation} \label{eq:approximation_of_the_average}
        \int_{-h_n}^{h_n} \left(\int_{B'(0, l_n)} |\bar{w}_n|^\frac{2(N - 1)}{N - 3} \, dx'\right)^\frac{N - 3}{N - 1} \, dx_N = |B'(0, l_n)|^\frac{N - 3}{N - 1} \int_{-h_n}^{h_n} |\bar{w}_n|^2 \, dx_N \lesssim \left(\frac{h_n}{l_n}\right)^2 \int_{C_n} |\nabla w_n|^2 \, dx
    \end{equation}
    Combining \eqref{eq:lambda_bound}, \eqref{eq:poincaré_sobolev_2} and \eqref{eq:approximation_of_the_average}, we see that
    \[
    \limsup_{n \to \infty} \int_{-h_n}^{h_n} \left(\int_{B'(0, l_n)} |w_n|^\frac{2(N - 1)}{N - 3} \, dx'\right)^\frac{N - 3}{N - 1} \, dx_N \lesssim \Lambda(h_\infty) < \infty.
    \]
    Passing to a subsequence (not relabeled), we know that there exist functions 
    \[
    w \in L^2(-h_\infty, h_\infty; L^\frac{2(N - 1)}{N - 3}(\mathbb{R}^{N - 1})), \quad g \in L^2(\mathbb{R}_{h_\infty}^N; \mathbb{R}^N)
    \]
    such that $w_n \rightharpoonup w$ in $L^2(-h_\infty, h_\infty; L^\frac{2(N - 1)}{N - 3}(\mathbb{R}^{N - 1}))$ and $\nabla w_n \rightharpoonup g$ in $L^2(\mathbb{R}_{h_\infty}^N; \mathbb{R}^N)$. Arguing as in Step 1 of Lemma \ref{lm:infinite_height}, we deduce that $w$ is weakly differentiable with $\nabla w = g$ and that $w = \alpha$ weakly on $K' \times \{0\}$. Hence, $w/\alpha$ is an admissible test function in \eqref{eq:extended_capacity_2}. Therefore, by the weak lower semi-continuity of $L^2$-norm and \eqref{eq:lambda_bound},
    \[
    \alpha^2 \cpct_{h_\infty}(K', \mathbb{R}^{N - 1}) \le \int_{\mathbb{R}_{h_\infty}^N} |\nabla w|^2 \, dx \le \Lambda(h_\infty).
    \]

    \underline{Step 2:} Next, we assume $h_\infty = 0$. Set
    \begin{gather*}
        \hat{w}_n(x', x_N) := w_n(x', h_n x_N), \quad (x', x_N) \in B'(0, l_n) \times (-1, 1), \\
        \bar{w}_n(x_N) := \fint_{B'(0, l_n)} \hat{w}_n(x', x_N) \, dx', \quad x_N \in (-1, 1).
    \end{gather*}
    By \eqref{eq:lambda_bound}
    \begin{equation} \label{eq:rescaled_gradient_inequality}
        \int_{-1}^1 \int_{B'(0, l_n)} |\nabla' \hat{w}_n|^2 + \frac{1}{h_n^2} |\partial_N \hat{w}_n|^2 \, dx' \, dx_N = \frac{1}{h_n} \int_{C_n} |\nabla w_n|^2 \, dx \le \frac{\Lambda(h_n)}{h_n}.
    \end{equation}
    Applying the Poincaré-Sobolev inequality to $\hat{w}_n$ and integrating in $x_N$ gives
    \begin{equation} \label{eq:poincaré_sobolev_3}
        \int_{-1}^1 \left(\int_{B'(0, l_n)} |\hat{w}_n(x) - \bar{w}_n(x_N)|^\frac{2(N - 1)}{N - 3} \, dx'\right)^\frac{N - 3}{N - 1} \, dx_N \le C \int_{-1}^1 \int_{B'(0, l_n)} |\nabla' \hat{w}_n(x)|^2 \, dx' \, dx_N,
    \end{equation}
    where $C$ depends only on $N$. It is easy to check that $\bar{w}_n$ has zero mean. From Poincaré's and Jensen's inequalities and \eqref{eq:rescaled_gradient_inequality} we obtain
    \[
    \int_{-1}^1 |\bar{w}_n|^2 \, dx_N \lesssim \int_{-1}^1 |\bar{w}_n'|^2 \, dx_N \le \int_{-1}^1 \fint_{B'(0, l_n)} |\partial_N \hat{w}_n|^2 \, dx' \, dx_N \lesssim \frac{h_n^2}{l_n^{N - 1}}.
    \]
    Then
    \begin{equation} \label{eq:approximation_of_the_average_2}
        \int_{-1}^1 \left(\int_{B'(0, l_n)} |\bar{w}_n|^\frac{2(N - 1)}{N - 3} \, dx'\right)^\frac{N - 3}{N - 1} \, dx_N = |B'(0, l_n)|^\frac{N - 3}{N - 1} \int_{-1}^1 |\bar{w}_n|^2 \, dx_N \lesssim \left(\frac{h_n}{l_n}\right)^2
    \end{equation}
    Thus, \eqref{eq:rescaled_gradient_inequality}, \eqref{eq:poincaré_sobolev_3} and \eqref{eq:approximation_of_the_average_2} imply
    \[
    \limsup_{n \to \infty}\int_{-1}^1 \left(\int_{B'(0, l_n)} |\hat{w}_n|^\frac{2(N - 1)}{N - 3} \, dx'\right)^\frac{N - 3}{N - 1} \, dx_N \le \limsup_{n \to \infty} \frac{\Lambda(h_n)}{h_n} < \infty.
    \]
    Passing to a subsequence (not relabeled), we know that there exist functions $\hat{w} \in L^\frac{2(N - 1)}{N - 3}(\mathbb{R}_1^N)$, $g \in L^2(\mathbb{R}_1^N; \mathbb{R}^N)$ such that $\hat{w}_n \rightharpoonup \hat{w}$ in $L^\frac{2(N - 1)}{N - 3}(\mathbb{R}_1^N)$ and $\nabla \hat{w}_n \rightharpoonup g$ in $L^2(\mathbb{R}_1^N; \mathbb{R}^N)$. Then $\hat{w}$ is weakly differentiable with $\nabla \hat{w} = g$. Moreover, by \eqref{eq:rescaled_gradient_inequality} we have $\partial_N \hat{w}_n \to 0$ in $L^2(\mathbb{R}_1^N)$. Consequently, $\partial_N \hat{w} = 0$ and there exists $w \in \mathcal{C}(\mathbb{R}^{N - 1})$ such that $\hat{w}(\cdot, x_N) = w$ for a.e. $x_N \in (0, 1)$. We can also conclude that $\hat{w} = \alpha$ weakly on $K' \times \{0\}$ as in Step 1 of Lemma \ref{lm:infinite_height}. We now observe that $w/\alpha$ is an admissible test function in \eqref{eq:extended_capacity_3}. Hence, by the weak lower semi-continuity of the $L^2$-norm and \eqref{eq:rescaled_gradient_inequality},
    \begin{multline}
        \alpha^2 \cpct_0(K', \mathbb{R}^{N - 1}) \le \int_{\mathbb{R}^{N - 1}} |\nabla' w|^2 \, dx' = \frac{1}{2} \int_{\mathbb{R}_1^N} |\nabla' \hat{w}|^2 \, dx \\
        \le \liminf_{n \to \infty} \frac{1}{2} \int_{-1}^1 \int_{B'(0, l_n)} |\nabla' \hat{w}_n|^2 \, dx' \, dx_N \le \limsup_{n \to \infty} \frac{\Lambda(h_n)}{2h_n} = \Lambda_0.
    \end{multline}
\end{proof}

\subsection{Proof of Proposition \ref{pr:subcapacitary_growth_1}}

Fix $\theta \in (0, 1)$. By Remark \ref{rmk:global_approximation} it suffices to prove the proposition for smooth functions that equal $1$ in an open neighborhood of $K$. We argue by contradiction. If the claim is false, then there exist sequences $(l_n)$, $(h_n)$ such that
\[
\lim_{n \to \infty} l_n = \lim_{n \to \infty} h_n = \infty, \quad \lim_{n \to \infty} \max \left\{\frac{\ln(l_n)}{h_n}, \frac{h_n}{l_n^2} \right\} = 0,
\]
and there exist functions $v_n \in C^\infty(\overline{C(l_{\varepsilon_n}, h_{\varepsilon_n})})$ such that $K \subset \topint(\{v_n = 1\})$ and
\begin{gather*}
    \int_{C(l_n, h_n)} |\nabla v_n|^2 \, dx < \theta \cpct(K, \mathbb{R}^3) \quad \text{for all } n, \\
    \fint_{C(l_n, h_n)} v_n \, dx =: a_n \to a \le 0 \quad \text{as } n \to \infty.
\end{gather*}
Set $\Lambda := \theta \cpct(K, \mathbb{R}^3)$. Define $w_n := v_n - a_n$ and $\alpha_n := 1 - a_n$. Note that $w_n$ has zero mean, $K \subset \topint(\{w_n = \alpha_n\})$ and
\[
\limsup_{n \to \infty} \int_{C(l_n, h_n)} |\nabla w_n|^2 \, dx \le \Lambda.
\]
Since $\alpha_n \to 1 - a$, Lemma \ref{lm:infinite_height} implies
\[
\cpct(K, \mathbb{R}^3) \le \frac{\Lambda}{(1 - a)^2} = \frac{\theta}{(1 - a)^2} \cpct(K, \mathbb{R}^3) < \cpct(K, \mathbb{R}^3),
\]
which is a contradiction.

\subsection{Proof of Proposition \ref{pr:subcapacitary_growth_2}}

The proof is similar to the proof of Proposition \ref{pr:subcapacitary_growth_1}. Fix $\theta \in (0, 1)$. By Remark \ref{rmk:global_approximation} it suffices to prove the proposition for smooth functions that equal $1$ in an open neighborhood of $K' \times \{0\}$. We argue by contradiction. If the claim is false, then there exist sequences $(l_n)$ and $(h_n)$ such that
\[
\lim_{n \to \infty} l_n = \infty, \quad \lim_{n \to \infty} \frac{h_n}{l_n^{N - 1}} = 0,
\]
and there exist functions $v_n \in C^\infty(\overline{C(l_{\varepsilon_n}, h_{\varepsilon_n})})$ such that $K' \times \{0\} \subset \topint(\{v_n = 1\})$ and
\begin{gather*}
    \int_{C(l_n, h_n)} |\nabla v_n|^2 \, dx < \theta \cpct_{h_n}(K', \mathbb{R}^{N - 1}) \quad \text{for all } n, \\
    \fint_{C(l_n, h_n)} v_n \, dx =: a_n \to a \le 0 \quad \text{as } n \to \infty.
\end{gather*}
Without loss of generality, we assume that $h_\infty := \lim_{n \to \infty} h_n$ exists in $[0, \infty]$. Set $\Lambda(h) := \theta \cpct_h(K', \mathbb{R}^{N - 1})$. Define $w_n := v_n - a_n$ and $\alpha_n := 1 - a_n$. Note that $w_n$ has zero mean and $K' \times \{0\} \subset \topint(\{w_n = \alpha_n\})$. If $h_\infty = \infty$, then by Proposition \ref{pr:limit_at_infinity}
\[
\limsup_{n \to \infty} \int_{C(l_n, h_n)} |\nabla w_n|^2 \, dx \le \limsup_{n \to \infty} \Lambda(h_n) = \theta \cpct(K' \times \{0\}, \mathbb{R}^N).
\]
Since $\alpha_n \to 1 - a$, Lemma \ref{lm:infinite_height} implies
\[
\cpct(K' \times \{0\}, \mathbb{R}^N) \le \frac{\theta}{(1 - a)^2} \cpct(K' \times \{0\}, \mathbb{R}^N) < \cpct(K' \times \{0\}, \mathbb{R}^N),
\]
which is impossible.

By Proposition \ref{pr:continuity_in_h}, $\Lambda$ is a continuous function and $\lim_{h \to 0} \Lambda(h)/(2h) = \theta \cpct_0(K', \mathbb{R}^{N - 1})$. If $h_\infty < \infty$, then Lemma \ref{lm:finite_height} gives
\[
\cpct_{h_\infty}(K', \mathbb{R}^{N - 1}) \le \frac{\theta}{(1 - a)^2} \cpct_{h_\infty}(K', \mathbb{R}^{N - 1}) < \cpct_{h_\infty}(K', \mathbb{R}^{N - 1}),
\]
which is another contradiction.

\section*{Acknowledgments}

This work is funded by the Deutsche Forschungsgemeinschaft (DFG, German Research Foundation) under Germany's Excellence Strategy EXC 2044–390685587, Mathematics Münster: Dynamics–Geometry–Structure. I would like to thank Caterina Ida Zeppieri for suggesting me to work on this problem and for the helpful discussions.

\bibliography{references}

@article {D,
    AUTHOR = {Damlamian, A.},
     TITLE = {Le probl\`eme de la passoire de {N}eumann},
   JOURNAL = {Rend. Sem. Mat. Univ. Politec. Torino},
  FJOURNAL = {Rendiconti del Seminario Matematico (gi\`a{} ``Conferenze di
              Fisica e di Matematica''). Universit\`a{} e Politecnico di
              Torino},
    VOLUME = {43},
      YEAR = {1985},
    NUMBER = {3},
     PAGES = {427--450},
      ISSN = {0373-1243},
   MRCLASS = {35B25 (35J05 35J85)},
  MRNUMBER = {884870},
MRREVIEWER = {Dietrich\ G\"ohde},
}

@article {P,
    AUTHOR = {Picard, C.},
     TITLE = {Analyse limite d'\'equations variationnelles dans un domaine
              contenant une grille},
   JOURNAL = {RAIRO Mod\'el. Math. Anal. Num\'er.},
  FJOURNAL = {RAIRO Mod\'elisation Math\'ematique et Analyse Num\'erique},
    VOLUME = {21},
      YEAR = {1987},
    NUMBER = {2},
     PAGES = {293--326},
      ISSN = {0764-583X,1290-3841},
   MRCLASS = {35B40 (35J05 35J85 49A29 65N15)},
  MRNUMBER = {896245},
MRREVIEWER = {Shuzi\ Zhou},
       DOI = {10.1051/m2an/1987210202931},
       URL = {https://doi.org/10.1051/m2an/1987210202931},
}

@article {CD,
    AUTHOR = {Casado-Díaz, J.},
     TITLE = {Existence of a sequence satisfying {C}ioranescu-{M}urat
              conditions in homogenization of {D}irichlet problems in
              perforated domains},
   JOURNAL = {Rend. Mat. Appl. (7)},
  FJOURNAL = {Rendiconti di Matematica e delle sue Applicazioni. Serie VII},
    VOLUME = {16},
      YEAR = {1996},
    NUMBER = {3},
     PAGES = {387--413},
      ISSN = {1120-7183,2532-3350},
   MRCLASS = {35B27 (35J25)},
  MRNUMBER = {1422390},
MRREVIEWER = {Patrizia\ Donato},
}

@incollection {M,
    AUTHOR = {Murat, F.},
     TITLE = {The {N}eumann sieve},
 BOOKTITLE = {Nonlinear variational problems ({I}sola d'{E}lba, 1983)},
    SERIES = {Res. Notes in Math.},
    VOLUME = {127},
     PAGES = {24--32},
 PUBLISHER = {Pitman, Boston, MA},
      YEAR = {1985},
      ISBN = {0-273-08670-7},
   MRCLASS = {35J25 (35B25 73K10)},
  MRNUMBER = {807534},
}

@incollection {CM1,
    AUTHOR = {Cioranescu, D. and Murat, F.},
     TITLE = {Un terme \'etrange venu d'ailleurs},
 BOOKTITLE = {Nonlinear partial differential equations and their
              applications. {C}oll\`ege de {F}rance {S}eminar, {V}ol. {II}
              ({P}aris, 1979/1980)},
    SERIES = {Res. Notes in Math.},
    VOLUME = {60},
     PAGES = {98--138, 389--390},
 PUBLISHER = {Pitman, Boston, Mass.-London},
      YEAR = {1982},
      ISBN = {0-273-08541-7},
   MRCLASS = {35J05 (35B25)},
  MRNUMBER = {652509},
MRREVIEWER = {M.\ H.\ Protter},
}

@incollection {CM2,
    AUTHOR = {Cioranescu, D. and Murat, F.},
     TITLE = {Un terme \'etrange venu d'ailleurs. {II}},
 BOOKTITLE = {Nonlinear partial differential equations and their
              applications. {C}oll\`ege de {F}rance {S}eminar, {V}ol. {III}
              ({P}aris, 1980/1981)},
    SERIES = {Res. Notes in Math.},
    VOLUME = {70},
     PAGES = {154--178, 425--426},
 PUBLISHER = {Pitman, Boston, Mass.-London},
      YEAR = {1982},
      ISBN = {0-273-08568-9},
   MRCLASS = {35J05 (35B25)},
  MRNUMBER = {670272},
MRREVIEWER = {M.\ H.\ Protter},
}

@article {O,
    AUTHOR = {Onofrei, D.},
     TITLE = {The unfolding operator near a hyperplane and its applications
              to the {N}eumann sieve model},
   JOURNAL = {Adv. Math. Sci. Appl.},
  FJOURNAL = {Advances in Mathematical Sciences and Applications},
    VOLUME = {16},
      YEAR = {2006},
    NUMBER = {1},
     PAGES = {239--258},
      ISSN = {1343-4373},
   MRCLASS = {35J65 (80M50)},
  MRNUMBER = {2253234},
}

@article {CDGO,
    AUTHOR = {Cioranescu, D. and Damlamian, A. and Griso, G. and Onofrei,
              D.},
     TITLE = {The periodic unfolding method for perforated domains and
              {N}eumann sieve models},
   JOURNAL = {J. Math. Pures Appl. (9)},
  FJOURNAL = {Journal de Math\'ematiques Pures et Appliqu\'ees. Neuvi\`eme
              S\'erie},
    VOLUME = {89},
      YEAR = {2008},
    NUMBER = {3},
     PAGES = {248--277},
      ISSN = {0021-7824},
   MRCLASS = {35B27 (35J25 74Q05)},
  MRNUMBER = {2401689},
MRREVIEWER = {Karsten\ Matthies},
       DOI = {10.1016/j.matpur.2007.12.008},
       URL = {https://doi.org/10.1016/j.matpur.2007.12.008},
}

@article {C1,
    AUTHOR = {Conca, C.},
     TITLE = {\'Etude d'un fluide traversant une paroi perfor\'ee. {I}.
              {C}omportement limite pr\`es de la paroi},
   JOURNAL = {J. Math. Pures Appl. (9)},
  FJOURNAL = {Journal de Math\'ematiques Pures et Appliqu\'ees. Neuvi\`eme
              S\'erie},
    VOLUME = {66},
      YEAR = {1987},
    NUMBER = {1},
     PAGES = {1--43},
      ISSN = {0021-7824},
   MRCLASS = {76D07 (35Q10)},
  MRNUMBER = {884812},
MRREVIEWER = {Monique\ Dauge},
}

@article {C2,
    AUTHOR = {Conca, C.},
     TITLE = {\'Etude d'un fluide traversant une paroi perfor\'ee. {II}.
              {C}omportement limite loin de la paroi},
   JOURNAL = {J. Math. Pures Appl. (9)},
  FJOURNAL = {Journal de Math\'ematiques Pures et Appliqu\'ees. Neuvi\`eme
              S\'erie},
    VOLUME = {66},
      YEAR = {1987},
    NUMBER = {1},
     PAGES = {45--70},
      ISSN = {0021-7824},
   MRCLASS = {76D07 (35Q10)},
  MRNUMBER = {884813},
MRREVIEWER = {Monique\ Dauge},
}

@article {K,
    AUTHOR = {Khrabustovskyi, A.},
     TITLE = {Operator estimates for the {N}eumann sieve problem},
   JOURNAL = {Ann. Mat. Pura Appl. (4)},
  FJOURNAL = {Annali di Matematica Pura ed Applicata. Series IV},
    VOLUME = {202},
      YEAR = {2023},
    NUMBER = {4},
     PAGES = {1955--1990},
      ISSN = {0373-3114,1618-1891},
   MRCLASS = {35B27 (35B40 35P05 47A55)},
  MRNUMBER = {4597609},
       DOI = {10.1007/s10231-023-01308-z},
       URL = {https://doi.org/10.1007/s10231-023-01308-z},
}

@article {AP,
    AUTHOR = {Attouch, H. and Picard, C.},
     TITLE = {Comportement limite de probl\`emes de transmission unilateraux
              \`a{} travers des grilles de forme quelconque},
   JOURNAL = {Rend. Sem. Mat. Univ. Politec. Torino},
  FJOURNAL = {Rendiconti del Seminario Matematico (gi\`a{} ``Conferenze di
              Fisica e di Matematica''). Universit\`a{} e Politecnico di
              Torino},
    VOLUME = {45},
      YEAR = {1987},
    NUMBER = {1},
     PAGES = {71--85},
      ISSN = {0373-1243},
   MRCLASS = {49A50 (35B40 49A29)},
  MRNUMBER = {981155},
}

@article {DV,
    AUTHOR = {Del Vecchio, T.},
     TITLE = {The thick {N}eumann's sieve},
   JOURNAL = {Ann. Mat. Pura Appl. (4)},
  FJOURNAL = {Annali di Matematica Pura ed Applicata. Serie Quarta},
    VOLUME = {147},
      YEAR = {1987},
     PAGES = {363--402},
      ISSN = {0003-4622},
   MRCLASS = {35J25 (35B20 35B25 35B40 35J67)},
  MRNUMBER = {916715},
MRREVIEWER = {Dietrich\ G\"ohde},
       DOI = {10.1007/BF01762424},
       URL = {https://doi.org/10.1007/BF01762424},
}

@article {GHV,
    AUTHOR = {Giunti, A. and H\"ofer, R. and Vel\'azquez, J. J.
              L.},
     TITLE = {Homogenization for the {P}oisson equation in randomly
              perforated domains under minimal assumptions on the size of
              the holes},
   JOURNAL = {Comm. Partial Differential Equations},
  FJOURNAL = {Communications in Partial Differential Equations},
    VOLUME = {43},
      YEAR = {2018},
    NUMBER = {9},
     PAGES = {1377--1412},
      ISSN = {0360-5302,1532-4133},
   MRCLASS = {35B27 (35J05 35J25 35R60 60H15)},
  MRNUMBER = {3915491},
MRREVIEWER = {Dan\ Polisevski},
       DOI = {10.1080/03605302.2018.1531425},
       URL = {https://doi.org/10.1080/03605302.2018.1531425},
}

@article {C,
    AUTHOR = {Cortesani, G.},
     TITLE = {Asymptotic behaviour of a sequence of {N}eumann problems},
   JOURNAL = {Comm. Partial Differential Equations},
  FJOURNAL = {Communications in Partial Differential Equations},
    VOLUME = {22},
      YEAR = {1997},
    NUMBER = {9-10},
     PAGES = {1691--1729},
      ISSN = {0360-5302,1532-4133},
   MRCLASS = {35J25 (35B40)},
  MRNUMBER = {1469587},
MRREVIEWER = {I.\ N.\ Tavkhelidze},
       DOI = {10.1080/03605309708821316},
       URL = {https://doi.org/10.1080/03605309708821316},
}

@article {DMFZ,
    AUTHOR = {Dal Maso, G. and Franzina, G. and Zucco, D.},
     TITLE = {Transmission conditions obtained by homogenisation},
   JOURNAL = {Nonlinear Anal.},
  FJOURNAL = {Nonlinear Analysis. Theory, Methods \& Applications. An
              International Multidisciplinary Journal},
    VOLUME = {177},
      YEAR = {2018},
     PAGES = {361--386},
      ISSN = {0362-546X,1873-5215},
   MRCLASS = {49J45},
  MRNUMBER = {3865203},
MRREVIEWER = {Michele\ Carriero},
       DOI = {10.1016/j.na.2018.04.015},
       URL = {https://doi.org/10.1016/j.na.2018.04.015},
}

@book {MK,
    AUTHOR = {Marchenko, V. A. and Khruslov, E. Y.},
     TITLE = {Homogenization of partial differential equations},
    SERIES = {Progress in Mathematical Physics},
    VOLUME = {46},
      NOTE = {Translated from the 2005 Russian original by M. Goncharenko
              and D. Shepelsky},
 PUBLISHER = {Birkh\"auser Boston, Inc., Boston, MA},
      YEAR = {2006},
     PAGES = {xiv+398},
      ISBN = {978-0-8176-4351-5; 0-8176-4351-6},
   MRCLASS = {35B27 (35-02 35J25 35J40 35K20 74Q05 76M50)},
  MRNUMBER = {2182441},
MRREVIEWER = {I.\ Aganovi\'c},
}

@incollection {PV,
    AUTHOR = {Papanicolaou, G. C. and Varadhan, S. R. S.},
     TITLE = {Diffusion in regions with many small holes},
 BOOKTITLE = {Stochastic differential systems ({P}roc. {IFIP}-{WG} 7/1
              {W}orking {C}onf., {V}ilnius, 1978)},
    SERIES = {Lect. Notes Control Inf. Sci.},
    VOLUME = {25},
     PAGES = {190--206},
 PUBLISHER = {Springer, Berlin-New York},
      YEAR = {1980},
      ISBN = {3-540-10498-4},
   MRCLASS = {60J60},
  MRNUMBER = {609184},
MRREVIEWER = {Constantin\ Tudor},
}

@article {GH,
    AUTHOR = {Giunti, A. and Höfer, R. M.},
     TITLE = {Homogenisation for the {S}tokes equations in randomly
              perforated domains under almost minimal assumptions on the
              size of the holes},
   JOURNAL = {Ann. Inst. H. Poincar\'e{} C Anal. Non Lin\'eaire},
  FJOURNAL = {Annales de l'Institut Henri Poincar\'e{} C. Analyse Non
              Lin\'eaire},
    VOLUME = {36},
      YEAR = {2019},
    NUMBER = {7},
     PAGES = {1829--1868},
      ISSN = {0294-1449,1873-1430},
   MRCLASS = {35B27 (35Q35 35R60)},
  MRNUMBER = {4020526},
       DOI = {10.1016/j.anihpc.2019.06.002},
       URL = {https://doi.org/10.1016/j.anihpc.2019.06.002},
}

@article {SZZ,
    AUTHOR = {Scardia, L. and Zemas, K. and Zeppieri, C. I.},
     TITLE = {Homogenisation of nonlinear {D}irichlet problems in randomly
              perforated domains under minimal assumptions on the size of
              perforations},
   JOURNAL = {Probab. Theory Related Fields},
  FJOURNAL = {Probability Theory and Related Fields},
    VOLUME = {192},
      YEAR = {2025},
    NUMBER = {1-2},
     PAGES = {471--544},
      ISSN = {0178-8051,1432-2064},
   MRCLASS = {35B27 (35J15 35J57 49J45 60F15)},
  MRNUMBER = {4914066},
       DOI = {10.1007/s00440-024-01320-1},
       URL = {https://doi.org/10.1007/s00440-024-01320-1},
}

@article {DMD,
    AUTHOR = {Dal Maso, G. and Defranceschi, A.},
     TITLE = {Limits of nonlinear {D}irichlet problems in varying domains},
   JOURNAL = {Manuscripta Math.},
  FJOURNAL = {Manuscripta Mathematica},
    VOLUME = {61},
      YEAR = {1988},
    NUMBER = {3},
     PAGES = {251--278},
      ISSN = {0025-2611,1432-1785},
   MRCLASS = {49A50 (31B20 31B25 35J65)},
  MRNUMBER = {949817},
MRREVIEWER = {Luciano\ Modica},
       DOI = {10.1007/BF01258438},
       URL = {https://doi.org/10.1007/BF01258438},
}

@article {DMG,
    AUTHOR = {Dal Maso, G. and Garroni, A.},
     TITLE = {New results on the asymptotic behavior of {D}irichlet problems
              in perforated domains},
   JOURNAL = {Math. Models Methods Appl. Sci.},
  FJOURNAL = {Mathematical Models and Methods in Applied Sciences},
    VOLUME = {4},
      YEAR = {1994},
    NUMBER = {3},
     PAGES = {373--407},
      ISSN = {0218-2025,1793-6314},
   MRCLASS = {35B27 (35J25 49J20)},
  MRNUMBER = {1282241},
MRREVIEWER = {Alexander\ Belyaev},
       DOI = {10.1142/S0218202594000224},
       URL = {https://doi.org/10.1142/S0218202594000224},
}

@article {CDG,
    AUTHOR = {Casado-Díaz, J. and Garroni, A.},
     TITLE = {Asymptotic behavior of nonlinear elliptic systems on varying
              domains},
   JOURNAL = {SIAM J. Math. Anal.},
  FJOURNAL = {SIAM Journal on Mathematical Analysis},
    VOLUME = {31},
      YEAR = {2000},
    NUMBER = {3},
     PAGES = {581--624},
      ISSN = {0036-1410,1095-7154},
   MRCLASS = {35B27 (35J25 74Q05)},
  MRNUMBER = {1741039},
MRREVIEWER = {Giovanni\ Alberti},
       DOI = {10.1137/S0036141097329627},
       URL = {https://doi.org/10.1137/S0036141097329627},
}

@book {DVJ,
    AUTHOR = {Daley, D. J. and Vere-Jones, D.},
     TITLE = {An introduction to the theory of point processes. {V}ol. {II}},
    SERIES = {Probability and its Applications (New York)},
   EDITION = {Second},
      NOTE = {General theory and structure},
 PUBLISHER = {Springer, New York},
      YEAR = {2008},
     PAGES = {xviii+573},
      ISBN = {978-0-387-21337-8},
   MRCLASS = {60G55 (60-02 60G57)},
  MRNUMBER = {2371524},
MRREVIEWER = {Gail\ Ivanoff},
       DOI = {10.1007/978-0-387-49835-5},
       URL = {https://doi.org/10.1007/978-0-387-49835-5},
}

@book {CSKM,
    AUTHOR = {Chiu, S. N. and Stoyan, D. and Kendall, W. S.
              and Mecke, J.},
     TITLE = {Stochastic geometry and its applications},
    SERIES = {Wiley Series in Probability and Statistics},
   EDITION = {Third},
 PUBLISHER = {John Wiley \& Sons, Ltd., Chichester},
      YEAR = {2013},
     PAGES = {xxvi+544},
      ISBN = {978-0-470-66481-0},
   MRCLASS = {60D05 (52A22 60G55 60G57)},
  MRNUMBER = {3236788},
MRREVIEWER = {Elena\ Villa},
       DOI = {10.1002/9781118658222},
       URL = {https://doi.org/10.1002/9781118658222},
}

@misc{B,
      title={Homogenization of the random {N}eumann sieve problem under minimal assumptions on the size of the perforations}, 
      author={Baştuğ, M.},
      year={2025},
      eprint={2512.14384},
      archivePrefix={arXiv},
      primaryClass={math.AP},
      note={https://arxiv.org/abs/2512.14384}, 
}

@article {ABZ,
    AUTHOR = {Ansini, N. and Babadjian, J.-F. and Zeppieri,
              C. I.},
     TITLE = {The {N}eumann sieve problem and dimensional reduction: a
              multiscale approach},
   JOURNAL = {Math. Models Methods Appl. Sci.},
  FJOURNAL = {Mathematical Models and Methods in Applied Sciences},
    VOLUME = {17},
      YEAR = {2007},
    NUMBER = {5},
     PAGES = {681--735},
      ISSN = {0218-2025,1793-6314},
   MRCLASS = {35B27 (35Q72 49J45 82C24)},
  MRNUMBER = {2325836},
MRREVIEWER = {Marco\ Veneroni},
       DOI = {10.1142/S0218202507002078},
       URL = {https://doi.org/10.1142/S0218202507002078},
}

@article {BFF,
    AUTHOR = {Bhattacharya, K. and Fonseca, I. and Francfort,
              G.},
     TITLE = {An asymptotic study of the debonding of thin films},
   JOURNAL = {Arch. Ration. Mech. Anal.},
  FJOURNAL = {Archive for Rational Mechanics and Analysis},
    VOLUME = {161},
      YEAR = {2002},
    NUMBER = {3},
     PAGES = {205--229},
      ISSN = {0003-9527,1432-0673},
   MRCLASS = {49J45 (74K35)},
  MRNUMBER = {1894591},
MRREVIEWER = {Yi-Chung\ Shu},
       DOI = {10.1007/s002050100177},
       URL = {https://doi.org/10.1007/s002050100177},
}

@article {N,
    AUTHOR = {Ansini, N.},
     TITLE = {The nonlinear sieve problem and applications to thin films},
   JOURNAL = {Asymptot. Anal.},
  FJOURNAL = {Asymptotic Analysis},
    VOLUME = {39},
      YEAR = {2004},
    NUMBER = {2},
     PAGES = {113--145},
      ISSN = {0921-7134,1875-8576},
   MRCLASS = {35B27 (74K35 74Q05)},
  MRNUMBER = {2093896},
MRREVIEWER = {Isabelle\ Gruais},
       DOI = {10.3233/asy-2004-636},
       URL = {https://doi.org/10.3233/asy-2004-636},
}
\bibliographystyle{plain}

\end{document}